\documentclass{amsart}
\pdfoutput=1
\usepackage{a4wide}
\usepackage{comment}
\usepackage{microtype}
\usepackage{stackengine}
\stackMath
\usepackage{amscd}
\usepackage{amsmath,amssymb,bbm,amsfonts, accents}
\usepackage{mathtools}
\usepackage{xstring}
\usepackage{xcolor}
\usepackage{eucal}
\usepackage{url}
\usepackage{latexsym}
\usepackage{amsthm}
\usepackage[latin1]{inputenc}
\usepackage{multicol}
\usepackage[shortlabels]{enumitem}
\usepackage[colorlinks=true, linkcolor={blue!50!black}, pdfhighlight=/O, 
ocgcolorlinks=true]{hyperref}
\hypersetup{bookmarksdepth=2}
\usepackage{graphicx}
\usepackage{color}
\usepackage{tikz-cd}
\usepackage[all]{xy}
\usepackage[normalem]{ulem}

\newcommand{\eg}{e.g.\@}
\newcommand{\ie}{i.e.\@}

\newcommand{\IFF}{if and only if}
\usepackage[capitalise]{cleveref}
\crefformat{equation}{(#2#1#3)}
\newcommand{\lortho}{curved orthofibration}
\newcommand{\lorthos}{curved orthofibrations}

\newcommand{\Lorthos}{Curved orthofibrations}
\newcommand{\Grayop}{\mathrm{OpGray}}
\newcommand{\Ortholax}{\mathrm{CrvOrtho}}
\newcommand{\LOrth}{\Ortholax}

\newcommand{\Cocartlax}{\coCartlax}
\newcommand{\Cocart}{\cocart}
\newcommand{\Cart}{\cart}
\newcommand{\LFib}{\Leftfib}
\newcommand{\RFib}{\Rightfib}
\newcommand{\tFun}[0]{\tcat{Fun}}
\DeclareMathOperator{\LCocart}{LCocart}
\DeclareMathOperator{\RCocart}{RCocart}
\DeclareMathOperator{\RCart}{RCart}
\DeclareMathOperator{\LCart}{LCart}
\newcommand{\Orth}{\ortho}

\makeatletter
\def\@tocline#1#2#3#4#5#6#7{\relax
  \ifnum #1>\c@tocdepth 
  \else
    \par \addpenalty\@secpenalty\addvspace{#2}%
    \begingroup \hyphenpenalty\@M
    \@ifempty{#4}{%
      \@tempdima\csname r@tocindent\number#1\endcsname\relax
    }{%
      \@tempdima#4\relax
    }%
    \parindent\z@ \leftskip#3\relax \advance\leftskip\@tempdima\relax
    \rightskip\@pnumwidth plus4em \parfillskip-\@pnumwidth
    #5\leavevmode\hskip-\@tempdima
      \ifcase #1
       \or \hskip -1em \or \hskip 1em \or \hskip 3em \else \hskip 5em \fi%
      #6\nobreak\relax
    \hfill\hbox to\@pnumwidth{\@tocpagenum{#7}}
      \par
    \nobreak
    \endgroup
  \fi}
\makeatother

\DeclareFontFamily{U}{cbgreek}{}
\DeclareFontShape{U}{cbgreek}{m}{n}{
        <-6>    grmn0500
        <6-7>   grmn0600
        <7-8>   grmn0700
        <8-9>   grmn0800
        <9-10>  grmn0900
        <10-12> grmn1000
        <12-17> grmn1200
        <17->   grmn1728
      }{}
\DeclareFontShape{U}{cbgreek}{bx}{n}{
        <-6>    grxn0500
        <6-7>   grxn0600
        <7-8>   grxn0700
        <8-9>   grxn0800
        <9-10>  grxn0900
        <10-12> grxn1000
        <12-17> grxn1200
        <17->   grxn1728
      }{}

\makeatletter
\newcommand{\normalorbold}{%
  \ifnum\pdf@strcmp{\math@version}{bold}=\z@ bx\else m\fi
}
\makeatother


\newtheoremstyle{introthms}
	{}{}{\itshape}{}{\bfseries }{}{ }
	{\thmname{#1} \thmnumber{#2}. \thmnote{\bfseries{(#3)}}}

\theoremstyle{introthms}
\newtheorem{introthm}{Theorem}
\newtheorem{introprop}[introthm]{Proposition}
\newtheorem{introcor}[introthm]{Corollary}

\swapnumbers

\numberwithin{equation}{subsection}
\theoremstyle{plain}
\newtheorem{theorem}[equation]{Theorem}
\newtheorem{thm}[equation]{Theorem}

\newtheorem{lemma}[equation]{Lemma}

\newtheorem{proposition}[equation]{Proposition}
\newtheorem{propn}[equation]{Proposition}

\newtheorem{corollary}[equation]{Corollary}
\newtheorem{cor}[equation]{Corollary}

\theoremstyle{definition}

\newtheorem{definition}[equation]{Definition}
\newtheorem{defn}[equation]{Definition}

\newtheorem{construction}[equation]{Construction}
\newtheorem{example}[equation]{Example}

\newtheorem{observation}[equation]{Observation}
\newtheorem{remark}[equation]{Remark}

\newtheorem{notation}[equation]{Notation}

\newtheorem*{uremark}{Remark}

\setenumerate{label=(\arabic*),itemsep=3pt,topsep=3pt, leftmargin=*}
\setitemize{itemsep=3pt,topsep=3pt, leftmargin=*}

\newcommand{\cocolon}{\nobreak \mskip6mu plus1mu \mathpunct{}\nonscript\mkern-\thinmuskip {:}\mskip2mu \relax}

\newcommand{\ol}[1]{\overline{#1}}
\newcommand{\CAT}{\textsc{Cat}}
\newcommand{\tCat}{\textbf{\textup{Cat}}}
\newcommand{\C}{\cat{C}}
\newcommand{\D}{\cat{D}}

\newcommand{\cat}[1]{
\StrLen{#1}[\mystrlen]
\ifnum\mystrlen=1 {#1}
\else \mathrm{#1}
\fi}

\newcommand{\sSet}[0]{\cat{sSet}}
\newcommand{\Gpd}{\cat{Gpd}}
\newcommand{\sS}[0]{\Gpd}

\newcommand{\colim}{\operatornamewithlimits{\mathrm{colim}}}

\newcommand{\mm}[1]{\mathrm{#1}}
\newcommand{\Hom}[0]{\mm{Hom}}
\newcommand{\Map}[0]{\mm{Map}}
\newcommand{\rto}[1]{\stackrel{#1}{\rt}}

\newcommand{\rt}[0]{\longrightarrow}

\newcommand{\Fun}[0]{\cat{Fun}}

\newcommand{\op}[0]{\mm{op}}
\newcommand{\id}[0]{\mm{id}}

\newcommand{\pr}{\mathrm{pr}}

\newcommand{\CC}[0]{\cat{C}}
\newcommand{\DD}[0]{\cat{D}}
\newcommand{\Cat}{\cat{Cat}}
\newcommand{\core}{\iota}

\DeclareMathOperator{\ortho}{Ortho}
\DeclareMathOperator{\bifib}{Bifib}
\newcommand{\Bifib}{\bifib}
\DeclareMathOperator{\cocart}{Cocart}
\DeclareMathOperator{\cart}{Cart}
\newcommand{\Strco}{\mathrm{Str}^{\mathrm{cc}}}

\newcommand{\Strcart}{\mathrm{Str}^{\mathrm{ct}}}

\newcommand{\Strsc}{\mathrm{Str}^{\scale}}
\newcommand{\Unco}{\mathrm{Un}^{\mathrm{cc}}}
\newcommand{\Uncart}{\mathrm{Un}^{\mathrm{ct}}}

\newcommand{\Unsc}{\mathrm{Un}^{\scale}}
\newcommand{\Dualco}{\mathrm{D}^\mathrm{cc}}
\newcommand{\Dualcart}{\mathrm{D}^\mathrm{ct}}



\DeclareMathOperator{\Tw}{Tw}
\DeclareMathOperator{\Ar}{Ar}

\newcommand{\TwL}{\Tw^{\ell}}
\newcommand{\TwR}{\Tw^{r}}
\newcommand{\TwLB}{\TwL_{B}}

\newcommand{\Cartlax}{\cart^{\mathrm{opl}}}
\newcommand{\coCartlax}{\cocart^{\mathrm{lax}}}
\newcommand{\Rightfib}{\mathrm{RFib}}
\newcommand{\Leftfib}{\mathrm{LFib}}
\newcommand{\leftfib}{\Leftfib}
\newcommand{\rightfib}{\Rightfib}

\newcommand{\Gray}{\mathrm{Gray}}
\newcommand{\localarrowtail}{\stackengine{0pt}{\rightarrowtail}{\circ}{O}{c}{F}{F}{L}}
\newcommand{\localarrowhead}{\stackengine{0pt}{\twoheadrightarrow}{\circ\phantom{|}}{O}{c}{F}{F}{L}}

\newcommand{\nerve}{\mathrm N}

\newcommand{\lococart}{\mathrm{LocCocart}}
\newcommand{\LocCocart}{\lococart}
\newcommand{\locart}{\mathrm{LocCart}}
\newcommand{\LocCart}{\locart}
\newcommand{\lax}{\mathrm{lax}}
\newcommand{\oplax}{\mathrm{opl}}
\newcommand{\OO}{\cat{O}}
\newcommand{\pset}[1]{\langle #1\rangle}
\newcommand{\MonCat}{\tcat{MonCat}}
\newcommand{\OMonCat}{\MonCat_{\OO}}
\newcommand{\Adj}{\mm{Adj}}
\newcommand{\Gtimes}{\boxtimes}
\newcommand{\tcat}[1]{\mathbf{#1}}
\newcommand{\scale}{\mathrm{sc}}
\newcommand{\markSet}{\mathrm{sSet}^+}
\newcommand{\markCat}{\mathrm{Cat}^+_{\Delta}}
\newcommand{\scSet}{\mathrm{sSet}^{\scale}}

\tikzset{Isom/.style={above,every to/.append style={edge node={node [sloped, allow upside down, auto=false]{$\sim$}}}}}

\newcommand{\icat}{$\infty$-category}
\newcommand{\iopd}{$\infty$-operad}
\newcommand{\icats}{$\infty$-categories}
\newcommand{\itcat}{$(\infty,2)$-category}
\newcommand{\itcats}{$(\infty,2)$-categories}
\newcommand{\igpds}{$\infty$-groupoids}
\newcommand{\igpd}{$\infty$-groupoid}
\newcommand{\Fcoc}[1]{F^{\mathrm{cc}}_{#1}}
\newcommand{\Fcart}[1]{F^{\mathrm{ct}}_{#1}}
\newcommand{\coc}{\mathrm{cc}}
\newcommand{\cac}{\mathrm{ct}}
\newcommand{\xto}[1]{\overset{#1}{\rightarrow}}
\newcommand{\ev}{\mathrm{ev}}

\DeclareMathOperator{\Corr}{Corr}
\DeclareMathOperator{\corr}{corr}
\newcommand{\isoto}{\rto{\sim}}


\newcommand{\csquare}[8]{%
  \begin{tikzcd}%
     #1 \arrow{r}{#5} \arrow{d}[swap]{#6} \pgfmatrixnextcell #2 %
      \arrow{d}{#7} \\%
     #3 \arrow{r}{#8} \pgfmatrixnextcell #4%
   \end{tikzcd}%
   }
   \newcommand{\nlcsquare}[4]{\csquare{#1}{#2}{#3}{#4}{}{}{}{}}

   \newcommand{\pF}{\mathrm{Fin}_{*}}
   \newcommand{\Dop}{\Delta^{\op}}

\title[Lax monoidal adjunctions, two-variable fibrations and the calculus of mates]{Lax monoidal adjunctions, two-variable fibrations\\ and the calculus of mates}

\author[R.~Haugseng]{Rune Haugseng}
\address{Institutt for matematiske fag, NTNU Trondheim, Norway}
\email{rune.haugseng@ntnu.no}
\author[F.~Hebestreit]{Fabian Hebestreit}
\address{Department of Mathematics, University of Aberdeen, UK}
\email{fabian.hebestreit@abdn.ac.uk}
\author[S.~Linskens]{Sil Linskens}\address{Mathematisches Institut, RFWU Bonn, Germany}
\email{linskens@math.uni-bonn.de}
\author[J.~Nuiten]{Joost Nuiten}
\address{IMT, Universit\'e de Toulouse III, France}
\email{joost.nuiten@math.univ-toulouse.fr}

\begin{document}
	
	\begin{abstract}
		We provide a calculus of mates for functors to the $\infty$-category of $\infty$-categories and extend Lurie's unstraightening equivalences to show that (op)lax natural transformations correspond to maps of (co)cartesian fibrations that do not necessarily preserve (co)cartesian edges. As a sample application we obtain an equivalence between lax symmetric monoidal structures on right adjoint functors and oplax symmetric monoidal structures on the left adjoint functors between symmetric monoidal $\infty$-categories that is compatible with both horizontal and vertical composition of such structures.
		
		As the technical heart of the paper we study various new types of fibrations over a product of two $\infty$-categories. In particular, we show how they can be dualised over one of the two factors and how they encode functors out of the Gray tensor product of $(\infty, 2)$-categories.
	\end{abstract}
	\setcounter{tocdepth}{1}
	\maketitle
	\tableofcontents

	\section{Introduction}
		The goal of the present paper is to establish a version of the \emph{calculus of mates} for diagrams of \icats{}, encoded in terms of (co)cartesian fibrations. Recall that if we have an \emph{oplax square} in a 2-category, that is a diagram of the form
	\[
	\begin{tikzcd}
		\bullet \arrow{r}{L} \arrow{d}[swap]{f} & \bullet \arrow{d}{g} \\
		\bullet \arrow{r}[swap]{L'} \arrow[Rightarrow, shorten=1.5ex]{ur}{\alpha} & \bullet
	\end{tikzcd}
	\]
	given by a 2-morphism $\alpha \colon L'f \to gL$, where $L$ and $L'$ have right adjoints $R$ and $R'$, then we can form the \emph{mate square}, which is the lax square
	\[
	\begin{tikzcd}
		\bullet \arrow{r}{R} \arrow{d}[swap]{g} & \bullet \arrow{d}{f} \\
		\bullet \arrow{r}[swap]{R'} \arrow[Leftarrow, shorten=1.5ex]{ur}{\beta} & \bullet,
	\end{tikzcd}
	\]
		where $\beta$ is the \emph{Beck--Chevalley transformation}
	\[ fR \rt R'L'fR \stackrel{\alpha}{\rt} R'gLR \rt R'g \]
	defined using the unit of the adjunction $L' \dashv R'$ and the counit of $L \dashv R$. It is not hard to show that this procedure and its dual give an isomorphism between oplax squares with horizontal left adjoints and lax squares with horizontal right adjoints. Moreover, this \emph{mate correspondence} is compatible with compositions of squares both vertically and horizontally. A useful way to encode some of this functoriality is in terms of \emph{(op)lax natural transformations}. If $C,D$ are 2-categories, a lax transformation $\phi$ between functors $F, G \colon C \to D$ has lax naturality squares
	\[
	\begin{tikzcd}
		F(x) \arrow{r}{\phi_{x}} \arrow{d}[swap]{F(f)} & G(x) \arrow{d}{G(f)} \\
		F(y) \arrow{r}[swap]{\phi_{y}} \arrow[Leftarrow, shorten=1.5ex]{ur} & G(y)
	\end{tikzcd}
	\]
		for each morphism $f \colon x \to y$ in $C$, and similarly for oplax transformations. By taking mates we obtain an equivalence between oplax transformations given pointwise by left adjoints and lax transformations given pointwise by right adjoints.
		
		In this paper we will prove an $\infty$-categorical version of this equivalence in the special case where $C$ is an \icat{} and $D$ is the $(\infty,2)$-category of \icats{}. This arises as a combination of our two main results: on the one hand, we will prove a version of the mate correspondence for (co)cartesian fibrations (\cref{thm:fibadj}), and on the other we will provide a fibrational description of (op)lax transformations (\cref{laxstr}). 
	
	\subsection*{Lax monoidal adjunctions}
	Since many $\infty$-categorical structures are conveniently encoded using fibrations, we believe this fibrational approach to mates is very useful in its own right. Before we discuss our general results in more detail, we will look at a concrete special case that illustrates this point: 
	the left adjoint of a lax symmetric monoidal functor between $\infty$-categories admits a canonical oplax symmetric monoidal structure, and vice versa. To state this more precisely, let us briefly recall the definitions of symmetric monoidal \icats{} and (op)lax monoidal functors among them:
		
		If $X$ is an \icat{} with finite products and $\pF$ denotes the  category of finite pointed
		sets, a \emph{commutative monoid} in $X$ can be defined as a functor 
		$M \colon \pF \to X$
		such that the Segal maps $M_{\langle n \rangle} \to \prod_{i=1}^{n}
		M_{\langle 1 \rangle}$ are equivalences; here $\langle n \rangle = \{0, \dots, n\}$ is pointed by $0$ and
		the map is induced by the projections $\langle n \rangle \to \langle 1 \rangle$ that send all but one element to the base point.
		(The underlying object of the monoid is $M_{\langle 1 \rangle}$.) Such a commutative monoid in the \icat{} $\Cat$ of small \icats{} is then
		a \emph{symmetric monoidal \icat{}}. Moreover, a homomorphism of commutative monoids is simply a natural
		transformation of functors $\pF \to X$; in the case of symmetric monoidal
		\icats{} these correspond to \emph{strong} symmetric monoidal functors.
		
		A functor $B \to \Cat$ can be encoded as either a cocartesian fibration over $B$ or a cartesian fibration over $B^{\op}$. A symmetric monoidal structure on an \icat{} $C$ can therefore be described either by a cocartesian fibration $C^{\otimes} \to \pF$ or a cartesian fibration $C_{\otimes} \to \pF^{\op}$. 
                  In terms of these fibrations, a strong monoidal functor from $C$ to $D$ corresponds to a commutative triangle of either of the forms
	\[
	\begin{tikzcd}
		C^{\otimes} \arrow{rr}{f^{\otimes}} \arrow{dr} & & D^{\otimes} \arrow{dl} \\
		& \pF,
	\end{tikzcd} \quad \quad \quad \begin{tikzcd}
			C_{\otimes} \arrow{rr}{f_{\otimes}} \arrow{dr} & & D_{\otimes} \arrow{dl} \\
			& \pF^{\op},
		\end{tikzcd}
	\]
		where $f^{\otimes}$ preserves cocartesian morphisms and $f_\otimes$ preserves cartesian morphisms.
		
		By weakening the conditions on $f^{\otimes}$ and $f_\otimes$ we obtain good definitions of
		lax and oplax symmetric monoidal functors. To this end, recall that a morphism
		$\phi \colon \langle n \rangle \to \langle m \rangle$ in $\pF$ is
		called \emph{inert} if it restricts to an isomorphism
		$\langle n \rangle \setminus \{\phi^{-1}(0)\} \isoto \langle m \rangle
		\setminus \{0\}$. In \cite{HA} Lurie defines a lax symmetric
		monoidal functor to be a commutative triangle as on the left where $f^{\otimes}$ only preserves the cocartesian morphisms that cover inert morphisms in $\pF$. Informally, we can think of an object of $C^{\otimes}$ over $\langle n \rangle$ as a list $(c_{1},\ldots,c_{n})$ of objects in $C$, and this condition should be thought of as requiring the image of this object in $D^{\otimes}$ to be the list $(f(c_{1}),\ldots,f(c_{n}))$. For objects $x,y \in C$ we have a cocartesian morphism starting at $(x,y)  \in C^{\otimes}_{\langle 2 \rangle}$ covering the map $\langle 2 \rangle  \rightarrow \langle 1\rangle$ that takes both $1$ and $2$ to $1$. Its target encodes (essentially by definition) the tensor product $x \otimes y$. This morphism gets sent to a morphism $(fx,fy) \to f(x \otimes y)$ in $D^{\otimes}$, which may no longer be cocartesian; taking a cocartesian factorisation we then obtain the lax monoidal structure map $fx \otimes fy \to f(x \otimes y)$ in $D$.
                
		 If we instead decide to similarly relax the conditions on $f_\otimes$ and require that it only preserve the cartesian morphisms covering inerts, we obtain Lurie's definition of an oplax symmetric monoidal functor. Namely now the tensor product of $x$ and $y$ in $C$ is encoded by a cartesian morphism $x \otimes y \to (x,y)$ in $C_\otimes$ covering the same map as before. Its image in $D_{\otimes}$ factors into a cartesian edge preceded by a map $f(x \otimes y) \to f(x) \otimes f(y)$, which by definition is the oplax structure map of $f$. We remark that the opposite functor
		$(C_{\otimes})^{\op} \to \pF$ is actually the cocartesian fibration
		which encodes the natural symmetric monoidal structure on $C^{\op}$ and so an oplax
		monoidal functor can also be described as a lax monoidal functor
		$C^{\op} \to D^{\op}$, as the name suggest.
		
In this fibrational context, we can easily deduce the following from our general results:
		\begin{introprop}\label{propa}
			Given two symmetric monoidal $\infty$-categories $\cat{C}$ and
			$\cat{D}$, the extraction of adjoints gives inverse equivalences between
			the $\infty$-category of lax symmetric monoidal right adjoints
			$\cat{C}^{\otimes} \rightarrow \cat{D}^{\otimes}$ and the opposite of the
			$\infty$-category of oplax monoidal left adjoints
			$\cat{D}_{\otimes} \rightarrow \cat{C}_{\otimes}$.
		\end{introprop}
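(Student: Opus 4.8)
The plan is to deduce Proposition~\ref{propa} from the general calculus of mates for $\mathrm{Cat}_\infty$-valued functors established in the body of the paper, carried out over the base $\mathrm{Fin}_*$; nothing in the argument uses the symmetry of the monoidal structures, so the same proof will yield the $\mathcal{O}$-monoidal variant for an arbitrary $\infty$-operad $\mathcal{O}$. I would begin by recording the fibrational descriptions of the two sides. Writing $\cat C^\otimes,\cat D^\otimes\to\mathrm{Fin}_*$ for the cocartesian fibrations classifying the given symmetric monoidal structures, a lax symmetric monoidal functor $R\colon\cat C\to\cat D$ is precisely a functor $\cat C^\otimes\to\cat D^\otimes$ over $\mathrm{Fin}_*$ carrying cocartesian lifts of inert maps to cocartesian morphisms, so the $\infty$-category of lax monoidal functors is a full subcategory of $\Fun_{/\mathrm{Fin}_*}(\cat C^\otimes,\cat D^\otimes)$; an oplax symmetric monoidal functor $\cat D\to\cat C$, by contrast, does \emph{not} admit such a direct description --- one must first pass to the fibrewise opposites of $\cat C^\otimes$ and $\cat D^\otimes$ over $\mathrm{Fin}_*$ (equivalently, view it as a lax monoidal functor $\cat D^{\mathrm{op}}\to\cat C^{\mathrm{op}}$) --- and absorbing this asymmetry is exactly what the two-variable machinery is for.

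I would then package the data of an adjunction $L\dashv R$ \emph{together with} a lax monoidal structure on $R$ as a single two-variable fibration over a product $\mathcal{A}\times\mathrm{Fin}_*$, with $\mathcal{A}$ a small shape encoding the adjunction (\eg\ $\mathcal{A}=[1]$) and the $\mathrm{Fin}_*$-factor carrying the monoidal structures: the adjunction is recorded by cartesian transport along $\mathcal{A}$, the monoidal structures by cocartesian transport along the inert maps of $\mathrm{Fin}_*$, and the lax structure maps $R(c)\otimes R(c')\to R(c\otimes c')$ appear as the comparison $2$-cells between the two ways of transporting along a morphism moving simultaneously in $\mathcal{A}$ and along an active map of $\mathrm{Fin}_*$. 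The symmetric assertion is that the data of an oplax monoidal $L\colon\cat D\to\cat C$ with right adjoint $R$ is encoded by the \emph{same} two-variable fibration presented in its form dualised over one of the two factors, in which cartesian and cocartesian transport are interchanged and the comparison $2$-cells instead furnish the oplax structure maps $L(d\otimes d')\to L(d)\otimes L(d')$. Granting these two recognition statements, Proposition~\ref{propa} then follows from the dualisation equivalence proved in the technical heart of the paper: it carries the two-variable fibration built from $(R,\ \text{lax})$ to the one built from $(L,\ \text{oplax})$, passing a functor to its adjoint; the criterion for dualisability --- fibrewise existence of the relevant adjoints --- matches the full subcategory ``underlying functor a right adjoint'' with ``underlying functor a left adjoint''; and, because passing to mates reverses $2$-cells (the mate of a transformation $R\Rightarrow R'$ of right adjoints is a transformation $L'\Rightarrow L$ of left adjoints), the resulting equivalence is contravariant, which produces the $(-)^{\mathrm{op}}$.

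I expect the genuine obstacle to be the two recognition statements, and especially making the oplax side precise: one must verify that the ``dualised-over-one-factor'' presentation really does reconstruct the $\infty$-category of oplax monoidal functors, with its monoidal natural transformations, rather than some lax variant between fibrewise opposites, and that the whole correspondence is natural in $\cat C$ and $\cat D$ and compatible with composition. The dualisation equivalence itself, which is the substantive input, is available from the earlier sections --- as are the facts about relative adjunctions and, where needed, the relation between these fibrations and the Gray tensor product --- so the remaining work is careful but essentially routine bookkeeping.
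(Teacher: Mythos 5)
Your proposal follows essentially the same route as the paper: unstraighten a lax monoidal right adjoint over $[1]$ to a curved orthofibration over $[1]\times\pF$, observe that the fibrewise-adjoint condition makes it simultaneously a Gray fibration when read over $\pF\times[1]$, dualise over the $\pF$-factor, and re-straighten over $[1]$ to obtain the oplax monoidal left adjoint, with the reversal of $2$-cells producing the $(1,2)$-op; the observation that nothing uses symmetry, so the argument works over any $\infty$-operad, is likewise exactly how the paper proceeds. The one step you dismiss as routine bookkeeping --- checking that the dualised morphism preserves cartesian lifts of inert maps, so that it really lands in oplax monoidal functors rather than some larger fibrational category --- is where the paper invests genuine effort: it requires first identifying the action of the dualisation equivalence on morphisms with the Beck--Chevalley (mate) construction, and then invoking the Segal conditions to see that the mates of the inert naturality squares, being mates of squares of finite-product projections, are automatically invertible.
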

		
		In fact, we will produce an equivalence of $(\infty,2)$-categories
		that also encodes the compatibility of taking mates with compositions
		of (op)lax monoidal functors. We note that since the first version of
		this paper appeared, another proof of \cref{propa} has been given by
		Torii in \cite{Torii2,Torii3}, based on the two universal properties
		of Day convolution on presheaves. Let us also say immediately that in
		\cite{HA} Lurie already proved that the right adjoint of a strong
		monoidal functor admits a lax monoidal structure, which suffices for a
		great many applications. Moreover, Torii has more generally produced a
		lax monoidal structure (but none of the accompanying coherences) on
		the right adjoint of an oplax monoidal functor in \cite{Torii1}, by
		means of a span category construction. (We compare his construction to
		ours in \cite{part2}.)

	\subsection*{Parametrised adjunctions}
	With the example of lax monoidal adjunctions in mind it hopefully seams reasonable that the fibrational calculus of mates should relate fibrewise right adjoint functors between cocartesian fibrations and fibrewise left adjoint functors between the corresponding cartesian fibrations. Our first main result shows that this holds generally:
	
	\begin{introthm}\label{thm:fibadj}
		Let $B$ be an $\infty$-category. Then there is an equivalence of $(\infty,2)$-categories
		\[
		\tcat{Cocart}^{\lax, \mm{R}}(B)\simeq \Big(\tcat{Cart}^{\oplax, \mm{L}}(B^{\op})\Big)^{(1, 2)\mm{-}\op}
		\]
		extracting adjoints fibrewise; here the left-hand side denotes the
		$(\infty,2)$-category with cocartesian fibrations over $B$ as objects,
		fibrewise right adjoint functors (that need not preserve cocartesian
		lifts) as $1$-morphisms, and natural transformations between these as
		$2$-morphisms. The right hand side is defined dually, using
		cartesian fibrations and fibrewise right adjoints, with the directions
		of $1$- and $2$-morphisms reversed by the superscript. Furthermore, these equivalences are natural in pulling back along the base.
	\end{introthm}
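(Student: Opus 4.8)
The plan is to realise both $(\infty,2)$-categories as straightenings of two-variable fibrations --- over $B$ (resp.\ $B^{\op}$) together with an auxiliary factor encoding the $2$-categorical structure --- and then to deduce the equivalence from the dualisation of such fibrations over the first factor. Working fibrationally is essential because one cannot choose fibrewise adjoints coherently by hand; one uses instead that an adjunction \emph{is} a bifibration over $[1]$, so that an $n$-chain of fibrewise right adjoints between cocartesian fibrations over $B$ is nothing but a single functor $M\to B\boxtimes[n]$ that is cocartesian over $B$ and restricts over each $\{b\}\boxtimes[n]$ to a chain of adjunctions. In this language, passing to the fibrewise left adjoint just amounts to reading the bifibration in the other direction, which requires no choices; this is the calculus of mates made manifest.

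Concretely, I would present an $(\infty,2)$-category by its complete Segal object in $\Cat$, \ie a functor $\Del^{\op}\to\Cat$ whose value at $[n]$ is the $\infty$-category of $n$-composable $1$-morphisms with their $2$-morphisms between them; equivalences of $(\infty,2)$-categories are then detected levelwise. So it suffices to produce, naturally in $[n]\in\Del^{\op}$, an equivalence between $\tcat{Cocart}^{\lax, \mm{R}}(B)_{[n]}$, the $\infty$-category of $n$-chains of fibrewise right adjoints between cocartesian fibrations over $B$ with natural transformations as morphisms, and the $[n]$-th level of $\big(\tcat{Cart}^{\oplax, \mm{L}}(B^{\op})\big)^{(1, 2)\mm{-}\op}$, which is $\tcat{Cart}^{\oplax, \mm{L}}(B^{\op})_{[n]}$ with $2$-morphisms reversed and the chains reread in the opposite order. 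The first step is to identify, via the Gray-tensor description of (op)lax functor $\infty$-categories together with unstraightening, the $\infty$-category $\tcat{Cocart}^{\lax, \mm{R}}(B)_{[n]}$ with that of two-variable fibrations over $B\boxtimes[n]$ which are cocartesian over $B$ and fibrewise chains of adjunctions over $[n]$, and dually $\tcat{Cart}^{\oplax, \mm{L}}(B^{\op})_{[n]}$ with the analogous fibrations over $B^{\op}\boxtimes[n]$ that are cartesian over $B^{\op}$ and again fibrewise chains of adjunctions over $[n]$.

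Granting these identifications, the theorem becomes exactly the dualisation of such fibrations over the $B$-factor, established earlier in the paper: it replaces ``cocartesian over $B$'' by ``cartesian over $B^{\op}$'' while leaving the fibres over $[n]$ unchanged. On objects ($n=0$) it therefore merely reinterprets a cocartesian fibration $E\to B$ as the cartesian fibration over $B^{\op}$ with the same fibres; on a $1$-morphism ($n=1$), which is a fibrewise adjunction in the $[1]$-direction, the change of ambient fibration type on the $B$-factor forces that adjunction to be read in the opposite direction, producing the fibrewise left adjoint, reversing the direction of $1$-morphisms (the $1$-op) and sending each $2$-morphism to its mate, which runs the opposite way (the $2$-op); this is precisely the fibrewise extraction of adjoints asserted in the statement. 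One then checks that the dualisation is compatible with the faces and degeneracies of $\Del^{\op}$ and with the Gray tensor product, which upgrades the levelwise equivalences to an equivalence of $(\infty,2)$-categories.

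I expect the main obstacle to be exactly this last compatibility, together with the two preliminary identifications. One must check that the general dualisation-over-one-factor functor is natural in the auxiliary $\Del^{\op}$-variable and interacts correctly with the Gray tensor product, and that the condition ``fibrewise a chain of adjunctions over $[n]$'' is both preserved under dualisation and suitably intertwined with it, so that the mate of such a two-variable fibration is again of the expected type with the adjoints swapped. This is precisely what the dualisation theorem for two-variable fibrations --- the technical heart of the paper --- is designed to supply; once the two $(\infty,2)$-categories above are recognised as instances of its source and target, what remains is the substantial but essentially routine bookkeeping with unstraightening and Gray tensors needed to make that recognition rigorous.
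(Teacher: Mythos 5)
Your proposal is correct and follows essentially the same route as the paper's proof: both encode an $n$-chain of fibrewise right adjoints as a two-variable fibration over the product of $B$ with a simplex that is simultaneously a curved orthofibration and (after swapping factors) a Gray fibration whose restriction to the simplex direction is fibrewise bicartesian, and both obtain the equivalence by dualising over the $B$-factor, with the $(1,2)$-op arising exactly as you describe from re-reading the bicartesian direction and from the opposition of the auxiliary simplicial variables. The one notable deviation is that you invoke the Gray-tensor description of lax functor categories to identify the simplicial levels, whereas the paper gets by with the more elementary one-variable (un)straightening of curved orthofibrations and Gray fibrations, reserving the Gray tensor product for the later comparison with lax natural transformations.
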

	Taking $B$ to be an $\infty$-operad and taking appropriate subcategories cut out by the Segal conditions, this specialises to give: 
	\begin{introcor}\label{cord}
		For any $\infty$-operad $\OO$, the extraction of adjoints gives a canonical equivalence of $(\infty, 2)$-categories
		\[
		\OMonCat^{\lax, \mm{R}}\simeq \Big(\OMonCat^{\oplax, \mm{L}}\Big)^{(1, 2)\mm{-op}},
		\]
		where the left-hand side denotes the $(\infty,2)$-category of $\OO$-monoidal \icats{}, lax $\OO$-monoidal functors that admit (objectwise) left adjoints, and $\OO$-monoidal transformations; the right-hand side is defined dually using oplax $\OO$-monoidal functors that admit right adjoints. Furthermore these equivalences are natural in pulling back along operad maps in the base.
	\end{introcor}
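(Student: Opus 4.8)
The plan is to deduce this from \cref{thm:fibadj} applied with $B = \OO$, by restricting the equivalence established there to suitable sub-\itcats{} on both sides. First I would recall the fibrational descriptions of the data involved. An $\OO$-monoidal \icat{} is precisely a cocartesian fibration $\cat{C}^{\otimes} \to \OO$ satisfying the Segal condition: for every object $X \in \OO$ lying over $\pset{n} \in \pF$, the inert morphisms $X \to X_i$ covering $\rho^i \colon \pset{n} \to \pset{1}$ induce an equivalence $\cat{C}^{\otimes}_X \xto{\sim} \prod_i \cat{C}^{\otimes}_{X_i}$. A lax $\OO$-monoidal functor is a functor over $\OO$ that preserves cocartesian morphisms lying over inert morphisms of $\OO$, and, dually, an oplax $\OO$-monoidal functor --- in the presentation by cartesian fibrations over $\OO^{\op}$ --- is a functor over $\OO^{\op}$ that preserves cartesian morphisms over inert morphisms. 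This identifies $\OMonCat^{\lax, \mm{R}}$ with the sub-\itcat{} of $\tcat{Cocart}^{\lax, \mm{R}}(\OO)$ spanned by the $\OO$-monoidal \icats{} and the fibrewise right adjoint functors that are moreover lax $\OO$-monoidal, with all $2$-morphisms between these; likewise it identifies $\OMonCat^{\oplax, \mm{L}}$ with such a sub-\itcat{} of $\tcat{Cart}^{\oplax, \mm{L}}(\OO^{\op})$. No condition on $2$-morphisms enters here, since an $\OO$-monoidal transformation is simply a natural transformation over $\OO$ (respectively $\OO^{\op}$) between the relevant functors.

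Next I would invoke \cref{thm:fibadj} with $B = \OO$ and check that the resulting equivalence $\tcat{Cocart}^{\lax, \mm{R}}(\OO) \simeq \Big(\tcat{Cart}^{\oplax, \mm{L}}(\OO^{\op})\Big)^{(1,2)\mm{-op}}$ carries the first sub-\itcat{} onto the second. On objects --- which the superscript $(1,2)\mm{-op}$ leaves unchanged --- the equivalence merely re-presents a functor $\OO \to \Cat$ alternately as a cocartesian fibration over $\OO$ or as a cartesian fibration over $(\OO^{\op})^{\op} \simeq \OO$; since this preserves fibres and is compatible with transport along inert morphisms, a cocartesian fibration over $\OO$ satisfies the Segal condition above if and only if the corresponding cartesian fibration over $\OO^{\op}$ does. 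On $1$-morphisms, I would show that a fibrewise right adjoint functor $F$ over $\OO$ is lax $\OO$-monoidal if and only if the $1$-morphism corresponding to $F$ under \cref{thm:fibadj} is oplax $\OO$-monoidal: both conditions can be tested on a generating set of inert morphisms, and over each such morphism each of them asserts that a single Beck--Chevalley $2$-cell --- respectively, its mate --- is invertible, and these are equivalent by the elementary calculus of mates, which is exactly the mechanism on which \cref{thm:fibadj} rests. Finally, since the two sub-\itcats{} are full on $2$-morphisms, the restricted functor remains fully faithful on mapping \icats{}, and the object- and $1$-morphism-comparisons supply essential surjectivity in the two lower degrees; hence it is an equivalence of \itcats{}, giving $\OMonCat^{\lax, \mm{R}} \simeq \Big(\OMonCat^{\oplax, \mm{L}}\Big)^{(1,2)\mm{-op}}$.

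I expect the main obstacle to be the $1$-morphism comparison, and within it the bookkeeping: one must verify that ``preservation of inert-cocartesian morphisms'' is exactly the invertibility of the Beck--Chevalley $2$-cells governed by the fibrewise adjunctions underlying \cref{thm:fibadj}, and that the handedness of the adjoints together with the lax/oplax and $(1,2)\mm{-op}$ conventions all line up. A lesser point is to confirm that ``$\OO$-monoidal'' and ``inert-preserving'' genuinely carve out sub-\itcats{} in whatever model of $(\infty,2)$-categories is being used, compatibly with the equivalence; this should be routine once the definitions entering \cref{thm:fibadj} are unwound.
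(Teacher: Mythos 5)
Your overall strategy is the same as the paper's (which proves this as \cref{thm:monoidal adjunctions}): apply \cref{thm:fibadj} with $B=\OO$, observe that the Segal condition is visibly preserved on objects, and then check that the equivalence matches inert-preserving $1$-morphisms on the two sides, the $2$-morphisms being taken care of automatically since both subcategories are $1$-full. The object-level and $2$-morphism-level parts of your argument are fine.

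The gap is in the $1$-morphism comparison, precisely at the point you dismiss as bookkeeping. You assert that over an inert morphism $\beta$, ``preservation of inert-cocartesian lifts by $g$'' and ``preservation of inert-cartesian lifts by $f=\Adj(g)$'' each amount to the invertibility of a Beck--Chevalley $2$-cell, respectively its mate, ``and these are equivalent by the elementary calculus of mates.'' The first part is correct --- by \cref{beck-chevalley} the $\beta$-component of $\Adj(g)$ is exactly the mate of the $\beta$-component $\rho_\beta$ of $g$ --- but the second part is false as stated: the mate of an invertible $2$-cell is in general \emph{not} invertible (this failure is the entire content of base-change/projection-formula conditions, and is exactly why a commuting square of right adjoints need not induce a commuting square of left adjoints). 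So knowing that $\rho_\beta$ is an equivalence does not by itself give that $\lambda_\beta$ is one. The missing ingredient, which the paper supplies, is the Segal condition: for an inert $\beta$ the transport functors $\beta_!$ are identified, via the Segal equivalences $\CC^{\otimes}(x)\simeq \prod_i\CC^{\otimes}(x_i)$, with projections $\prod_{i\in I}\CC^{\otimes}(x_i)\to \prod_{j\in J}\CC^{\otimes}(x_j)$ along an inclusion $J\subseteq I$, and for such projection squares the Beck--Chevalley transformation \emph{is} always an equivalence, since the unit and counit of a product of adjunctions are computed factorwise. Your proof needs this (or some equivalent special feature of inert morphisms) inserted at that step; without it the argument does not go through.
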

	Proposition \ref{propa} is contained in this statement by taking
	$\OO = \mathbb E_{\infty}$ and examining the morphism \icats{} between
	two symmetric monoidal \icats{} $\C$ and $\D$. We also use Corollary
	\ref{cord} to extend a result of Hinich: We show that the internal
	mapping functor
	\[[-,-] \colon \C^\op \times \C \longrightarrow \C\]
	in a closed $\mathbb E_{n+1}$-monoidal \icat{} $\C$ admits a
	canonical lax $\mathbb E_n$-monoidal structure, where $1 \leq n \leq
	\infty$; in \cite{Hinich} Hinich established the case $n=\infty$ by
	different means. \\
	
	In order to prove Theorem~\ref{thm:fibadj}, we need to describe
	functors to the full subcategories $\Cocartlax(B)$ and $\Cartlax(B)$
	of cocartesian and cartesian fibration in $\Cat/B$ in terms of
	fibrations.
	
	We show that functors $A \to \Cocartlax(B)$ correspond under covariant
	unstraightening to functors
	$p=(p_{1},p_{2}) \colon E \to A \times B$ such that
	\begin{enumerate}[(1)]
		\item $p_{1}$ is a cocartesian fibration,
		\item $p_{1}$-cocartesian morphisms map to equivalences under $p_{2}$,
		\item for every $a \in A$ the functor $(p_{2})_{a} \colon E_{a}\to B$ on
		fibres over $a$ is a cocartesian fibration.
	\end{enumerate} 
	We call such a functor a \emph{Gray fibration}, for reasons that will
	become clear in a moment. Dually, functors $A^{\op} \to \Cocartlax(B)$ correspond under
	contravariant unstraightening to functors 
	$p = (p_{1},p_{2}) \colon E \to A \times B$ such that
	\begin{enumerate}[(1)]
		\item $p_{1}$ is a cartesian fibration,
		\item $p_{1}$-cartesian morphisms map to equivalences under $p_{2}$,
		\item for every $a \in A$ the functor $(p_{2})_a \colon E_{a}\to B$ on
		fibres over $a$ is a cocartesian fibration.
	\end{enumerate}
	We call these functors \emph{curved orthofibrations}. While the notion of Gray fibrations admits a cartesian dual, \emph{op-Gray fibrations}, which encode functors $A^\op \rightarrow \Cart^\oplax(B)$, the key point for our proofs is that curved orthofibrations are self-dual in the sense that they can also be characterised  by $p_2$ being a cocartesian fibration, $p_2$-cocartesian morphisms mapping to equivalences under $p_1$ and the functors $(p_{1})_b \colon E_b \rightarrow A$ being cartesian fibrations. They can therefore also be straightened covariantly in the second variable, and hence
	correspond to functors $B \to \Cartlax(A)$.

	Combining these one-variable straightenings, we obtain the following ``dualisation'' equivalences:
	\begin{introthm}\label{thm:grayorthdual}
		There is a natural equivalence of \icats{}
		\[ \Gray(A,B) \simeq \LOrth(A^{\op},B) \quad \text{and} \quad \Grayop(A,B) \simeq \LOrth(A,B^\op);\]
		here $\Gray(A,B)$ and $\Grayop(A,B)$ are the \icat{} of Gray fibrations and op-Gray fibrations over $A \times
		B$, respectively, while $\LOrth(A^{\op},B)$ is the \icat{} of \lorthos{} over $A^{\op}
		\times B$ and in both cases the morphisms are required to
		preserve the defining (co)cartesian morphisms.
	\end{introthm}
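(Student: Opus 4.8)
The plan is to deduce both equivalences from the one-variable (un)straightening correspondences recorded above, applied over whichever of the two factors is \emph{not} being dualised. The point is that a Gray fibration, an op-Gray fibration and a curved orthofibration all have a first projection $p_1$ which is a (co)cartesian fibration over the first factor whose fibres are themselves (co)cartesian fibrations over the second factor, so each of the three can be (un)straightened over the first factor alone, and the two equivalences of the theorem then amount to observing that in each case the two sides have the same such straightening.

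For the first equivalence, I would use that covariant unstraightening over $A$ identifies $\Gray(A,B)$ with the wide subcategory of $\Fun(A,\Cocartlax(B))$ on those natural transformations whose components preserve cocartesian arrows: condition (2) in the definition of a Gray fibration is exactly what makes the straightening of $p_1$ take values in $\Cat_{/B}$ rather than $\Cat$, condition (3) upgrades this to $\Cocartlax(B) \subseteq \Cat_{/B}$, and the two kinds of ``defining'' cocartesian morphisms of a map of Gray fibrations translate respectively into naturality over $A$ and into componentwise preservation of cocartesian arrows. Dually, contravariant unstraightening of $p_1$ identifies $\LOrth(A^\op,B)$---whose first projection $p_1 \colon E \to A^\op$ is a \emph{cartesian} fibration with fibres $E_a \to B$ cocartesian fibrations---with the same wide subcategory of $\Fun((A^\op)^\op,\Cocartlax(B)) = \Fun(A,\Cocartlax(B))$. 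Composing the two equivalences gives $\Gray(A,B) \simeq \LOrth(A^\op,B)$, and naturality in $A$ and $B$ is inherited from that of the one-variable equivalences.

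The second equivalence is handled in the same way, now landing in cartesian fibrations over the undualised factor. By definition of op-Gray fibrations, contravariant unstraightening identifies $\Grayop(A,B)$ with the appropriate wide subcategory of $\Fun(A^\op,\Cartlax(B))$, while the same procedure applied to $\LOrth(A,B^\op)$---here $p_1 \colon E \to A$ is cartesian with fibres $E_a \to B^\op$ cocartesian fibrations---identifies it with the corresponding subcategory of $\Fun(A^\op,\Cocartlax(B^\op))$. It then remains to produce an equivalence $\Cocartlax(B^\op) \simeq \Cartlax(B)$ restricting to an equivalence on the wide subcategories of (co)cartesian-arrow-preserving morphisms; this is induced by the self-equivalence $(-)^\op$ of $\Cat$, which carries a cocartesian fibration $E \to B^\op$ to the cartesian fibration $E^\op \to B$, functors over $B^\op$ to functors over $B$, and cocartesian to cartesian arrows. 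Composing the three equivalences yields $\Grayop(A,B) \simeq \LOrth(A,B^\op)$. (Alternatively, one may straighten $\LOrth(A,B^\op)$ over its second factor, using the self-dual description of curved orthofibrations, at the cost of a genuinely two-variable comparison afterwards.)

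The substantive input is the package of one-variable (un)straightening equivalences over a single factor of a product, established separately---that is where the real work sits, in particular in checking that they are equivalences of \icats{} rather than mere bijections on equivalence classes of objects and that they restrict compatibly to the various sub-morphism-classes. Granting that, the only delicate point in the present proof is the one I expect to be the main obstacle: carefully matching, on each side and under each change of variance, the ``preserve the defining (co)cartesian morphisms'' condition on maps of fibrations with the ``natural transformation with components in the appropriate subcategory'' condition on the functor side, so that the intermediate \icats{} coincide on the nose rather than merely being equivalent in some looser sense.
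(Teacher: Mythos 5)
Your proposal is correct and follows essentially the same route as the paper: the first equivalence is obtained by composing the cocartesian straightening of Gray fibrations over $A$ (the paper's \cref{cor:graycocartstr}) with the cartesian straightening of \lorthos{} over $A^{\op}$ (the first equivalence of \cref{cor:locorthstr}), both landing in $\Fun(A,\Cocartlax(B))^{\mathrm{cc}}$, and the second equivalence is the evident dual. The careful matching of morphism conditions that you flag as the delicate point is exactly what the paper isolates in the proofs of \cref{cor:locorthstr} and \cref{cor:graycocartstr}, so your division of labour agrees with theirs.
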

	
	Special cases of this duality were already known. For example,
	bifibrations are precisely those curved orthofibration whose fibres
	are $\infty$-groupoids, and under the equivalences above they
	correspond precisely to the left and right fibrations,
	respectively. An equivalence of this kind was first established by
	Stevenson in \cite{Stevenson} by different means.
	
	To see the relation of these results on two-variable fibrations to
	Theorem~\ref{thm:fibadj}, let us explain how the equivalence we build
	acts on a morphism $f \colon D \to E$ of cartesian fibrations over $B$
	which is given fibrewise by left adjoints:
	\begin{enumerate}[(1)]
		\item First, $f$ can be covariantly unstraightened to a \lortho{} over $B
		\times [1]$.
		\item Using Theorem~\ref{thm:grayorthdual}, this corresponds to a
		Gray fibration over $B^{\op} \times [1]$. Furthermore, because $f$ is given fibrewise by left adjoints, this Gray fibration is also a \lortho{} over $[1] \times
		B^{\op}$.
		\item Therefore, this new curved orthofibration can be covariantly straightened to a functor $[1]^{\op} \to
		\Cocartlax(B^{\op})$, corresponding to a morphism $E^{\vee} \to
		D^{\vee}$ over $B^{\op}$ between the cocartesian fibrations dual to
		the cartesian fibrations we started with.
	\end{enumerate}
	Here the second half of the second step is the parametrised analogue of the statement that adjunctions among $\infty$-categories can be encoded by functors to $[1]$ that are both cocartesian and cartesian fibrations, with the left and right adjoint obtained by cocartesian and cartesian straightening, respectively. At the level of spaces of objects, a very similar argument appears in work of Ayala, Mazel-Gee and Rozenblyum, where the base $B$ is allowed to be an $(\infty, 2)$-category \cite[Lemma B.5.7]{AyalaM}. Their work also contains \Cref{cord} at the level of objects, as \cite[Remark 4.1.7]{AyalaM}.  \\

	\subsection*{Unstraightening lax natural transformations}
	So far we have explained the fibrational perspective on lax natural transformation and how in this context we produce an analogue of the calculus of mates. However, there is a second perspective on lax natural transformations which is more intrinsic to $(\infty,2)$-category theory. 
	To explain it, recall that in terms of diagrams of \icats{}, a lax natural transformation $f$ between two functors $F, G \colon B \rightarrow \Cat$ should be encoded by maps $F(b)\to G(b)$ for each $b$, together with lax commuting squares
	\[\begin{tikzcd}
		F(b)\arrow[d, "\beta_!"{swap}]\arrow[r, "f_{b}"] & F(b)\arrow[d, "\beta_!"]\arrow[ld, Rightarrow, shorten=1.5ex,, "f_\beta"]\\
		F(b')\arrow[r, "f_{b'}"{swap}] & F(b')
	\end{tikzcd}\]
	for each map $\beta \colon b \rightarrow b'$ in $B$ (together with coherence data for compositions). For oplax transformations the direction of the natural transformation $f_\beta$ is reversed.  
	The $(\infty,2)$-categorical approach to defining such (op)lax natural transformations is to use the Gray tensor product $\Gtimes$, for which a good model has been introduced by Gagna, Harpaz and Lanari in \cite{GHL-Gray}. Indeed, one can define lax and oplax natural transformations between functors of $(\infty,2)$-categories $F,G \colon \tcat{X} \to \tcat{Y}$ in general as functors $[1] \Gtimes \tcat{X} \to \tcat{Y}$ and
	$\tcat{X}\Gtimes [1] \to \tcat{Y}$ restricting to $F$ and $G$ under the two embedding $[0] \rightarrow [1]$, respectively. To see why this is reasonable we observe that the Gray tensor product $[1]\Gtimes [1]$ of the 1-simplex with itself is exactly the lax square
	\[
	\begin{tikzcd}
		\bullet \arrow{r}\arrow{d}& \bullet \arrow{d}\\
		\bullet \arrow{r} \arrow[Leftarrow, shorten=1.5ex]{ur} & \bullet.
	\end{tikzcd}
	\]
	Therefore a functor $f\colon  [1]\Gtimes \tcat{X}\rightarrow \tcat{Y}$ in particular encodes the data of a lax square 
	\[
	\begin{tikzcd}
	F(b)\arrow[d, "\beta_!"{swap}]\arrow[r, "f_{b}"] & F(b)\arrow[d, "\beta_!"]\arrow[ld, Rightarrow, shorten=1.5ex,, "f_\beta"]\\
	F(b')\arrow[r, "f_{b'}"{swap}] & F(b')
	\end{tikzcd}
	\] 
	for every 1-morphism $\beta$ in $\tcat{X}$. The Gray tensor product $[1]\Gtimes \tcat{X}$ further encodes all of the higher coherences that these lax squares should satisfy.
	
	Our second main result relates these definitions to the fibrational approach discussed above. To explain the statement, let us introduce two $(\infty,2)$-categories $\tFun^\lax(\tcat{X},\tcat{Y})$ and $\tFun^\oplax(\tcat{X},\tcat{Y})$ universally defined via
	\[\Hom_{\Cat_2}(-,\tFun^\lax(\tcat{X},\tcat{Y})) \simeq \Hom_{\Cat_2}(-\Gtimes \tcat{X}, \tcat{Y})\] and \[\Hom_{\Cat_2}(-,\tFun^\oplax(\tcat{X},\tcat{Y})) \simeq \Hom_{\Cat_2}(\tcat{X} \Gtimes -, \tcat{Y})\]
	as functors $\Cat_2 \rightarrow \Gpd$. By definition the 1-morphisms in the $(\infty,2)$-category $\tFun^\lax(\tcat{X},\tcat{Y})$ correspond to lax natural transformations and analogously for oplax transformations. 
By means of Lurie's locally cocartesian unstraightening equivalence \cite{LurieGoo} we show:
	\begin{introthm}\label{laxstr}
	There are natural equivalences of $\infty$-categories 
	\[\Gray(A,B) \simeq \Fun(A \Gtimes B,\tcat{Cat}),\]
	and consequently natural equivalences of
	$(\infty,2)$-categories 
	\[\tcat{Cocart}^{\lax}(B) \simeq \tFun^\lax(B,\tCat), \qquad
	\tcat{Cart}^{\oplax}(B) \simeq \tFun^{\oplax}(B^{\op},\tCat),\] given on
	objects by straightening of (co)cartesian fibrations; here the targets
	are defined as above, and so have functors as objects, (op)lax natural
	transformations as morphisms, and modifications between these as
	$2$-morphisms.
	\end{introthm}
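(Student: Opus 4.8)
The plan is to establish the first equivalence $\Gray(A,B) \simeq \Fun(A \Gtimes B, \tCat)$ directly, and then derive the $(\infty,2)$-categorical statements by applying it with $A$ replaced by $[1]$ (or dually with $B$ replaced by $[1]$) and taking adjoints to the Gray tensor product. For the core equivalence, I would first recall that Lurie's locally cocartesian straightening/unstraightening equivalence from \cite{LurieGoo} identifies, for a fixed base, locally cocartesian fibrations with functors on a marked-simplicial model. The key point is that a Gray fibration $p = (p_1, p_2) \colon E \to A \times B$ is in particular a \emph{locally cocartesian} fibration over $A \times B$: a locally cocartesian edge over $(\alpha, \beta) \colon (a, b) \to (a', b')$ is obtained by first taking the $p_1$-cocartesian lift of $\alpha$ (which lies over an equivalence in $B$ by condition (2), hence stays in a fibre $E_{a'}$ up to equivalence) and then the $(p_2)_{a'}$-cocartesian lift of $\beta$; conversely conditions (1)--(3) can be read off from the locally cocartesian structure by restricting to $A \times \{b\}$ and $\{a\} \times B$. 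So Gray fibrations over $A \times B$ form a full subcategory of locally cocartesian fibrations over $A \times B$, cut out by the condition that the restriction to $A \times \{b\}$ is genuinely cocartesian for each $b$.

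Next I would bring in the Gagna--Harpaz--Lanari model \cite{GHL-Gray} of the Gray tensor product, in which $A \Gtimes B$ is presented as a marked (or scaled) simplicial set whose underlying simplicial set is $A \times B$ and whose marking encodes exactly the edges $(\alpha, \id_b)$ with $\alpha$ an edge of $A$ (together with the appropriate degenerate/thin $2$-simplices). The content is then that a functor $A \Gtimes B \to \tCat$, i.e.\ a map of scaled simplicial sets to the appropriate nerve of $\tCat$, is the same datum as a functor from the marked simplicial set $(A \times B)$ with this marking into $\Cat$ via Lurie's model — and the latter straightens to precisely a locally cocartesian fibration over $A \times B$ whose locally cocartesian lifts over the marked edges $(\alpha, \id_b)$ are \emph{cocartesian}. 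That is exactly the full subcategory of Gray fibrations identified in the previous step. Making this comparison precise is where the real work lies: one must match GHL's scaled-simplicial presentation of $A \Gtimes B$ with the marked-simplicial input of Lurie's locally cocartesian straightening, check that ``thin $2$-simplices map to invertible $2$-cells'' corresponds to ``the selected locally cocartesian edges are cocartesian'', and verify naturality in $A$ and $B$.

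Granting the equivalence $\Gray(A,B) \simeq \Fun(A \Gtimes B, \tCat)$, the $(\infty,2)$-categorical consequences are largely formal. For the first, set $A = [1]$: by the discussion preceding the theorem (and the one-variable straightening $\Gray([1],B)$-versus-$\Fun([1], \Cocartlax(B))$ identification already recorded), functors $[1] \to \Cocartlax(B)$ correspond to Gray fibrations over $[1] \times B$, hence to functors $[1] \Gtimes B \to \tCat$, which by definition are lax natural transformations of functors $B \to \tCat$; assembling these coherently as $[1]$ is allowed to vary (more precisely, verifying the construction is compatible with composition of $1$-morphisms and with $2$-morphisms, which come from $[1]\Gtimes[1]\Gtimes B$ and from natural transformations) upgrades this to an equivalence of $(\infty,2)$-categories $\tcat{Cocart}^{\lax}(B) \simeq \tFun^{\lax}(B, \tCat)$. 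The oplax statement follows by the dual argument, using op-Gray fibrations and $\Gray^{\op}(A,B) \simeq \Fun(A \Gtimes B, \tCat)$ with $B$ in the $[1]$-slot together with the identification of op-Gray fibrations over $[1]\times B$ with cartesian fibrations over $B^{\op}$ equipped with a non-cartesian-preserving morphism, or simply by passing to opposites via $\tFun^{\oplax}(B^{\op}, \tCat) \simeq \tFun^{\lax}(B, \tCat^{(1,2)\text{-}\op})^{\dots}$-type manipulations.

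The main obstacle I anticipate is the compatibility in the second paragraph: reconciling two different combinatorial models — GHL's scaled simplicial sets presenting $A \Gtimes B$ and Lurie's marked simplicial sets feeding the locally cocartesian (un)straightening — at the level of an honest equivalence of $\infty$-categories, not just a bijection on objects. One has to be careful that the ``correct'' marking on $A \times B$ (all edges $(\alpha, \id_b)$, plus handling of which triangles are thin) is precisely the one for which Lurie's straightening lands in locally cocartesian fibrations that \emph{happen} to be cocartesian in the $A$-direction, and that the resulting equivalence is natural enough to iterate in both variables. Everything else — the characterisation of Gray fibrations as a full subcategory of locally cocartesian fibrations, and the bootstrapping to $(\infty,2)$-categories — is comparatively routine once the dictionary is set up.
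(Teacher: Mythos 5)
Your strategy coincides with the paper's: identify Gray fibrations inside locally cocartesian fibrations, feed the Gagna--Harpaz--Lanari scaled model of $A\Gtimes B$ into Lurie's locally cocartesian straightening, and bootstrap to $(\infty,2)$-categories via simplices in each variable. However, the two places where you defer to ``making the comparison precise'' are exactly where the content lies, and your intermediate claims there are incorrect as stated. The subcategory of locally cocartesian fibrations over $A\times B$ corresponding to $\Fun(A\Gtimes B,\tCat)$ is \emph{not} cut out by asking that the locally cocartesian lifts of the edges $(\alpha,\id_b)$ be cocartesian (equivalently, that the restrictions to $A\times\{b\}$ be cocartesian fibrations). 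Already for $A=\ast$ that condition is vacuous and would yield all of $\lococart(B)$, whereas $\Gray(\ast,B)\simeq \cocart(B)\simeq \Fun(B,\tCat)$; and for $A=B=[1]$ it is again automatic, while a locally cocartesian fibration over $[1]\times[1]$ carries an independent transport functor over the diagonal edge and is a Gray fibration only when that functor is identified with the composite ``$A$-direction then $B$-direction''. The correct characterisation (\cref{lem:gray reform}) requires cocartesianness over three families of triangles, including the mixed ones; correspondingly, the GHL Gray tensor product is a \emph{scaling} of $2$-simplices of $A\times B$, not a marking of edges, and Lurie's theorem singles out the locally cocartesian fibrations that are cocartesian over the thin triangles. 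Even with the dictionary set up correctly, one must still show that cocartesianness over the generating thin $2$-simplices forces it over all of them --- the $3$-simplex argument in \cref{prop:locally cocart of gray tensor} --- which your sketch does not anticipate.

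The $(\infty,2)$-categorical upgrade is also less formal than you suggest. The $(m,n)$-simplices of $\tFun^{\lax}(B,\tCat)$ are maps out of $\delta_2([m],[n])\Gtimes B$, where $\delta_2([m],[n])=[m]([n],\dots,[n])$ is not $([m]\Gtimes [n])\Gtimes B$; one needs that $[m]\Gtimes[n]\to \delta_2([m],[n])$ is a localisation (\cref{lem:localization of Gray tensor product}), and then a condition-by-condition comparison of the resulting local-constancy and cocartesianness requirements with those defining $\tcat{Cocart}^{\lax}(B)$ through $\cocart^{\lax}_{[n]}(B\times[n])$, including a further composite-of-lifts argument; this occupies most of the proof of \cref{thm:cocart vs functor lax transformations}. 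Finally, the oplax statement is not obtained in the paper by ``passing to opposites'' but by running the parallel argument and then dualising the resulting fibrations over $B^{\op}$, since the two sides of that equivalence naturally live over $B^{\op}$ and $B$ respectively.
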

	In particular, this implies that the cocartesian unstraightening of a functor $B\rt \Cat$ has the universal property of the \emph{lax colimit}: it is given by the left adjoint of the constant diagram functor $\tCat\rt \tFun^\lax(B,\tCat)$ (see \cref{obs:lax colim}). Furthermore, using \cref{laxstr} we obtain the following reformulation of
	\cref{thm:fibadj}:
	\begin{introcor}\label{core}
		Extracting adjoints gives an equivalence of $(\infty,2)$-categories
		\[\tFun^{\lax,\mm R}(B,\tCat) \simeq \Big(\tFun^{\oplax,\mm L}(B,\tCat)\Big)^{(1, 2)\mm{-}\op}\]
		for every $\infty$-category $B$, where the superscript $\mm R$ denotes
		the locally full (or 1-full) sub-$2$-category of $\tFun^{\lax}(B,\tCat)$ spanned by
		those lax natural transformations that admit pointwise left adjoints,
		and dually for the right hand side. Furthermore, these equivalences are natural for restriction in $B$.
	\end{introcor}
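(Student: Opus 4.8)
The plan is to deduce \cref{core} from \cref{thm:fibadj} by transporting the latter equivalence through the straightening equivalences of \cref{laxstr}. Concretely, \cref{laxstr} supplies an equivalence of $(\infty,2)$-categories $\tcat{Cocart}^{\lax}(B) \simeq \tFun^{\lax}(B,\tCat)$ and, applied to $B^{\op}$ together with the cartesian reformulation, an equivalence $\tcat{Cart}^{\oplax}(B) \simeq \tFun^{\oplax}(B^{\op},\tCat)$. Since $(1,2)\text{-}\op$ is an automorphism of the relevant $(\infty,2)$-category of $(\infty,2)$-categories, it commutes with these equivalences up to coherent equivalence, so \cref{thm:fibadj} immediately yields $\tFun^{\lax}(B,\tCat)_* \simeq \big(\tFun^{\oplax}(B^{\op},\tCat)_*\big)^{(1,2)\text{-}\op}$ on suitable sub-$2$-categories, once we have matched up the decorations. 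The content of \cref{core} beyond \cref{thm:fibadj} is therefore almost entirely the bookkeeping of identifying which $1$-morphisms and which objects on each side correspond under the straightening equivalences.

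The first real step is to check that the straightening equivalence $\tcat{Cocart}^{\lax, \mm{R}}(B) \simeq \tFun^{\lax,\mm R}(B,\tCat)$ holds \emph{with the decorations}, i.e.\ that a lax natural transformation $F \Rightarrow G$ between functors $B \to \tCat$ admits pointwise left adjoints precisely when the corresponding morphism of cocartesian fibrations $E \to E'$ over $B$ is fibrewise a right adjoint. This is immediate from how \cref{laxstr} is built: restricting a functor $[1] \Gtimes B \to \tCat$ along $\{i\} \hookrightarrow [1]$ recovers the functors $F,G \colon B \to \tCat$, and restricting along $[1] \Gtimes \{b\} \to [1]$ (i.e.\ evaluating at $b \in B$) recovers the component functor $F(b) \to G(b)$, which under unstraightening is exactly the fibre $f_b \colon E_b \to E'_b$. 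Hence ``pointwise left adjoint'' on the functor side literally matches ``fibrewise right adjoint'' on the fibration side, so \cref{laxstr} restricts to the desired equivalence of the locally full sub-$2$-categories cut out by this condition (and their $2$-morphisms automatically agree, being modifications/natural transformations on matching underlying $1$-morphisms). The same discussion on $B^{\op}$ handles the oplax side: a morphism of cartesian fibrations over $B^{\op}$ is fibrewise a left adjoint iff the associated oplax transformation admits pointwise right adjoints.

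The second step is to combine these identifications with \cref{thm:fibadj}. The target side of \cref{thm:fibadj} is $\big(\tcat{Cart}^{\oplax,\mm L}(B^{\op})\big)^{(1,2)\text{-}\op}$; under the (decorated) straightening equivalence this is $\big(\tFun^{\oplax, \mm L}((B^{\op})^{\op}, \tCat)\big)^{(1,2)\text{-}\op} = \big(\tFun^{\oplax, \mm L}(B, \tCat)\big)^{(1,2)\text{-}\op}$, which is the right-hand side of \cref{core}. The source side of \cref{thm:fibadj} is $\tcat{Cocart}^{\lax,\mm R}(B) \simeq \tFun^{\lax,\mm R}(B,\tCat)$, the left-hand side of \cref{core}. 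So \cref{core} follows by composing the equivalences; what must be verified is that the equivalence built in \cref{thm:fibadj} matches the one described in \cref{core} \emph{on objects} --- namely, that ``extracting adjoints fibrewise'' on the fibration side corresponds to ``extracting pointwise adjoints'' on the functor side. This is again forced by compatibility with evaluation at $b \in B$: both constructions, after restricting to the fibre/point $b$, reduce to the classical equivalence between an adjunction of $\infty$-categories and its mate, which is symmetric under the relevant $\op$, and a morphism $B \to \Adj$ (or the corresponding $[1] \times B$-fibration that is fibrewise a bifibration) is determined by its evaluations together with the coherence packaged by the straightening.

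The main obstacle is the coherence in this last matching: one must argue that the equivalence of \cref{thm:fibadj}, built via the three-step procedure (covariant unstraightening to a \lortho{}, dualisation by \cref{thm:grayorthdual}, covariant straightening back) outlined before the statement of \cref{laxstr}, genuinely agrees as a functor of $(\infty,2)$-categories with the ``apply pointwise mates'' functor $\tFun^{\lax,\mm R}(B,\tCat) \to \big(\tFun^{\oplax,\mm L}(B,\tCat)\big)^{(1,2)\text{-}\op}$ --- not merely that the two agree on objects and on $1$-morphisms separately. The clean way to handle this is to test against the universal examples: an object is a functor $B \to \tCat$, a $1$-morphism is a functor $[1] \Gtimes B \to \tCat$ (equivalently a \lortho{} over $[1] \times B$, by \cref{laxstr} and the dictionary of the introduction), and a $2$-morphism is a functor $([1] \times [1])\text{-ish}$ Gray-type diagram into $\tCat$; in each case \cref{thm:grayorthdual} and the self-duality of \lorthos{} identify the ``dualise in the $B$-direction'' operation on both descriptions, and the ``given fibrewise by left adjoints'' condition is exactly what makes the dualised Gray fibration again a \lortho{} in the swapped variables, which is precisely the content of step (2) in the sketch preceding \cref{laxstr}. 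Once the diagrammatic identifications are pinned down at the level of $[n] \Gtimes B$ for the low values of $n$ that generate an $(\infty,2)$-category, naturality in $B$ and functoriality of all the straightening/dualisation equivalences upgrade the object-level agreement to the asserted equivalence of $(\infty,2)$-categories.
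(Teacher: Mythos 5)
Your proposal is correct and follows essentially the same route as the paper: Corollary~\ref{core} is obtained (as Theorem~\ref{thm:mates}) by combining Theorem~\ref{thm:dualizing lax natural transformations} with the straightening equivalences of Theorem~\ref{thm:cocart vs functor lax transformations}, after checking that these equivalences identify the $\mm{R}$/$\mm{L}$-decorated sub-$2$-categories, exactly as in your first two steps. The coherence concern in your final paragraph is not needed for the statement as such (the equivalence \emph{is} the composite; its identification with the pointwise mate construction on $1$-morphisms is handled separately in Proposition~\ref{beck-chevalley}).
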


	We will show that
        the equivalence of \itcats{} in \cref{core} is given by
        a higher-categorical form of the
	calculus of mates, in the following sense:
        on 1-morphisms it takes a lax natural transformation $\theta^{R} \colon [1] \Gtimes B \to \tCat$ such that
	$\theta^{R}_{b}$ is a right adjoint for every $b \in B$ to
	an oplax transformation $\theta^{L} \colon B \Gtimes [1] \to \tCat$
	such that the lax naturality squares for $\theta^{R}$ are the mates
	of the oplax naturality squares for $\theta^{L}$. This procedure of taking mates should make sense in any
	\itcat{}, which suggests that a version of \cref{core} should hold for any two $(\infty,2)$-categories in lieu of $B$ and $\tCat$. This generality does not seem to be directly within reach of our
	methods.
	
	Furthermore, we provide two elaborations on the equivalences
	of \cref{thm:fibadj} and \cref{core}. First, we describe the (co)unit
	of a fibrewise adjunction fibrationally, and consequently also the passage to adjoint morphisms in families. Secondly, we provide a
	characterisation of fibrewise adjoints in terms of mapping
	functors, to identify these in practice.

	Let us finally point out that the book of Gaitsgory and Rozenblyum
	contains \cref{laxstr} as a consequence of their general
	$2$-categorical straightening procedure \cite[Corollary
	11.1.2.6]{GaitsgoryR}. It also outlines a version of \cref{core} with
	$B$ and $\tCat$ replaced by arbitrary $(\infty,2)$-categories
	\cite[Section 12.3]{GaitsgoryR}, using constructions somewhat similar
	to the ones presented here; in particular, our notions of curved
	orthofibrations and op-Gray fibrations are also introduced in Sections
	12.2.1 and 12.2.3 of \cite{GaitsgoryR}. However, as far as we can tell
	they deduce both results using arguments different from ours, which at
	some points seem to rely on some of the unproven statements from
	\cite[Chapter 10]{GaitsgoryR}, notably about models for
	$(\infty, 2)$-categories in terms of bisimplicial spaces of lax
	squares.

	\begin{uremark}
		This article is one part of a recombination of our earlier preprints \cite{Haugseng} and
		\cite{HLN}, which contain many of the results we present here, most of
		them twice; the other part is \cite{part2}. For the reader interested in archaeology we mention that
		Theorems 1.1, 1.2 and 1.3 from \cite{Haugseng} are now contained in
		Corollary \ref{core}, Proposition \ref{propa} and Proposition
		\ref{prop:monoidalinternalhom}, whereas Theorems A, B and C from \cite{HLN} are now part of Proposition \ref{propa}, Theorem
		\ref{thm:grayorthdual} and Corollary \ref{cord},
		respectively. 
	\end{uremark}

\subsection*{Organisation}
Section \ref{sec:ortho} introduces curved
orthofibrations and Gray fibrations in more detail and establishes
their basic properties. In particular, we deduce
\cref{thm:grayorthdual} as \cref{thm:dual lortho-gray}. In Section
\ref{sec:adjunctions} we then introduce and study parametrised adjunctions in fibrational form. We prove \cref{thm:fibadj}  as \cref{thm:dualizing lax
  natural transformations}, and deduce \cref{propa} and \cref{cord} as  \cref{propaintext} and 
\cref{thm:monoidal adjunctions}, respectively. In between, this section also contains the identification of the functor in \cref{thm:fibadj} on morphisms with the Beck-Chevalley construction and the characterisation of parametrised adjoints in terms of mapping \igpds{}. Section~\ref{sec:paraunit} then discusses
units and counits for parametrised adjunctions and derives the
functoriality of the passage to adjoint morphisms in the
parametrised  context. Finally, in Section \ref{sec:laxgray} we establish
the connection to lax natural transformations, prove \cref{laxstr} as a combination of \cref{cor:gray fib is loc cocart of gray} and \cref{thm:cocart vs functor lax transformations}, and lastly deduce \cref{core} as \cref{thm:mates}.

\subsection*{Conventions}
As mentioned above, in order to declutter notation we will write $\Gpd, \Cat$ and $\Cat_2$ for the \icats{} of $\infty$-groupoids (or spaces), $\infty$-categories and $(\infty, 2)$-categories, respectively. By default we use complete two-fold Segal spaces as the definition of the latter, but we will also need to discuss other models in \cref{sec:laxgray}.

The letter $\iota$ will denote the core of an
$\infty$-category, \ie{} the $\infty$-groupoid spanned by its
equivalences, and $|\cdot|$ its realisation. By a subcategory of an $\infty$-category we
mean a functor such that the induced morphisms on mapping \igpds{} and
cores are inclusions of path components. A subcategory is \emph{full} if the functor furthermore induces
equivalences on mapping \igpds{}, while it is \emph{wide} if the functor induces an
equivalence on cores. Similarly, a sub-$2$-category of an
$(\infty, 2)$-category is a functor inducing subcategory inclusions
on mapping \icats{} and an inclusion of path components on underlying
\igpds{}; we say such a sub-$2$-category is \emph{1-full} if it is locally full, \ie{} given by full subcategory inclusions on mapping \icats{}.

Throughout, we shall use small caps such as $\CAT$ to indicate the large
variants of $\infty$-categories and boldface such as
$\tCat$ to indicate the $(\infty,2)$-categorical variants. We have
also reserved sub- and superscripts on category names to refer to
changes on morphisms, \eg{} $\cart(A) \subseteq \Cartlax(A)$.

We will write $\Ar(C)$ for the arrow \icat{} $\Fun([1],C)$ of an \icat{} $C$, and $\TwL(C)$ and $\TwR(C)$ for the two versions of the twisted arrow category, geared so that the combined source-target map defines a left fibraton in the former, and a right fibration in the latter case, see \ref{not:Tw}.

\subsection*{Acknowledgments}
It is a pleasure to thank Dustin Clausen, Yonatan Harpaz, Gijsbert Scheltus Karel Sebastiaan Heuts, Corina Keller, Achim Krause, Markus Land, Denis Nardin, Thomas Nikolaus, Stefan Schwede, Wolfgang Steimle and Lior Yanovski for several very useful discussions.

During the preparation of this manuscript FH and SL were members of the Hausdorff Center for Mathematics at the University of Bonn funded by the German Research Foundation (DFG), grant no. EXC 2047. FH and JN were further supported by the European Research Council (ERC) through the grants `Moduli spaces, Manifolds and Arithmetic', grant no. 682922, and `Derived Symplectic Geometry and Applications', grant no. 768679, respectively.

\section{Two-variable fibrations}\label{sec:ortho}

Our main goal in the present section is to introduce two new classes of
fibrations over a product of two \icats{}, namely \lorthos{} and Gray
fibrations, and describe how they can be unstraightened over one of
the two factors, and consequently dualised. We first recall some basic material about
(co)cartesian fibrations in \S\ref{subsec:fibbackgr}. Then in \S\ref{subsec:Rfib} we discuss functors to a product of two \icats{} that behave like a (co)cartesian fibration in one of the two variables; both \lorthos{} and Gray fibrations are special cases of such functors. In \S\ref{subsec:lorth} we then introduce (curved) orthofibrations and study their partial unstraightenings. 
We consider Gray fibrations in \S\ref{subsec:Gray} and characterise cocartesian and left fibrations among these. Finally, in \S\ref{subsec:duallll} we record the various ways in which these fibrations can be dualised. In particular, we prove \cref{thm:grayorthdual} there.

\subsection{Background}\label{subsec:fibbackgr}
For the reader's convenience, we begin by briefly reviewing some basic
material on (co)cartesian morphisms and fibrations.

\begin{definition}
  Let $p \colon X\rightarrow S$ be a functor of
  $\infty$-categories. Then a morphism $\alpha \colon y\rightarrow z$ of $X$ is \emph{$p$-cartesian} if the square
  \[
    \begin{tikzcd}
      \Map_X(x,y) \arrow[r, "\alpha_*"] \arrow[d] & \Map_X(x,z) \arrow[d] \\ 
      \Map_S(p(x),p(y)) \arrow[r, "p(\alpha)_*"] & \Map_S(p(x),p(z))
    \end{tikzcd}
  \]
  is a pullback square in $\Gpd$ for every $x \in X$. Dually, the
  morphism $\alpha$ is \emph{$p$-cocartesian} if it is $p^{\op}$-cartesian
  when regarded as a morphism in $X^{\op}$, or in other words if for every $x \in X$ the square
  \[
    \begin{tikzcd}
      \Map_X(z,x) \arrow[r, "\alpha^*"] \arrow[d] & \Map_X(y,x) \arrow[d] \\ 
      \Map_S(p(z),p(x)) \arrow[r, "p(\alpha)^*"] & \Map_S(p(y),p(x))
    \end{tikzcd}
  \]
  is cartesian.
\end{definition}

\begin{notation}\label{not:cartesian arrows}
  To make diagrams more readable, for $p \colon X \to B$ we will
  sometimes indicate a $p$-cocartesian morphism of $X$ by $x\rightarrowtail y$ and a $p$-cartesian morphism of $X$ by $x\twoheadrightarrow y$.
\end{notation}

\begin{definition}
  Let $p \colon X \to S$ be a functor of \icats{}. If $T$ is a
  subcategory of $S$,
  we say that \emph{$X$ has all $p$-cartesian
      lifts over $T$} if for every morphism $f \colon a \to b$ in $T$
    and every object $x$ such that $p(x) \simeq b$, there exists a
    filler in the commutative square
  \[
    \begin{tikzcd}
      {[0]} \arrow{r}{x} \arrow{d}{d_{0}} & X \arrow{d}{p} \\
      {[1]} \arrow{r}{f} \arrow[dashed]{ur} & S
    \end{tikzcd}
  \]
  which is a $p$-cartesian morphism. Dually, \emph{$X$ has all $p$-cocartesian
	lifts over $T$} if $X^{\op}$ has all $p^{\op}$-cartesian lifts
      over $T^{\op}$.
The functor $p \colon X \to S$ is a \emph{cartesian fibration} if
$X$ has all $p$-cartesian lifts over $S$, and a \emph{cocartesian
  fibration} if $X$ has all $p$-cocartesian lifts over $S$.
\end{definition}

\begin{notation}
  We will write $\coCartlax(S)$ and $\Cartlax(S)$ for
  the full subcategories of $\Cat/S$ spanned by the cocartesian and
  cartesian fibrations, respectively, and $\cocart(S)$ and $\cart(S)$
  for the wide subcategories thereof in which morphisms are required to
  preserve (co)cartesian edges. 
\end{notation}

\begin{remark}\label{internalexternal}
The definition above is an invariant version of the
    definition for quasicategories given by Lurie in \cite[Definition
    2.4.2.1]{HTT}. More precisely, a map $p$ between quasicategories corresponds to a (co)cartesian fibration in our sense if and only if for some (and then any) factorisation of $p$ into a categorical equivalence followed by a categorical fibration the latter is a (co)cartesian fibration in Lurie's sense.
\end{remark}

\begin{definition}
  Let $p \colon X \to S$ be a functor of \icats{}. A morphism $\alpha
  \colon y \to z$ in $X$ is \emph{locally $p$-(co)cartesian} if it is
  a (co)cartesian morphism for the pullback $X \times_{S} [1] \to [1]$
  of $p$ along $p(\alpha) \colon [1] \to S$. The functor $p$ is a
  \emph{locally (co)cartesian fibration} if the pullback $X \times_{S}
  [1] \to [1]$ is a (co)cartesian fibration for every map $[1]
  \to S$.
\end{definition}

\begin{notation}
  We write $\LocCocart^{\lax}(S)$ and $\LocCart^{\oplax}(S)$ for the
  full subcategories of $\Cat/S$ spanned by the locally cocartesian
  and locally cartesian fibrations, respectively. We also denote by
  $\LocCocart(S)$ and $\LocCart(S)$ the wide subcategories of these
  where morphisms are required to preserve locally (co)cartesian
  morphisms.
\end{notation}

\begin{defn}
	We call a functor $p \colon X \to S$ that is both a cartesian and a
	cocartesian fibration a \emph{bicartesian fibration}. We write
	$\mathrm{Bicart}^{\mathrm{(op)lax}}(S)$ for the full subcategory of
	$\Cat/S$ spanned by the bicartesian fibrations.
\end{defn}

\begin{remark}
	In the category theory literature our ``bicartesian fibrations'' are
	often called ``bifibrations''; we will instead use the latter term
	as in \cite{HTT}, see \cref{defn:bifib}.
\end{remark}

We recall the following characterisation from \cite[Lemma 2.4.2.7]{HTT} of cartesian morphisms in a locally cartesian fibration, which will be used repeatedly below.

\begin{propn}\label{lem:cartinloccartfib}
  Suppose $p \colon E \to B$ is a locally cartesian fibration. Then
  the following are equivalent for a locally $p$-cartesian morphism $f
  \colon x\to y$ in $E$:
  \begin{enumerate}[(1)]
  \item $f$ is a $p$-cartesian morphism.
  \item For every locally $p$-cartesian morphism $g \colon z \to x$,
    the composite $fg \colon z \to y$ is also locally $p$-cartesian. \qed
  \end{enumerate} 
\end{propn}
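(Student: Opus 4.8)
The plan is to reduce everything to mapping-space computations, using the defining pullback-square characterisation of (locally) $p$-cartesian morphisms. Since both conditions only refer to morphisms over a fixed $1$-simplex or $2$-simplex in $B$, I would first observe that the statement is essentially local on $B$: by pulling back along $\Delta^2 \to B$ spanned by $p(g)$ and $p(f)$ it suffices to treat the case $B = [2]$ (or $B=[1]$ for the parts that only involve $f$), where a locally cartesian fibration restricts to an honest cartesian fibration over each edge.

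For the implication $(1) \Rightarrow (2)$: if $f$ is $p$-cartesian and $g$ is locally $p$-cartesian, I want to show $fg$ is locally $p$-cartesian, i.e.\ $p$-cartesian for the pullback of $p$ over the edge $p(fg)$. The cleanest route is the ``right cancellation'' property of cartesian morphisms: $f$ being $p$-cartesian, the morphism $g$ is $p$-cartesian if and only if $fg$ is. But $g$ is only \emph{locally} cartesian, so instead I would argue directly with mapping spaces: for any $z' \in E$, form the diagram of mapping-space squares for $g$, for $f$, and for $fg$, identify the outer rectangle with the composite of the two inner squares, and use that the $f$-square is a pullback (as $f$ is $p$-cartesian) together with the $g$-square being a pullback \emph{after restricting to the relevant edge} of $B$ — which is exactly what ``locally $p$-cartesian'' gives, once we have reduced $B$ to a simplex so that the edge in question carries an actual cartesian fibration. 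Pasting pullbacks then yields that $fg$ is locally $p$-cartesian.

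For the converse $(2) \Rightarrow (1)$: assume every locally $p$-cartesian $g\colon z \to x$ has $fg$ locally $p$-cartesian. To show $f$ is globally $p$-cartesian I must verify the mapping-space square is a pullback for \emph{every} $z \in E$, not just those admitting a cartesian lift to $x$. The idea is: given an arbitrary $z$ and a morphism $z \to y$ together with a compatible lift of $p(z)\to p(y)$ through $p(x)$, choose a locally $p$-cartesian lift $g\colon z'\to x$ of the edge $p(z)\to p(x)$ with target $x$ (possible since $p$ is a locally cartesian fibration); by hypothesis $fg\colon z'\to y$ is locally $p$-cartesian over the edge $p(z)\to p(y)$, hence the factorisation problem for $z \to y$ through $x$ has a contractible space of solutions, using the universal properties of $g$ and of $fg$ in succession. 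Assembling these contractible choice spaces shows the mapping-space square for $f$ is a pullback, i.e.\ $f$ is $p$-cartesian.

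The main obstacle I anticipate is the bookkeeping in $(2)\Rightarrow(1)$: one has to be careful that ``locally $p$-cartesian'' is only a statement about a single edge of $B$, so all the lifts and factorisations must be organised over a common $2$-simplex of $B$, and the contractibility of the relevant spaces of fillers has to be extracted from the pasting of two pullback squares of $\infty$-groupoids rather than from a naive $1$-categorical cancellation argument. Once the reduction to $B$ a simplex is in place and Lurie's original quasicategorical proof \cite[Lemma 2.4.2.7]{HTT} is invoked through \cref{internalexternal}, however, this is routine; indeed the cleanest writeup may simply cite \cite{HTT} after the reduction, which is presumably what the authors do.
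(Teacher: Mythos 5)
Your proposal is correct, and it matches the paper exactly in the only sense available: the paper gives no proof of \cref{lem:cartinloccartfib} at all, simply recalling it from \cite[Lemma 2.4.2.7]{HTT} with a \qed, which is precisely the fallback you anticipate at the end. Your sketch (reduction to a $2$-simplex in $B$, the fibrewise mapping-space identifications $\Map^{\alpha}_E(z,x)\simeq\Map^{\mathrm{id}}_E(z,z')$ via a locally cartesian lift $g$, and pasting of pullback squares) is a faithful reconstruction of Lurie's argument and contains no gaps.
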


\begin{cor}\label{cor:loccartiscartcond}
  Suppose $p \colon E \to B$ is a locally cartesian fibration. Then
  $p$ is a cartesian fibration \IFF{} any composite of locally
  $p$-cartesian morphisms is locally $p$-cartesian. \qed
\end{cor}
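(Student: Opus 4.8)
The plan is to deduce both implications directly from \cref{lem:cartinloccartfib}.

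First I would treat the implication that $p$ is a cartesian fibration once every composite of locally $p$-cartesian morphisms is locally $p$-cartesian. Since $p$ is assumed to be a locally cartesian fibration, for every morphism $\bar{\alpha}\colon a\to b$ in $B$ and every object $x$ with $p(x)\simeq b$ there is a locally $p$-cartesian lift $\alpha\colon x'\to x$; it therefore suffices to show that $\alpha$ is actually $p$-cartesian. By the equivalence of (1) and (2) in \cref{lem:cartinloccartfib} this amounts to checking that $\alpha g$ is locally $p$-cartesian for every locally $p$-cartesian $g\colon z\to x'$, and this is precisely our hypothesis applied to the two-step composite $\alpha g$.

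For the converse, assume $p$ is a cartesian fibration and let $f\colon x\to y$ and $g\colon z\to x$ be locally $p$-cartesian. The key observation is that in a cartesian fibration every locally $p$-cartesian morphism is already $p$-cartesian: choosing a genuine $p$-cartesian lift $\widetilde f\colon \widetilde x\to y$ of $p(f)$, the morphism $f$ factors as $\widetilde f\circ h$ for a unique $h\colon x\to\widetilde x$ lying over $\id_{p(x)}$, and since $f$ and $\widetilde f$ are both locally $p$-cartesian over $p(f)$ the cancellation property for locally cartesian morphisms over an equivalence forces $h$ to be an equivalence, so $f$ is $p$-cartesian. Now the implication (1)$\Rightarrow$(2) of \cref{lem:cartinloccartfib}, applied to the $p$-cartesian morphism $f$, shows that the composite $fg$ is locally $p$-cartesian, as desired.

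The only input that is not purely formal is the observation that locally $p$-cartesian morphisms in a cartesian fibration are $p$-cartesian; I expect this to be the only point requiring any argument, and it is entirely standard (it also follows from \cite[Lemma 2.4.2.7]{HTT}). Everything else is a direct bookkeeping application of \cref{lem:cartinloccartfib}.
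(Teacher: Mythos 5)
Your argument is correct and is exactly the intended one: the paper marks this corollary as immediate from \cref{lem:cartinloccartfib}, and your two implications are precisely the direct applications of that equivalence (together with the standard observation that locally cartesian morphisms in a cartesian fibration are cartesian). Nothing further is needed.
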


The following is \cite[Proposition 2.4.2.4]{HTT}:

\begin{lemma}
The following conditions on a cartesian fibration $p \colon X \rightarrow S$ are equivalent:
  \begin{enumerate}
  \item the fibres $X_{s}$ are \igpds{} for all $s$ in $S$,
  \item all morphisms in $X$ are $p$-cartesian,
  \item $p$ is conservative.\qed
  \end{enumerate} 
\end{lemma}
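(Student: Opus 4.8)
The plan is to prove the cycle of implications $(2)\Rightarrow(3)\Rightarrow(1)\Rightarrow(2)$, using throughout the mapping-space characterisation of $p$-cartesian morphisms from the definition above, together with two elementary observations that follow immediately from that characterisation: first, a $p$-cartesian morphism lying over an equivalence of $S$ is itself an equivalence (the relevant square is a pullback of an equivalence, so $\alpha_*\colon\Map_X(z,y)\to\Map_X(z,z)$ is an equivalence and produces an inverse); second, the class of $p$-cartesian morphisms is closed under pre- and post-composition with equivalences, since pre-/post-composing with equivalences preserves pullback squares of mapping spaces.

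For $(2)\Rightarrow(3)$: if every morphism of $X$ is $p$-cartesian and $p(\alpha)$ is an equivalence, then $\alpha$ is $p$-cartesian over an equivalence, hence an equivalence by the first observation; so $p$ is conservative. For $(3)\Rightarrow(1)$: fix $s\in S$. Any morphism of $X_s=X\times_S\{s\}$ maps to the identity $\id_s$ in $S$, hence to an equivalence; by conservativity its image in $X$ is an equivalence, and since the fibre inclusion $X_s\hookrightarrow X$ reflects equivalences (equivalences in a pullback are detected componentwise, and the second component is always $\id_s$), the morphism is already an equivalence in $X_s$. Thus $X_s$ is an \igpd{}.

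The substance is $(1)\Rightarrow(2)$. Suppose all fibres of $p$ are \igpds{}, and let $\alpha\colon y\to z$ be an arbitrary morphism of $X$. As $p$ is a cartesian fibration, choose a $p$-cartesian lift $\bar\alpha\colon y'\to z$ of $p(\alpha)\colon p(y)\to p(z)$. Applying the defining pullback square for $\bar\alpha$ with test object $x=y$, the point $\alpha\in\Map_X(y,z)$ together with $\id_{p(y)}\in\Map_S(p(y),p(y))$ (which maps to $p(\bar\alpha)=p(\alpha)$ under $p(\alpha)_*$) determines a morphism $\sigma\colon y\to y'$ with $p(\sigma)\simeq\id_{p(y)}$ and $\bar\alpha\circ\sigma\simeq\alpha$. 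Since $\Map_{X_{p(y)}}(y,y')$ is the fibre of $\Map_X(y,y')\to\Map_S(p(y),p(y))$ over the component of $\id_{p(y)}$, the morphism $\sigma$ lies in $X_{p(y)}$, which is an \igpd{} by hypothesis, so $\sigma$ is an equivalence. Hence $\alpha\simeq\bar\alpha\circ\sigma$ is a composite of a $p$-cartesian morphism with an equivalence, and therefore $p$-cartesian by the second observation.

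The only mildly delicate point — and the one I would write out carefully — is the factorisation in $(1)\Rightarrow(2)$: extracting $\sigma$ from the pullback square and identifying it as a morphism of the fibre $X_{p(y)}$ rather than merely a morphism lying over something equivalent to $\id_{p(y)}$. Everything else is formal manipulation of the mapping-space definition and the defining lifting property of a cartesian fibration.
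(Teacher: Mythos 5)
Your proof is correct. Note, however, that the paper does not actually prove this lemma: it is stated with a \verb|\qed| and attributed to \cite[Proposition 2.4.2.4]{HTT}, so there is no internal argument to compare against. What you have written is a self-contained, model-independent proof phrased entirely in terms of the mapping-space definition of $p$-cartesian morphisms that the paper adopts, whereas Lurie's original argument is carried out for quasicategories via horn-lifting and the combinatorial description of right fibrations; your version is therefore better adapted to the invariant setup of this paper (cf.\ \cref{internalexternal}). The cycle $(2)\Rightarrow(3)\Rightarrow(1)\Rightarrow(2)$ is sound, and you correctly isolate the two delicate points: that the fibre inclusion $X_s\hookrightarrow X$ reflects equivalences (which follows since cores commute with pullbacks, so equivalences in $X\times_S\{s\}$ are detected componentwise), and the factorisation $\alpha\simeq\bar\alpha\circ\sigma$ with $\sigma$ genuinely a morphism of the fibre $X_{p(y)}$ rather than merely lying over something homotopic to $\id_{p(y)}$ --- the chosen path $p(\sigma)\simeq\id_{p(y)}$ is exactly the datum exhibiting $\sigma$ as a point of $\Map_{X_{p(y)}}(y,y')$, the fibre of $\Map_X(y,y')\to\Map_S(p(y),p(y))$ over $\id_{p(y)}$. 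The only step you wave at is that a one-sided inverse extracted from the equivalence $\alpha_*$ is automatically two-sided; this follows by the usual Yoneda argument (test against $x=y$ as well) and is not a gap worth worrying about.
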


\begin{definition}
A \emph{right fibration} is a cartesian fibration satisfying the equivalent conditions of the previous lemma. A \emph{left fibration} is a functor whose opposite is a right fibration.
\end{definition}

Finally, let us briefly discuss functoriality. Consider the functor
$t \colon \Ar(\Cat) \rightarrow \Cat$ extracting the target of a
morphism. Its cartesian edges are precisely the
pullback squares, so since $\Cat$ is complete we obtain a
functor
\[\Cat^\op \longrightarrow \CAT, \quad S \longmapsto \Cat/S\]
by cartesian unstraightening, where $\CAT$ denotes the \icat{} of
\emph{large} \icats{}. By \cite[Proposition 2.4.1.3]{HTT}
the pullback of a (co)cartesian fibration is a (co)cartesian fibration and the structure map in a pullback preserves cocartesian edges. Therefore one obtains subfunctors
\[\leftfib, \rightfib, \cocart, \cart \colon \Cat^\op \longrightarrow \CAT\]
via the construction above. Combining Lurie's unstraightening equivalence with \cite[Appendix A]{GHN} one finds inverse equivalences
\[\begin{tikzcd}\Strcart \colon \Cart  \arrow[r, yshift=0.5ex] & \arrow[l, yshift=-0.5ex]  \Fun(-^\op,\Cat) \cocolon \Uncart\end{tikzcd} \quad \text{and}\quad\begin{tikzcd}\Strco \colon \Cocart  \arrow[r, yshift=0.5ex] & \arrow[l, yshift=-0.5ex]  \Fun(-,\Cat) \cocolon \Unco\end{tikzcd}\]
which restrict to equivalences
\[\begin{tikzcd}\rightfib  \arrow[r, yshift=0.5ex] & \arrow[l, yshift=-0.5ex]  \Fun(-^\op,\Gpd) \end{tikzcd} \quad \text{and}\quad\begin{tikzcd}\leftfib  \arrow[r, yshift=0.5ex] & \arrow[l, yshift=-0.5ex]  \Fun(-,\Gpd).\end{tikzcd}\]
The resulting equivalence between cartesian and cocartesian fibrations we shall denote
\[\begin{tikzcd}\Dualco \colon \Cart(-^\op)  \arrow[r, yshift=0.5ex] & \arrow[l, yshift=-0.5ex]  \Cocart \cocolon \Dualcart.\end{tikzcd}\]
Its restriction to left and right fibrations is simply given by taking opposites, but this is not true in general, since $\Dualcart$ and $\Dualco$ are given by the identity on $\Cart(*) \simeq \Cat \simeq \Cocart(*)$; an explicit description of the equivalence in the general case is the main result of \cite{BGN}.

\subsection{Straightening in one variable}\label{subsec:Rfib}
Before we introduce the main classes of fibrations we are interested
in, here we will consider the most general kinds of functors to a
product of \icats{} that can be straightened over one of the two
factors.  The basic observation we need for this is the following:
\begin{propn}\label{propn:projtoeqcocart}
  Given a functor of \icats{} $p = (p_1,p_2) \colon X \to A \times B$, a
  morphism $\alpha \colon x \to y$ in $X$ such that $p_{1}(\alpha)$ is an
  equivalence is $p$-cocartesian \IFF{} it is $p_{2}$-cocartesian.
\end{propn}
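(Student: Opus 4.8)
The plan is to reduce everything to the mapping-space characterisation of (co)cartesian morphisms and exploit that $p_1(\alpha)$ being an equivalence makes the $A$-factor of all relevant mapping-space squares a pullback for free. Recall that $\alpha \colon x \to y$ is $p$-cocartesian \IFF{} for every $z \in X$ the square relating $\Map_X(y,z) \to \Map_X(x,z)$ to $\Map_{A\times B}(p(y),p(z)) \to \Map_{A\times B}(p(x),p(z))$ is a pullback in $\Gpd$; likewise $\alpha$ is $p_2$-cocartesian \IFF{} the analogous square with $B$ in place of $A \times B$ is a pullback. Since $\Map_{A\times B}((a,b),(a',b')) \simeq \Map_A(a,a') \times \Map_B(b,b')$, the first square is the product of the second square with the square
\[
  \begin{tikzcd}
    \Map_A(p_1(y),p_1(z)) \arrow[r] \arrow[d, equal] & \Map_A(p_1(x),p_1(z)) \arrow[d, equal] \\
    \Map_A(p_1(y),p_1(z)) \arrow[r, "p_1(\alpha)^*"] & \Map_A(p_1(x),p_1(z))
  \end{tikzcd}
\]
where I am using that the left-hand vertical maps in both the $X$-level and $A\times B$-level squares factor through (are induced by) the same horizontal maps. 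The point is that because $p_1(\alpha)$ is an equivalence, the map $p_1(\alpha)^* \colon \Map_A(p_1(y),p_1(z)) \to \Map_A(p_1(x),p_1(z))$ is an equivalence, so this auxiliary $A$-square is a pullback.

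The key step is then the following general fact about pullback squares in an \icat{} with products: given squares $\sigma$ and $\tau$ sharing the same cospan shape, if $\tau$ is a pullback square, then $\sigma$ is a pullback \IFF{} $\sigma \times \tau$ is a pullback. I would phrase this cleanly by writing the big ($p$-cocartesian) square as a product $\sigma \times \tau$ where $\sigma$ is the $B$-square (the $p_2$-cocartesian one) and $\tau$ is the auxiliary $A$-square above, which I have just argued is a pullback; then this fact immediately gives that $\sigma$ is a pullback \IFF{} $\sigma \times \tau$ is, i.e.\ $\alpha$ is $p_2$-cocartesian \IFF{} $\alpha$ is $p$-cocartesian. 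To be careful about the identification of the large square as a product, I need to check that the two vertical maps and the bottom map of the $p$-cocartesian square decompose compatibly under the equivalence $\Map_{A\times B} \simeq \Map_A \times \Map_B$; this is just the statement that the projections $A \times B \to A$ and $A \times B \to B$ induce the projection of mapping spaces, which is formal.

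The main obstacle — really the only nonroutine point — is making the product-decomposition argument precise at the level of $\infty$-categories rather than just on objects: one wants the commutative square of \igpds{} attached to $p$ to be genuinely equivalent, as a functor $[1]\times[1] \to \Gpd$, to the product of the square attached to $p_2$ with the auxiliary $A$-square, naturally in $z$. This can be handled either by invoking that $\Map_{A \times B}(-,-)$ is computed componentwise (so the relevant functor $X^{\op} \times X \to \Gpd$ splits as a product), or, perhaps more cleanly, by noting that for the pullback square $X \times_{A\times B}[1] \to [1]$ along $p(\alpha)$ one has a factorisation through $X \times_A [1] \to [1]$ in which the first map is an equivalence on fibres over the two objects of $[1]$ (since $p_1(\alpha)$ is an equivalence) — but I would prefer the mapping-space route as it avoids fibre-transport bookkeeping. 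Once the product decomposition is set up, the conclusion is immediate from the cancellation fact above, and the cocartesian case then yields the cartesian case (and the statement as literally written) by passing to opposite categories.
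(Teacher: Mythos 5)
Your overall strategy---reduce to mapping \igpds{} and exploit that the $A$-component of $p(\alpha)$ contributes an equivalence---is sound, but the central decomposition is wrong as stated. You claim that the $p$-cocartesian square for $\alpha$ at a test object $z$ is the \emph{product} of the $p_2$-cocartesian square $\sigma$ with your auxiliary $A$-square $\tau$. Compare upper-left corners: $\sigma\times\tau$ has $\Map_X(y,z)\times\Map_A(p_1(y),p_1(z))$ there, whereas the $p$-square has just $\Map_X(y,z)$. Under $\Map_{A\times B}\simeq \Map_A\times\Map_B$ only the \emph{bottom} row of the $p$-square splits as a product; the top row $\Map_X(y,z)\to\Map_X(x,z)$ is shared with the $p_2$-square and does not decompose, so the identity ``first square $=\sigma\times\tau$'' fails and the cancellation step has nothing to apply to. (That cancellation fact is moreover false in $\Gpd$ as stated: taking $\tau$ to be the constant square on $\emptyset$, both $\tau$ and $\sigma\times\tau$ are pullbacks for every $\sigma$; one needs the relevant corner of $\tau$ to be nonempty.)

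The repair is to replace the product by a \emph{vertical pasting}: the $p_2$-square is the $p$-square stacked on top of
\[
\begin{tikzcd}
\Map_{A\times B}(p(y),p(z)) \arrow[r] \arrow[d] & \Map_{A\times B}(p(x),p(z)) \arrow[d] \\
\Map_B(p_2(y),p_2(z)) \arrow[r] & \Map_B(p_2(x),p_2(z)),
\end{tikzcd}
\]
and this lower square is a pullback because its comparison map is $p_1(\alpha)^*\times \id_{\Map_B(p_2(y),p_2(z))}$, an equivalence since $p_1(\alpha)$ is one. The pasting lemma for pullbacks then gives both directions of the claim. This pasting argument is precisely the proof of \cite[Proposition 2.4.1.3(3)]{HTT} applied to $X\xto{p}A\times B\xto{\pr_2}B$, which is how the paper disposes of the statement in one line: $p(\alpha)$ is $\pr_2$-cocartesian because its $A$-component is an equivalence, so $\alpha$ is $p$-cocartesian \IFF{} it is $(\pr_2\circ p)$-cocartesian, i.e.\ $p_2$-cocartesian. (Your closing remark about deducing ``the statement as literally written'' from the cocartesian case by passing to opposites is unnecessary---the statement \emph{is} the cocartesian case.)
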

\begin{proof}
  Since equivalences are always cocartesian, this is an immediate consequence of \cite[Proposition
  2.4.1.3(3)]{HTT}, which says that given $X \xto{q} Y \xto{r} Z$,
    a morphism in $X$ whose image in $Y$ is $r$-cocartesian is
    $q$-cocartesian \IFF{} it is $rq$-cocartesian.
\end{proof}

\begin{corollary}\label{onevarcocart}
  The following are equivalent for a functor $p = (p_1,p_2) \colon X \to
  A \times B$:
  \begin{enumerate}[(1)]
  \item $X$ has all $p$-cocartesian lifts over $A \times \core{B}$.
  \item $p_1$ is a cocartesian fibration and all $p_1$-cocartesian morphisms
    lie over equivalences in $B$.
  \item In the commutative triangle
    \[
      \begin{tikzcd}
        X \arrow{rr}{p}\arrow[dr, "p_1"{swap}] & & A \times B
        \arrow{dl}{\pr_{1}} \\
        & A
      \end{tikzcd}
    \]
    the map $p_1$ is a cocartesian fibration, and $p$ takes
    $p_1$-cocartesian morphisms to $\pr_{1}$-cocartesian morphisms.
  \end{enumerate}
\end{corollary}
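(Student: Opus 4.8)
The plan is to prove the cycle of implications $(2) \Leftrightarrow (3)$ directly, and $(1) \Leftrightarrow (2)$ using \cref{propn:projtoeqcocart} together with standard facts about (co)cartesian lifts.

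First I would dispense with $(2) \Leftrightarrow (3)$, which is essentially a restatement once one observes what $\pr_1$-cocartesian morphisms are. A morphism $(f,g) \colon (a,b) \to (a',b')$ in $A \times B$ is $\pr_1$-cocartesian \IFF{} $g$ is an equivalence in $B$: indeed $\pr_1 \colon A \times B \to A$ is the cocartesian (in fact bicartesian) fibration classifying the constant functor at $B$, so its cocartesian lifts of a morphism $f \colon a \to a'$ are exactly those of the form $(f, \id)$, and more generally a morphism is $\pr_1$-cocartesian \IFF{} its second component is an equivalence. Hence the condition in $(3)$ that $p$ sends $p_1$-cocartesian morphisms to $\pr_1$-cocartesian morphisms is literally the condition in $(2)$ that $p_1$-cocartesian morphisms lie over equivalences in $B$ (noting $p_2 = \pr_2 \circ p$), and the condition ``$p_1$ is a cocartesian fibration'' appears verbatim in both. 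So these two are immediate.

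Next, $(1) \Rightarrow (2)$. Assume $X$ has all $p$-cocartesian lifts over $A \times \core B$. Given $f \colon a \to a'$ in $A$ and $x \in X$ with $p_1(x) \simeq a$, the morphism $(f, \id_{p_2(x)})$ lies in $A \times \core B$, so it admits a $p$-cocartesian lift $\alpha \colon x \to y$. Since $\pr_1 \colon A \times B \to A$ preserves cocartesian morphisms (again because it is the pullback-stable projection, or directly by \cite[Proposition 2.4.1.3(3)]{HTT} applied to $X \xto{p} A \times B \xto{\pr_1} A$, using that $(f,\id)$ is $\pr_1$-cocartesian), the composite $p_1(\alpha) = \pr_1(p(\alpha))$ is $\id_A$-cocartesian over $f$, i.e. a $p_1$-cocartesian lift of $f$. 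This shows $p_1$ is a cocartesian fibration. Conversely, any $p_1$-cocartesian morphism $\beta$ is, up to the essentially unique factorisation through a $p_1$-cocartesian lift, the $p$-cocartesian lift of its image $(p_1(\beta), \id)$ post-composed with an equivalence; more cleanly, since $p$-cocartesian lifts over $(p_1(\beta), \id)$ exist and are automatically $p_1$-cocartesian by the preservation just noted, and $p_1$-cocartesian lifts are unique, $\beta$ itself must lie over an equivalence in $B$ (its $p_2$-image agrees with that of an equivalence). I would phrase this last point by uniqueness of cocartesian lifts: both $\beta$ and the constructed $p$-cocartesian lift are $p_1$-cocartesian over the same morphism with the same source, hence equivalent, and the latter lies over an equivalence in $B$.

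Finally, $(2) \Rightarrow (1)$. Given a morphism $(f,g)$ in $A \times \core B$ — so $g$ is an equivalence — and $x \in X$ over its source, take a $p_1$-cocartesian lift $\alpha \colon x \to y$ of $f$ (possible by $(2)$). By $(2)$, $p_1$-cocartesian morphisms lie over equivalences in $B$, so $p_2(\alpha)$ is an equivalence; composing $\alpha$ with the $p$-cocartesian (because $p$-invertible-on-both-factors) lift of $(\id_{a'}, g \cdot p_2(\alpha)^{-1})$ produces a lift of $(f,g)$. To see this composite is $p$-cocartesian: $\alpha$ has $p_1(\alpha) = f$ and $p_2(\alpha)$ an equivalence, so by \cref{propn:projtoeqcocart} it is $p$-cocartesian \IFF{} it is $p_2$-cocartesian; but $p_2(\alpha)$ being an equivalence, $\alpha$ is $p_2$-cocartesian provided it is so for the pullback along an equivalence, which holds since $p_1$-cocartesian morphisms over equivalences in the base are detected... — more simply, I would argue that $\alpha$ is $p$-cocartesian directly: the defining mapping-space square for $p$-cocartesianness factors through the one for $p_1$-cocartesianness after identifying $\Map_{A \times B}$ with $\Map_A \times \Map_B$ and using that the $B$-component maps are equivalences of $\igpds{}$ (pullback along an equivalence), so the square is cartesian. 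The composite of two $p$-cocartesian morphisms is $p$-cocartesian, finishing $(1)$.

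\medskip

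The main obstacle I expect is the bookkeeping in $(2) \Rightarrow (1)$: one must build a genuine $p$-cocartesian lift of an arbitrary morphism in $A \times \core B$ out of a $p_1$-cocartesian lift (which a priori only lies over \emph{some} equivalence in $B$, not the prescribed $g$), which forces composing with an equivalence-lift and then checking $p$-cocartesianness of the result. The cleanest route is to first establish the lemma that for $p = (p_1,p_2)$, a morphism $\alpha$ with $p_1(\alpha)$ arbitrary and $p_2(\alpha)$ an equivalence is $p$-cocartesian \IFF{} it is $p_1$-cocartesian — which follows from \cref{propn:projtoeqcocart} applied with the roles of the two factors swapped, or from a direct mapping-space computation as above — and then the whole of $(1) \Leftrightarrow (2)$ becomes formal. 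Everything else is a routine application of \cite[Proposition 2.4.1.3]{HTT} and uniqueness of cocartesian lifts.
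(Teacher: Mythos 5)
Your proof is correct and follows essentially the same route as the paper, which disposes of $(1)\Leftrightarrow(2)$ via \cref{propn:projtoeqcocart} (applied with the two factors swapped, as you note at the end) and of $(2)\Leftrightarrow(3)$ via the observation that the $\pr_1$-cocartesian morphisms of $A\times B$ are exactly those with invertible $B$-component; you simply spell out the details the paper leaves implicit, including the bookkeeping needed to lift a general morphism $(f,g)$ of $A\times\core B$ rather than just $(f,\id)$. The only wobble is the mid-paragraph attempt to apply \cref{propn:projtoeqcocart} in its unswapped form (which would require $p_1(\alpha)$ to be an equivalence), but you correctly identify and repair this with the swapped version, so there is no gap.
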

\begin{proof}
  The equivalence of (1) and (2) is immediate from
  \cref{propn:projtoeqcocart}, while that of (2) and (3) amounts to
  the observation that the cocartesian morphisms for $\pr_{1} \colon A
  \times B \to A$ are precisely those morphisms that project to equivalences in $B$.
\end{proof}

\begin{defn}
  We say that a functor $p \colon X \to A \times B$ is
  \emph{cocartesian over the left factor}, or simply \emph{cocartesian over $A$} when no confusion can arise, if it satisfies
  the equivalent conditions of \cref{onevarcocart}. Dually, we say
  that $p$ is \emph{cartesian over $A$} if $p^{\op}$ is cocartesian
  over $A^{\op}$. We write $\LCocart(A,B)$ and $\LCart(A,B)$ for the
  subcategories of $\Cat/(A \times B)$ whose objects are (co)cartesian
  over $A$, with the morphisms required to preserve the (co)cartesian
  morphisms over $A \times \core{B}$. Similarly, we write
  $\RCocart(A,B)$ and $\RCart(A,B)$ for the subcategories of $\Cat/(A \times B)$ whose objects are (co)cartesian
  over the right factor $B$, with the morphisms required to preserve the (co)cartesian
  morphisms over $\core{A} \times B$. 
\end{defn}

Of course, we obtain equivalences
  \[ \RCocart(A,B) \simeq \LCocart(B,A), \qquad \RCart(A,B) \simeq
    \LCart(B,A)\]
by restricting the obvious equivalence $\Cat/(A \times B)
  \simeq \Cat/(B \times A)$. 
  
  From the third condition in \cref{onevarcocart} we immediately see:
\begin{corollary}\label{rmk:lcocstr}
	We write $\pr_1:A\times B\rightarrow A$ for the projection to $A$. The equivalence $\Cat/(A \times B) \simeq (\Cat/A)/\pr_1$ restricts to equivalences of subcategories
  \begin{equation}
    \label{eq:lcocslice}
   \LCocart(A,B) \simeq \Cocart(A)/\pr_1, \qquad \LCart(A,B)
   \simeq \Cart(A)/\pr_1. 
  \end{equation}
Combining these equivalences with straightening over $A$, we get
  natural equivalences
  \begin{equation}
    \label{eq:lcocstr}
    \LCocart(A,B) \simeq \Fun(A, \Cat/B), \qquad \LCart(A,B) \simeq
    \Fun(A^{\op}, \Cat/B),
  \end{equation}
  since $\pr_1:A \times B \rightarrow A$ straightens over $A$ to the constant functor with
  value $B$. \qed
\end{corollary}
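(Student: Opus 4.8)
The plan is to treat the two displayed strings of equivalences separately, both as bookkeeping on top of \cref{onevarcocart}. For the first, \cref{eq:lcocslice}, I would start from the ``associativity of slices'' equivalence $\Cat/(A \times B) \simeq (\Cat/A)/\pr_1$ --- an instance of the general $(\mc E/e)/\phi \simeq \mc E/e'$ for a morphism $\phi\colon e' \to e$, natural in $\phi$ --- which sends a functor $p = (p_1,p_2)\colon X \to A\times B$ to the object $p_1 \colon X \to A$ of $\Cat/A$ equipped with the morphism $p\colon (X,p_1) \to (A\times B,\pr_1)$ of $\Cat/A$. On objects, condition~(3) of \cref{onevarcocart} says precisely that $p$ is cocartesian over $A$ if and only if $p_1$ is a cocartesian fibration, \ie{} $p_1 \in \Cocart(A)$, and $p$, viewed as the morphism above, carries $p_1$-cocartesian morphisms to $\pr_1$-cocartesian ones, \ie{} $p$ is a morphism of $\Cocart(A)$; these are exactly the conditions for the corresponding object of $(\Cat/A)/\pr_1$ to lie in $\Cocart(A)/\pr_1$.

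For morphisms I would first check that, for an object of $\LCocart(A,B)$, the $p$-cocartesian morphisms lying over $A \times \core B$ coincide with the $p_1$-cocartesian morphisms: by \cref{propn:projtoeqcocart} applied to $(p_2,p_1)\colon X \to B \times A$, a morphism $\alpha$ with $p_2(\alpha)$ an equivalence is $p$-cocartesian iff it is $p_1$-cocartesian, while conversely condition~(3) of \cref{onevarcocart} forces every $p_1$-cocartesian morphism to be sent to a $\pr_1$-cocartesian morphism of $A \times B$, hence to project to an equivalence in $B$. Consequently a functor over $A \times B$ preserves the $p$-cocartesian morphisms over $A \times \core B$ if and only if it preserves the $p_1$-cocartesian morphisms --- which is exactly the condition for a morphism of $\Cat/A$ between objects of $\Cocart(A)$ to be a morphism of $\Cocart(A)$. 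This yields the first equivalence in \cref{eq:lcocslice}; the second follows by applying it to $p^{\op}$, using that $p$ is cartesian over $A$ iff $p^{\op}$ is cocartesian over $A^{\op}$, that $(-)^{\op}$ identifies $\Cart(A)$ with $\Cocart(A^{\op})$ and the object $\pr_1\colon A\times B\to A$ with $\pr_1\colon A^{\op}\times B^{\op}\to A^{\op}$, and ``preserves cocartesian'' with ``preserves cartesian''.

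For the equivalences in \cref{eq:lcocstr} I would postcompose with straightening over $A$. Since $\Strco \colon \Cocart(A) \xrightarrow{\ \sim\ } \Fun(A,\Cat)$ is an equivalence, it induces an equivalence of slices $\Cocart(A)/\pr_1 \simeq \Fun(A,\Cat)/\Strco(\pr_1)$, and I would identify $\Strco(\pr_1)$ with the constant functor $\const_B$: as $\const_B$ is the composite $A \to \ast \xrightarrow{\ B\ } \Cat$ and unstraightening is compatible with base change, $\Unco_A(\const_B) \simeq A \times_\ast \Unco_\ast(B) \simeq A \times B$ over $A$, \ie{} $\Unco_A(\const_B) \simeq \pr_1$. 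Finally I would invoke the elementary identity $\Fun(A,\mc C)/\const_c \simeq \Fun(A, \mc C/c)$, valid for any $\infty$-category $\mc C$ with an object $c$ --- which follows by modelling $\mc C/c \simeq \Fun([1],\mc C)\times_{\mc C}\{c\}$ (evaluation at the target) and commuting $\Fun(A,-)$ past the pullback --- with $\mc C = \Cat$ and $c = B$. The cartesian case is identical, with $\Strcart \colon \Cart(A) \simeq \Fun(A^{\op},\Cat)$ in place of $\Strco$ and the same identification $\Strcart(\pr_1) \simeq \const_B$. Naturality is automatic, all the building blocks being natural in $A$ and $B$.

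I do not anticipate a genuine obstacle here: once \cref{onevarcocart} is available the whole argument is formal. The one step requiring a moment's care is the morphism-level comparison for \cref{eq:lcocslice} --- namely verifying that ``preserves $p$-cocartesian lifts over $A\times\core B$'' is genuinely \emph{equivalent} to, rather than merely implied by, ``preserves $p_1$-cocartesian morphisms'' --- and this is exactly where one uses that for an object of $\LCocart(A,B)$ every $p_1$-cocartesian morphism already projects to an equivalence in $B$.
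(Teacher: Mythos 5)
Your proposal is correct and follows exactly the route the paper intends: the paper records this as an immediate consequence of condition~(3) of \cref{onevarcocart} together with the observation that $\pr_1$ straightens to $\const_B$, and you have simply filled in the routine details (the morphism-level comparison via \cref{propn:projtoeqcocart}, the identification $\Strco(\pr_1)\simeq\const_B$ via base change, and the identity $\Fun(A,\Cat)/\const_B\simeq\Fun(A,\Cat/B)$). All of these checks are accurate, so there is nothing to correct.
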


For later use we also note the following consequence of
\cref{propn:projtoeqcocart} here:
\begin{corollary}\label{cor:cocartprgpd}
  Let $I$ be an \igpd{} and $C$ an \icat{}. The following are
  equivalent for a functor $p \colon X \to I \times C$:
  \begin{enumerate}[(1)]
  \item $p$ is a cocartesian fibration.
  \item For every $i \in I$, the morphism on fibres $p_{i} \colon
    X_{i} \to C$ is a cocartesian fibration.
  \item The composite $X \xto{p} I \times C \to C$ is a cocartesian
    fibration.
  \end{enumerate}
\end{corollary}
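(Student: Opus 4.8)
The plan is to prove \cref{cor:cocartprgpd} as a routine application of \cref{propn:projtoeqcocart}, exploiting that every morphism in the \igpd{} $I$ is an equivalence.

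\textbf{Strategy.} First I would establish (1) $\Leftrightarrow$ (2). Write $p = (p_1, p_2) \colon X \to I \times C$. Since $I$ is an \igpd{}, every morphism of $I$ is an equivalence, so for \emph{any} morphism $\alpha$ of $X$ the image $p_1(\alpha)$ is an equivalence; hence by \cref{propn:projtoeqcocart} a morphism $\alpha$ is $p$-cocartesian \IFF{} it is $p_2$-cocartesian. For the lifting condition, a morphism $(f, g) \colon (i, c) \to (i', c')$ in $I \times C$ with a chosen lift of the target amounts, since $f$ is an equivalence, to the data of the morphism $g$ in $C$ together with the (essentially unique) transport along $f$; concretely, pulling $p$ back along $f \colon [1] \to I$ (which is an equivalence, so $X \times_I [1] \to X_{i'} \times [1]$ is an equivalence onto $X_{i'}$ regarded over the terminal object) identifies the existence of $p$-cocartesian lifts over $\{i'\} \times C$-type morphisms with the existence of $p_2$-cocartesian lifts in the fibre. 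Making this precise: $p$ is a cocartesian fibration \IFF{} it has all $p$-cocartesian lifts over $I \times C$, and by \cref{onevarcocart} (with the roles of $A$ and $B$ swapped, using that $I$ is an \igpd{} so \emph{every} morphism over $I \times \core C = I \times C$ is relevant) this is equivalent to: $p_1$ is a cocartesian fibration --- automatic since $I$ is an \igpd{}, as all morphisms of $X$ are then $p_1$-cocartesian --- and the induced functor on each fibre $p_i = (p_2)_i \colon X_i \to C$ is a cocartesian fibration. Actually the cleanest route is: by the equivalences \cref{eq:lcocstr} (again with $A = I$, using \cref{cor:cocartprgpd}'s hypothesis only through \cref{onevarcocart}) a functor cocartesian over $I$ corresponds to a functor $I \to \Cat/C$, and being a cocartesian fibration over $I \times C$ corresponds to this functor landing in cocartesian fibrations over $C$ \emph{pointwise}; but since $I$ is an \igpd{} that is the same as landing there, which is condition (2).

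\textbf{Step (2) $\Leftrightarrow$ (3).} Here I would observe that the composite $X \xto{p} I \times C \xto{\pr_2} C$ has fibre over $c \in C$ equal to $I \times_C^{\,} (\text{stuff})$... more simply: the fibre of $\pr_2 \circ p$ over $c$ is $p_2^{-1}(c)$, which maps to $I$ via $p_1$, and since $I$ is an \igpd{} this fibre is equivalent to $\coprod_{i \in \pi_0 I}$(something) --- better to argue directly. Note $X = \coprod$? No. The honest argument: $\pr_2 \circ p$ is a cocartesian fibration \IFF{} it has all cocartesian lifts; given the identification of $p$-cocartesian and $p_2$-cocartesian morphisms (valid since $I$ is an \igpd{}, as above), and since every morphism downstairs in $I \times C$ over a fixed $C$-morphism is equivalent to one in $\{i\} \times C$, the $(\pr_2 \circ p)$-cocartesian lifts are exactly the $p_2$-cocartesian ones, whose existence over all of $C$ after restricting to each $X_i$ is precisely (2). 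Alternatively, and perhaps most efficiently, apply \cref{propn:projtoeqcocart} in the form: every morphism $\alpha$ of $X$ has $p_1(\alpha)$ an equivalence, so $\alpha$ is $(\pr_2 \circ p)$-cocartesian \IFF{} $\alpha$ is $p_2$-cocartesian \IFF{} $\alpha$ is $p$-cocartesian; combined with the fact that $I$-coordinates of lifts are forced up to equivalence, this gives (1) $\Leftrightarrow$ (3) directly, and then (2) follows as the fibrewise reformulation.

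\textbf{Main obstacle.} There is no deep obstacle; the only thing requiring care is bookkeeping about lifting conditions --- specifically, checking that for a cocartesian fibration over $I \times C$ the existence of lifts over the "$C$-direction" morphisms in each fibre $X_i$ genuinely suffices to get lifts over all of $I \times C$, which rests on $I$ being an \igpd{} so that the "$I$-direction" is invertible and contributes no new lifting data. I would phrase the whole proof by reducing everything, via \cref{propn:projtoeqcocart}, to the single slogan "a morphism of $X$ is cocartesian for any of $p$, $p_2$, or $\pr_2 \circ p$ at once", and then reading off (1), (2), (3) as three packagings of the same condition. \qed
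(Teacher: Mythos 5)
Your proposal is correct and follows essentially the same route as the paper: both hinge on \cref{propn:projtoeqcocart} to identify $p$-cocartesian and $(\pr_2 \circ p)$-cocartesian morphisms (valid here because every morphism of $X$ lies over an equivalence in the \igpd{} $I$), giving (1) $\Leftrightarrow$ (3), with (2) read off as the fibrewise repackaging. The one point your closing slogan elides --- and which the paper itself only declares ``easy to check directly'' --- is that a $p_i$-cocartesian morphism of the fibre $X_i$ is $(\pr_2\circ p)$-cocartesian in all of $X$: this is a genuinely third notion of cocartesianness, not covered by \cref{propn:projtoeqcocart}, and it (together with the closure of cocartesian fibrations under base change for the converse direction) is what makes (2) equivalent to the other two conditions.
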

\begin{proof}
  The equivalence of (1) and (3) follows from
  \cref{propn:projtoeqcocart}, while (1) implies (2) since cocartesian
  fibrations are closed under base change. Finally, applying the criterion of \cite[Lemma A.1.8]{cois} to the commutative
  triangle
  \[
    \begin{tikzcd}
      X \arrow{rr}{p} \arrow{dr} & & I \times C \arrow[dl,"\pr_1"] \\
       & I,
    \end{tikzcd}
  \]
 shows that (2) implies (1).
\end{proof}

\begin{notation}
  Given a functor $p \colon X \to A \times B$, we define $p_{\ell}$ 
  and $p_{r}$ by the cartesian squares 
  \[
    \begin{tikzcd}
      X_{\ell} \arrow[d, "p_{\ell}"{swap}] \arrow{r} & X \arrow{d}{p} & X_{r}
      \arrow{l} \arrow{d}{p_{r}} \\
      A \times \core{B} \arrow{r} & A \times B & \core{A} \times B. \arrow{l}
    \end{tikzcd}
  \]
  To make diagrams more readable, we will sometimes indicate a $p_{\ell}$-cartesian edge of $X$ by $x\localarrowhead y$ and a $p_r$-cocartesian edge of $X$ by $x\localarrowtail y$.      
\end{notation}

\begin{remark}\label{rmk:pbtocore}
  From \cref{cor:cocartprgpd} it follows immediately that for a
  functor $p \colon X \to A \times B$ the pullback $p_{\ell}$ is a
  (co)cartesian fibration \IFF{} for every $b \in B$ the map on fibres
  $p_{b} \colon X_{b} \to A$ is a (co)cartesian fibration, and
  similarly for $p_{r}$.
\end{remark}

\subsection{\Lorthos{}}\label{subsec:lorth}
If we combine our conditions from the previous subsection for a
functor to $A \times B$ to straighten contravariantly over $A$ and
covariantly over $B$, we obtain the following definition:
\begin{definition}
  A \emph{\lortho{}} is a functor of \icats{} $p \colon X \to A \times
  B$ such that $p$ is cartesian over $A$ and cocartesian over $B$,
  \ie{} $X$ has all $p$-cartesian lifts over $A \times \core{B}$ and all
  $p$-cocartesian lifts over $\core{A} \times B$.  We write $\LOrth(A,B)$ for the subcategory of $\Cat/(A\times B)$
  whose objects are the \lorthos{}, with the morphisms
  required to preserve both cartesian morphisms over $A$ and
  cocartesian morphisms over $B$.
\end{definition}
Let us record two alternative characterisations:

\begin{observation}
  Using \cref{onevarcocart} we can reformulate the definition of a
  \lortho{} as a functor $p=(p_1,p_2) \colon X \to A \times B$ such that
  \begin{enumerate}[(1)]
  \item $p_1$ is a cartesian fibration,
  \item $p_2$ is a cocartesian fibration,
  \item every $p_1$-cartesian morphism in $X$ lies over an equivalence
    in $B$,
  \item every $p_2$-cocartesian morphism in $X$ lies over an equivalence
    in $A$.
  \end{enumerate}
\end{observation}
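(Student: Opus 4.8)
The plan is to deduce this purely by unwinding the definition of a \lortho{} and invoking \cref{onevarcocart} once in each of the two variables, so that essentially no new argument is required; I would present it as a short remark rather than a formal proof.

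First I would treat the condition of being \emph{cocartesian over $B$}. By definition this means that $X$ has all $p$-cocartesian lifts over $\core{A} \times B$, which is precisely condition (1) of \cref{onevarcocart} applied to $p$ regarded as a functor to $B \times A$ (with the two factors interchanged). That corollary then rewrites this as: $p_2$ is a cocartesian fibration and every $p_2$-cocartesian morphism of $X$ lies over an equivalence in $A$, which is exactly conditions (2) and (4) of the statement. Next I would treat the condition of being \emph{cartesian over $A$}, which by definition says that $p^{\op} = (p_1^{\op}, p_2^{\op}) \colon X^{\op} \to A^{\op} \times B^{\op}$ is cocartesian over its left factor $A^{\op}$. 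Applying \cref{onevarcocart} to $p^{\op}$, this becomes: $p_1^{\op}$ is a cocartesian fibration --- i.e.\ $p_1$ is a cartesian fibration --- and every $p_1^{\op}$-cocartesian, i.e.\ every $p_1$-cartesian, morphism of $X$ lies over an equivalence in $B^{\op}$, equivalently in $B$. This is conditions (1) and (3). Putting the two halves together yields the claimed equivalence with (1)--(4).

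The only thing to watch is the bookkeeping: keeping straight which of $A$ and $B$ plays the role of the ``left factor'' in each invocation of \cref{onevarcocart}, and that equivalences in $B^{\op}$ coincide with equivalences in $B$. I do not expect any substantive obstacle here; the genuine content sits in \cref{propn:projtoeqcocart}, which feeds into \cref{onevarcocart} and is already available.
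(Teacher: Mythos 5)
Your unwinding is correct and is exactly how the paper intends this observation to be read: the statement is given without proof precisely because it amounts to applying \cref{onevarcocart} once to the swapped functor $(p_2,p_1)\colon X \to B\times A$ (yielding (2) and (4)) and once to $p^{\op}$ (yielding (1) and (3)). No further comment is needed.
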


\begin{propn}\label{prop:lortho reformulations}
  The following are equivalent for a functor $p=(p_1,p_2) \colon X \to A \times B$:
  \begin{enumerate}[(1)]
  \item\label{condlortho} $p$ is a \lortho{}.
  \item\label{lorthocartfw} In the commutative triangle
    \[
      \begin{tikzcd}
        X \arrow{rr}{p} \arrow[dr, "p_1"{swap}] & & A \times B \arrow{dl}{\pr_{1}} \\
         & A,
      \end{tikzcd}
    \]
    $p_1$ is a cartesian fibration, $p$ takes $p_1$-cartesian
    morphisms to $\pr_{1}$-cartesian morphisms, and for every $a \in
    A$ the map on fibres $X_{a} \to B$ is a cocartesian fibration.
  \item\label{lorthocartr} $p$ is cartesian over $A$ and
    $p_{r} \colon X_{r} \to \core{A} \times B$ is a cocartesian fibration.
  \item\label{lorthococfw} In the commutative triangle
    \[
      \begin{tikzcd}
        X \arrow{rr}{p} \arrow[dr, "p_2"{swap}] & & A \times B \arrow{dl}{\pr_{2}} \\
         & B,
      \end{tikzcd}
    \]
    $p_2$ is a cocartesian fibration, $p$ takes $p_2$-cocartesian
    morphisms to $\pr_{2}$-cocartesian morphisms, and for every $b \in
    B$ the map on fibres $X_{b} \to A$ is a cartesian fibration.
  \item\label{lorthococl} $p$ is cocartesian over $B$ and $p_{\ell} \colon X_{\ell} \to
    A \times \core{B}$ is a cartesian fibration.
  \end{enumerate}
\end{propn}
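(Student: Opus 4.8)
The plan is to prove the cycle of implications using the one-variable results already established, treating the ``cartesian over $A$'' half and the ``cocartesian over $B$'' half as far as possible in parallel, since conditions \eqref{lorthocartfw}--\eqref{lorthocartr} concern only the first and conditions \eqref{lorthococfw}--\eqref{lorthococl} only the second.

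First I would observe that \eqref{condlortho} unpacks, via the preceding Observation and via \cref{onevarcocart} (applied to $p^{\op}$ for the ``cartesian over $A$'' part and directly for the ``cocartesian over $B$'' part), into the conjunction of the following four facts: $p_1$ is a cartesian fibration; $p_2$ is a cocartesian fibration; $p$ takes $p_1$-cartesian morphisms to $\pr_1$-cartesian morphisms in $A \times B$; and $p$ takes $p_2$-cocartesian morphisms to $\pr_2$-cocartesian morphisms in $A \times B$. The first step is therefore just to identify which of conditions \eqref{lorthocartfw}--\eqref{lorthococl} already record which of these four facts. By the dual of \cref{onevarcocart}, ``$p$ is cartesian over $A$'' is exactly ``$p_1$ is a cartesian fibration and $p$ takes $p_1$-cartesian morphisms to $\pr_1$-cartesian morphisms'', which is the first and third fact together; this clause appears verbatim in \eqref{lorthocartfw} and \eqref{lorthocartr}. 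Dually ``$p$ is cocartesian over $B$'' is the second and fourth fact, appearing in \eqref{lorthococfw} and \eqref{lorthococl}. So the content of the proposition reduces to showing that, \emph{given} that $p$ is cartesian over $A$, the remaining clause of \eqref{lorthocartfw} (``$X_a \to B$ is a cocartesian fibration for every $a$''), the remaining clause of \eqref{lorthocartr} (``$p_r \colon X_r \to \core A \times B$ is a cocartesian fibration''), and the statement ``$p$ is cocartesian over $B$'' are all equivalent — and dually with the roles of the two factors interchanged for \eqref{lorthococfw}, \eqref{lorthococl}.

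For the equivalence of the two ``remaining clauses'' of \eqref{lorthocartfw} and \eqref{lorthocartr} with each other I would invoke \cref{rmk:pbtocore}: it says precisely that $p_r \colon X_r \to \core A \times B$ is a cocartesian fibration if and only if $X_a \to B$ is a cocartesian fibration for every $a \in A$, with no hypotheses needed. It then remains to connect these to ``$p$ is cocartesian over $B$''. The implication from ``$p$ cocartesian over $B$'' to ``$X_a \to B$ a cocartesian fibration'': by \cref{onevarcocart}(3) applied to $p_2$, cocartesianness over $B$ means $p_2 \colon X \to B$ is a cocartesian fibration with $p$ taking $p_2$-cocartesian morphisms to $\pr_2$-cocartesian morphisms, i.e.\ to morphisms that are equivalences in $A$; restricting such a fibration and such lifts over a point $a$ yields a cocartesian fibration $X_a \to B$. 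For the converse I would use the combination of cartesianness over $A$ with ``$X_a \to B$ a cocartesian fibration for all $a$'': a $p_2$-cocartesian lift of $\beta \colon b \to b'$ at a point $x \in X_b$ with $p_1(x) = a$ can be produced as the $(X_a \to B)$-cocartesian lift, which exists by hypothesis, and one checks it is genuinely $p$-cocartesian (equivalently $p_2$-cocartesian with $p_1$-image an equivalence, by \cref{propn:projtoeqcocart}); naturality of cartesian transport in $A$ over equivalences in $A$ — which is automatic — shows these assemble correctly. Concretely, the cleanest route is again via \cref{rmk:pbtocore} together with \cref{propn:projtoeqcocart}: $p_r \colon X_r \to \core A \times B$ being a cocartesian fibration plus the fact that $X_r \to X$ is conservative on the $A$-coordinate lets one promote $p_r$-cocartesian lifts to $p$-cocartesian ones, giving that $X$ has all $p$-cocartesian lifts over $\core A \times B$, which is the definition of cocartesian over $B$.

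The symmetric argument — swapping $(A, \text{cartesian})$ with $(B, \text{cocartesian})$, and using $p^{\op}$ where needed — establishes the equivalence of \eqref{condlortho}, \eqref{lorthococfw}, and \eqref{lorthococl} by the same three tools (\cref{onevarcocart}, \cref{propn:projtoeqcocart}, \cref{rmk:pbtocore}). Assembling: \eqref{condlortho} $\Leftrightarrow$ [$p$ cartesian over $A$] $\wedge$ [$p$ cocartesian over $B$]; the first conjunct together with either remaining clause is \eqref{lorthocartfw} or \eqref{lorthocartr}, and the remaining clauses are interchangeable with ``$p$ cocartesian over $B$'' as just argued; dually for \eqref{lorthococfw} and \eqref{lorthococl}. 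I expect the only mildly delicate point to be the direction ``fibrewise-over-$A$ cocartesian fibrations $\Rightarrow$ genuinely $p$-cocartesian over $B$'', i.e.\ verifying that the pointwise lifts really are $p_2$-cocartesian and not merely fibrewise so; this is exactly where \cref{propn:projtoeqcocart} (a morphism over an equivalence in $A$ is $p$-cocartesian iff $p_2$-cocartesian) and \cref{rmk:pbtocore} do the work, and it is the step I would write out most carefully. Everything else is formal bookkeeping with the slice-category reformulations of \S\ref{subsec:Rfib}.
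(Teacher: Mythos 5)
Your reduction is sound and matches the paper's: the equivalences \ref{lorthocartfw}$\Leftrightarrow$\ref{lorthocartr} and \ref{lorthococfw}$\Leftrightarrow$\ref{lorthococl} are exactly \cref{rmk:pbtocore} plus \cref{onevarcocart}, condition \ref{condlortho} is by definition the conjunction of ``cartesian over $A$'' and ``cocartesian over $B$'', and the implications from \ref{condlortho} to the other conditions are immediate. The one substantive claim is the converse, e.g.\ that ``cartesian over $A$'' together with ``$p_r \colon X_r \to \core A \times B$ a cocartesian fibration'' forces $X$ to have all $p$-cocartesian lifts over $\core A \times B$ --- and here your proposal has a genuine gap. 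A $p_r$-cocartesian (equivalently, fibrewise cocartesian) morphism $\phi\colon x\to y$ satisfies its universal property only against morphisms of the wide subcategory $X_r\subseteq X$, i.e.\ those lying over equivalences in $A$; to be $p$-cocartesian it must satisfy the universal property against \emph{all} of $X$. Neither of the tools you name bridges this: \cref{propn:projtoeqcocart} converts ``$p$-cocartesian'' into ``$p_2$-cocartesian'' for morphisms lying over equivalences in $A$, but both of those are conditions tested against all of $X$, while \cref{rmk:pbtocore} merely identifies the two \emph{restricted} conditions (fibrewise versus over $\core A\times B$) with each other. So ``promote $p_r$-cocartesian lifts to $p$-cocartesian ones'' is precisely the statement to be proved, not a consequence of the cited results.

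The missing ingredient is the cartesian structure over $A$, which you list as a hypothesis but never actually deploy. Concretely: to test $\phi$ against $z\in X$ over $(c,d)$ along a pair $(\gamma,\delta)$ with $\gamma\colon a\to c$ not an equivalence, one chooses a $p$-cartesian lift $\gamma^*z\to z$ of $(\gamma,\id)$ and uses it to identify the fibre of $\Map_X(y,z)\to \Map_A(a,c)\times\Map_B(b',d)$ over $(\gamma,\delta)$ with the corresponding fibre of $\Map_{X_a}(y,\gamma^*z)$, and similarly for $x$; this reduces the general lifting problem to one inside the fibre $X_a$, where the fibrewise cocartesianness of $\phi$ applies. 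This is exactly the diagram chase the paper carries out for the dual implication \ref{lorthococl}$\Rightarrow$\ref{condlortho} (using a $p$-cocartesian lift over $B$ to resolve the lifting problem for a $p_\ell$-cartesian edge), and it is what the cited \cite[Lemma A.1.10]{cois} supplies for \ref{lorthocartfw}$\Rightarrow$\ref{condlortho}. Your write-up should replace the appeal to \cref{propn:projtoeqcocart} and \cref{rmk:pbtocore} at this step by such an argument.
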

\begin{proof}
  The equivalence of \ref{lorthocartfw} and \ref{lorthocartr}, as well
  as of \ref{lorthococfw} and \ref{lorthococl}, follows from
  \cref{rmk:pbtocore} and \cref{onevarcocart}. It thus remains to show that
  \ref{condlortho} is equivalent to one of these pairs, since they
  correspond to each other under taking opposites.

  If $p$ is a \lortho{} then it is immediate from the definition that
  $p_{\ell}$ is a cartesian and $p_{r}$ a cocartesian fibration, \ie{}
  \ref{condlortho} implies \ref{lorthocartr} and
  \ref{lorthococl}. Conversely, the implication \ref{lorthocartfw}
  $\Rightarrow$ \ref{condlortho} is
  \cite[Lemma A.1.10]{cois}. For completeness, we also include a
  brief argument that \ref{lorthococl} implies \ref{condlortho}:
  We need to show that a $p_{\ell}$-cartesian lift $\lambda \colon x \rightarrow y$ in $X$ of an arrow $(\alpha, \beta)\colon (a,b) \rightarrow (a',b')$, for which $\beta\colon b \rightarrow b'$ is an equivalence, is automatically $p$-cartesian.

Consider thus the black part of the diagram
	\[\begin{tikzcd}[column sep=4em]
	z\arrow[rrd]\arrow[d, dashed, blue]\arrow[rd, dashed] & & & (c, d)\arrow[rd]\arrow[rrd]\arrow[d, dashed, blue]\\
	w\arrow[rr, dashed, red, bend right] & x\arrow[r, "\lambda" below] & y & (c,b)\arrow[r, dashed, blue]\arrow[rr, dashed, red, bend right]& (a, b)\arrow[r, "{(\alpha, \beta)}" below] & (a', b'),
      \end{tikzcd}\] which is a lifting problem in which one has to
    find a black dashed arrow in an essentially unique manner. First
    take an (essentially unique) $p$-cocartesian lift $z\rt w$ of
    $(c,d) \rightarrow (c,b)$. Since this arrow is cocartesian in all
    of $X$, there is an essentially unique dotted red arrow lifting
    the outer triangle on the right. Since the lower horizontal part
    of the diagram lives over $A \times \core{B}$ there now exists an
    essentially unique map $w\rt x$ (not drawn) lifting the lower
    triangle. The composition with $z\rt x$ is the desired black
    dotted map, and using that $z\rt w$ is $p$-cocartesian one can
    then complete the diagram in an essentially unique way. The
    essential uniqueness of the map $z\rt x$ is seen by reading the
    argument in reverse.
\end{proof}

By definition a functor to $A \times B$ is a \lortho{} \IFF{} it lies in both $\cat{LCart}(A,B)$ and $\cat{RCocart}(A,B)$ and thus by
\cref{rmk:lcocstr} a \lortho{} can be straightened in either of the
two variables. The previous lemma allows us to see precisely \emph{what} a curved orthofibration straightens to:

\begin{corollary}\label{cor:locorthstr}
  Straightening over $A$ and $B$ give natural equivalences
  \[ \Fun(A^{\op}, \Cocartlax(B))^{\mathrm{cc}} \simeq \LOrth(A,B) \simeq
    \Fun(B, \Cartlax(A))^{\mathrm{ct}},\]
  respectively,
  where $\Fun(B, \Cartlax(A))^{\mathrm{ct}}$ denotes the wide subcategory
  of $\Fun(B, \Cartlax(A))$ in which the morphisms are natural
  transformations whose components all preserve cartesian morphisms
  over $A$, and similarly for $\Fun(A^{\op},
  \Cocartlax(B))^{\mathrm{cc}}$.
\end{corollary}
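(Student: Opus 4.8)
The plan is to deduce this purely formally from the one-variable straightening equivalences established in \cref{rmk:lcocstr} together with the reformulations of \cref{prop:lortho reformulations}. First I would observe that, by definition, a curved orthofibration over $A \times B$ is exactly a functor lying in $\cat{LCart}(A,B) \cap \cat{RCocart}(A,B)$, viewed as a subcategory of $\Cat/(A\times B)$; the subtlety is only that on morphisms we ask for preservation of \emph{both} classes of distinguished edges simultaneously, so $\LOrth(A,B)$ is the pullback $\cat{LCart}(A,B)\times_{\Cat/(A\times B)}\cat{RCocart}(A,B)$ in the (large) \icat{} of subcategories of $\Cat/(A\times B)$. Under the equivalence $\Cat/(A\times B)\simeq\Cart(A)/\pr_1$ from \cref{rmk:lcocstr}, the subcategory $\cat{LCart}(A,B)$ becomes $\Cart(A)/\pr_1$ itself; the extra condition cutting out $\LOrth(A,B)$ inside it is precisely the ``fibrewise cocartesian'' condition on $X_a\to B$ together with the requirement that morphisms preserve these fibrewise cocartesian edges, by the equivalence of \ref{condlortho} and \ref{lorthocartfw} (and the analogous statement for morphisms).

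Next I would carry out the straightening over $A$. The equivalence $\Cart(A)/\pr_1\simeq\Fun(A^{\op},\Cat/B)$ from \cref{eq:lcocstr} sends the cartesian fibration $X\to A$ to the functor $a\mapsto X_a$, now landing in $\Cat/B$ via the residual map to $B$. The point is then to track what the remaining curved-orthofibration conditions translate to: the condition that each $X_a\to B$ is a cocartesian fibration says exactly that this functor factors through the full subcategory $\Cocartlax(B)\subseteq\Cat/B$, and the condition on morphisms --- that a morphism of curved orthofibrations preserves $p_r$-cocartesian edges, equivalently preserves fibrewise cocartesian edges over $B$ --- translates under straightening to the statement that the components of the natural transformation all lie in $\cocart(B)\subseteq\Cocartlax(B)$, i.e.\ that we land in the wide-on-morphisms subcategory $\Fun(A^{\op},\Cocartlax(B))^{\mathrm{cc}}$. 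This gives the left-hand equivalence. The right-hand one is obtained symmetrically: a curved orthofibration is also characterised by \ref{lorthococfw}, namely $p_2\colon X\to B$ is a cocartesian fibration whose fibrewise structure over $A$ is cartesian, so one straightens over $B$ using $\Cocart(B)/\pr_2\simeq\Fun(B,\Cat/A)$ instead and identifies the image with $\Fun(B,\Cartlax(A))^{\mathrm{ct}}$ by the same bookkeeping. Naturality in $A$ and $B$ is inherited from naturality of the unstraightening equivalences.

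The main obstacle I anticipate is purely bookkeeping: making precise, at the level of subcategories rather than just on objects, that the two extra conditions (the fibrewise (co)cartesian condition on fibres, and the morphism-level preservation condition) match up under the one-variable straightening. On objects this is immediate from \cref{prop:lortho reformulations}, but one must check that a natural transformation $\eta\colon F\Rightarrow G$ between functors $A^{\op}\to\Cocartlax(B)$ has all components in $\cocart(B)$ if and only if the corresponding map of curved orthofibrations preserves $p_r$-cocartesian edges --- this uses that, under the identification $X_a\simeq F(a)$, the $p_r$-cocartesian edges of $X$ are exactly the edges lying over $\iota A\times B$ that are cocartesian in some (equivalently any) fibre $X_a$, which is \cref{rmk:pbtocore}, and that the straightening equivalence is compatible with passage to fibres. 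So the proof is essentially an assembly of \cref{rmk:lcocstr}, \cref{prop:lortho reformulations}, and \cref{rmk:pbtocore}, with the only real work being the careful identification of morphism classes; there is nothing genuinely hard, and one could reasonably just cite the relevant one-variable equivalences and note that both the object- and morphism-level conditions defining $\LOrth(A,B)$ translate to the stated subcategories.
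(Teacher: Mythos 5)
Your proposal is correct and follows essentially the same route as the paper: the object-level identification via conditions \ref{lorthocartfw} and \ref{lorthococfw} of \cref{prop:lortho reformulations} applied to the one-variable straightening of \cref{rmk:lcocstr}, and on morphisms the observation that unstraightened maps automatically preserve cartesian edges over $A$ while componentwise preservation of cocartesian edges corresponds (via \cref{rmk:pbtocore}, or equivalently \cref{propn:projtoeqcocart} as the paper cites) to preservation of the $p_r$-cocartesian, hence $p$-cocartesian, edges over $B$. The only quibble is notational: the ambient equivalence is $\Cat/(A\times B)\simeq(\Cat/A)/\pr_1$, which \emph{restricts} to $\LCart(A,B)\simeq\Cart(A)/\pr_1$, rather than being that equivalence itself.
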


\begin{proof}
\cref{lorthocartfw} and \cref{lorthococfw} of \cref{prop:lortho reformulations} immediately imply the result at the level of objects. We will exhibit the claim on morphisms for the left equivalence, the right being dual. Suppose that \[\begin{tikzcd}
X\arrow[rd, "p"']\arrow[rr, "f"] & 	&	Y\arrow[ld, "q"] \\
&	A\times B 	& 
\end{tikzcd}\] is the unstraightening of a map $\eta\in \Fun(A^\op,\Cocartlax(B))$. Because it is given by cartesian unstraightening, it will necessarily preserve cartesian edges over $A$. Naturality of unstraightening implies that the components of $\eta$ preserving cocartesian edges is equivalent to the maps $f_{a}$ on fibres preserving cocartesian edges. \Cref{propn:projtoeqcocart} then implies that $f_{r}$ is a map of cocartesian fibrations. However, we have seen in \cref{prop:lortho reformulations} that for a \lortho, the $p_r-$cocartesian edges agree with the $p$-cocartesian edges over $B$. Therefore, $\eta$ preserves cocartesian edges pointwise if and only if $f$ preserves cocartesian edges over $B$, and is thus a map of curved orthofibrations.
\end{proof}

We saw above that \lorthos{} over $A \times B$ can be straightened to
functors $A^{\op} \to \Cocartlax(B)$ or $B \to \Cartlax(A)$. Our next
goal is to introduce a further condition that will ensure these
functors actually land in the subcategories $\Cocart(B)$ and
$\Cart(A)$ (and thus encode functors
$A^\op \times B \rightarrow \Cat$), giving the notion of
\emph{orthofibrations}. We also specialise further to
\emph{bifibrations} as considered by Lurie in
\cite[Section 2.4.7]{HTT}, and studied in detail for example in
Stevenson \cite{Stevenson} or \cite[Appendix A]{cois}; these
straighten to functors $A^{\op}\to \LFib(B)$ and $B \to \RFib(A)$.

\begin{construction}\label{ex:interpol}
Suppose that $A=[1]^{\op}$ and $B=[1]$, and let us write $\alpha, \beta$ for the unique nondegenerate simplices in $A$ and $B$. Consider the diagram $\rho\colon [1]\rt \cocart^{\lax}([1])$ corresponding to the map of cocartesian fibrations (between ordinary categories)
\begin{equation}\label{eq:corep interpol}\begin{tikzcd}
{[1]}\arrow[rd, equal]\arrow[rr, "\partial_1"] & & {[2]}\arrow[ld, "\sigma_1"]\\
& {[1].}
\end{tikzcd}\end{equation}
Note that this diagram is characterised by a universal property in $\Ar(\Cocartlax([1]))^{\mathrm{cc}},$ which is our slightly shorter notation for $\Fun([1],\cocart^{\lax}([1]))^{\mathrm{cc}}$: for each object $g\colon X\rt Y$ in $\Ar(\Cocartlax([1]))^{\mathrm{cc}}$, evaluation at $\{0\}\in [1]=\rho(0)$ yields a natural equivalence to the fibre of $X$ over $0$
$$
\Map_{\Ar(\Cocartlax([1]))^{\mathrm{cc}}}(\rho, g)\simeq \core X_0.
$$
Indeed, unravelling the definitions shows that a natural transformation
$\rho\Rightarrow g$ whose components preserve cocartesian arrows is
given by a cocartesian arrow $\tilde{\beta}\colon x\rt \beta_!x$ in
$X$ over $\beta$, together with a factorisation of $g(\tilde{\beta})$ into a cocartesian morphism followed by a fibrewise one,
\begin{equation}\label{eq:interpol str}\begin{tikzcd}
g(\tilde{\beta})\colon g(x)\arrow[r, tail] & \beta_!g(x)\arrow[r, "\rho_\beta(x)"] & g(\beta_!x).
\end{tikzcd}\end{equation}
Now the cartesian unstraightening of $\rho$ over $A=[1]$ is the \lortho{}
$$
q\colon Q\rt [1]^{\op}\times [1]
$$
where $Q$ is the poset given by
\begin{center}
\begin{tikzcd}
                                        & 11'\arrow[rd]\ar[dd,dotted] &                          \\
10 \arrow[d,two heads] \arrow[ru,tail] \ar[rd, dotted] \ar[rr, dotted]&    & 11 \arrow[ld,two heads] \\
00 \arrow[r,tail]                       & 01  &                         
\end{tikzcd}  
\end{center} 
and the projection is the evident one, sending $11'\rt 11$ to the identity. Then $Q\rt [1]^{\op}\times [1]$ has the following universal property: for every \lortho{} $p\colon X\rt A\times B$ and every $\alpha\colon a'\rt a$ and $\beta\colon b\rt b'$, there is a natural equivalence between the \igpd{} $\core(X_{(a, b)})$ of objects in the fibre over $(a, b)$ and the \igpd{} of maps of curved orthofibrations
$$\begin{tikzcd}
Q\arrow[r]\arrow[d] & X\arrow[d, "p"]\\
{[1]^{\op}\times[1]}\arrow[r, "\alpha\times\beta"] & A\times B.
\end{tikzcd}$$
Let us refer to such diagrams as \emph{$p$-interpolating diagrams}. Explicitly, the $p$-interpolating diagram associated to $x\in X_{(a, b)}$ is given by
\[
\begin{tikzcd}
                                        &
                                        (\id,\beta)_{!}(\alpha,\id)^{*}x\arrow[rd,
                                        dashed] \arrow[dd,dotted] &                          \\
(\alpha,\id)^{*}x \arrow[d, two heads] \ar[rd,dotted] \arrow[ru, tail] \arrow[rr,dotted] &    & (\alpha,\id)^{*}(\id,\beta)_{!}x \arrow[ld, two heads] \\
x \arrow[r, tail]                       & (\id,\beta)_{!}x  &                         
\end{tikzcd}  
\]
where we choose $p$-(co)cartesian morphism and (dotted) compositions as indicated, and finally the dashed arrow is given by either factoring the horizontal dotted morphisms through the cocartesian morphism over $(\id,\beta)$, or equivalently by factoring the vertical dotted morphism through the cartesian morphism over $(\alpha,\id)$.
\end{construction}

\begin{definition}\label{def:interpol lorth}
Let $p\colon X\rt A\times B$ be a \lortho{}. We will refer to a morphism in $X$ as \emph{$p$-interpolating} if it arises as the evaluation at $11' \rightarrow 11$ of a $p$-interpolating diagram $Q \rightarrow X$.
\end{definition}
\begin{remark}
The $p$-interpolating edges in $X$ are precisely the edges that arise under unstraightening from the morphisms $\rho_\beta(x)$ described in \eqref{eq:interpol str}.
\end{remark}

\begin{defn}
  A functor $p=(p_1,p_2) \colon X \to A \times B$ is an
  \emph{orthofibration} if it is a \lortho{} and all $p$-interpolating
  morphisms in $X$ are invertible, \ie{} for every
  pair of morphisms $\alpha \colon a' \to a$ in $A$ and $\beta \colon b \to b'$
  in $B$ and every object $x$ in $X$ over $(a,b)$, the interpolating
  morphism
  \[ (\id,\beta)_{!}(\alpha,\id)^{*}x \longrightarrow (\alpha,\id)^{*}(\id,\beta)_{!}x \]
  is an equivalence. We write $\Orth(A,B)$ for the full subcategory of
  $\LOrth(A,B)$ spanned by the orthofibrations. (Note that our orthofibrations are the same as the \emph{two-sided fibrations} defined in \cite[Section 7.1]{RiehlVerity}.)
\end{defn}

\begin{propn}
  The following are equivalent for a curved orthofibration $p=(p_1,p_2) \colon X \to A \times B$:
  \begin{enumerate}[(1)]
  \item\label{orth} $p$ is an orthofibration.
  \item\label{orth ct} For every morphism $\alpha \colon a' \to a$ in $A$ the cartesian
    transport functor $\alpha^{*} \colon X_{a} \to X_{a'}$ preserves
    $p_2$-cocartesian morphisms. 
  \item\label{orth cot} For every morphism $\beta \colon b' \to b$ in $B$ the cocartesian
    transport functor $\beta_{!} \colon X_{b'} \to X_{b}$ preserves
    $p_1$-cartesian morphisms. 
  \end{enumerate}
\end{propn}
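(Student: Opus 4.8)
The plan is to prove the cycle of implications $(1) \Leftrightarrow (2)$ and $(1) \Leftrightarrow (3)$, where the two latter pairs are dual to one another under passing to opposite categories (recall that a curved orthofibration $p \colon X \to A \times B$ becomes a curved orthofibration $p^\op \colon X^\op \to A^\op \times B^\op$ with the roles of cartesian and cocartesian morphisms swapped, and the interpolating morphisms are interchanged, so it suffices to prove one of the two equivalences). So I would concentrate on $(1) \Leftrightarrow (2)$.

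For the direction $(1) \Rightarrow (2)$, fix $\alpha \colon a' \to a$ in $A$ and a $p_2$-cocartesian morphism $\gamma \colon x \to y$ in the fibre $X_a$, lying over some $\beta \colon b \to b'$ in $B$. I want to show $\alpha^*\gamma \colon \alpha^* x \to \alpha^* y$ in $X_{a'}$ is again $p_2$-cocartesian. The key is to assemble the $p$-interpolating diagram of \cref{ex:interpol} for the object $x \in X_{(a,b)}$ and the pair $(\alpha,\beta)$, together with the analogous data for $y$. Concretely, factor the cartesian transport of the cocartesian morphism: starting from $y = (\id,\beta)_! x$ (up to equivalence, using that $\gamma$ is $p_2$-cocartesian over $\beta$), the interpolating morphism $(\id,\beta)_! (\alpha,\id)^* x \to (\alpha,\id)^*(\id,\beta)_! x = \alpha^* y$ is an equivalence by hypothesis (1); but the source of this morphism is exactly $\beta_!(\alpha^* x)$, the cocartesian transport of $\alpha^* x$ along $\beta$ inside $X_{a'}$. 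Hence $\alpha^* x \to \alpha^* y$ factors as a $p_2$-cocartesian morphism followed by an equivalence, so it is itself $p_2$-cocartesian. One should phrase this carefully using the naturality of the interpolating construction in $x$, so that the comparison map $\alpha^* x \to \alpha^* y$ really is identified with $\alpha^* \gamma$; the cleanest route is to read off everything from the universal property of $Q \to [1]^\op \times [1]$ stated in \cref{ex:interpol}, applied to the composite arrow $x \to y$ regarded as living over $(\id_a) \times \beta$ and then restricted along $\alpha \times \beta$.

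For the converse $(2) \Rightarrow (1)$: given any $\alpha \colon a' \to a$ and $\beta \colon b \to b'$ and $x \in X_{(a,b)}$, I must show the interpolating morphism $\iota \colon (\id,\beta)_!(\alpha,\id)^* x \to (\alpha,\id)^*(\id,\beta)_! x$ is invertible. By the explicit description of the $p$-interpolating diagram, $\iota$ is obtained by factoring the dotted composite $(\alpha,\id)^* x \to (\alpha,\id)^*(\id,\beta)_! x$ (which is the cartesian transport along $\alpha$ of the cocartesian morphism $x \to (\id,\beta)_! x$) through the $p_2$-cocartesian morphism $(\alpha,\id)^* x \to (\id,\beta)_!(\alpha,\id)^* x$. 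By hypothesis (2), the transported morphism $(\alpha,\id)^* x \to (\alpha,\id)^*(\id,\beta)_! x$ is itself $p_2$-cocartesian. Now $\iota$ sits in a commutative triangle of two $p_2$-cocartesian morphisms out of $(\alpha,\id)^* x$ with the same target, both lying over $\id_{b'}$ after projecting to $B$ (since the cocartesian morphism $x \to (\id,\beta)_! x$ lies over $\beta$ and cartesian transport along $\alpha$ covers $\id_B$); hence $\iota$ is a morphism between two cocartesian lifts of the same arrow of $B$ and is therefore an equivalence by the essential uniqueness of cocartesian lifts. This gives (1).

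The main obstacle I anticipate is purely bookkeeping: making sure the various transport functors $\alpha^*$, $\beta_!$ are applied on the correct fibres and that the interpolating morphism really is identified with the comparison map $\beta_!(\alpha^* x) \to \alpha^*(\beta_! x)$ appearing in the definition — i.e. that "cocartesian transport inside the fibre $X_{a'}$" agrees with "$p$-cocartesian transport over $\core{A} \times B$", which is exactly the content of \cref{prop:lortho reformulations}\ref{lorthococl} combined with \cref{rmk:pbtocore}. Once that identification is in place, both implications reduce to the standard fact that a morphism factoring as a cocartesian morphism followed by an equivalence is cocartesian, and that any map between two cocartesian lifts of a fixed arrow is an equivalence. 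I would also remark explicitly at the end that $(1) \Leftrightarrow (3)$ follows by applying $(1) \Leftrightarrow (2)$ to $p^\op$, noting that $p$ is an orthofibration iff $p^\op$ is (the interpolating morphisms of $p^\op$ are the opposites of those of $p$).
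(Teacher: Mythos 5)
Your argument is correct and takes essentially the same approach as the paper: both identify the interpolating morphism as the comparison map arising when the cartesian transport $\alpha^*(\tilde{\beta})$ of a chosen cocartesian lift is factored through the cocartesian lift of $\beta$ at $(\alpha,\id)^*x$, so that invertibility of the interpolating morphism is equivalent to cocartesianness of the transported morphism. The only cosmetic difference is that the paper handles (2) and (3) symmetrically via the two commuting triangles from \cref{ex:interpol}, whereas you derive (3) from (2) by passing to opposites (where, strictly speaking, one must also swap the two factors so that $X^{\op}\to B^{\op}\times A^{\op}$ is again a curved orthofibration).
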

\begin{proof}
Let us consider a \lortho{} $p$,
  morphisms $\alpha\colon a'\rt a$ and $\beta\colon b\rt b'$, and
  $x\in X_{(a, b)}$. By \cref{ex:interpol}, the associated interpolating morphism fits into commuting triangles
\[\begin{tikzcd}
(\alpha, \id)^*x\arrow[r, tail]\arrow[rd, "\alpha^*(\tilde{\beta})"{swap}] & (\id, \beta)_!(\alpha, \id)^*x\arrow[d] & & (\id, \beta)_!(\alpha, \id)^*x\arrow[r]\arrow[rd, "\beta_!(\tilde{\alpha})"{swap}] & (\alpha, \id)^*(\id, \beta)_!x\arrow[d, two heads]\\
& (\alpha, \id)^*(\id, \beta)_!x & & & (\id, \beta)_!x.
\end{tikzcd}\]
  Here the diagonal morphisms are the image of the cocartesian
  morphism $\tilde{\beta}\colon x\rt (\id, \beta)_!x$ under
  $\alpha^*\colon X_{a}\rt X_{a'}$ and the image of the cartesian
  morphism $(\alpha, \id)^*\colon (\id, \alpha)^*x\rt x$ under
  $\beta_!\colon X_b\rt X_{b'}$. It follows that the interpolating
  morphism is an equivalence if and only if these images remain
  cocartesian and cartesian, respectively. This shows that condition
  \ref{orth} is equivalent to both \ref{orth ct} and \ref{orth cot}.
 \end{proof}

From this we see that restricting the equivalence of
\cref{cor:locorthstr} to
orthofibrations gives:
\begin{corollary}\label{cor:orthostr}
  Straightening over $A$ and $B$ give natural equivalences
 \begin{equation*}
 	\pushQED{\qed} 
    \label{eq:orthstr}
    \Fun(A^{\op}, \Cocart(B)) \simeq \Orth(A,B) \simeq
    \Fun(B, \Cart(A)). \qedhere
    \popQED
  \end{equation*}
\end{corollary}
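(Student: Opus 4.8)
The plan is to obtain both equivalences by restricting the equivalences already established in \cref{cor:locorthstr} to suitable full subcategories. Since $\Orth(A,B)$ is by definition the full subcategory of $\LOrth(A,B)$ spanned by the orthofibrations, and since restricting an equivalence of \icats{} to a pair of full subcategories that correspond to one another again yields an equivalence, it suffices to check two things: (i) that $\Fun(A^{\op}, \Cocart(B))$ is a \emph{full} subcategory of $\Fun(A^{\op}, \Cocartlax(B))^{\mathrm{cc}}$, and dually that $\Fun(B, \Cart(A))$ is a full subcategory of $\Fun(B, \Cartlax(A))^{\mathrm{ct}}$; and (ii) that the equivalences of \cref{cor:locorthstr} carry these full subcategories onto $\Orth(A,B)$ on the level of objects.

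For (i): as $\Cocart(B)$ is the wide subcategory of $\Cocartlax(B)$ whose morphisms preserve cocartesian morphisms, a functor $F \colon A^{\op} \to \Cocartlax(B)$ factors through $\Cocart(B)$ exactly when $F(\alpha)$ preserves cocartesian morphisms for every morphism $\alpha$ in $A$, and a natural transformation $\eta$ between two such functors factors through $\Cocart(B)$ exactly when each component $\eta_a$ does --- the values of the associated functor $A^{\op} \times [1] \to \Cocartlax(B)$ on the remaining, composite, morphisms are then automatically morphisms of $\Cocart(B)$. Hence $\Fun(A^{\op}, \Cocart(B))$ is precisely the full subcategory of $\Fun(A^{\op}, \Cocartlax(B))^{\mathrm{cc}}$ on those functors whose values on all morphisms preserve cocartesian morphisms; the argument for $\Fun(B, \Cart(A))$ inside $\Fun(B, \Cartlax(A))^{\mathrm{ct}}$ is identical.

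For (ii): by the construction of the equivalence in \cref{cor:locorthstr} and naturality of straightening, under the left-hand equivalence a functor $F \colon A^{\op} \to \Cocartlax(B)$ corresponds to a \lortho{} $p = (p_1,p_2) \colon X \to A \times B$ with $F(a) = (X_a \to B)$ and with cartesian transport $\alpha^{*} \colon X_a \to X_{a'}$ along $\alpha \colon a' \to a$ given by $F(\alpha)$; moreover, as in the proof of \cref{cor:locorthstr} (via \cref{propn:projtoeqcocart}), $F(\alpha)$ is a morphism of $\Cocart(B)$ exactly when $\alpha^{*}$ preserves $p_2$-cocartesian morphisms. By the equivalence of conditions \ref{orth} and \ref{orth ct} in the preceding proposition, this holds for all $\alpha$ exactly when $p$ is an orthofibration, so $F$ lands in $\Cocart(B)$ iff its unstraightening lies in $\Orth(A,B)$; restricting then gives the left-hand equivalence of the corollary. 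The right-hand equivalence is obtained the same way using the equivalence of conditions \ref{orth} and \ref{orth cot}: under straightening over $B$, a \lortho{} $p$ corresponds to $B \to \Cartlax(A)$, $b \mapsto (X_b \to A)$, with cocartesian transport $\beta_{!} \colon X_{b'} \to X_b$ along $\beta \colon b' \to b$ in $B$, and this functor lands in $\Cart(A)$ iff each $\beta_{!}$ preserves $p_1$-cartesian morphisms, iff $p$ is an orthofibration.

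The only genuinely delicate point is step (i): one must verify that no mapping \igpds{} shrink further when passing to $\Fun(A^{\op}, \Cocart(B))$, i.e.\ that it is really full in $\Fun(A^{\op}, \Cocartlax(B))^{\mathrm{cc}}$, so that the formal "restrict an equivalence to corresponding full subcategories" principle applies. Granting that, step (ii) is a direct reading of the preceding proposition through the already-established straightening equivalence, requiring no further computation.
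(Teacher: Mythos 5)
Your proposal is correct and is exactly the paper's argument: the paper proves this corollary simply by restricting the equivalences of \cref{cor:locorthstr} to orthofibrations, using the equivalence of conditions \ref{orth}, \ref{orth ct} and \ref{orth cot} in the preceding proposition to identify the corresponding full subcategories on each side. Your additional verification in step (i) that $\Fun(A^{\op},\Cocart(B))$ sits as a full subcategory of $\Fun(A^{\op},\Cocartlax(B))^{\mathrm{cc}}$ is the right point to be careful about and is left implicit in the paper.
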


Of course one can now apply another instance of the straightening functor on both outer terms. We will discuss the result in \S \ref{subsec:duallll} below.
For now, let us instead specialise the discussion further. Since interpolating edges always lie over equivalences in
$A \times B$, we find:
\begin{propn}\label{bifibinortho}
For a functor $p \colon X\rightarrow A\times B$ the following are
equivalent:
\begin{enumerate}
    \item $p$ is a conservative \lortho{}.
    \item $p$ is a \lortho{} whose fibres are \igpds{}.
    \item $p$ is a \lortho{} and $p_{\ell}$ is a left fibration.
    \item $p$ is a \lortho{} and $p_r$ is a right fibration.
    \item $p_1$ is a cartesian fibration and a morphism in $X$ is $p_{1}$-cartesian \IFF{} it is sent to an
    equivalence by $p_{2}$, and $p_2$ is a cocartesian fibration and a morphism in $X$ is $p_{2}$-cocartesian \IFF{} it is sent to an
    equivalence by $p_{1}$.
\end{enumerate}
If these conditions are satisfied, then $p$ is in particular an
orthofibration. \qed
\end{propn}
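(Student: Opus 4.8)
The plan is to establish the cycle of implications among the five conditions, using the earlier characterisations of curved orthofibrations and the classical dictionary (\cref{lem:cartinloccartfib} and the lemma preceding the definition of right fibrations) relating conservativity, groupoidal fibres, and "all morphisms are cartesian". First I would prove $(1)\Leftrightarrow(2)$: a curved orthofibration $p=(p_1,p_2)$ has $p_1$ a cartesian fibration and $p_2$ a cocartesian fibration; a fibre $X_{(a,b)}$ is the fibre over $a$ of the map on $b$-fibres $X_b \to A$ (a cartesian fibration by \cref{prop:lortho reformulations}\ref{lorthococfw}), so by the cited lemma $X_b$ has groupoidal fibres iff $X_b \to A$ is conservative. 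Assembling these over all $b$ and using that $p$ is conservative iff each $p_b$ and each $X_b\to A$ is conservative (equivalently, iff $p_\ell$ and $p_r$ are conservative), one gets the equivalence; the same bookkeeping shows fibres of $p$ are groupoids iff the fibres of $p_\ell \to A\times\core B$ are groupoids, which by the lemma is iff $p_\ell$ is conservative, i.e. a left fibration — giving $(2)\Leftrightarrow(3)$, and dually $(2)\Leftrightarrow(4)$.

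Next I would handle $(5)$. For $(1)\Rightarrow(5)$: by the Observation after the definition of curved orthofibration, every $p_1$-cartesian morphism already lies over an equivalence in $B$; conversely, if $\alpha\colon x\to y$ is sent to an equivalence by $p_2$, factor $\alpha$ through a $p_1$-cartesian lift $\bar\alpha\colon x' \to y$ of $p_1(\alpha)$, yielding a map $x\to x'$ lying over an equivalence in $A\times B$, hence an equivalence in $X$ by conservativity of $p$; thus $\alpha$ is $p_1$-cartesian. The dual argument handles the $p_2$-cocartesian clause. For $(5)\Rightarrow(1)$: the conditions in $(5)$ say precisely that $p$ satisfies the four bullets of the Observation (so $p$ is a curved orthofibration) and moreover that any morphism sent to an equivalence by \emph{both} $p_1$ and $p_2$ — i.e. by $p$ — is simultaneously $p_1$-cartesian and $p_2$-cocartesian; I would argue that such a morphism is then an equivalence (a $p_1$-cartesian morphism over an equivalence in $A$ is an equivalence, since cartesian transport along an equivalence is an equivalence), giving conservativity of $p$, which is $(1)$.

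Finally, the last sentence: if the equivalent conditions hold, then $p_\ell$ is a left fibration, so in particular cocartesian transport $\beta_! \colon X_{b'}\to X_b$ and cartesian transport $\alpha^*\colon X_a \to X_{a'}$ are functors between groupoids, hence automatically preserve all morphisms — so the criterion \ref{orth cot} (equivalently \ref{orth ct}) of the preceding proposition characterising orthofibrations is vacuously satisfied, and $p$ is an orthofibration.

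The main obstacle I anticipate is the bookkeeping in $(2)\Leftrightarrow(3)\Leftrightarrow(4)$: one must carefully match up "fibres of $p$ are groupoids" with "fibres of $p_\ell$ (resp. $p_r$) are groupoids" and then invoke the classical lemma \emph{on the right fibration $p_\ell$} rather than on $p$ directly, since $p$ itself is neither a cartesian nor a cocartesian fibration. The cleanest route is probably to note $X_{(a,b)} \simeq (X_\ell)_{(a,b)} \simeq (X_r)_{(a,b)}$ as $\infty$-groupoids and that, by \cref{rmk:pbtocore}, $p_\ell$ is a cartesian fibration and $p_r$ a cocartesian fibration under the standing hypothesis that $p$ is a curved orthofibration, so the cited lemma applies verbatim to $p_\ell$ and $p_r$. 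Everything else is a routine application of \cref{lem:cartinloccartfib}, the conservativity lemma, and the Observation reformulating curved orthofibrations.
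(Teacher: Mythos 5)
The paper records \cref{bifibinortho} without proof (only the one-sentence remark about interpolating edges for the final claim), so there is no argument to compare against line by line; your overall strategy --- reduce to the one-variable restrictions $p_\ell$, $p_r$ and the fibre maps $X_b\to A$, $X_a\to B$, apply the classical conservativity lemma to those, and use factorisation through (co)cartesian lifts for $(1)\Leftrightarrow(5)$ --- is sound and is essentially the verification the paper leaves implicit. However, two of your justifications fail as written. First, for a \lortho{} the restriction $p_\ell\colon X_\ell\to A\times\core{B}$ is a \emph{cartesian} fibration, so conservativity makes it a \emph{right} fibration, not a left fibration as you assert; dually $p_r$ is a cocartesian fibration and conservativity makes it a \emph{left} fibration. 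Conditions (3) and (4) as literally printed cannot in fact be equivalent to the others: for the bifibration $(s,t)\colon\Ar(C)\to C\times C$ the restriction $p_\ell$ generally has no cocartesian lifts of morphisms of the form $(\alpha,\id)$ (this would require pushforwards $\Map(a,b)\to\Map(a',b)$ left adjoint to precomposition), so it is a right fibration but not a left fibration. The labels in (3) and (4) must be interchanged, and your argument proves that corrected statement; you should say so explicitly rather than eliding ``conservative'' with ``left fibration''.

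Second, your justification of the last claim is incorrect: the transport functors $\alpha^*\colon X_a\to X_{a'}$ and $\beta_!\colon X_{b'}\to X_b$ are \emph{not} functors between \igpds{}. Here $X_a$ is the fibre of $p_1$ over a point of $A$, which is a cocartesian fibration over all of $B$, and $X_b$ is a cartesian fibration over all of $A$; only the fibres over points of $A\times B$ are groupoids. The repair is easy: since $X_{b'}\to A$ and $X_b\to A$ are right fibrations, \emph{every} morphism of $X_{b'}$ is $p_1$-cartesian, so $\beta_!$, being a functor over $A$, preserves $p_1$-cartesian morphisms for trivial reasons, and criterion \ref{orth cot} of the orthofibration characterisation holds. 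Alternatively, and this is the paper's own intended one-line argument, the interpolating edges of a \lortho{} lie over equivalences in $A\times B$, hence are equivalences by conservativity, which is precisely the definition of an orthofibration.
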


\begin{defn}\label{defn:bifib}
  A \emph{bifibration} is a functor $p = (p_{1},p_{2}) \colon X \to A
  \times B$ satisfying the equivalent conditions of the previous proposition.
  \end{defn}

Restricting the equivalence of \cref{cor:orthostr} to bifibrations
gives:
\begin{cor}\label{cor:bistr}
  Straightening over $A$ and $B$ gives natural equivalences
  \begin{equation*}
  	\pushQED{\qed}
   \Fun(A^{\op}, \LFib(B)) \simeq \Bifib(A,B) \simeq
    \Fun(B, \RFib(A)).   \qedhere
    \popQED
  \end{equation*}
\end{cor}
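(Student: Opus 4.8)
The plan is to obtain \cref{cor:bistr} by restricting the equivalences of \cref{cor:orthostr} to the relevant full subcategories on both sides, so the only real content is the identification of the subcategory of $\Orth(A,B)$ spanned by bifibrations with the subcategories $\Fun(A^{\op},\LFib(B))$ and $\Fun(B,\RFib(A))$ under straightening. First I would recall from \cref{cor:orthostr} that straightening over $A$ gives an equivalence $\Fun(A^{\op},\Cocart(B)) \simeq \Orth(A,B)$, under which an orthofibration $p \colon X \to A \times B$ corresponds to the functor $a \mapsto (p_{a} \colon X_{a} \to B)$, this last being a cocartesian fibration by \cref{prop:lortho reformulations}\ref{lorthocartfw}. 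Since $\LFib(B) \subseteq \Cocart(B)$ is the full subcategory spanned by the left fibrations, $\Fun(A^{\op},\LFib(B))$ is the full subcategory of $\Fun(A^{\op},\Cocart(B))$ on those functors that land in left fibrations, i.e.\ such that $p_{a} \colon X_{a} \to B$ is a left fibration for every $a$.

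The heart of the argument is then to check that the corresponding full subcategory of $\Orth(A,B)$ is exactly $\Bifib(A,B)$. This is immediate from \cref{bifibinortho}: condition (4) there says $p$ is a bifibration precisely when $p$ is a \lortho{} and $p_{r}$ is a right fibration, and by \cref{rmk:pbtocore} the pullback $p_{r}$ is a right fibration if and only if $p_{b} \colon X_{b} \to A$ is a right fibration for every $b \in B$; dually, via condition (3) and another application of \cref{rmk:pbtocore}, $p$ is a bifibration if and only if $p_{a} \colon X_{a} \to B$ is a left fibration for every $a \in A$. (Alternatively one can just invoke condition (2), that the fibres of $p$ are \igpds{}, together with the characterisation of left/right fibrations among (co)cartesian fibrations via groupoidal fibres.) Hence under the straightening equivalence over $A$, bifibrations correspond exactly to functors $A^{\op} \to \Cocart(B)$ landing in $\LFib(B)$, giving $\Fun(A^{\op},\LFib(B)) \simeq \Bifib(A,B)$; the equivalence $\Bifib(A,B) \simeq \Fun(B,\RFib(A))$ follows symmetrically by straightening over $B$ instead, using the $p_{b}$ characterisation.

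One should also remark that the morphisms match up correctly: a morphism in $\Bifib(A,B)$ is a map over $A \times B$ preserving both $p_{1}$-cartesian and $p_{2}$-cocartesian edges, but since all such edges in a bifibration lie over equivalences and all morphisms in the fibres are (co)cartesian, such a map is automatically a morphism of \lorthos{}, and conversely; so $\Bifib(A,B)$ is genuinely the full subcategory of $\Orth(A,B)$ on the bifibrations, and the restricted equivalences are again equivalences of full subcategories. I do not anticipate a serious obstacle here: everything reduces to \cref{bifibinortho}, \cref{rmk:pbtocore} and the already-established \cref{cor:orthostr}, so the proof is essentially a one-line deduction and the statement is marked with \qed in the excerpt accordingly.
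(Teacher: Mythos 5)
Your proof is correct and takes essentially the same approach as the paper, whose entire proof consists of the remark that one restricts the equivalence of \cref{cor:orthostr} to bifibrations. The content you supply --- identifying $\Bifib(A,B)$ with the full subcategory of orthofibrations whose fibrewise straightenings land in $\LFib(B)$ resp.\ $\RFib(A)$ via \cref{bifibinortho} and \cref{rmk:pbtocore} (most cleanly through the groupoid-fibre characterisation, which also sidesteps any ambiguity in matching $p_\ell$ and $p_r$ to the two factors) --- is exactly what the paper leaves implicit.
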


\subsection{Gray fibrations}\label{subsec:Gray}
We saw above that \lorthos{} over $A^{\op} \times B$ could be
unstraightened to functors $A \to \Cocartlax(B)$. We can also consider
the functors to $A \times B$ that correspond to such functors under
\emph{cocartesian} unstraightening over $A$, which leads to the
following definition:

\begin{defn}
  A \emph{Gray fibration} over $(A,B)$ is a functor $p \colon X \to A \times B$
  such that $p$ is cocartesian over $A$ and $p_{r} \colon X_{r}
  \to \core{A} \times B$ is a cocartesian fibration.  
  We write $\Gray(A,B)$ for the subcategory of $\Cat/(A\times B)$
  whose objects are the Gray fibrations, with morphisms required to
  preserve both types of cocartesian morphisms.

  Dually, we say $p \colon X \rightarrow A\times B$ is an \emph{op-Gray
    fibration} if $p^\op$ is a Gray fibration over $(A^\op,B^\op)$,
  and denote the \icat{} they span by $\Grayop(A,B)$.
\end{defn}
We will see in \cref{cor:gray fib is loc cocart of gray}
below that Gray fibrations over $(A,B)$ encode functors of
$(\infty,2)$-categories $A \Gtimes B \to \tCat$, where $\Gtimes$
denotes the Gray tensor product, which is the reason for the name. In particular, just as the Gray tensor product is not symmetric, let us point out that a Gray fibration $(p_1, p_2)\colon X\rt A\times B$ typically does \emph{not} determine a Gray fibration $(p_2, p_1)\colon X\rt B\times A$.

\begin{observation}
  From \cref{onevarcocart} and \cref{rmk:pbtocore} we see that a
  functor $p = (p_1,p_2) \colon X \to A \times B$ is a Gray fibration
  \IFF{} in the commutative triangle
  \[
    \begin{tikzcd}
      X \arrow{rr}{p} \arrow[dr, "p_1"{swap}] & & A \times B \arrow{dl}{\pr_{1}} \\
      & A,
    \end{tikzcd}
  \]
  $p_1$ is a cocartesian fibration, $p$ takes $p_1$-cocartesian
  morphisms to $\pr_{1}$-cocartesian morphisms, and for every $a \in
  A$ the map on fibres $X_{a} \to B$ is a cocartesian fibration.
\end{observation}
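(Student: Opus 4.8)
The plan is to unwind the definition of a Gray fibration into its two constituent conditions and match each against an earlier result. Recall that $p \colon X \to A \times B$ is a Gray fibration precisely when (i) $p$ is cocartesian over $A$ and (ii) the pullback $p_{r} \colon X_{r} \to \core{A} \times B$ is a cocartesian fibration. It therefore suffices to show that (i) is equivalent to the first two conditions stated in the Observation and that (ii) is equivalent to the third.

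For (i), I would simply recall that by definition $p$ is cocartesian over $A$ if and only if it satisfies the equivalent conditions of \cref{onevarcocart}, and that condition~(3) there is verbatim the assertion that, in the commutative triangle $X \xrightarrow{p_{1}} A$, the functor $p_{1}$ is a cocartesian fibration and $p$ carries $p_{1}$-cocartesian morphisms to $\pr_{1}$-cocartesian morphisms --- exactly the first two clauses of the Observation.

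For (ii), I would invoke \cref{rmk:pbtocore} in its ``$p_{r}$'' incarnation: since $\core{A}$ is an \igpd{}, \cref{cor:cocartprgpd} identifies $p_{r} \colon X_{r} \to \core{A} \times B$ being a cocartesian fibration with the condition that the induced functor on fibres $(X_{r})_{a} \to B$ is a cocartesian fibration for every $a \in A$, and $(X_{r})_{a}$ is canonically the fibre $X_{a}$ of $p_{1}$ over $a$, with structure map the restriction of $p_{2}$. This matches the third clause of the Observation. Combining the two equivalences gives the claim. There is essentially no obstacle here; the only point requiring a moment's care is to apply \cref{rmk:pbtocore} to $p_{r}$ --- so that the fibrewise condition is indexed over $a \in A$ and the fibres map to $B$ --- rather than the dual statement for $p_{\ell}$.
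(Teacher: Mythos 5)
Your proposal is correct and matches the paper exactly: the Observation is stated in the paper with no further argument beyond citing \cref{onevarcocart} (condition (3) of which is the first two clauses) and \cref{rmk:pbtocore} applied to $p_{r}$ (which gives the third clause), which is precisely the decomposition you carry out.
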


 Combining this observation with \cref{rmk:lcocstr}, and the same analysis as in \ref{cor:locorthstr}, we see:

\begin{corollary}\label{cor:graycocartstr}
  Straightening over $A$ gives a natural equivalence
  \[\pushQED{\qed}
  \Gray(A,B) \simeq \Fun(A, \Cocartlax(B))^{\mathrm{cc}}.\qedhere
  \popQED\]
\end{corollary}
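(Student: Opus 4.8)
The plan is to obtain this exactly as \cref{cor:locorthstr} was obtained, but using \emph{cocartesian} straightening over $A$ in place of the cartesian one. First I would note that a Gray fibration is in particular cocartesian over $A$, hence an object of $\LCocart(A,B)$. By \cref{rmk:lcocstr} the equivalence $\Cat/(A\times B)\simeq\Cocart(A)/\pr_1$ restricts to $\LCocart(A,B)\simeq\Cocart(A)/\pr_1$, and composing with cocartesian straightening over $A$ --- under which $\pr_1\colon A\times B\to A$ becomes the constant functor at $B$ --- yields $\LCocart(A,B)\simeq\Fun(A,\Cat/B)$, sending $p=(p_1,p_2)\colon X\to A\times B$ to the functor $a\mapsto\bigl((p_2)_a\colon X_a\to B\bigr)$.

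Next I would invoke the Observation preceding the statement: a functor cocartesian over $A$ is a Gray fibration precisely when every fibre $(p_2)_a\colon X_a\to B$ is a cocartesian fibration, \ie{} when the associated functor $A\to\Cat/B$ lands objectwise in the full subcategory $\Cocartlax(B)\subseteq\Cat/B$, and hence factors through it. This already matches up the objects on the two sides of the claimed equivalence.

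For the morphisms I would repeat the bookkeeping from the proof of \cref{cor:locorthstr}. A morphism in $\LCocart(A,B)$ straightens to an arbitrary natural transformation $\eta$ of functors $A\to\Cat/B$, whose component at $a$ is the induced map $f_a\colon X_a\to Y_a$ over $B$. A morphism of $\Gray(A,B)$ is required in addition to preserve $p_r$-cocartesian morphisms; by \cref{rmk:pbtocore} and \cref{cor:cocartprgpd} --- using that $\core{A}$ is an \igpd{}, so that a $(p_2)_a$-cocartesian lift is automatically $p_r$-cocartesian --- this is equivalent to each $f_a$ preserving cocartesian morphisms over $B$, which is exactly the condition cutting $\Fun(A,\Cocartlax(B))^{\mathrm{cc}}$ out of $\Fun(A,\Cocartlax(B))$. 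Assembling the object- and morphism-level identifications gives the equivalence, compatibly with the two straightenings as in \cref{cor:locorthstr}.

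The content here is routine, and the only step that needs any care at all is this last translation, namely that "$f$ preserves $p_r$-cocartesian edges" corresponds to "$\eta$ is componentwise a map of cocartesian fibrations over $B$"; everything else is a formal manipulation of the one-variable straightening equivalences already established.
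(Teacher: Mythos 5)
Your proposal is correct and follows exactly the route the paper takes: it combines the Observation characterising Gray fibrations fibrewise with the one-variable straightening of \cref{rmk:lcocstr}, and then repeats the morphism-level bookkeeping of \cref{cor:locorthstr} to match preservation of $p_r$-cocartesian edges with the natural transformations lying in $\Fun(A,\Cocartlax(B))^{\mathrm{cc}}$. The paper compresses all of this into the single sentence preceding the statement, so your write-up is just the expanded version of the intended argument.
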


Our next goal is to give an alternative characterisation of Gray
  fibrations, namely as those locally cocartesian fibrations that are cocartesian over certain triangles in the base.
This characterisation will be the key to
relating them to Gray tensor products below in
\S\ref{subsec:scunstrGray}. We first observe that Gray fibrations are
in particular locally cocartesian fibrations:

\begin{lemma}
  Let $p \colon X \to A \times B$ be a Gray fibration. Then every
  morphism in $X$ over $(\alpha,\beta)$ of the form
  $x \to (\id,\beta)_{!}x \to (\alpha,\id)_{!}(\id,\beta)_{!}x$, where the first
  morphism is $p$-cocartesian over $(\id,\beta)$ and the second is
  $p_{r}$-cocartesian over $(\alpha,\id)$, is locally $p$-cocartesian. In
  particular, $p$ is a locally cocartesian fibration where all locally
  $p$-cocartesian morphisms are of this form, and we have a
  fully faithful inclusion
  \[\Gray(A,B) \subseteq \lococart(A \times B).\]
\end{lemma}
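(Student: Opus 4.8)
The plan is to verify the claim in two stages: first, that every morphism of the stated form $x \to (\id,\beta)_!x \to (\alpha,\id)_!(\id,\beta)_!x$ is locally $p$-cocartesian; second, that these are the \emph{only} locally $p$-cocartesian morphisms (up to the usual equivalence), so that $p$ is a locally cocartesian fibration and the inclusion $\Gray(A,B) \subseteq \lococart(A \times B)$ is fully faithful. For the first stage, I would fix a morphism $(\alpha,\beta) \colon (a,b) \to (a',b')$ in $A \times B$ and pull $p$ back along $[1] \xto{(\alpha,\beta)} A \times B$; writing $X' = X \times_{A\times B} [1]$, the task is to show the composite above becomes a $p'$-cocartesian morphism for $p' \colon X' \to [1]$. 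Here I would factor $[1] \xto{(\alpha,\beta)} A \times B$ through $[1] \xto{(\id,\beta)} \{a\} \times B \hookrightarrow A \times B$ composed appropriately, or more cleanly observe that $X' \to [1]$ factors as $X' \to X'_\ell \to \cdots$; the point is that the first morphism $x \to (\id,\beta)_!x$ is $p$-cocartesian over $\{a\} \times B$ hence (by \cref{propn:projtoeqcocart}) $(p_2)$-cocartesian, while the second is $p_r$-cocartesian over $A \times \{b'\}$. One then invokes the pasting/cancellation property of cocartesian morphisms (\cite[Proposition 2.4.1.3(3)]{HTT}, already used in the proof of \cref{propn:projtoeqcocart}) in the two-step factorisation $X' \to \{a\}\times B \text{-part} \to A\times\{b'\}\text{-part} \to [1]$ — more precisely, one checks that after base change to $[1]$ the two constituent morphisms are cocartesian for the respective projections $[1] \to [1]$ appearing in a filtration of the identity, and concludes the composite is $p'$-cocartesian.

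For the second stage — that $p$ is a locally cocartesian fibration with \emph{all} locally cocartesian morphisms of this form — I would argue that for any $(a,b) \xto{(\alpha,\beta)} (a',b')$ and any $x \in X_{(a,b)}$, a morphism of the stated form exists (using that $p$ is cocartesian over $A$, so $p$-cocartesian lifts over $\{a\} \times B \subseteq A \times \core B$... wait, $(\id,\beta)$ lies over an equivalence in $A$, so such a lift exists; then $p_r$-cocartesian lifts over $A \times \{b'\}$ exist since $p_r$ is a cocartesian fibration). Its target lies over $(a',b')$, so it is a lift of $(\alpha,\beta)$ with source $x$. By stage one it is locally $p$-cocartesian, and since locally cocartesian lifts with prescribed source are essentially unique when they exist, every locally $p$-cocartesian morphism over $(\alpha,\beta)$ is of this form. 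This simultaneously shows $p$ is a locally cocartesian fibration.

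For the full faithfulness of $\Gray(A,B) \subseteq \lococart(A \times B)$: both are subcategories of $\Cat/(A\times B)$, so it suffices to check that a morphism $f \colon X \to Y$ between Gray fibrations over $A \times B$ preserves all locally $p$-cocartesian morphisms if and only if it preserves both classes of defining cocartesian morphisms (those over $A$ and the $p_r$-cocartesian ones over $\core A \times B$). The "only if" is clear since the defining morphisms are locally $p$-cocartesian by stage one; the "if" follows because, by stage two, every locally $p$-cocartesian morphism is a composite of one of each defining type, and $f$ preserves composites. I expect the main obstacle to be the bookkeeping in stage one: correctly setting up the base change to $[1]$ and identifying which two-out-of-three/cancellation instance of \cite[Proposition 2.4.1.3]{HTT} applies to the factored composite, since the two constituent morphisms are cocartesian over \emph{different} subcategories ($\{a\}\times B$ versus $A \times \{b'\}$) and one must assemble these into a statement about $p' \colon X' \to [1]$. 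The rest is a routine application of essential uniqueness of locally cocartesian lifts.
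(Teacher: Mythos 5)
Your stages two and three (existence plus essential uniqueness of locally cocartesian lifts, then deducing full faithfulness from the description of locally $p$-cocartesian morphisms as composites of the two defining types) are fine and coincide with the paper's argument. The gap is in stage one, which carries all the content of the lemma. The difficulty you defer as ``bookkeeping'' is the actual mathematical point: after pulling back along $(\alpha,\beta)\colon [1]\to A\times B$, the intermediate object of the factorisation does not lie over $[1]$ at all (it sits over $(a,b')$, resp.\ $(a',b)$), so there is no two-step factorisation of $X'\to[1]$ through ``$\{a\}\times B$-parts'' inside which to apply \cite[Proposition 2.4.1.3(3)]{HTT}; that result concerns a \emph{single} morphism relative to a tower of functors and says nothing about a composite of two morphisms lying over a $2$-simplex of the base. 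The statement actually needed is the composition law for locally cocartesian morphisms, namely (the dual of) \cref{lem:cartinloccartfib} = \cite[Lemma 2.4.2.7]{HTT}: the composite of a \emph{globally} $p$-cocartesian morphism followed by a \emph{locally} $p$-cocartesian one is locally $p$-cocartesian. This is exactly what the paper invokes, after noting that a $p_r$-cocartesian morphism is in particular locally $p$-cocartesian. It is essential that one of the two legs be cocartesian in all of $X$ and that it come first: a composite of two merely locally cocartesian morphisms need not be locally cocartesian --- that failure is precisely what the interpolating morphisms of \cref{rmk:interpol Gray} measure and what separates Gray fibrations from cocartesian ones (\cref{cocartgray}). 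Your sketch never isolates this asymmetry, and the cancellation property you cite cannot supply it.

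Relatedly, you have inherited and compounded a labelling slip. For a Gray fibration the globally $p$-cocartesian lifts are those over $A\times\core B$, i.e.\ over $(\alpha,\id)$ (this is what ``cocartesian over $A$'' means), while the lifts over $(\id,\beta)$ are only $p_r$-cocartesian, $p_r$ being the restriction to $\core A\times B$; compare \cref{rmk:interpol Gray}, where the locally cocartesian lift of $(\alpha,\beta)$ is the composite of a $p$-cocartesian lift of $(\alpha,\id)$ followed by a $p_r$-cocartesian lift of $(\id,\beta)$. Your assertions that $p$-cocartesian lifts exist over $\{a\}\times B$ ``because $p$ is cocartesian over $A$'' and that the other leg is ``$p_r$-cocartesian over $A\times\{b'\}$'' are both category errors: $\{a\}\times B\not\subseteq A\times\core B$, and $X_r$ lives over $\core A\times B$. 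Until you correctly identify which leg is globally cocartesian, which is only locally so, and in which order they must be composed for \cref{lem:cartinloccartfib} to apply, neither the local cocartesianness in stage one nor the existence claim in stage two is justified.
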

\begin{proof}
  It follows from \cref{lem:cartinloccartfib} that the morphisms of the given
  form are locally $p$-cocartesian, since any $p_{r}$-cocartesian
  morphism in $X_{r}$ is in particular locally $p$-cocartesian.
  Thus $X$ has all locally $p$-cocartesian lifts, \ie{} $p$ is a locally
  cocartesian fibration. Moreover, all locally $p$-cocartesian
  morphisms are of the given form by uniqueness.
  
  It remains to show that a morphism $f
  \colon X \to Y$ between Gray fibrations over $A \times B$ preserves
  locally cocartesian morphisms \IFF{} it lies in $\Gray(A,B)$, which
  is immediate from the description of locally $p$-cocartesian
  morphisms in terms of the two types of cocartesian morphisms for a
  Gray fibration.
\end{proof}

\begin{remark}
  It is immediate from the definition that any cocartesian fibration
  over $A \times B$
  is a Gray fibration. Since $\Cocart(A \times B)$ is also a full
  subcategory of $\lococart(A \times B)$ it follows that we have a
  fully faithful inclusion
  \[\cocart(A \times B) \subseteq \Gray(A,B).\]
  The following characterisation pins down exactly how Gray fibrations
  fit in between cocartesian and locally cocartesian fibrations:
\end{remark}

\begin{lemma}\label{lem:gray reform}
  A locally cocartesian fibration $p\colon X\rt A\times B$ is a Gray fibration if and only if it restricts to a cocartesian fibration over each triangle $\sigma \colon [2]\rt A\times B$ of one of the following forms:
\[\begin{tikzcd}
{(a,
  b)}\arrow[r, "{(\id, \beta)}"]\arrow[rd, "{(\id,
  \beta'\beta)}"{swap}] & {(a, b')}\arrow[d, "{(\id, \beta')}"] & {(a, b)}\arrow[r, "{(\alpha, \id)}"]\arrow[rd, "{(\alpha'\alpha,
  \id)}"{swap}] & {(a', b)}\arrow[d, "{(\alpha', \id)}"] &  {(a,
  b)}\arrow[r, "{(\alpha, \id)}"]\arrow[rd, "{(\alpha, \beta)}"{swap}]
& {(a', b)}\arrow[d, "{(\id, \beta)}"]\\ 
& {(a, b'')} & & {(a'', b)} &  & {(a', b')}
\end{tikzcd}\]
\end{lemma}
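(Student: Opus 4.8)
The plan is to prove the two implications separately, using the characterisation of locally $p$-cocartesian morphisms in a locally cocartesian fibration from \cref{lem:cartinloccartfib} (via \cref{cor:loccartiscartcond}) together with the explicit description of Gray fibrations in terms of the two types of cocartesian lifts.

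For the ``only if'' direction, suppose $p$ is a Gray fibration, so by the preceding lemma it is a locally cocartesian fibration whose locally cocartesian morphisms are precisely the composites $x \to (\id,\beta)_{!}x \to (\alpha,\id)_{!}(\id,\beta)_{!}x$ of a $p$-cocartesian lift over $(\id,\beta)$ followed by a $p_{r}$-cocartesian lift over $(\alpha,\id)$. By \cref{cor:loccartiscartcond} (applied to $p^{\op}$), to show $p$ restricts to a cocartesian fibration over a triangle $\sigma\colon [2]\to A\times B$ it suffices to show that the composite of two locally $p$-cocartesian morphisms lying over the two edges $\sigma(0)\to\sigma(1)$ and $\sigma(1)\to\sigma(2)$ is again locally $p$-cocartesian, i.e.\ again of the above form. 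For the first type of triangle, both edges lie in $\{a\}\times B$, so both locally cocartesian lifts are honestly $p$-cocartesian (the $(\alpha,\id)$-part is trivial), and the composite is $p$-cocartesian over $(\id,\beta'\beta)$ since $p$-cocartesian morphisms compose; hence it is locally $p$-cocartesian. For the second type both edges lie in $A\times\{b\}$, so both lifts are $p_{r}$-cocartesian, and the composite is $p_{r}$-cocartesian over $(\alpha'\alpha,\id)$ (again by closure of cocartesian morphisms under composition), hence locally $p$-cocartesian. For the third, mixed, triangle the composite over $(a,b)\to(a',b)\to(a',b')$ is a $p_{r}$-cocartesian lift over $(\alpha,\id)$ followed by a $p$-cocartesian lift over $(\id,\beta)$; one must show this is locally $p$-cocartesian over $(\alpha,\beta)$, which by the description above means exhibiting it as a $p$-cocartesian lift over $(\id,\beta)$ followed by a $p_{r}$-cocartesian lift over $(\alpha,\id)$. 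This is exactly the ``interpolation''/exchange manipulation: lift $(a,b)\to(a,b')$ $p$-cocartesianly, then lift $(a,b')\to(a',b')$ $p_{r}$-cocartesianly, obtaining a second composite of the standard form with the same source and target over $A\times B$; a diagram chase using the universal properties of the two cocartesian lifts (as in \cref{ex:interpol}) produces a comparison morphism over the identity of $(a',b')$, and one checks this comparison is itself $p$-cocartesian (lying over an equivalence) and hence an equivalence by conservativity of $p$-cocartesian lifts over identities; therefore the two composites agree and the mixed composite is locally $p$-cocartesian.

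For the ``if'' direction, suppose $p\colon X\to A\times B$ is a locally cocartesian fibration that restricts to a cocartesian fibration over each of the three displayed types of triangle. We must produce the two families of cocartesian lifts. First, using the triangles of the second type (those in $A\times\{b\}$) together with \cref{cor:loccartiscartcond}, we conclude that any composite of locally $p$-cocartesian morphisms lying over $A\times\{b\}$ is locally $p$-cocartesian; restricting $p$ along $A\times\{b\}\hookrightarrow A\times B$ and applying that corollary shows $p_{b}\colon X_{b}\to A$ is a cocartesian fibration, i.e.\ the fibre over each $b$ is cocartesian over $A$. By \cref{rmk:pbtocore} this says $p_{r}\colon X_{r}\to\core{A}\times B$ is a cocartesian fibration. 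Dually, using the triangles of the first type (those in $\{a\}\times B$), each fibre $X_{a}\to B$ is a cocartesian fibration. It remains to show $p$ is cocartesian over $A$, i.e.\ by \cref{onevarcocart} that $p_{1}$ is a cocartesian fibration and its cocartesian morphisms lie over equivalences in $B$; equivalently, that $X$ has all $p$-cocartesian lifts over $A\times\core{B}$. Given $(\alpha,\id)\colon(a,b)\to(a',b)$ and $x\in X_{(a,b)}$, take the locally $p$-cocartesian lift $x\to \lambda\colon(\alpha,\id)_{!}x$. Using \cref{cor:loccartiscartcond} once more — now the hypothesis that $p$ is cocartesian over the mixed triangles of the third type is what lets us compose this $(\alpha,\id)$-lift with an arbitrary locally $p$-cocartesian lift over any $(\id,\beta)$ and stay locally $p$-cocartesian, while the second-type triangles handle composition with $(\alpha',\id)$-lifts — one verifies that $\lambda$ composed with \emph{every} locally $p$-cocartesian morphism out of $(\alpha,\id)_{!}x$ is again locally $p$-cocartesian, since every such morphism factors (by the fibrewise statement plus the $p_{r}$-statement already established) as an $(\id,\beta)$-part followed by an $(\alpha',\id)$-part, each of which is handled by one of the three triangle types. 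Hence $\lambda$ is $p$-cocartesian by \cref{cor:loccartiscartcond}. This produces the required $p$-cocartesian lifts over $A\times\core{B}$, so $p$ is cocartesian over $A$, and combined with the fact that $p_{r}$ is a cocartesian fibration this shows $p$ is a Gray fibration.

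The main obstacle is the ``exchange'' step for the mixed (third) triangle in the ``only if'' direction — verifying that the comparison morphism between the two standard-form composites over $(a,b)\to(a',b')$ is an equivalence. This is precisely the content that forces the notion of Gray fibration to sit strictly between cocartesian and locally cocartesian fibrations, and handling it cleanly requires the universal-property bookkeeping already set up in \cref{ex:interpol}; everything else is routine application of \cref{lem:cartinloccartfib}, \cref{cor:loccartiscartcond}, \cref{onevarcocart} and \cref{rmk:pbtocore}.
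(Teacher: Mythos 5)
Your overall strategy --- translating cocartesianness over the three triangle types into closure properties of locally $p$-cocartesian morphisms under composition via \cref{lem:cartinloccartfib} and \cref{cor:loccartiscartcond} --- is the same as the paper's, but you have systematically interchanged the roles of the two factors, and this is not a harmless relabelling: it forces you into a step that is actually false. In a Gray fibration the \emph{genuine} $p$-cocartesian lifts are the ones over $(\alpha,\id)$ (that is what ``cocartesian over $A$'' means), while the lifts over $(\id,\beta)$ are only $p_r$-cocartesian, i.e.\ cocartesian in the fibre $X_a\to B$ but not in all of $X$; accordingly the locally $p$-cocartesian lift of $(\alpha,\beta)$ is a $p$-cocartesian lift of $(\alpha,\id)$ \emph{followed by} a $p_r$-cocartesian lift of $(\id,\beta)$. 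You assert the opposite in both pure cases (claiming the type-1 lifts over $(\id,\beta)$ are ``honestly $p$-cocartesian'' and the type-2 lifts over $(\alpha,\id)$ are ``$p_r$-cocartesian'' --- the latter does not even typecheck, since $(\alpha,\id)$ is not a morphism of $\core A\times B$), and consequently you believe the mixed triangle requires an ``exchange'' comparing the two factorisations of a lift of $(\alpha,\beta)$. It does not: the type-3 composite ($A$-direction first, then $B$-direction) \emph{is} the locally cocartesian lift by construction, so that case is immediate. Worse, your resolution of the spurious exchange --- that the comparison morphism over $\id_{(a',b')}$ is $p$-cocartesian and hence an equivalence --- is false in general: that comparison is precisely the interpolating edge, and by \cref{cocartgray} it is invertible exactly when $p$ is a cocartesian fibration. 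If your exchange argument were correct, every Gray fibration would be cocartesian.

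The same swap damages the converse direction. From the type-2 triangles and \cref{cor:loccartiscartcond} you correctly deduce that each $X_b\to A$ is a cocartesian fibration, but by \cref{rmk:pbtocore} this controls $p_\ell$, not $p_r$; it is the type-1 triangles (which you do invoke, ``dually'') that make $p_r$ cocartesian. More seriously, to show that a locally cocartesian lift $\lambda$ of $(\alpha,\id)$ is genuinely $p$-cocartesian you factor an arbitrary locally cocartesian morphism out of its target as an ``$(\id,\beta)$-part followed by an $(\alpha',\id)$-part''; that factorisation is licensed only by cocartesianness over the mixed triangle in the \emph{other} orientation, which is not among your hypotheses. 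The correct order is $(\alpha',\id)$ first and $(\id,\beta')$ second (this is exactly what the type-3 triangles provide), after which type 2 handles the composite of $\lambda$ with the $(\alpha',\id)$-part and type 3 handles appending the $(\id,\beta')$-part. With the two factors put back in their proper places your outline does reduce to the paper's proof, and no interpolation argument is needed anywhere; but as written, the treatment of the mixed triangle rests on a claim that fails for every Gray fibration that is not already cocartesian.
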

\begin{proof}
  For a locally cocartesian
  fibration $p$, being cocartesian over the first type of triangle is
  equivalent to $p_r$ being a cocartesian fibration by \cref{cor:loccartiscartcond}. Using \cref{lem:cartinloccartfib},
  being cocartesian over the second and third types of triangles is
  equivalent to $p$ being a locally cocartesian fibration such that
  for any two locally cocartesian arrows $x\rt x'$ and $x'\rt x''$
  covering $(\alpha, \id)$ and $(\alpha', \beta)$ respectively, their composition is
  locally cocartesian as well. By \cref{lem:cartinloccartfib},
  this means precisely that $p$ admits cocartesian lifts over $A
  \times \core{B}$.
\end{proof}

We see that the difference between Gray and cocartesian fibrations lies in
the fact that in a Gray fibration the locally cocartesian lifts of the three edges in a
  diagram of the form
\[\begin{tikzcd} (a,b) \ar[r,"{(\id,\beta)}"] \ar[rd,"{(\alpha,\beta)}"'] & (a,b') \ar[d,"{(\alpha,\id)}"] \\
    & (a',b')\end{tikzcd}\] 
need not form a commutative diagram. We now analyze the
relationship between Gray fibrations and cocartesian fibrations over a
product more closely.

\begin{construction}\label{rmk:interpol Gray}
  Let $p \colon X \to A \times B$ be a Gray fibration. Consider an
  edge $(\alpha,\beta) \colon (a,b) \rightarrow (a',b')$ in
  $A \times B$ as above. Given a lift $x$ of the source of this edge we can choose $p_r$- and $p$-cocartesian lifts as in the solid part of
\begin{center}
\begin{tikzcd}
                                   & (\id,\beta)_!(\alpha,\id)_!x \arrow[rd, dashed] &    \\
(\alpha,\id)_!x \ar[rr,dotted]\arrow[ru, tail, "\circ" marking]  &               & (\alpha,\id)_!(\id,\beta)_!x \\
x \ar[ruu,dotted] \ar[rru,dotted]\arrow[r, tail, "\circ" marking] \arrow[u, tail]              & (\id,\beta)_!x. \arrow[ru, tail]  &   
\end{tikzcd}
\end{center}
(Recall that tailed arrows denote $p$-cocartesian edges and tailed
arrows marked by a circle denote $p_r$-cocartesian edges.) Now by
\cref{lem:cartinloccartfib} the composition $x \rightarrow (\alpha,\id)_!x \rightarrow (\id,\beta)_!(\alpha,\id)_!x$ along the top is still locally $p$-cocartesian, whence there
exists an essentially unique dashed arrow as indicated making the
diagram commute.  More formally, consider the functor
$\rho\colon [1]\rt \cocart^{\lax}([1])$ from \cref{ex:interpol}. Then
the cocartesian unstraightening of $\rho$ over
$[1]$ can be identified with the Gray fibration $Q'\rt [1]\times [1]$,
where $Q'$ is the poset 
\begin{center}
\begin{tikzcd}
	& 11' \arrow[rd] &    \\
	10 \arrow[ru,tail,"\circ" marking]  \ar[rr,dotted] &               & 11. \\
	00 \arrow[r,tail,"\circ" marking] \arrow[u,tail] \arrow[ruu,dotted] \ar[rru,dotted]              & 01 \arrow[ru,tail]  &   
\end{tikzcd}
\end{center}
and the projection is the evident one, sending $11'\to 11$ to the
identity. Then just as in \cref{ex:interpol} evaluation at $00$ induces an equivalence between the $\infty$-groupoid of maps of Gray fibrations
$$\begin{tikzcd}
Q'\arrow[r]\arrow[d] & X\arrow[d, "p"]\\
{[1]\times[1]}\arrow[r, "\alpha\times\beta"] & A\times B
\end{tikzcd}$$
and $\iota X_{(a,b)}$.
\end{construction}

\begin{definition}\label{def:interpol gray}
If $p\colon X\rt A\times B$ is a Gray fibration, then a morphism $Q'\rt X$ of Gray fibrations is said to be a \emph{$p$-interpolating diagram}.
A morphism in $X$ is said to be \emph{$p$-interpolating} if it arises as the restriction of a $p$-interpolating diagram to $11'\rt 11$.
\end{definition}

Note that we do not include in the notation whether an edge in
  $X$ is regarded as an interpolating edge for a Gray or curved
  orthofibration, assuming $p$ is both (a situation we will have to
  explicitly consider later). We will be more explicit when the need
  arises.

\begin{proposition}\label{cocartgray}
  Let $p \colon X\rightarrow A\times B$ be a Gray fibration. Then the
  following are equivalent:
  \begin{enumerate}[(1)]
  \item $p$ is a cocartesian fibration,
  \item $p$ restricts to a cocartesian fibration over each triangle
    $\sigma \colon [2] \rightarrow A \times B$ of the form
    \[\begin{tikzcd} (a,b) \ar[r,"{(\id,\beta)}"] \ar[rd,"{(\alpha,\beta)}"'] & (a,b') \ar[d,"{(\alpha,\id)}"] \\
        & (a',b'),\end{tikzcd}\]
  \item\label{cocartinterpolating} Every $p$-interpolating edge in
    $X$ is an equivalence.
  \item For every morphism $\alpha \colon a \to a'$ in $A$, the cocartesian transport functor $\alpha_{!} \colon X_{a} \to X_{a'}$ preserves
    cocartesian morphisms over $B$.
  \item The functor $A \to \Cocartlax(B)$ obtained by straightening
    $p$ over $A$ factors through the wide subcategory $\Cocart(B)$.
  
  \end{enumerate}
\end{proposition}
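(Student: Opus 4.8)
The plan is to establish the cycle $(1)\Rightarrow(2)\Leftrightarrow(3)\Leftrightarrow(4)\Rightarrow(1)$ together with the equivalence $(4)\Leftrightarrow(5)$. Throughout I will use that, by \cref{rmk:interpol Gray}, $p$ is a locally cocartesian fibration whose locally $p$-cocartesian morphisms are exactly the composites $x \to (\alpha,\id)_!x \to (\id,\beta)_!(\alpha,\id)_!x$ of a $p$-cocartesian morphism over $(\alpha,\id)$ followed by a $p_r$-cocartesian morphism over $(\id,\beta)$, and that the $p$-interpolating edges are exactly the morphisms $\theta_{\alpha,\beta,x}\colon (\id,\beta)_!(\alpha,\id)_!x \to (\alpha,\id)_!(\id,\beta)_!x$ appearing in the $p$-interpolating diagrams. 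The implication $(1)\Rightarrow(2)$ is immediate: cocartesian fibrations are stable under base change, and the restriction of $p$ to a triangle $\sigma\colon[2]\to A\times B$ is the base change of $p$ along $\sigma$.

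For $(2)\Leftrightarrow(3)$, I would fix $\alpha\colon a\to a'$ and $\beta\colon b\to b'$ and let $\sigma$ be the associated triangle appearing in $(2)$, with legs $(\id,\beta)$ and $(\alpha,\id)$ and hypotenuse $(\alpha,\beta)$. For an object $x$ over $(a,b)$, the composite of the locally $p$-cocartesian lifts of the two legs of $\sigma$ is the ``$B$-then-$A$'' composite $x \to (\id,\beta)_!x \to (\alpha,\id)_!(\id,\beta)_!x$, and by \cref{rmk:interpol Gray} this factors through the canonical locally $p$-cocartesian lift $x \to (\id,\beta)_!(\alpha,\id)_!x$ of the hypotenuse via the interpolating edge $\theta_{\alpha,\beta,x}$. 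Since two locally cocartesian lifts of a morphism with a fixed source differ by an equivalence in the fibre over the target, $p$ restricts to a cocartesian fibration over $\sigma$ if and only if $\theta_{\alpha,\beta,x}$ is an equivalence for every such $x$. Letting $\alpha$ and $\beta$ range over all morphisms of $A$ and $B$, this says precisely that every $p$-interpolating edge is invertible, which is $(3)$.

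For $(3)\Leftrightarrow(4)$, the point is that in the $p$-interpolating diagram the edge $(\alpha,\id)_!x \to (\alpha,\id)_!(\id,\beta)_!x$ is the image $\alpha_!(\tilde\beta)$ under the cocartesian transport functor $\alpha_!\colon X_a\to X_{a'}$ of a cocartesian morphism $\tilde\beta\colon x\to(\id,\beta)_!x$ of $X_a$ over $\beta$, and that it factors as the cocartesian morphism $(\alpha,\id)_!x\to(\id,\beta)_!(\alpha,\id)_!x$ of $X_{a'}$ over $\beta$ followed by $\theta_{\alpha,\beta,x}$. By the uniqueness of factorisations through a cocartesian morphism, $\alpha_!(\tilde\beta)$ is again cocartesian over $B$ if and only if $\theta_{\alpha,\beta,x}$ is an equivalence; as every fibrewise cocartesian morphism of $X_a$ over $\beta$ arises as such a $\tilde\beta$, letting $\alpha$, $\beta$ and $x$ vary yields $(3)\Leftrightarrow(4)$. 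For $(4)\Leftrightarrow(5)$, one recalls from \cref{cor:graycocartstr} that straightening $p$ over $A$ produces the functor $A\to\Cocartlax(B)$ carrying $a$ to the cocartesian fibration $X_a\to B$ and a morphism $\alpha$ to the cocartesian transport $\alpha_!\colon X_a\to X_{a'}$; by construction of the wide subcategory $\Cocart(B)\subseteq\Cocartlax(B)$, this functor factors through $\Cocart(B)$ exactly when every $\alpha_!$ preserves cocartesian morphisms over $B$, which is $(4)$.

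It remains to prove $(4)\Rightarrow(1)$, which I expect to be the one step requiring genuine work. From $(4)$ one obtains $(3)$, hence $(2)$, by the above, so $p$ restricts to a cocartesian fibration over every triangle appearing in $(2)$; combined with \cref{lem:gray reform}, which guarantees that a Gray fibration restricts to a cocartesian fibration over triangles of the three forms displayed there, $p$ is cocartesian over all four types of triangles. By \cref{cor:loccartiscartcond} it then suffices to check that the composite of any two locally $p$-cocartesian morphisms is again locally $p$-cocartesian. Writing each of the two in the ``$A$-then-$B$'' normal form, the composite becomes a chain of shape $A$-$B$-$A$-$B$; applying cocartesianness over a triangle appearing in $(2)$ rewrites the middle $B$-$A$ subchain as an ``$A$-then-$B$'' one, up to a fibrewise equivalence which does not affect local cocartesianness of the total composite, so that the chain takes the shape $A$-$A$-$B$-$B$. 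Collapsing the two $A$-steps using a triangle of the second form and the two $B$-steps using a triangle of the first form from \cref{lem:gray reform} then identifies the total composite, again up to fibrewise equivalence, with the canonical locally $p$-cocartesian lift of the composite morphism, so it is locally $p$-cocartesian. The argument is purely formal once the ``$A$-then-$B$'' normal form for locally cocartesian morphisms and the interpolating edges of \cref{rmk:interpol Gray} are in hand; keeping track of that normal form across the rewriting is the only delicate point.
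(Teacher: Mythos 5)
Your proof is correct and follows essentially the same route as the paper's: the same ``$A$-then-$B$'' normal form for locally cocartesian morphisms, the same identification of the interpolating edges with the failure of $\alpha_!$ to preserve cocartesian lifts over $B$, and the same chain-rewriting argument for the hard implication $(2)\Rightarrow(1)$ via \cref{cor:loccartiscartcond} and \cref{lem:gray reform}. The only difference is cosmetic: you normalise the chain as $A$-$B$-$A$-$B \rightsquigarrow A$-$A$-$B$-$B$, whereas the paper reorders towards $B$-$A$; both rewritings use exactly the same three triangle types plus condition (2).
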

\begin{proof}
  Condition (1) immediately implies (2). Conversely, since $p$ is a
  Gray fibration it follows from (2) that locally $p$-cocartesian
  morphisms in $X$ are closed under composition:
  Given morphisms $(\alpha,\beta) \colon (a,b) \to (a',b')$ and
  $(\alpha',\beta') \colon (a',b') \to (a'',b'')$ in $A \times B$, we
  must show that for $x_{00} \in E_{(a,b)}$ the composite
  \[ x_{00} \to x_{11} \to x_{22} \]
  of locally $p$-cocartesian morphisms over $(\alpha,\beta)$ and
  $(\alpha',\beta')$ is again locally
  $p$-cocartesian over $(\alpha'\alpha,\beta'\beta)$. We can expand this
  to a composite
  \[ x_{00} \to x_{10} \to x_{11} \to x_{12} \to x_{22} \] of locally
  $p$-cocartesian morphisms over $(\alpha,\id)$, $(\id,\beta)$,
  $(\id,\beta')$, and $(\alpha',\id)$. Then the composite
  $x_{10} \to x_{12}$ is locally $p$-cocartesian over
  $(\id,\beta'\beta)$, and so the composite morphism
  $x_{00} \to x_{12}$ can alternatively be factored as
  $x_{00} \to x_{02} \to x_{12}$ where these morphisms are locally
  $p$-cocartesian over $(\id,\beta'\beta)$ and $(\alpha,\id)$. Then
  $x_{02} \to x_{12} \to x_{22}$ is locally $p$-cocartesian over
  $(\alpha'\alpha,\id)$, and so finally $x_{00} \to x_{02} \to x_{22}$
  is locally cocartesian over $(\alpha'\alpha,\beta'\beta)$ as
  required. Hence $p$ is a cocartesian fibration by
  \cref{cor:loccartiscartcond}.

  From \cref{rmk:interpol Gray} we see that (2) implies (3), since the
  $p$-interpolating morphisms are now obtained by factoring a morphism
  that is already locally $p$-cocartesian. Moreover, if all
  $p$-interpolating morphisms are invertible we can also conclude that
  the composite of locally $p$-cocartesian morphisms over a triangle
  as in (2) factors as a locally $p$-cocartesian morphism followed by
  an equivalence, and hence is again locally $p$-cocartesian.

  Since the cocartesian edges in $X_{a}$ over $B$ are precisely the
  locally $p$-cocartesian edges in $X$ that lie over $\id_{a}$, the
  equivalence of (3) and (4) is immediate from the definition of
  $p$-interpolating edges, while (5) is just a rephrasing of (4).
\end{proof}

The interpolating edges of a Gray fibration $p:X\rightarrow A\times B$ map to $\core(A \times B)$ by construction. From the analogous assertion for cocartesian fibrations we therefore immediately obtain:
\begin{corollary}\label{leftgray}
A Gray fibration $p:X\rightarrow A\times B$ is a left fibration if and
only if it is conservative, or equivalently if its fibres are
\igpds{}. \qed
\end{corollary}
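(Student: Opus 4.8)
The plan is to deduce \cref{leftgray} from \cref{cocartgray} combined with \cref{leftfib} in the sense of the characterisation of left fibrations among cocartesian fibrations. First I would observe that a left fibration is in particular a cocartesian fibration, so a Gray fibration that is a left fibration must already be a cocartesian fibration; hence by the lemma preceding \cref{defn:bifib} (the characterisation of left fibrations as conservative cocartesian fibrations, or equivalently cocartesian fibrations with \igpd{} fibres) the three conditions — being a left fibration, being conservative, having \igpd{} fibres — are equivalent \emph{among cocartesian} Gray fibrations. So the real content is to show that a conservative Gray fibration (equivalently, one with \igpd{} fibres) is automatically cocartesian; then \cref{cocartgray} is not even needed, but it is the cleanest route.

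Here is the argument I would write. Suppose $p\colon X\to A\times B$ is a Gray fibration whose fibres are \igpds{}. By \cref{cocartinterpolating} of \cref{cocartgray} it suffices to check that every $p$-interpolating edge of $X$ is an equivalence. But by construction (\cref{rmk:interpol Gray}, or already the remark that interpolating edges always lie over equivalences in $A\times B$) any $p$-interpolating edge maps to an equivalence, hence to $\core(A\times B)$, in particular it lies in a fibre $X_{(a,b)}$ up to the identifications given by cocartesian transport; more precisely, an interpolating edge $(\id,\beta)_!(\alpha,\id)_!x \to (\alpha,\id)_!(\id,\beta)_!x$ lies over the identity of $(a',b')$, so it is a morphism in the fibre $X_{(a',b')}$. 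Since that fibre is an \igpd{} by hypothesis, the interpolating edge is an equivalence. Hence $p$ is a cocartesian fibration by \cref{cocartgray}, and being conservative it is then a left fibration by the lemma characterising left fibrations among cartesian/cocartesian fibrations.

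For the converse direction and the equivalence of ``conservative'' with ``fibres are \igpds{}'', one simply notes these are general facts about cocartesian fibrations (the cited lemma from \cite{HTT}), applicable once we know the Gray fibration in question is cocartesian — which is automatic in one direction from the definition of left fibration and which we have just established in the other. Thus all three conditions coincide.

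I do not expect any serious obstacle here: the statement is essentially a corollary, and the only point requiring a moment's care is to make sure that an interpolating edge genuinely lies in a single fibre (it does, since it covers an identity morphism in $A\times B$, both of its endpoints being obtained from $x$ by applying $(\id,\beta)_!$ and $(\alpha,\id)_!$ in the two orders, which produce objects over the common target $(a',b')$). Given that, the implication ``\igpd{} fibres $\Rightarrow$ all interpolating edges invertible $\Rightarrow$ cocartesian'' via \cref{cocartgray} is immediate, and the rest is the cited classical lemma.
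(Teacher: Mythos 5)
Your proof is correct and follows essentially the same route as the paper: interpolating edges lie over equivalences in $A\times B$, hence are invertible when the fibres are \igpds{} (equivalently, when $p$ is conservative), so $p$ is cocartesian by \cref{cocartgray}, and the classical characterisation of left fibrations among cocartesian fibrations finishes the argument. The only quibble is a mis-citation: the lemma you want precedes the definition of left/right fibrations in \S\ref{subsec:fibbackgr}, not \cref{defn:bifib}.
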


\subsection{Dualisation of curved orthofibrations and Gray fibrations}\label{subsec:duallll}

In the present section we put together the pieces and analyse the dualisation equivalence promised in \cref{thm:grayorthdual}. 

\begin{thm}\label{thm:dual lortho-gray}
Cocartesian straightening followed by cartesian unstraightening over $A$ provides a natural equivalence
	\begin{equation}
	\label{eq:grayeqlorth}
	\begin{tikzcd}\Dualcart \colon\Gray(A,B)  \arrow[r, yshift=0.5ex] & \arrow[l, yshift=-0.5ex]  \LOrth(A^{\op},B) \cocolon \Dualco\end{tikzcd}
	\end{equation}
	between Gray fibrations over $A \times B$ and \lorthos{} over
	$A^{\op}\times B$. It restricts to the identity if
          $A=\ast$ and is the usual dualisation equivalence between
          cartesian and cocartesian fibrations if $B=\ast$. In
          particular, for all $(a,b) \in A \times B$ there are canonical equivalences
	\begin{equation}\Dualcart(p)_{(a,b)} \simeq X_{(a,b)} \quad\text{and}\quad \Dualco(q)_{(a,b)} \simeq Y_{(a,b)}\end{equation}
        for every  Gray fibration $p \colon X \rightarrow A \times B$ and curved orthofibration $q \colon Y \rightarrow A^\op \times B$.

Dually, there is an equivalence
\begin{equation}
\label{eq:opgrayeqlorth}
\begin{tikzcd}\Dualcart \colon\LOrth(A,B)  \arrow[r, yshift=0.5ex] & \arrow[l, yshift=-0.5ex]  \Grayop(A,B^\op) \cocolon \Dualco\end{tikzcd}
\end{equation}
with the analogous properties.
\end{thm}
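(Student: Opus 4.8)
The plan is to assemble the equivalence \eqref{eq:grayeqlorth} purely formally out of the one-variable straightening statements already established, and then verify the two normalisations and the fibre identification by tracing through those equivalences. Concretely, I would observe that \cref{cor:graycocartstr} gives $\Gray(A,B) \simeq \Fun(A, \Cocartlax(B))^{\mathrm{cc}}$ via cocartesian straightening over $A$, and that the leftmost equivalence of \cref{cor:locorthstr} gives $\Fun(A^{\op}, \Cocartlax(B))^{\mathrm{cc}} \simeq \LOrth(A^{\op},B)$ via cartesian unstraightening over $A^{\op}$. Composing these two and noting that $\Fun(A, -) = \Fun((A^\op)^\op,-)$ is the target of both, we obtain the desired equivalence $\Dualcart\colon \Gray(A,B) \xrightarrow{\ \sim\ } \LOrth(A^{\op},B)$, with inverse $\Dualco$ given by cartesian straightening over $A^{\op}$ followed by cocartesian unstraightening over $A$. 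Naturality in $A$ and $B$ is automatic since every ingredient (cocartesian/cartesian (un)straightening over $A$, the identifications of $\Cat/(A\times B)$ with slices over $\pr_1$, and the functoriality of $\Fun(-, \Cocartlax(B))$) is natural.

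For the two normalisations: when $A = \ast$, both $\Cocartlax$-straightenings reduce to the identity (a functor out of the point is just an object, and cocartesian/cartesian (un)straightening over the point is the identity), so the composite is the identity on $\Cocartlax(B) = \Gray(\ast,B) = \LOrth(\ast,B)$. When $B = \ast$, we have $\Cocartlax(\ast) \simeq \Cat$, so $\Gray(A,\ast) \simeq \Cocart(A)$ and $\LOrth(A^\op,\ast) \simeq \Cart(A^\op)$, and the composite ``cocartesian straighten over $A$, then cartesian unstraighten over $A^\op$'' is by definition the classical dualisation equivalence $\Dualcart\colon \Cart(A^{\op}) \simeq \Cocart(A)$ recalled in \S\ref{subsec:fibbackgr} (read off from the equivalence $\Strco$ followed by $\Uncart$). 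The fibre identification \eqref{eq:grayeqlorth}, last display, then follows: for a Gray fibration $p\colon X \to A\times B$, restricting along $\{a\} \hookrightarrow A$ is compatible with the straightening equivalences, so $\Dualcart(p)|_{\{a\}\times B}$ is the dual over the point $\{a\}$ of $X|_{\{a\}\times B}$, i.e.\ $X_a \to B$ itself (the $B = \ast$ case applied fibrewise, or rather the $A = \ast$ normalisation after further restricting to $\{b\}$), giving $\Dualcart(p)_{(a,b)} \simeq X_{(a,b)}$; dually for $\Dualco$.

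Finally, the second equivalence \eqref{eq:opgrayeqlorth} is obtained by applying the first with $(A,B)$ replaced by $(A^\op, B^\op)$ and then taking opposites throughout: $\Grayop(A,B^\op)$ is by definition $\{p : p^\op \in \Gray(A^\op, B)\}$, and a \lortho{} over $A \times B$ has opposite a \lortho{} over $A^\op \times B^\op$ (the defining conditions (1)--(4) in the Observation after \cref{prop:lortho reformulations} are stable under $(-)^\op$ up to swapping cartesian/cocartesian, which is exactly the self-duality of the \lortho{} conditions), so $(-)^\op$ identifies $\LOrth(A,B)$ with $\LOrth((A^\op)^\op, B^\op) = \LOrth(A, B^\op)$ appropriately; chasing the definitions gives the stated equivalence with the analogous normalisation and fibre properties. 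I expect the main obstacle to be bookkeeping rather than mathematics: one must be careful that ``cocartesian straightening over $A$'' and ``cartesian straightening over $A^\op$'' land in the \emph{same} functor \icat{} $\Fun(A, \Cocartlax(B))^{\mathrm{cc}}$ with the \emph{same} subcategory of morphisms (this is where \cref{cor:locorthstr}'s analysis of which natural transformations correspond to maps of \lorthos{}, together with the matching analysis behind \cref{cor:graycocartstr}, is essential), and that the opposite-category manipulations in the last step do not secretly swap the roles of $\Dualcart$ and $\Dualco$.
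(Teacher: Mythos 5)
Your proposal is correct and follows essentially the same route as the paper: the equivalence \eqref{eq:grayeqlorth} is obtained by composing the straightening equivalence of \cref{cor:graycocartstr} with the first equivalence of \cref{cor:locorthstr}, with the normalisations and fibre identifications read off from the construction. The only (harmless) deviation is in the dual case \eqref{eq:opgrayeqlorth}, which the paper gets directly from the second equivalence of \cref{cor:locorthstr} together with the dual of \cref{cor:graycocartstr} (i.e.\ straightening over $B$), whereas you pass through $(-)^{\op}$ and the first equivalence; both give the stated result.
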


\begin{proof}
	Combine the straightening equivalence of \cref{cor:graycocartstr}
	with the first equivalence of \cref{cor:locorthstr} to obtain 
	\[\Gray(A,B) \simeq \Fun(A,\Cocartlax(B))^\coc \simeq \LOrth(A^\op,B).\]
	The addenda are all immediate from the construction, and the dual case is obtained by using the the equivalence from \cref{cor:locorthstr} combined with the dual of \cref{cor:graycocartstr}.
\end{proof}

	\begin{propn}\label{lem:interpol dual}
		Let $p\colon X\rt A\times B$ be a Gray fibration and let $q\colon Y\rt A^{\op}\times B$ be the dual \lortho{}. For each $\alpha\colon a\rt a'$, $\beta\colon b\rt b'$ and $x\in X_{a', b}\simeq Y_{a', b}$, the canonical equivalence $X_{a, b'}\simeq Y_{a, b'}$ identifies the associated $p$-interpolating morphism from Definition \ref{def:interpol gray} with the associated $q$-interpolating morphism from Definition \ref{def:interpol lorth}.
	\end{propn}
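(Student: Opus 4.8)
The plan is to reduce both interpolating morphisms to the same universal model over $[1]^\op \times [1]$ (resp.\ $[1] \times [1]$) and then invoke the naturality of the dualisation equivalence \eqref{eq:grayeqlorth}. Concretely, recall from \cref{rmk:interpol Gray} that the $p$-interpolating morphism associated to $x \in X_{a,b}$ arises (via evaluation at $11' \to 11$) from an essentially unique map of Gray fibrations $Q' \to X$ over $\alpha \times \beta \colon [1] \times [1] \to A \times B$, where $Q' \to [1] \times [1]$ is the cocartesian unstraightening of the functor $\rho \colon [1] \to \cocart^\lax([1])$ from \cref{ex:interpol}. Dually, by \cref{ex:interpol} itself, the $q$-interpolating morphism associated to $x \in Y_{a,b}$ arises from an essentially unique map of curved orthofibrations $Q \to Y$ over $\alpha \times \beta \colon [1]^\op \times [1] \to A^\op \times B$, where $Q \to [1]^\op \times [1]$ is the cartesian unstraightening of the \emph{same} functor $\rho$ over $A = [1]$.

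First I would observe that, by \cref{thm:dual lortho-gray} applied with $(A,B) = ([1],[1])$, the dualisation equivalence sends the Gray fibration $Q' \to [1] \times [1]$ to the \lortho{} $Q \to [1]^\op \times [1]$: indeed both are constructed from $\rho$ by straightening over $A = [1]$ and then (co/contra)-unstraightening, which is exactly the definition of $\Dualcart$. In particular the canonical fibrewise equivalence of \cref{thm:dual lortho-gray} identifies $Q'_{(i,j)} \simeq Q_{(i,j)}$ for all $(i,j)$, and one checks directly on the explicit posets that this identification is the evident one matching the vertices $00, 01, 10, 11, 11'$ (both posets have the same underlying set with the edge $11' \to 11$ collapsing to an identity on the base). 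Under this identification the edge $11' \to 11$ of $Q'$ corresponds to the edge $11' \to 11$ of $Q$.

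Next I would use naturality of $\Dualcart$ in the total space: given the Gray fibration $p \colon X \to A \times B$ and an object $x \in X_{a',b} \simeq Y_{a',b}$, the $p$-interpolating diagram $Q' \to X$ (pulled back along $\alpha \times \beta$) is carried by $\Dualcart$ to a map of \lorthos{} $Q \to Y$ over $[1]^\op \times [1]$, since $\Dualcart$ is a functor $\Gray([1],[1]) \to \LOrth([1]^\op,[1])$ compatible with base change along maps into $(A,B)$; moreover this resulting map is \emph{the} $q$-interpolating diagram attached to $x$, because it is again the essentially unique such map (both interpolating diagrams being characterised by evaluation at $00 \mapsto x$, using the universal properties recorded in \cref{ex:interpol} and \cref{rmk:interpol Gray}, which are preserved by the equivalence). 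Evaluating at the edge $11' \to 11$ and using the fibrewise identification $X_{a,b'} \simeq Y_{a,b'}$ from \cref{thm:dual lortho-gray} then gives the claim.

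The main obstacle I anticipate is bookkeeping rather than conceptual: one must verify that the canonical fibrewise equivalence $Q'_{(i,j)} \simeq Q_{(i,j)}$ coming out of the abstract straightening machinery really is the naive combinatorial bijection of posets, so that the distinguished edge $11' \to 11$ on one side genuinely matches the distinguished edge on the other side and not some other morphism. This is a finite, explicit check — both $Q$ and $Q'$ are small posets and $\rho$ is a map of $1$-categories, so the straightening/unstraightening can be computed by hand — but it is the step where an honest argument is needed rather than an appeal to naturality, and it is essential because the whole statement is about which specific morphism the interpolating morphism \emph{is}.
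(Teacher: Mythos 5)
Your proposal is correct and follows essentially the same route as the paper: reduce to $A=B=[1]$, observe that $Q'\to[1]\times[1]$ and $Q\to[1]^{\op}\times[1]$ are dual by construction since both arise from the functor $\rho$ of \cref{ex:interpol}, deduce that the \igpds{} of interpolating diagrams correspond under dualisation, and check that the edge $11'\to 11$ is matched. The only difference is that you flag the vertex-matching verification explicitly as a step requiring an honest (finite) check, whereas the paper asserts it in one line; this is a fair point but does not change the argument.
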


\begin{proof}
	The statement immediately reduces to the case where $A=B=[1]$. Now by construction the Gray fibration	$Q'\rt [1]\times [1]$ from \cref{def:interpol gray} is dual to the
	\lortho{} $Q\rt [1]^{\op}\times [1]$ from \cref{ex:interpol}, so it follows that the \igpd{} of interpolating diagrams $Q'\rt X$ and $Q\rt Y$ are equivalent. Since dualisation identifies the morphism $11'\rt 11$ in $Q$ with $11'\rt 11$ in $Q'$, dualisation preserves interpolating morphisms as well.
\end{proof}

\begin{corollary}\label{cor:strothbifibb}
	The equivalences from \cref{thm:dual lortho-gray} restrict to equivalences
	\[\begin{tikzcd}\Cocart(A \times B)  \arrow[r, yshift=0.5ex] & \arrow[l, yshift=-0.5ex]  \Orth(A^{\op},B) & and & \Leftfib (A \times B)  \arrow[r, yshift=0.5ex] & \arrow[l, yshift=-0.5ex]  \Bifib(A^{\op},B)\end{tikzcd}\]
	and dually
	\[\begin{tikzcd}\Orth(A, B)  \arrow[r, yshift=0.5ex] & \arrow[l, yshift=-0.5ex]  \Cart(A \times B^{\op}) & and & \Bifib (A, B)  \arrow[r, yshift=0.5ex] & \arrow[l, yshift=-0.5ex]  \Rightfib(A \times B^{\op}).\end{tikzcd}\]
\end{corollary}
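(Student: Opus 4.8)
The plan is to obtain all four equivalences by restricting the dualisation equivalences of \cref{thm:dual lortho-gray} to suitable full subcategories; the point is that the relevant subcategories are cut out on both sides by the \emph{same} condition once one passes through the intermediate one-variable straightening. Recall from the proof of \cref{thm:dual lortho-gray} that the equivalence $\Dualcart\colon\Gray(A,B)\simeq\LOrth(A^{\op},B)$ of \eqref{eq:grayeqlorth} is the composite
\[\Gray(A,B)\ \xrightarrow{\ \sim\ }\ \Fun(A,\Cocartlax(B))^{\coc}\ \xleftarrow{\ \sim\ }\ \LOrth(A^{\op},B),\]
where the left-hand map is cocartesian straightening over $A$ (\cref{cor:graycocartstr}) and the right-hand map is the first equivalence of \cref{cor:locorthstr} applied to $A^{\op}$. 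First I would note that $\Cocart(A\times B)$ is a full subcategory of $\Gray(A,B)$ (the inclusion is fully faithful by the remark preceding \cref{lem:gray reform}) and that $\Orth(A^{\op},B)$ is a full subcategory of $\LOrth(A^{\op},B)$ by definition; since both maps in the display are equivalences, it then suffices to check that these two full subcategories are sent onto the \emph{same} full subcategory of $\Fun(A,\Cocartlax(B))^{\coc}$, i.e.\ to match objects.

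To match objects I would invoke two results already established. On the $\Gray$ side, \cref{cocartgray}(5) says that a Gray fibration over $A\times B$ is a cocartesian fibration precisely when its straightening $A\to\Cocartlax(B)$ factors through the wide subcategory $\Cocart(B)$. On the $\LOrth$ side, \cref{cor:orthostr} applied to $A^{\op}$ identifies the image of $\Orth(A^{\op},B)$ under the first equivalence of \cref{cor:locorthstr} with $\Fun(A,\Cocart(B))$---that is, with exactly those functors $A\to\Cocartlax(B)$ that factor through $\Cocart(B)$. The transition functors on the two sides agree by construction of the dualisation, so the two full subcategories have the same objects and hence coincide; thus $\Dualcart$ restricts to the asserted equivalence $\Cocart(A\times B)\simeq\Orth(A^{\op},B)$, with inverse the restriction of $\Dualco$. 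To promote this to $\Leftfib(A\times B)\simeq\Bifib(A^{\op},B)$ I would restrict once more: a left fibration is precisely a cocartesian fibration all of whose fibres are \igpds{}, so $\Leftfib(A\times B)$ is the full subcategory of $\Cocart(A\times B)$ on such fibrations, while by \cref{bifibinortho} the same property singles out $\Bifib(A^{\op},B)$ inside $\Orth(A^{\op},B)$; since $\Dualcart$ preserves fibres by the addendum to \cref{thm:dual lortho-gray}, it restricts accordingly.

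For the dual pair $\Orth(A,B)\simeq\Cart(A\times B^{\op})$ and $\Bifib(A,B)\simeq\Rightfib(A\times B^{\op})$ I would run the identical argument on the second equivalence $\Dualcart\colon\LOrth(A,B)\simeq\Grayop(A,B^{\op})$ of \eqref{eq:opgrayeqlorth}, which by construction factors through $\Fun(B,\Cartlax(A))^{\mathrm{ct}}$ via the second equivalence of \cref{cor:locorthstr} and the dual of \cref{cor:graycocartstr}: here $\Orth(A,B)$ matches $\Cart(A\times B^{\op})$ using the second half of \cref{cor:orthostr} together with the dual of \cref{cocartgray}, and $\Bifib(A,B)$ matches $\Rightfib(A\times B^{\op})$ again by preservation of fibres. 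Alternatively, the whole second pair can be deduced from the first by passing to opposite \icats{} throughout. I do not anticipate a serious obstacle---the argument is soft---so the main thing to get right is the bookkeeping of which of the two one-variable straightenings enters each equivalence of \cref{thm:dual lortho-gray}, so that one invokes the matching object-level criterion (\cref{cocartgray} or \cref{cor:orthostr}) on the correct side.
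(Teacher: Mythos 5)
Your argument is correct and is essentially the paper's own proof: the paper likewise obtains the first pair by replacing \cref{cor:locorthstr} and \cref{cor:graycocartstr} in the proof of \cref{thm:dual lortho-gray} with \cref{cor:orthostr} and the straightening of cocartesian fibrations over a product (your appeal to \cref{cocartgray}(5) is just this restriction made explicit), and it handles the left/bifibration cases by inspecting fibres exactly as you do. The only thing you omit is the paper's alternative one-line argument via \cref{lem:interpol dual} and the interpolating-edge criterion \ref{cocartinterpolating} of \cref{cocartgray}, which is not needed for your route.
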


\begin{proof}
	The left hand equivalences follow by replacing the use of \cref{cor:locorthstr} and \cref{cor:graycocartstr} in \cref{thm:dual lortho-gray} with \cref{cor:orthostr} and straightening for (co)cartesian fibrations. Alternatively, using the previous proposition they follow from characterisation \ref{cocartinterpolating} of \cref{cocartgray}. The statement about left and bifibrations follows by inspecting fibres.
\end{proof}

\begin{remark}\label{remark:annoyingsquare} \begin{enumerate}
	\item Equivalences as on the right were first constructed by Stevenson, by comparing both \(\bifib(A,B)\) and \(\leftfib(A^{\op} \times B)\) to an \icat{} of correspondences \cite[Theorems C \& D]{Stevenson}.  In the companion paper \cite{part2} we will prove a uniqueness
		result for the equivalences above that in particular shows that our equivalences restrict to those of Stevenson.%
	
\item\label{item:annoyingsquareisannoying} From \cref{cor:strothbifibb} we obtain a diagram of equivalences
\[\begin{tikzcd} & \ar[rd, "\sim"]\ar[ld] \Orth(A,B) & \\
\Cocart(A^\op \times B) \ar[ru, "\sim"]\ar[rr, "\sim"]& &\ar[ll] \ar[lu]\Cart(A \times B^\op)\end{tikzcd}\]
where the lower maps are given by dualisation in a single
  variable (i.e. over $A^\op \times B$). It is not a priori clear that
  this diagram commutes, but this will also be a consequence of the
  results in \cite{part2}. Combined with the usual
	straightening equivalences for (co)cartesian fibrations, we similarly obtain two a priori different equivalences
	\[ \Orth(A,B) \simeq \Fun(A^{\op} \times B, \Cat), \]
	given by straightening first over $A$ and then over $B$, or vice versa. Both restrict to equivalences
	\[ \Bifib(A,B) \simeq \Fun(A^{\op} \times B, \Gpd)\] 
	 and their agreement seems to be new even in this latter case.
\item By restricting to one of the two legs in the previous point, the dualisation of bifibrations is also discussed in detail in \cite[Section 5]{HLAS}, \cite[Section A.1]{cois} and \cite[Section 7.1]{CDH1}
\item In \cite{part2} we also supply a more explicit description of the equivalences in \cref{thm:dual lortho-gray} based on span \icats{}, generalising the work of Barwick, Glasman and Nardin \cite{BGN} in the single-variable case.
\end{enumerate}
\end{remark}

As a typical example of the dualisation procedure above, consider the bifibration $(s,t) \colon \Ar(C) \rightarrow C \times C$. Its duals are the twisted arrow categories of $C$; let us briefly recall these to fix conventions.

\begin{notation}\label{not:Tw}
For an $\infty$-category $\cat{C}$, we write $\TwL(\cat{C})$ and
  $\TwR(\cat{C})$ for the left and
  right \emph{twisted arrow $\infty$-category} of $\cat{C}$. These are
  characterised by the natural equivalences
  \[ \Map\big([n], \TwR(\cat{C})\big) \simeq \Map\big([n] \star [n]^{\op},
    \cat{C}\big), \qquad \Map\big([n], \TwL(\cat{C})\big) \simeq \Map\big([n]^{\op} \star [n],
    \cat{C}\big),\]
    so that $\TwR(\cat{C}) = \TwL(\cat{C})^{\op}$. The natural inclusions of $[n]$ and $[n]^{\op}$ correspond to
  functors
  \[(s,t) \colon \TwL(\cat{C}) \longrightarrow \cat{C}^{\op} \times \cat{C}, \qquad\qquad
    (s,t) \colon \TwR(\cat{C}) \longrightarrow \cat{C} \times \cat{C}^{\op},\]
  which are a left fibration and a right fibration, respectively, both
  straightening to the mapping functor \[\Map_{\cat{C}}
    \colon \cat{C}^{\op} \times \cat{C} \to \sS.\]
\end{notation}

\begin{remark}
  Informally, the objects of $\TwR(\cat{C})$ are the morphisms in $\cat{C}$. For morphisms $f
  \colon x \to y$, $f' \colon x' \to y'$ in $\cat{C}$, a morphism from $f$ to $f'$ in $\TwR(\cat{C})$ is a
  commutative diagram
    \[
    \begin{tikzcd}
      x \arrow{d}[swap]{f}\arrow{r}  & x' \arrow{d}{f'}  \\
      x' & y'. \arrow{l}
    \end{tikzcd}
    \]
\end{remark}

\begin{example}\label{ex:twardualar}
There are canonical equivalences
\[\Dualco(\Ar(C) \rightarrow C \times C) \simeq \TwL(C) \rightarrow C^\op \times C\] and
\[\Dualcart(\Ar(C) \rightarrow C \times C) \simeq \TwR(C) \rightarrow C \times C^\op.\]
This is proved for example in \cite[Corollary A.2.5]{cois}, based on Lurie's recognition criterion for twisted arrow categories \cite[Corollary 5.2.1.2]{HTT}. We supply another proof in \cite{part2} and also extend the statement to the (op)lax arrow and twisted arrow categories of an $(\infty,2)$-category.
\end{example}

\section{Parametrised and monoidal adjunctions}\label{sec:adjunctions}

In the present section we will use the results of Section
\ref{sec:ortho} to study the operation of taking adjoint functors in
families. The statements we prove in \S\ref{subsec:paraadj} boil down to the fact that for any lax natural transformation $g\colon \cat{F}\Rightarrow \cat{G}$ between two diagrams of $\infty$-categories, such that each component of $g$ is a right adjoint, the pointwise left adjoints assemble into an oplax natural transformation $f\colon \cat{G}\Rightarrow \cat{F}$. For the moment, we shall, however, stay in the
fibrational picture and instead consider maps between the associated
(co)cartesian fibrations. In particular, we will prove Theorem
\ref{thm:fibadj} from the introduction. The translation of this
statement into the mate correspondence for (op)lax transformations
will be delayed till \S\ref{subsec:laxnat}. In
\S\ref{subsec:mateident} and \S\ref{subsec:identifications} we carry
out two consistency checks: In the former we show that for each
morphism our functorial
passage to fibrewise adjoints is given by the Beck-Chevalley
construction on morphisms, and in the latter we prove that the fibrewise adjoints we produce in the fibrational picture are characterised by the expected relation on morphism \igpds{} from the left to the right adjoint.

In \S\ref{subsec:laxmonadj} we then finally specialise the discussion to maps of $\infty$-operads and produce the correspondence between lax $\cat O$-monoidal structures on a right adjoint functor and oplax $\cat O$-monoidal structures on its left adjoint. In particular, we will prove Proposition \ref{propa} and Corollary \ref{cord} here. 

\subsection{Parametrised adjunctions}\label{subsec:paraadj}

We start by considering adjunctions in families over a base \icat{} $B$:

\begin{definition}\label{def:param adj}
  A map $g\colon C\rt D$ in $\cocart^{\lax}(B)$ is said to be a
  \emph{$B$-parametrised right adjoint} if it induces right adjoint
  functors between the fibres over each $b\in B$. Dually, a map
  $f\colon D\rt C$ in $\cart^{\oplax}(B)$ is said to be a
  \emph{$B$-parametrised left adjoint} if it induces left adjoint
  functors between the fibres.

Let us write $\cocart^{\lax, \mm{R}}(B)$ and
$\cart^{\oplax,\mm{L}}(B)$ for the wide subcategories of
$\cocart^{\lax}(B)$ and $\cart^\oplax(B)$ whose maps are
$B$-parametrised right and left adjoints, respectively.
\end{definition}

As defined the categories $\cocart^{\lax, \mm{R}}(B)$ and
 $\cart^{\oplax,\mm{L}}(B)$ are oblivious to the fact that there are
  non-invertible transformations between $B$-parametrised adjoints. As
  it is often important not to forget these when passing to adjoints
  (in the specialisation to symmetric monoidal categories in
  \S\ref{subsec:laxmonadj} they correspond to symmetric monoidal
  natural transformations for example), we first enhance the \icats{}
  from \cref{def:param adj} to \itcats{} that encode natural
  transformations as their $2$-morphisms.

For this we will use the description of \itcats{} as complete
  $2$-fold Segal \igpds{}:
\begin{defn}
  A \emph{complete 2-fold Segal \igpd{}} is a functor $X \colon \Dop
  \times \Dop \to \Gpd$ such that
  \begin{enumerate}[(1)]
  \item the simplicial \igpds{} $X_{n,\bullet}$ and $X_{\bullet,m}$
    satisfy the Segal condition for all $n,m$,
  \item the simplicial \igpd{} $X_{0,\bullet}$ is constant,
  \item the Segal \igpds{} $X_{\bullet,0}$ and $X_{1,\bullet}$ (and
    hence $X_{n,\bullet}$ for all $n$) are complete.
  \end{enumerate}
  Note that by \cite[Lemma 2.8]{JohnsonFreydScheimbauer} these
  conditions imply that $X_{\bullet,m}$  is also complete for all
  $m$.
\end{defn}

We use the following general construction to enhance our \icats{} to
\itcats{}:
\begin{propn}\label{lem:extract2cat}
  Suppose $F \colon \Cat^{\op} \to \CAT$ is a limit-preserving functor
  such that for every \icat{} $B$ the functor $F(|B|) \to F(B)$
  arising from the canonical map $B \to |B|$ induces a monomorphism
  $\iota F(|B|) \to \iota F(B)$ on underlying \igpds{}.  If we define
  $F_{\mathrm{lc}}(B) \subseteq F(B)$ to be the full subcategory
  spanned by the image of $F(|B|)$ under this functor, then
  $F_{\mathrm{lc}}$ is also a limit-preserving functor
  $\Cat^{\op} \to \CAT$, and the bisimplicial \igpd{}
  \[ ([n],[m]) \mapsto \Map_{\CAT}([n], F_{\mathrm{lc}}([m])) \]
  is a complete 2-fold Segal space.
\end{propn}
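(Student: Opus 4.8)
The plan is to verify the three defining conditions of a complete 2-fold Segal space for the bisimplicial \igpd{} $Y_{n,m} := \Map_{\CAT}([n], F_{\mathrm{lc}}([m]))$, after first checking that $F_{\mathrm{lc}}$ is indeed a well-defined limit-preserving functor $\Cat^{\op} \to \CAT$. For the latter, the key point is that a limit-preserving functor out of $\Cat^{\op}$ sends colimits in $\Cat$ to limits; since $B \mapsto |B|$ is a left adjoint (hence colimit-preserving) and $F$ is limit-preserving, the natural transformation $F(|{-}|) \Rightarrow F({-})$ of functors $\Cat^{\op} \to \CAT$ consists of functors that are componentwise fully faithful onto the subcategory $F_{\mathrm{lc}}$; functoriality and full faithfulness make $F_{\mathrm{lc}}({-})$ into a subfunctor, and one checks limit-preservation by noting that $F(|{-}|)$ is limit-preserving (as a composite of $|{-}|^{\op}$, which preserves limits since $|{-}|$ preserves colimits — here one uses that colimits in $\Cat$ computed after inverting all morphisms agree with colimits of the localizations — together with $F$) and that a limit of fully faithful inclusions is again fully faithful, identifying $F_{\mathrm{lc}}(\lim B_i)$ with the appropriate full subcategory of $\lim F(B_i)$.

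Next I would verify the Segal conditions in each variable. In the $n$-direction, for fixed $m$ the simplicial \igpd{} $Y_{\bullet, m} = \Map_{\CAT}([\bullet], F_{\mathrm{lc}}([m]))$ is the nerve of the \icat{} $F_{\mathrm{lc}}([m])$, which automatically satisfies the Segal condition and is complete (nerves of \icats{} always are). This simultaneously handles the completeness of $Y_{1,\bullet}$ — wait, rather, it handles that $Y_{\bullet, m}$ is a complete Segal space for every $m$, which is condition (3) for the rows and more. In the $m$-direction, for fixed $n$ we need $Y_{n, \bullet} = \Map_{\CAT}([n], F_{\mathrm{lc}}([\bullet]))$ to satisfy the Segal condition: here one uses that $[m] \simeq [1] \sqcup_{[0]} \cdots \sqcup_{[0]} [1]$ as a colimit in $\Cat$, that $F_{\mathrm{lc}}$ is limit-preserving hence converts this to a limit $F_{\mathrm{lc}}([m]) \simeq F_{\mathrm{lc}}([1]) \times_{F_{\mathrm{lc}}([0])} \cdots \times_{F_{\mathrm{lc}}([0])} F_{\mathrm{lc}}([1])$, and that $\Map_{\CAT}([n], -)$ preserves these limits. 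For condition (2), that $Y_{0,\bullet}$ is constant: $Y_{0,m} = \Map_{\CAT}([0], F_{\mathrm{lc}}([m])) = \iota F_{\mathrm{lc}}([m])$, and by construction $\iota F_{\mathrm{lc}}([m])$ is the image of the monomorphism $\iota F(|[m]|) \to \iota F([m])$; since $|[m]| \simeq \ast$ for every $m$ (the classifying space of a finite totally ordered set is contractible), we get $\iota F_{\mathrm{lc}}([m]) \simeq \iota F(\ast)$ independently of $m$, and one must check the simplicial structure maps are equivalences — they are, because they are all induced by maps $[m] \to [m']$ which become equivalences (indeed, the identity) after applying $|{-}|$.

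Finally, condition (3) requires completeness of $Y_{\bullet, 0}$ and $Y_{1,\bullet}$. Completeness of $Y_{\bullet,0} = \nerve(F_{\mathrm{lc}}([0]))$ is automatic as the nerve of an \icat{}. For $Y_{1,\bullet} = \Map_{\CAT}([1], F_{\mathrm{lc}}([\bullet]))$, I expect this to be the main obstacle: one must show the simplicial \igpd{} $m \mapsto \Map_{\CAT}([1], F_{\mathrm{lc}}([m]))$ is complete, i.e. that the map to its ``core'' is an equivalence onto the degenerate-plus-invertible part. The strategy is to identify $Y_{1,\bullet}$ with (the underlying simplicial \igpd{} of) the \icat{} $\Fun([1], F_{\mathrm{lc}}([\bullet]))$ evaluated appropriately — more precisely, to use that completeness of a Segal space $Z_\bullet$ is equivalent to $Z_\bullet$ being local with respect to $[0] \to J$ (where $J$ is the walking isomorphism), then reduce to knowing $F_{\mathrm{lc}}([\bullet])$ has this property via the cited \cite[Lemma 2.8]{JohnsonFreydScheimbauer}, or alternatively to observe directly that $\Map_{\CAT}([1], F_{\mathrm{lc}}(-))$ inherits completeness from the completeness of $\Map_{\CAT}([0], F_{\mathrm{lc}}(-))$ — no wait, that one is constant. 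The cleanest route is: $Y_{1,\bullet}$ is complete because for any complete Segal space $W_\bullet$ and any \icat{} $K$, the simplicial object $m \mapsto \Map([n_K], W_m)$-type constructions inherit completeness; here since we have shown $Y_{\bullet,m}$ is a complete Segal space for all $m$ and the Segal conditions hold in both variables, \cite[Lemma 2.8]{JohnsonFreydScheimbauer} (completeness in one variable plus Segal in both gives completeness in the other, given (2)) applies to promote completeness of $Y_{\bullet,0}$ to completeness of all $Y_{\bullet,m}$ and dually — so it suffices to establish completeness along $Y_{1,\bullet}$, which follows since $F_{\mathrm{lc}}$ is limit-preserving and $Y_{1,m} = \Map_{\CAT}([1], F_{\mathrm{lc}}([m]))$ together with the identification $[m] = \mathrm{colim}\,[1]$ expresses $Y_{1,\bullet}$ as built from $Y_{1,1}$ and $Y_{1,0}$, whose completeness as a Segal space in $\bullet$ reduces to that of $\nerve(\Fun([1],F_{\mathrm{lc}}([0])))$ and the pullback description. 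I expect the bookkeeping of which completeness statements bootstrap which — and carefully invoking \cite[Lemma 2.8]{JohnsonFreydScheimbauer} in the right direction — to be where most of the care is needed.
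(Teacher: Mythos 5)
Your overall strategy --- establish limit-preservation of $F_{\mathrm{lc}}$ and then read off the axioms as ``colimits in $\Cat$ to limits'' conditions --- is the same as the paper's, and your treatment of the Segal conditions and of the constancy of $X_{0,\bullet}$ is fine. But there are two genuine soft spots. First, you assert that the components $F(|B|)\to F(B)$ are fully faithful onto $F_{\mathrm{lc}}(B)$. The hypothesis only gives a monomorphism on cores, so $F(|B|)\to F_{\mathrm{lc}}(B)$ is an equivalence on underlying \igpds{} but need not be fully faithful; in particular you cannot transport limit-preservation from $F(|{-}|)$ to $F_{\mathrm{lc}}$ along a purported equivalence. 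The correct argument (and the paper's) runs a two-step 2-out-of-3 on the comparison map $F_{\mathrm{lc}}(\lim_i B_i)\to \lim_i F_{\mathrm{lc}}(B_i)$: the inclusions $F_{\mathrm{lc}}(B_i)\hookrightarrow F(B_i)$ are fully faithful and full faithfulness is closed under limits, which makes the comparison map fully faithful; and the maps $F(|B_i|)\to F_{\mathrm{lc}}(B_i)$ are equivalences on cores, a condition also closed under limits, which makes it essentially surjective. You need both halves.

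Second, your discussion of the completeness of $X_{1,\bullet}$ is circular: you invoke \cite[Lemma 2.8]{JohnsonFreydScheimbauer}, which \emph{assumes} completeness of $X_{1,\bullet}$ in order to deduce completeness of the other columns, and you appeal to the decomposition $[m]\simeq [1]\sqcup_{[0]}\cdots\sqcup_{[0]}[1]$, which only yields the Segal condition, not completeness; the claim is never actually proved. The missing observation is that completeness is itself a colimit-to-limit condition: writing the walking isomorphism $J$ as a colimit of simplices in $\Cat$, limit-preservation of $F_{\mathrm{lc}}$ together with the fact that $\Map_{\CAT}([1],-)$ preserves limits identifies the relevant limit of the $X_{1,n_i}$ with $\Map_{\CAT}([1],F_{\mathrm{lc}}(J))$, and since $J\to [0]$ is an equivalence of \icats{} this agrees with $X_{1,0}$. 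This is exactly how the paper disposes of completeness in one line, namely that it ``can be expressed as taking certain colimits in $\CAT$ to limits.''
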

\begin{proof}
  Since the map $B \to |B|$ is a natural transformation in $B$, we see
  that $F_{\mathrm{lc}}$ is a subfunctor of $F$. Note that the
  condition that $\iota F(|B|) \to \iota F(B)$ is a monomorphism
  implies that $\iota F(|B|) \to \iota F_{\mathrm{lc}}(B)$ is an
  equivalence. For any colimit of \icats{} $B
  \simeq \colim_{i}B_{i}$ we then have a commutative diagram
  \[
    \begin{tikzcd}
      F(|B|) \arrow{r}{\sim} \arrow{d} & \lim_{i} F(|B_{i}|) \arrow{d} \\
      F_{\mathrm{lc}}(B) \arrow{r}
      \arrow[hookrightarrow]{d} & \lim_{i} F_{\mathrm{lc}}(B_{i})
      \arrow[hookrightarrow]{d} \\
      F(B) \arrow{r}{\sim} & \lim_{i} F(B_{i}),
    \end{tikzcd}
  \]
  where the top and bottom horizontal maps are equivalences since $F$
  preserves limits (and $|-| \colon \Cat \to \Gpd \hookrightarrow
  \Cat$ preserves colimits), and the bottom right vertical map is
  fully faithful since fully faithful maps are closed under
  limits. Hence the middle horizontal functor is also fully faithful,
  by the 2-of-3 property for equivalences applied to mapping \igpds{}.
  In the top square the vertical morphisms are both given by
  equivalences on underlying \igpds{}, since this condition is also
  closed under limits. By the 2-of-3 property it follows that the
  middle horizontal functor is also an equivalence on underlying
  \igpds{}, and hence it is an equivalence. Thus $F_{\mathrm{lc}}$
  preserves limits.

  It follows that the functor
  \[ ([n],[m]) \mapsto \Map_{\CAT}([n], F_{\mathrm{lc}}([m])) \]
  satisfies the Segal and completeness conditions levelwise in each
  variable, since these can be expressed as taking certain colimits in
  $\Cat$ to limits. It remains only to observe that for $n = 0$ the
  simplicial space $\iota F_{\mathrm{lc}}([m])$ is indeed constant:
  the unique map $[m] \to [0]$ is the localisation $[m] \to |[m]|
  \simeq *$ and so we know that the map $\iota F_{\mathrm{lc}}([0]) \to \iota
  F_{\mathrm{lc}}([m])$ is an equivalence; since $[0]$ is terminal in
  $\Delta$ the diagram is then necessarily constant.
\end{proof}

\begin{remark}
  If we regard a simplicial \icat{} $X \colon \Dop \to \Cat$ that
  satisfies the Segal condition as a \emph{double \icat{}} whose
  objects are the objects of $X_{0}$, horizontal morphisms are the
  morphisms in $X_{0}$, vertical morphisms are the objects of $X_{1}$,
  and squares are the morphisms in $X_{1}$, then the construction of
  \cref{lem:extract2cat} can be interpreted as extracting an \itcat{} from the
  double \icat{} $[n] \mapsto F([n])$ by forgetting the non-invertible
  vertical morphisms. Such a construction can be performed more
  generally, but the conditions in \ref{lem:extract2cat} seem required
  to ensure the resulting 2-fold Segal space is complete.
\end{remark}

Returning to our specific situation, for any $\infty$-category $A$ we have natural equivalences
\[
\Map_{\CAT}(A, \cocart^{\lax}(B\times S)\big)\simeq \core\Ortholax(A^{\op}, B\times S)\simeq \Map_{\CAT}(B\times S, \cart^{\oplax}(A)\big)
\]
by \cref{cor:locorthstr}. By the Yoneda lemma, this implies that for
all $B$, the functor
\[  \Cat^{\op} \to \CAT, \quad S \mapsto
  \cocart^{\lax}\big(B\times S\big)\]
preserves limits. Moreover, on underlying \igpds{} we have equivalences
\[ \iota \Cocartlax\big(B\times S\big) \simeq \iota \Cocart\big(B
  \times S\big) \simeq \Map(B \times S, \Cat) \simeq \Map(S,
  \Fun(B,\Cat)),\] so that the functor
$\iota \Cocartlax(B \times |S|) \to \iota \Cocartlax(B \times S)$ corresponds to
the functor \[\Map(|S|, \Fun(B, \Cat)) \to \Map(S, \Fun(B, \Cat))\]
given by composition with $S \to |S|$; this is therefore a
monomorphism by the universal property of the localisation $|S|$,
which says that $\Fun(|S|, X) \to \Fun(S, X)$ is fully faithful with
image those functors that take all morphisms in $S$ to equivalences. 

Let us denote by 
\[\cocart^{\lax}_{S}\big(B\times S\big)\subseteq \cocart^{\lax}\big(B\times S\big)\] 
the full subcategory of cocartesian fibrations which are locally
constant on $S$, \ie{} those obtained by pulling back a cocartesian
fibration over $B \times |S|$, or equivalently those
whose straightening to a functor
$B \times S \rightarrow \Cat$ factors through the localisation to $B \times |S|$. Applying
\cref{lem:extract2cat}  we then have:
\begin{cor}
  The functor
  $$\begin{tikzcd}
    \Cat^{\op}\arrow[r] & \CAT; & S\arrow[r, mapsto] & \cocart^{\lax}_{S}\big(B\times S\big)
  \end{tikzcd}
  $$
  preserves limits, and the bisimplicial space
  \begin{equation}\label{eq:nerve of 2-cat of lax functors}
    \begin{tikzcd}
      \big([m], [n]\big)\arrow[r, mapsto] & \Map_{\CAT}\Big([m], \cocart^{\lax}_{[n]}\big(B\times [n]\big)\Big)
    \end{tikzcd}
  \end{equation}
  is a complete 2-fold Segal \igpd{}.\qed
\end{cor}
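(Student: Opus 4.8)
The plan is to deduce this corollary from \cref{lem:extract2cat} by checking that the functor $F \colon \Cat^{\op} \to \CAT$, $S \mapsto \cocart^{\lax}(B \times S)$ satisfies the two hypotheses of that proposition, and then identifying $F_{\mathrm{lc}}$ with the stated subfunctor $S \mapsto \cocart^{\lax}_{S}(B \times S)$. The bulk of the work has in fact already been carried out in the paragraphs immediately preceding the statement; the proof is essentially an assembly of those observations, so I would keep it very short.

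\emph{Step 1: $F$ preserves limits.} Here I would invoke the chain of equivalences displayed just before the statement, coming from \cref{cor:locorthstr}: for every $A$ one has $\Map_{\CAT}(A, \cocart^{\lax}(B \times S)) \simeq \iota\,\Ortholax(A^{\op}, B \times S) \simeq \Map_{\CAT}(B \times S, \cart^{\oplax}(A))$, naturally in $S$. Since $\cart^{\oplax}(A)(-)$ sends colimits of \icats{} to limits (it is, after all, a functor corepresented appropriately, or one argues directly from the unstraightening equivalence), the functor $S \mapsto \Map_{\CAT}(A, F(S))$ takes colimits to limits for every $A$; by the Yoneda lemma $F$ itself preserves limits.

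\emph{Step 2: the monomorphism hypothesis.} One must check that $\iota F(|S|) \to \iota F(S)$ is a monomorphism. Again this is exactly the computation recorded above: $\iota\,\cocart^{\lax}(B \times S) \simeq \iota\,\cocart(B \times S) \simeq \Map(B \times S, \Cat) \simeq \Map(S, \Fun(B, \Cat))$, and under these identifications the map $\iota F(|S|) \to \iota F(S)$ is restriction along $S \to |S|$, i.e.\ $\Map(|S|, \Fun(B, \Cat)) \to \Map(S, \Fun(B, \Cat))$. By the universal property of the localisation $S \to |S|$ this is fully faithful (its image is the functors inverting all morphisms of $S$), hence in particular a monomorphism of \igpds{}.

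\emph{Step 3: identify $F_{\mathrm{lc}}$ and conclude.} By \cref{lem:extract2cat} the subfunctor $F_{\mathrm{lc}}$, defined fibrewise as the full subcategory of $F(S)$ on the image of $F(|S|)$, again preserves limits, and the bisimplicial \igpd{} $([m],[n]) \mapsto \Map_{\CAT}([m], F_{\mathrm{lc}}([n]))$ is a complete 2-fold Segal \igpd{}. It only remains to observe that $F_{\mathrm{lc}}(S) = \cocart^{\lax}_{S}(B \times S)$: by definition $\cocart^{\lax}_{S}(B \times S)$ is the full subcategory of cocartesian fibrations pulled back from $B \times |S|$, which is precisely the essential image of $F(|S|) \to F(S)$ — and that image is a full subcategory because the monomorphism from Step 2, being on cores, upgrades (via limit-closure of full faithfulness, as in the proof of \cref{lem:extract2cat}) to full faithfulness of $F(|S|) \to F_{\mathrm{lc}}(S)$. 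Specialising to $S = [n]$ gives the bisimplicial \igpd{} in \eqref{eq:nerve of 2-cat of lax functors}, completing the proof.

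\emph{Main obstacle.} There is no serious obstacle: every ingredient is either quoted (\cref{cor:locorthstr}, \cref{lem:extract2cat}) or already spelled out in the preceding discussion. The only point requiring a moment's care is matching the two descriptions of the locally-constant subcategory — "pulled back from $B \times |S|$" versus "straightening factors through $B \times |S| \to \ast$-fibrewise inversion" versus "essential image of $F(|S|)$" — but these are manifestly the same by the universal property of $|S|$, so this is bookkeeping rather than a genuine difficulty.
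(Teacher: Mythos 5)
Your proposal is correct and takes essentially the same route as the paper: the two hypotheses of \cref{lem:extract2cat} are verified exactly as you do (limit preservation via the equivalences of \cref{cor:locorthstr} and the Yoneda lemma, and the monomorphism condition by identifying $\iota\,\Cocartlax(B\times S)$ with $\Map(S,\Fun(B,\Cat))$ and invoking the universal property of the localisation $S\to|S|$), after which the corollary is an immediate application of that proposition together with the observation that $F_{\mathrm{lc}}(S)$ is the full subcategory $\cocart^{\lax}_{S}(B\times S)$ of locally constant fibrations.
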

The same assertion holds if we instead take
$\cocart^{\lax, \mm{R}}_{S}(B\times S)$,
$\cart^{\oplax}_{S}(B\times S)$ or
$\cart^{\oplax, \mm{L}}_{S}(B\times S)$, which are all defined
analogously.

\begin{definition}\label{def:2-cat of cart oplax}
Let $B$ be a small $\infty$-category. We define $\mathbf{Cocart}^{\lax}(B)$ to be the $(\infty, 2)$-category associated to the complete $2$-fold Segal \igpd{} \eqref{eq:nerve of 2-cat of lax functors}. Likewise, we define the $(\infty, 2)$-category $\mathbf{Cart}^{\oplax}(B)$ to be the $(\infty, 2)$-category associated to the $2$-fold complete Segal \igpd{}
\begin{equation}\label{eq:nerve of 2-cat of oplax functors}\begin{tikzcd}
\big([m], [n]\big)\arrow[r, mapsto] & \Map_{\CAT}\Big([m], \cart^{\oplax}_{[n]}\big(B\times [n]\big)\Big)
\end{tikzcd}\end{equation}
We define the $(\infty, 2)$-categories $\mathbf{Cocart}^{\lax, \mm{R}}(B)$ and $\mathbf{Cart}^{\oplax, \mm{L}}(B)$ similarly.
\end{definition}

In the special case where $B=\ast$, the equivalent $2$-fold complete Segal spaces
\[\mathbf{Cocart}^{\lax}(*) \simeq \mathbf{Cart}^{\oplax}(*)\]
provide a model for the $(\infty, 2)$-category $\tcat{Cat}$ of  $\infty$-categories (this is proved more precisely in Section \ref{subsec:laxnat}). Consequently, we can identify
\[\mathbf{Cocart}^{\lax,\mm R}(*) \simeq \tcat{Cat}^{\mm R} \quad \text{and} \quad \mathbf{Cart}^{\oplax,\mm L}(*) \simeq \tcat{Cat}^{\mm L}.\]

\begin{observation}\label{rmk:morincocartS}
  If the \icat{} $S$ has contractible realisation (\ie{} $|S| \simeq
  *$)
  then the objects of $\cocart^{\lax}_{S}\big(B\times S\big)$ are by definition
  the cocartesian fibrations over $B \times S$ that are pulled back
  along the projection $B \times S \to B$, \ie{} those of the
  form $E
  \times S \to B \times S$ for a cocartesian fibration $E \to B$. A
  morphism between two such objects can then be identified with a
  commutative triangle
  \[
    \begin{tikzcd}
      E \times S \arrow{dr} \arrow{rr} & & E' \arrow{dl} \\
       & B
    \end{tikzcd}
  \]
  for cocartesian fibrations $E,E' \to B$. Note that this applies in
  particular for $S = [n]$. In particular, a $2$-morphism in
  $\mathbf{Cocart}^{\lax}(B)$ is simply a natural transformation $\mu$
  over $B$ of maps $g,g'$ between cocartesian fibrations $X \rightarrow B$ and $Y \rightarrow B$.

  This we can view as a family of natural
  transformations $\mu_b\colon g_b\rt g'_b$ that
  commutes with the lax structure maps, in the sense that for each
  $b\rt b'$, there is a commuting diagram
$$\begin{tikzcd}[column sep=3.4pc, row sep=1.9pc]
& X_b\arrow[ldd, ""{right, name=s3}]\ar[r, bend left, "g_b", ""{name=s1, below}]\ar[r, bend right, "{g'_b}"{below}, ""{above, name=t1}] & Y_b\arrow[ldd, ""{left, name=t3}]\arrow[Rightarrow, from=s1, to=t1, "\mu_b"]\\
\\
X_{b'}\ar[r, bend left, "g_{b'}", ""{name=s2, below}]\ar[r, bend right, "{g'_{b'}}"{below}, ""{above, name=t2}] & Y_{b'}.\arrow[Rightarrow, from=s2, to=t2, "\mu_{b'}"]\arrow[Leftarrow, bend left=40, from=s3, to=t3, start anchor={[xshift=3ex, yshift=1ex]}, end anchor={[xshift=-2ex, yshift=-1ex]}]\arrow[Leftarrow, bend right=30, from=s3, to=t3, start anchor={[xshift=2ex, yshift=0ex]}, end anchor={[xshift=-3ex, yshift=-2ex]}]
\end{tikzcd}$$
Depicting this diagram cubically, it can also be viewed as a lax
natural transformation between two functors $B\times [1]\rt \Cat$ that
are constant along the interval. Note that $\mathbf{Cocart}^{\lax, \mm{R}}(B)\subseteq \mathbf{Cocart}^{\lax}(B)$ is the $1$-full sub-$2$-category whose morphisms are lax natural transformations consisting of right adjoints.
\end{observation}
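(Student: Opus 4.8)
The statement is essentially a matter of unwinding definitions, so the plan is to carry out that unwinding in the natural order; the only real inputs are the definition of $\cocart^{\lax}_{S}(B\times S)$ and the universal property of pullbacks.

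First I would settle the object-level claim. By definition the objects of $\cocart^{\lax}_{S}(B\times S)$ are the cocartesian fibrations over $B\times S$ obtained by pulling back a cocartesian fibration over $B\times|S|$ along $B\times S\to B\times|S|$; when $|S|\simeq\ast$ the target is $B\times\ast\simeq B$ and the map is the projection, so these are exactly the fibrations $E\times S\to B\times S$ for $E\to B$ a cocartesian fibration. For the morphism-level claim, a morphism from $E\times S$ to $E'\times S$ in this full subcategory is a map over $B\times S$; writing $E'\times S=E'\times_{B}(B\times S)$ and invoking the universal property of the pullback identifies the space of such maps with the space of maps $E\times S\to E'$ over $B$, i.e.\ with commutative triangles over $B$. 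As $|[n]|\simeq\ast$, this applies in particular when $S=[n]$.

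Next I would read off the $2$-morphisms of $\mathbf{Cocart}^{\lax}(B)$ by taking $S=[1]$. By \cref{def:2-cat of cart oplax} and the double $\infty$-category interpretation of the construction in \cref{lem:extract2cat}, a $2$-morphism between $1$-morphisms $g,g'\colon X\to Y$ over $B$ is a morphism of $\cocart^{\lax}_{[1]}(B\times[1])$ between the corresponding objects $X\times[1]$ and $Y\times[1]$ whose two restrictions along the vertices $\{0\},\{1\}\hookrightarrow[1]$ are $g$ and $g'$; note that by the object-level claim \emph{every} object of $\cocart^{\lax}_{[1]}(B\times[1])$ has this form, so all ``vertical morphisms'' of the double category are of the degenerate kind that survives the passage to the $(\infty,2)$-category. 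By the morphism-level claim such a datum is precisely a functor $X\times[1]\to Y$ over $B$ restricting to $g$ and $g'$ at the endpoints, that is, a natural transformation $g\Rightarrow g'$ over $B$; the same computation shows that the mapping $\infty$-category of $\mathbf{Cocart}^{\lax}(B)$ from $X$ to $Y$ is the $\infty$-category of functors $X\to Y$ over $B$.

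The remaining assertions are reformulations. Restricting a natural transformation over $B$ to the fibre over each $b\in B$ produces the family $\mu_{b}\colon g_{b}\Rightarrow g'_{b}$, and naturality over $B$ is exactly the compatibility of the $\mu_{b}$ with the lax comparison maps $\beta_{!}g_{b}(x)\to g_{b'}(\beta_{!}x)$; viewing the same functor $X\times[1]\to Y$ over $B$ instead as a (not necessarily cocartesian-edge-preserving) map $X\times[1]\to Y\times[1]$ of cocartesian fibrations over $B\times[1]$ that are constant along $[1]$ yields the ``cubical'' picture, which under straightening becomes a lax natural transformation of functors $B\times[1]\to\Cat$ constant along the interval. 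Finally, by \cref{def:param adj} the $(\infty,2)$-category $\mathbf{Cocart}^{\lax,\mm{R}}(B)$ is built from the same objects and $2$-morphisms but only the fibrewise-right-adjoint $1$-morphisms, so it is the $1$-full sub-$2$-category on those $1$-morphisms. There is no genuine obstacle in any of this; the only points requiring care are that morphisms in $\cocart^{\lax}(-)$ need not preserve cocartesian edges --- which is precisely why the straightened picture in the penultimate step gives a \emph{lax} rather than a strict natural transformation --- and that making this last identification fully precise needs the Gray tensor product machinery of \S\ref{sec:laxgray}, so at this stage it should be read as motivation.
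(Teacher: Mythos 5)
Your proposal is correct and proceeds exactly as the paper intends: the statement is an observation whose content is the definitional unwinding you carry out (objects of $\cocart^{\lax}_{S}(B\times S)$ as pullbacks from $B\times|S|\simeq B$, morphisms via the universal property of the pullback $E'\times_B(B\times S)$, and $2$-morphisms read off from the $(1,1)$-cells of the $2$-fold Segal space of \cref{def:2-cat of cart oplax}). You also correctly flag that the final identification with lax natural transformations is only heuristic at this stage and is made precise in \cref{sec:laxgray}, which matches the paper's own framing.
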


\begin{remark}\label{rem:opposite of lax is oplax}
Note that for any two $\infty$-categories $B$ and $S$, taking opposite $\infty$-categories defines an equivalence
$$
(-)^{\op}\colon \cocart_S^{\lax}(B\times S)\isoto \cart_{S^{\op}}^{\oplax}\big(B^{\op}\times S^{\op}\big).
$$
Using this, one deduces that taking opposite $\infty$-categories defines an equivalence of $(\infty, 2)$-categories, where in the target the $2$-morphisms are reversed
$$
(-)^{\op}\colon \mathbf{Cocart}^{\lax}(B)\isoto \mathbf{Cart}^{\oplax}\big(B^{\op}\big)^{ 2\mm{-op}}.
$$
\end{remark}
We now come to our main technical result, \cref{thm:fibadj}:
\begin{theorem}\label{thm:dualizing lax natural transformations}
Let $B$ be an $\infty$-category. Then there is a natural equivalence of $(\infty, 2)$-categories
\begin{equation}\label{eq:taking adjoint 2-functorially}
\Adj\colon \mathbf{Cocart}^{\lax, \mm{R}}(B)\rto{\sim} \mathbf{Cart}^{\oplax, \mm{L}}(B^{\op})^{(1, 2)\mm{-}\op}
\end{equation}
sending each cocartesian fibration to the cartesian fibration classifying the same functor $B\rt \Cat$. Here in the target the directions of $1$- and $2$-morphisms are changed, as indicated.
\end{theorem}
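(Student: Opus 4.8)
The plan is to construct $\Adj$ at the level of the defining complete $2$-fold Segal \igpds{} of \cref{def:2-cat of cart oplax}, reducing it to a natural family of equivalences of \icats{} supplied by the Gray--curved-orthofibration duality of \cref{thm:dual lortho-gray}. First I would reduce to one variable: unwinding \cref{def:2-cat of cart oplax}, and using that reversing the simplicial direction of a nerve yields the nerve of the opposite \icat{}, the $2$-fold Segal \igpd{} underlying $\mathbf{Cart}^{\oplax,\mm{L}}(B^{\op})^{(1,2)\mm{-}\op}$ is
\[
\big([m],[n]\big)\ \longmapsto\ \Map_{\CAT}\big([m],\,(\cart^{\oplax,\mm{L}}_{[n]^{\op}}(B^{\op}\times[n]^{\op}))^{\op}\big).
\]
Since the functors $\Cat^{\op}\to\CAT$ involved are determined by their restrictions to $\Delta$, it suffices to produce equivalences of \icats{} $\cocart^{\lax,\mm{R}}_{S}(B\times S)\simeq\big(\cart^{\oplax,\mm{L}}_{S^{\op}}(B^{\op}\times S^{\op})\big)^{\op}$ natural in $S\in\Cat$ and $B\in\Cat$; plugging in $S=[n]$ and applying $\Map_{\CAT}([m],-)$ then yields the equivalence of $2$-fold Segal spaces, hence of \itcats{}, by the machinery behind \cref{lem:extract2cat} and the corollary following it. By \cref{rmk:morincocartS}, and because $|S|\simeq\ast$ for $S\in\Delta$, the $S$-decoration is essentially inert --- objects become pullbacks of (co)cartesian fibrations over $B$ along $B\times S\to B$ and morphisms become $S$-families of $B$-parametrised adjoints --- so the real content is the case $S=\ast$ together with naturality; and after replacing $B$ by $B^{\op}$ and invoking \cref{rem:opposite of lax is oplax}, this in turn amounts to constructing, naturally in $B$, an equivalence $\cart^{\oplax,\mm{L}}(B)\simeq\big(\cocart^{\lax,\mm{R}}(B^{\op})\big)^{\op}$.

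I would build this equivalence as the composite of the following chain, each arrow an equivalence (or the restriction of one) from Section~\ref{sec:ortho} and natural in $B$:
\begin{enumerate}[(i)]
\item a morphism $f\colon D\to E$ of cartesian fibrations over $B$ is a functor $[1]\to\Cartlax(B)$, which by \cref{cor:locorthstr} unstraightens over $[1]$ to a curved orthofibration $X_{f}\to B\times[1]$ whose two $[1]$-fibres are $D,E$ and whose cocartesian transport $(X_{f})_{b}\to[1]$ over $b\in B$ is $f_{b}$;
\item $f$ is a \emph{fibrewise left adjoint} precisely when each $f_{b}$ admits a right adjoint, which --- by the one-object statement ``an adjunction $=$ a functor to $[1]$ that is both cocartesian and cartesian'' --- is exactly the condition that each $(X_{f})_{b}\to[1]$ is also a cartesian fibration; running this with $[1]$ replaced by $S\times[1]$ identifies the whole \icat{} $\cart^{\oplax,\mm{L}}_{S}(B\times S)$ with an \icat{} of fibrations over $B\times S\times[1]$ that are curved orthofibrations over $B\times S\times[1]$ and moreover bicartesian over the $[1]$-factor fibrewise;
\item applying $\Dualco$ of \cref{thm:dual lortho-gray} to $X_{f}$, regarded as a curved orthofibration over $B\times[1]$, dualises the $B$-variable and produces a Gray fibration $X_{f}^{\vee}\to B^{\op}\times[1]$ whose two $[1]$-fibres are the cocartesian fibrations $D^{\vee},E^{\vee}\to B^{\op}$ dual to $D,E$ (by the addendum to \cref{thm:dual lortho-gray}); its fibres over $B^{\op}$ are still bicartesian over $[1]$, so $X_{f}^{\vee}$ is in turn a curved orthofibration over $[1]\times B^{\op}$;
\item straightening $X_{f}^{\vee}$ over the $[1]$-factor (again \cref{cor:locorthstr}) produces a functor $[1]^{\op}\to\Cocartlax(B^{\op})$ sending $0,1$ to $D^{\vee},E^{\vee}$ and the non-degenerate edge to the cartesian transport of $X_{f}^{\vee}$ over $[1]$, which fibrewise over $\bar b\in B^{\op}$ is the right adjoint of $f_{b}$ --- that is, a fibrewise right adjoint map $E^{\vee}\to D^{\vee}$, i.e.\ a morphism of $\cocart^{\lax,\mm{R}}(B^{\op})$ with source and target interchanged.
\end{enumerate}
On objects the composite is the standard single-variable dualisation of cartesian into cocartesian fibrations classifying the same functor (and the identity when $B=\ast$), which matches the description of $\Adj$ in the statement; being a composite of equivalences it is an equivalence, and it is visibly natural in $B$ and compatible with the $S$-decoration of step~(ii), so it assembles into the required natural family.

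The main obstacle is not the underlying idea --- which is step~(ii), the parametrised form of the slogan ``an adjunction $=$ a bicartesian fibration over $[1]$'', making the morphism direction transparent to the duality of \cref{thm:dual lortho-gray} --- but the coherence bookkeeping. One must carry the whole construction through at the level of the $S$-decorated \icats{} $\cocart^{\lax,\mm{R}}_{S}(B\times S)$, $\cart^{\oplax,\mm{L}}_{S}(B\times S)$ and their variants, verifying that each (un)straightening and each application of \cref{thm:dual lortho-gray} is natural in both $S$ and $B$, so that the resulting equivalences really glue into an equivalence of complete $2$-fold Segal \igpds{} rather than merely matching up objects, $1$-morphisms and $2$-morphisms separately.
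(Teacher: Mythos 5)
Your proposal is correct and follows essentially the same route as the paper: unstraighten over the $[m]$-direction, recognise the fibrewise-adjoint condition as the statement that the resulting curved orthofibration is simultaneously a Gray fibration with bicartesian fibres (the paper's Lemma \ref{lem:lortho with adj}, the parametrised form of ``adjunction $=$ bicartesian fibration over $[1]$''), dualise over $B\times[n]$ via \cref{thm:dual lortho-gray}, and straighten back. The coherence bookkeeping you flag is exactly what the paper handles by introducing the spaces $\cat{M}_S$ and $\cat{N}_S$ and checking in \cref{cor:fibration model} and \cref{lem:dualisation of bicart} that unstraightening and dualisation respect the local-constancy and bicartesian conditions naturally in $[m]$, $[n]$ and $B$.
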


In particular, for $B=\ast$ this produces an equivalence
\[\mathbf{Cat}^{\mm{R}}\simeq (\mathbf{Cat}^{\mm{L}})^{(1, 2)\mm{-}\op}.\]

For the proof recall first that a functor $g\colon C\rt D$ between \icats{} is a right
adjoint if the corresponding cartesian fibration $p\colon X\rt [1]$ is
a cocartesian fibration as well, see \cite[Section 5.2.2]{HTT}. Dually, a
functor $f\colon D\rt C$ is a left adjoint if the corresponding
cocartesian fibration is a cartesian fibration as well. In other
words, one can encode adjunctions by functors $p\colon X\rt [1]$ that
are simultaneously cartesian and cocartesian fibrations; the two
adjoint functors can be extracted from this by (co)cartesian
straightening. 

We now extend this statement by showing that a functor between two $B$-parametrised categories (in the form of cocartesian fibrations over $B$) is a parametrised right adjoint if and only if the corresponding curved orthofibration over $[1] \times B$ is also a Gray fibration, when considered over $B \times [1]$, and similarly in the dual situation. More generally:

\begin{lemma}\label{lem:lortho with adj}
Let $p=(p_1, p_2)\colon X\rt A\times B$ be a functor. Then the following conditions are equivalent:
\begin{enumerate}
\item\label{it:straightened adjoint fib} $p$ is a \lortho{} and the functor $A^{\op}\rt \Cocart^{\lax}(B)$ classifying $p$ via Corollary \ref{cor:locorthstr} takes values in the wide subcategory $\Cocart^{\lax, \mm{R}}(B)$,
\item\label{it:straightened adjoint fib2} $p$ is a \lortho{} whose restriction $p_l$ to $A\times \core(B)$ is a cocartesian fibration as well,
\item\label{it:straightened adjoint fib3} $\ol{p}=(p_2,
    p_1)\colon X\rt B\times A$ is a Gray fibration and the functor $B
    \rightarrow \Cocart^{\lax}(A)$ classifying $B$ takes values in the
    full subcategory $\mathrm{Bicart}^{\mathrm{(op)lax}}(A)$,
\item\label{it:straightened adjoint fib4} $\ol{p}=(p_2, p_1)\colon X\rt B\times A$ is a Gray fibration whose restriction to $\core(B)\times A$ is a cartesian fibration as well.
\end{enumerate}
Dually, a \lortho{} $q=(q_1, q_2)\colon Y\rt B\times A$ classifies a
functor $A\rt \Cart^{\oplax, \mm{L}}(B)$ via \cref{cor:locorthstr} if
and only if $q_r$ is a cartesian fibration as well, or equivalently if
and only if $(q_2, q_1)\colon Y\rt A\times B$ is an op-Gray fibration, which is then automatically classified by a functor $A^\op \rightarrow \mathrm{Bicart}^{\mathrm{(op)lax}}(B)$.
\end{lemma}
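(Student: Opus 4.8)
The plan is to reduce the whole statement to the one-variable straightening equivalences of \cref{cor:locorthstr} and \cref{cor:graycocartstr}, the reformulations collected in \cref{prop:lortho reformulations}, the fibrewise criterion \cref{rmk:pbtocore} (equivalently \cref{cor:cocartprgpd}), and the classical fact that a functor is a right adjoint \IFF{} the cartesian fibration over $[1]$ it classifies is simultaneously a cocartesian fibration (see \cite[Section 5.2.2]{HTT}). The backbone is ``(2) $\Leftrightarrow$ (3) $\Leftrightarrow$ (4)'', proved by unwinding definitions, together with ``(1) $\Leftrightarrow$ (2)'', proved by straightening over $A$ and invoking the adjunction criterion; the dual statement then follows by passing to opposite \icats{}. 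The key preliminary is the following dictionary: for $p=(p_1,p_2)\colon X\to A\times B$, writing $\bar p=(p_2,p_1)\colon X\to B\times A$, the pullback $p_{\ell}$ of $p$ to $A\times\core B$ coincides, under $\Cat/(A\times B)\simeq\Cat/(B\times A)$, with the pullback $(\bar p)_{r}$ of $\bar p$ to $\core B\times A$, and by \cref{rmk:pbtocore} this functor is a (co)cartesian fibration precisely when every fibre $X_{b}\to A$ is one; moreover ``$p$ is cocartesian over $B$'' is by definition the same condition as ``$\bar p$ is cocartesian over the left factor $B$'' in the sense of \cref{onevarcocart}.

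For ``(2) $\Leftrightarrow$ (4)'' I would simply unwind both sides against this dictionary. By \cref{prop:lortho reformulations}, $p$ is a \lortho{} \IFF{} $p$ is cocartesian over $B$ and $p_{\ell}$ is a cartesian fibration, so adding ``$p_{\ell}$ cocartesian'' --- condition (2) --- says exactly: $p$ is cocartesian over $B$ and $X_{b}\to A$ is bicartesian for every $b\in B$. On the other hand $\bar p$ is a Gray fibration over $B\times A$ \IFF{} $\bar p$ is cocartesian over the left factor $B$ and $(\bar p)_{r}$ is a cocartesian fibration, so adding ``$(\bar p)_{r}$ cartesian'' --- condition (4) --- again says: $p$ is cocartesian over $B$ and $X_{b}\to A$ is bicartesian for every $b$. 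Hence (2) and (4) are the same condition on $p$. For ``(3) $\Leftrightarrow$ (4)'' I would note that, by \cref{cor:graycocartstr}, the functor $B\to\Cocart^{\lax}(A)$ classifying a Gray fibration $\bar p$ sends $b$ to $X_{b}\to A$, which the Gray structure already makes cocartesian; it therefore takes values among bicartesian fibrations \IFF{} each $X_{b}\to A$ is also cartesian, which by the dictionary is precisely the condition ``$(\bar p)_{r}$ cartesian'' appearing in (4).

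For ``(1) $\Leftrightarrow$ (2)'' I would use straightening over $A$. If $p$ is a \lortho{}, then by \cref{prop:lortho reformulations} and \cref{cor:locorthstr} it is classified by the functor $F\colon A^{\op}\to\Cocart^{\lax}(B)$ with $F(a)=(X_{a}\to B)$ and $F(\alpha\colon a'\to a)$ equal to the cartesian transport $\alpha^{*}\colon X_{a}\to X_{a'}$ (which need not preserve cocartesian morphisms --- this is the laxness). By \cref{def:param adj}, $F$ lands in $\Cocart^{\lax,\mm{R}}(B)$ \IFF{} for every such $\alpha$ and every $b\in B$ the functor $(\alpha^{*})_{b}\colon X_{(a,b)}\to X_{(a',b)}$ is a right adjoint. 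But $(\alpha^{*})_{b}$ is precisely the cartesian transport along $\alpha$ for the cartesian fibration $X_{b}\to A$ (which is cartesian by the dictionary and \cref{prop:lortho reformulations}), and all of these transport functors admit left adjoints \IFF{} $X_{b}\to A$ is also a cocartesian fibration, by the criterion recalled above. Thus $F$ lands in $\Cocart^{\lax,\mm{R}}(B)$ \IFF{} $X_{b}\to A$ is bicartesian for every $b$, \IFF{} $p_{\ell}$ is a cocartesian fibration --- which is condition (2).

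Finally, the dual statement for $q=(q_1,q_2)\colon Y\to B\times A$ follows by applying the equivalence just established to $q^{\op}$ (using that a functor is a \lortho{} \IFF{} its opposite is, and that ``classifies a functor to $\Cart^{\oplax,\mm{L}}(B)$'' corresponds under opposites to ``classifies a functor to $\Cocart^{\lax,\mm{R}}(B^{\op})$''), translating ``$q_{r}$ is a cartesian fibration'' into ``$(q_2,q_1)$ is an op-Gray fibration'' via the op-Gray dualisation equivalence of \cref{thm:dual lortho-gray}; the final clause, that such a $q$ is automatically classified by a functor $A^{\op}\to\mathrm{Bicart}^{\mathrm{(op)lax}}(B)$, is the dual of the ``(3) $\Leftrightarrow$ (4)'' step. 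I do not anticipate a genuine obstacle: the only non-formal input is the $[1]$-indexed characterisation of adjoints, and the sole thing requiring care is the bookkeeping of variances and of which of the two factors plays which role when passing between $p$ over $A\times B$, $\bar p$ over $B\times A$ and their restrictions --- once the dictionary above is fixed, every implication is a direct unwinding.
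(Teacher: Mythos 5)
Your proof is correct and takes essentially the same route as the paper's: the paper likewise obtains (2)$\Leftrightarrow$(4) from \cref{prop:lortho reformulations}, (3)$\Leftrightarrow$(4) from \cref{cor:cocartprgpd}, and handles (1)$\Leftrightarrow$(2) by reducing to a fibrewise statement over $B$ (there via naturality of straightening to the case $B=\ast$ and \cite[Proposition 4.7.4.17]{HA}, which is the same bicartesian criterion for adjoints you invoke at the level of transport functors). The dual clause is likewise left to an unwinding of opposites in the paper.
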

\begin{proof}
  For the equivalence between \ref{it:straightened adjoint fib} and \ref{it:straightened adjoint fib2}, we claim that one can check both conditions fibrewise in $B$. Namely for \ref{it:straightened adjoint fib} this follows by the naturality of the straightening equivalence (\cref{cor:locorthstr}) in $B$, and for \ref{it:straightened adjoint fib2} this is an immediate consequence of Corollary \ref{cor:cocartprgpd}. So it suffices to prove the two conditions are equivalent for $B = \{\ast\}$, where the assertion becomes that a cartesian fibration classifies a diagram of \icats{} and
  right adjoints if and only it is also a cocartesian fibration, which
  is \cite[Proposition 4.7.4.17]{HA}. The equivalence between \ref{it:straightened adjoint fib2} and
  \ref{it:straightened adjoint fib4} follows from characterisation \ref{lorthocartr} of \lortho{s} in
  \cref{prop:lortho reformulations}, and finally the equivalence between \ref{it:straightened adjoint fib3} and \ref{it:straightened adjoint fib4} is part of Corollary \ref{cor:cocartprgpd}.
\end{proof}

Let us write $\cat{M}(A, B)$ for the \igpd{} of functors $p\colon X\rt A\times B$ satisfying the equivalent conditions of Lemma \ref{lem:lortho with adj}, so that there are natural inclusions of path components
$$\begin{tikzcd}
\core\Gray(B, A)\arrow[r, hookleftarrow] & \cat{M}(A, B)\arrow[r, hookrightarrow] & \core\Ortholax(A, B)
\end{tikzcd}$$
where the left inclusion sends $p=(p_1, p_2)$ to $(p_2, p_1)$.
Likewise, let us write $\cat{N}(A, B)$ for the \igpd{} of functors $q\colon Y\rt B\times A$ satisfying the equivalent opposite conditions of Lemma \ref{lem:lortho with adj}, so that there are natural inclusions of path components
$$\begin{tikzcd}
\core\Grayop(A, B)\arrow[r, hookleftarrow] & \cat{N}(A, B)\arrow[r, hookrightarrow] & \core\Ortholax(B, A).
\end{tikzcd}$$
More generally, let us write $M_S(A, B\times S)\subseteq M(A, B\times S)$ and $N_S(A, B\times S)\subseteq N(A, B\times S)$ for the natural subspaces spanned by fibrations $p\colon X\rt A \times (B\times S)$ such that each $X_{a, b}\rt S$ is locally constant, i.e.\ the associated functor factors through $|S|$.

\begin{corollary}\label{cor:fibration model}
For any \icat{} $B$, unstraightening over $[m]$ provides natural equivalence of $2$-fold complete Segal spaces
$$\begin{tikzcd}[row sep=0.2pc]
\Uncart\colon \Map_{\CAT}\Big([m], \cocart^{\lax, \mm{R}}_{[n]}\big(B\times [n]\big)\Big)\arrow[r, "\simeq"] & M_{[n]}\big([m]^{\op}, B\times [n]\big)\\
\Unco\colon \Map_{\CAT}\Big([m], \cart^{\oplax, \mm{L}}_{[n]}\big(B\times [n]\big)\Big)\arrow[r, "\simeq"] & N_{[n]}\big([m], B\times [n]\big).
\end{tikzcd}$$
\end{corollary}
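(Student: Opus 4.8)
The plan is to identify, at each bidegree $([m],[n])$, both sides with the core of a subcategory of a functor category, and then to read off the statement from the one-variable straightening equivalence of \cref{cor:locorthstr}; the actual work lies only in matching up the various defining conditions. I will treat the first equivalence, the second being proved by the identical argument applied to the second equivalence of \cref{cor:locorthstr} (or by transport along the equivalence $(-)^{\op}$ of \cref{rem:opposite of lax is oplax}). First I would unwind the left-hand side: by the definition of mapping spaces in $\CAT$ one has $\Map_{\CAT}\big([m],\cocart^{\lax,\mm{R}}_{[n]}(B\times[n])\big)\simeq\core\Fun\big([m],\cocart^{\lax,\mm{R}}_{[n]}(B\times[n])\big)$, and since $\cocart^{\lax,\mm{R}}_{[n]}(B\times[n])$ is by construction the subcategory of $\cocart^{\lax}(B\times[n])$ on the objects locally constant on $[n]$ with morphisms the $(B\times[n])$-parametrised right adjoints, this exhibits the left-hand side as the subgroupoid of $\core\Fun\big([m],\cocart^{\lax}(B\times[n])\big)$ spanned by those diagrams whose vertices are all locally constant on $[n]$ and whose edges are all parametrised right adjoints --- a union of path components, as both conditions are invariant under equivalence of diagrams.

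Next I would apply \cref{cor:locorthstr} with first variable $[m]^{\op}$ and second variable $B\times[n]$: cartesian unstraightening $\Uncart$ is an equivalence $\Fun\big([m],\cocart^{\lax}(B\times[n])\big)^{\coc}\simeq\LOrth\big([m]^{\op},B\times[n]\big)$, hence on cores an equivalence $\core\Fun\big([m],\cocart^{\lax}(B\times[n])\big)\simeq\core\LOrth\big([m]^{\op},B\times[n]\big)$. It then suffices to check that the two conditions carving out the subgroupoid on the left match the two conditions defining $M_{[n]}\big([m]^{\op},B\times[n]\big)$. That every edge of the diagram is a parametrised right adjoint corresponds to characterisation \ref{it:straightened adjoint fib} of \cref{lem:lortho with adj}, which is exactly how $\cat{M}$ was defined; this matches the condition defining $M\big([m]^{\op},B\times[n]\big)$. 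That every vertex is locally constant on $[n]$ matches the subscript in $M_{[n]}$: under the equivalence, the fibre of a curved orthofibration over a vertex $a\in[m]^{\op}$ is the cocartesian fibration over $B\times[n]$ that the classifying functor assigns to $a$ (by \cref{cor:locorthstr} and characterisation \ref{lorthocartfw} of \cref{prop:lortho reformulations}), and a cocartesian fibration over $B\times[n]$ is locally constant on $[n]$ precisely when each of its restrictions to $\{b\}\times[n]$ is, since the localisation $B\times[n]\to B\times|[n]|$ inverts exactly the morphisms of the form $(\id_b,e)$. Combining, the core equivalence restricts to the asserted equivalence $\Map_{\CAT}\big([m],\cocart^{\lax,\mm{R}}_{[n]}(B\times[n])\big)\simeq M_{[n]}\big([m]^{\op},B\times[n]\big)$.

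Finally I would observe that the straightening equivalence of \cref{cor:locorthstr} is natural in both of its arguments --- this is built into its construction, via the slice description of \cref{rmk:lcocstr} and Lurie's straightening --- so the equivalences just produced are natural in $([m],[n])\in\Dop\times\Dop$ and assemble into an equivalence of bisimplicial spaces; since the source is a complete $2$-fold Segal space, so is the target, which gives the claimed equivalence of $2$-fold complete Segal spaces. I do not expect a single deep obstacle here: the effort goes entirely into keeping the opposites, subscripts, and (wide versus full) subcategory conditions straight on both sides, and if one ingredient is less formal than the rest it is the two-variable naturality of \cref{cor:locorthstr}, which I would want to make explicit rather than leave implicit.
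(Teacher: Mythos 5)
Your proposal is correct and follows essentially the same route as the paper: the published proof is just a compressed version of your argument, citing \cref{lem:lortho with adj} to match the parametrised-right-adjoint condition on morphisms with the defining conditions of $\cat{M}$, and observing that local constancy along $[n]$ is a condition on objects (hence checkable at $[m]=\ast$, where unstraightening is the identity). Your more explicit unwinding of the core of the wide subcategory and of the two defining conditions is a faithful expansion of that same argument.
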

\begin{proof}
Apply \cref{lem:lortho with adj} and use that local constancy along $[n]$ can be checked when $[m]=\ast$, in which case the unstraightening functors are equivalent to the identity.
\end{proof}
\begin{lemma}\label{lem:dualisation of bicart}
  The dualisation functor from \cref{thm:dual lortho-gray} with respect to $B\times S$
$$
\Dualcart\colon \Gray\big(B\times S, A\big)\rto{\sim} \Ortholax\big((B\times S)^{\op}, A\big)
$$
restricts to an equivalence of \igpds{}
$$
\Dualcart\colon \cat{M}_S(A, B\times S)\rto{\sim} \cat{N}_{S^{\op}}(A, (B\times S)^{\op}).
$$ 
\end{lemma}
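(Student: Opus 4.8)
The plan is to identify both $\cat{M}_S(A,B\times S)$ and $\cat{N}_{S^{\op}}(A,(B\times S)^{\op})$, via the one-variable straightening equivalences, with the \emph{same} subspace of $\Map_{\CAT}(B\times S,\Cocartlax(A))$, and then to check that $\Dualcart$ relative to $B\times S$ intertwines these two identifications. To this end, recall from the proof of \cref{thm:dual lortho-gray} that, with the two factors played by $B\times S$ and $A$, the functor $\Dualcart$ is by construction the composite of the equivalence $\Gray(B\times S,A)\simeq\Fun(B\times S,\Cocartlax(A))^{\mathrm{cc}}$ of \cref{cor:graycocartstr} with the inverse of the equivalence $\Fun(B\times S,\Cocartlax(A))^{\mathrm{cc}}\simeq\LOrth((B\times S)^{\op},A)$ of \cref{cor:locorthstr}. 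On cores both of these send a Gray fibration, respectively a \lortho{}, to its classifying functor $\phi\colon B\times S\to\Cocartlax(A)$ over the first factor, so $\Dualcart$ simply relabels this classifying functor.

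I would then pin down the two subspaces in terms of $\phi$. Viewing an object of $\cat{M}_S(A,B\times S)$ inside $\core\Gray(B\times S,A)$ via $p\mapsto\bar p=(p_2,p_1)$, condition~\ref{it:straightened adjoint fib3} of \cref{lem:lortho with adj} says exactly that $\phi$ factors through the full subcategory $\mathrm{Bicart}^{\mathrm{(op)lax}}(A)\subseteq\Cocartlax(A)$, while the local-constancy condition defining $\cat{M}_S$ says that $\phi$ factors through the localisation $B\times S\to B\times|S|$; for the latter one uses that for $(a,b)\in A\times B$ the fibre $X_{a,b}\to S$ is a cocartesian fibration whose straightening sends $s$ to $\phi(b,s)_a$, so it is locally constant for all $a,b$ precisely when each $\phi(b,-)$ inverts all morphisms of $S$. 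Dually, for a \lortho{} $q\colon Y\to(B\times S)^{\op}\times A$ with classifying functor $\phi$, the condition defining $\cat{N}$ --- that the restriction $q_r$, which is automatically cocartesian by \cref{prop:lortho reformulations}, be a cartesian fibration as well --- is, by \cref{rmk:pbtocore}, equivalent to every fibre $\phi(b,s)\to A$ being bicartesian, hence again to $\phi$ factoring through $\mathrm{Bicart}^{\mathrm{(op)lax}}(A)$; and the local-constancy condition defining $\cat{N}_{S^{\op}}$, using the fibrewise identity $Y_{(b,s)}\simeq X_{(b,s)}$ of \cref{thm:dual lortho-gray} and the equivalence $|S^{\op}|\simeq|S|$, is once more the statement that $s\mapsto\phi(b,s)_a$ invert all morphisms, \ie{} that $\phi$ factor through $B\times|S|$. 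Hence both groupoids correspond, under the respective straightening equivalences, to the subspace of $\Map_{\CAT}(B\times S,\Cocartlax(A))$ of functors factoring through $\mathrm{Bicart}^{\mathrm{(op)lax}}(A)$ and through $B\times|S|$; since $\Dualcart$ carries the first identification to the second, it restricts to an equivalence between them, which is the claim.

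The only step requiring genuine care is the comparison of the two local-constancy conditions: a priori the fibre $X_{a,b}\to S$ of a Gray fibration and the fibre $Y_{b,a}\to S^{\op}$ of its dual \lortho{} are computed by different recipes, and one has to verify that they carry the same associated functor to $\Cat$ once $S^{\op}$ is identified with $S$ on classifying spaces --- which is precisely where the fibrewise nature of $\Dualcart$ from \cref{thm:dual lortho-gray} is used. The rest is a routine translation through \cref{lem:lortho with adj} and the one-variable results of \S\ref{subsec:Rfib}.
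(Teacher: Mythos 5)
Your reduction of the lemma to a comparison of two subspaces of $\Map_{\CAT}(B\times S,\Cocartlax(A))$ is reasonable in outline, and your translation of the bicartesian-fibre conditions through \cref{lem:lortho with adj} is correct, but the translation of the local-constancy condition — the step you yourself flag as the delicate one — is false as stated. You claim that all fibres $X_{a,b}\to S$ being locally constant is \emph{equivalent} to the classifying functor $\phi\colon B\times S\to\Cocartlax(A)$ of the Gray fibration factoring through $B\times|S|$. Only one implication holds. The transition functor $\phi(\id_b,\sigma)\colon\phi(b,s)\to\phi(b,s')$ is a functor over $A$ that in general does \emph{not} preserve cocartesian edges — its failure to do so is exactly the interpolating/lax structure of the Gray fibration — and a functor over $A$ between cocartesian fibrations that restricts to an equivalence on every fibre need not be an equivalence of total $\infty$-categories unless it preserves cocartesian edges. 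Concretely, take $A=S=[1]$, $B=\ast$, let $M$ be the one-object category on the monoid $(\mathbb{N},+)$, and consider the object of $\cat{M}_S$ given by the morphism $g=(\id,\id,\eta)\colon M\times[1]\to M\times[1]$ in $\cocart^{\lax,\mm{R}}([1])$, where $\eta=1\in\mathbb{N}$ is the non-invertible natural endotransformation of $\id_{M}$. Every fibre $X_a\to S$ is constant, so this lies in $\cat{M}_S$; but since $g$ does not preserve cocartesian edges, the transition functor $\phi(\sigma)$ has a non-invertible lax constraint, hence is a fibrewise equivalence that fails to be fully faithful on total categories (it acts on $\Map(x_0,x_1)\cong\mathbb{N}$ by $n\mapsto n+1$), and $\phi$ does not factor through $|S|\simeq\ast$. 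So the subspace of $\Map_{\CAT}(B\times S,\Cocartlax(A))$ you identify is in general a \emph{proper} subspace of the image of $\cat{M}_S$, and the argument does not establish the lemma, even though the conclusion is true in this example.

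The correct reformulation of local constancy is fibrewise in $A$: for each $a,b$ the composite $S\xrightarrow{\phi(b,-)}\Cat/A\xrightarrow{(-)_a}\Cat$ must invert all morphisms, which is strictly weaker than $\phi(b,-)$ doing so. This is why the paper argues differently: it shows separately that each of the two defining conditions is preserved by $\Dualcart$, using naturality of the dualisation in the $A$-variable to test local constancy after restriction along points of $A$ (where dualisation over $B\times S$ is the classical cocartesian/cartesian duality, which preserves the classifying functor and hence local constancy), and naturality in the $B\times S$-variable to test the bicartesian condition after restriction to points of $B\times S$ (where dualisation is the identity). Your treatment of the bicartesian half is essentially the latter step; the local-constancy half needs to be repaired along the lines of the former rather than by a condition on $\phi$ as a functor.
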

\begin{proof}
When $A=\ast$, dualisation over $B\times S$ simply sends cocartesian fibrations to their dual cartesian fibrations. This preserves local constancy in $S$ and by naturality in $A$, one sees that dualisation preserves those objects that restrict to locally constant fibrations over $\{b\}\times S\times A$.
 
  By the addenda of \cref{thm:dual lortho-gray}, for $B\times S=*$ the dualisation
  equivalence restricts
  to a natural self-equivalence of the \icat{} of cocartesian
  fibrations over $A$ that is equivalent to the identity. By
  naturality in $B\times S$, one therefore sees that the dualisation preserves
  those objects that restrict for each $x\in B\times S$ to a bicartesian
  fibration over $\{x\}\times A$, as required.
\end{proof}
\begin{proof}[Proof of \cref{thm:dualizing lax natural transformations}]
\cref{cor:fibration model} and \cref{lem:dualisation of bicart} yield a natural equivalence
\begin{equation}\label{eq:mate correspondence}\begin{tikzcd}[column sep=2.2pc]
\Map_{\scriptscriptstyle\CAT}\big(A^{\op}, \cocart^{\lax, \mm{R}}_S(B\times S)\big)\arrow[d, "\sim"{swap}]\arrow[r, "\Uncart"{above}, ] & M_S(A, B\times S)\arrow[d, "{\Dualcart}"{left}, ] \\
\Map_{\scriptscriptstyle\CAT}\big(A, \cart^{\oplax, \mm{L}}_{S^{\op}}(B^{\op}\times S^{\op})\big) & N_{S^{\op}}(A^\op, B^{\op}\times S^{\op})\arrow[l, "\Strco"{above}]
\end{tikzcd}\end{equation}
Taking $A$ and $S$ to be simplices, one obtains the desired equivalence between $2$-fold Segal spaces $\mathbf{Cocart}^{\lax, \mm{R}}(B)\simeq \mathbf{Cart}^{\oplax, \mm{L}}(B^{\op})^{(1, 2)\mm{-}\op}$.
\end{proof}

%

\begin{example}\label{ex:monoidalclosed}
  A \emph{two-variable adjunction} consists of functors $F \colon B
  \times C \rightarrow D$ and $G \colon B^{\op} \times D \to C$,
  together with a natural equivalence
  \[ \Map_{D}(F(b,c), d) \simeq \Map_{C}(c, G(b,d));\]
  the prototypical example is the tensor-hom adjunction in a (left-)closed
  monoidal \icat{}. This is a special case of our parametrised
  adjunctions: It follows from the Yoneda lemma that given $F$, the functor $G$ is
  uniquely determined and exists \IFF{} $F(b,-)$ is a left
  adjoint for all $b \in B$. We can then view $F$ as a parametrised
  left adjoint
  \[
    \begin{tikzcd}
      B \times C \arrow{rr}{(\pr_1,F)} \arrow[dr,"\pr_{1}"'] & & B \times D
      \arrow{dl}{\pr_{1}} \\
       & B.
    \end{tikzcd}
  \]
  Since the dual cocartesian fibration to $\pr_{1}
  \colon B \times C \to B$ is the projection $B^{\op} \times C \to
  B^{\op}$,
  \cref{thm:dualizing lax natural transformations} produces a
  parametrised right adjoint in the form
  \[
    \begin{tikzcd}
      B^\op \times C \arrow[dr,"\pr_{1}"'] & & B^\op \times D \arrow[ll,"{(\pr_1,G)}"']
      \arrow{dl}{\pr_{1}} \\
       & B^\op.
    \end{tikzcd}
  \]
  At the moment we only know that $G(b,-)$ is right adjoint to
  $F(b,-)$ for each $b$, but we will verify in
  \cref{cor:identifyingadjoints} below that $G$ indeed gives the
  expected natural equivalence on mapping \igpds{}. We will apply this fibrational
  description of two-variable adjunctions to analyse the monoidal properties of the internal mapping functor in
  \cref{cor::monoidalinternalhom}.
\end{example}

\subsection{Identifying mates}\label{subsec:mateident}

Our goal in this subsection is to describe the effect on morphisms of the equivalence from \cref{thm:dualizing lax natural transformations} in terms of mates or Beck--Chevalley transformations, see \cref{beck-chevalley} below. 

In order to do this, let us first recollect how one can
obtain the unit and counit of the adjunction from a bicartesian fibration $p\colon X\rt [1]$, using the following general
construction:
\begin{construction}\label{con:cocartesian push}
Let $p\colon X\rt [1]$ be a cocartesian fibration and $I$ any \icat{}. By \cite[Proposition 3.1.2.1]{HTT}, post-composition with $p$ determines a cocartesian fibration \[ p^{I} \colon \Fun(I, X) \to \Fun(I,[1]), \] 
with cocartesian morphisms those natural transformations that are given by $p$-cocartesian morphisms at each object of $I$. Given a functor $\phi \colon I \to X$, its \emph{cocartesian transport functor} $\phi_{\coc} \colon I \times [1] \to X$ is the diagram corresponding to the essentially unique $p^{I}$-cocartesian morphism with domain $\phi$ covering the map $p\phi\Rightarrow \mm{const}_1$ in $\Fun(I, [1])$. Alternatively, it is the unique diagram whose restriction to $I\times \{0\}$ is given by $\phi$ such that each $\phi_{\coc}(i)\colon [1]\rt X$ is $p$-cocartesian over $p(\phi(i)\leq 1)$.

Dually, for a cartesian fibration $p\colon X\rt [1]$ and a functor $\psi\colon I\rt X$ one can form the \emph{cartesian transport functor} $\psi_{\cac} \colon I \times [1] \to X$ of $\psi$.
\end{construction}
\begin{example}\label{ex:cocartesian push}
Let $p\colon X\rt [1]$ be a cocartesian fibration classifying a functor $f\colon D\rt C$. Taking $\phi$ to be the fibre inclusion $i_{0} \colon D\simeq X_{0} \hookrightarrow X$, one obtains a diagram $i_{0,\coc} \colon D \times [1] \to X$. The restriction to $D\times \{1\}$ gives a functor $D\rt X_1\simeq C$ naturally equivalent to $f$ (as a consequence of \cite[Lemma 5.2.1.4]{HTT}) and for each $d\in D$, the arrow $i_{0,\coc}(d) \colon d \rightarrowtail f(d)$ is $p$-cocartesian.
\end{example}
\begin{construction}\label{con:unit-counit}
Let $p\colon X\rt [1]$ be a cartesian and cocartesian fibration classifying an adjoint pair $f\colon D\leftrightarrows C\cocolon g$. Applying Example \ref{ex:cocartesian push} and taking the cartesian transport functor of the resulting diagram $i_{0,\coc} \colon D \times [1] \to X$ yields a functor
 \[ (i_{0,\coc})_{\cac} \colon D \times [1]
    \times [1] \to X,\]
  which takes $y \in D$ to the square
 \[
    \begin{tikzcd}
      y \arrow[r, "\eta_d"] \arrow[equals]{d} & gf(y) \arrow[d, two heads] \\
      y \arrow[r, tail] & f(y).
    \end{tikzcd}
  \]
  The functor $(i_{0,\coc})_{\cac}(-,-,0)$ factors through the fibre
  $D\simeq X_{0}$, and encodes the unit transformation $\eta\colon
  \id_D\Rightarrow gf$ of the adjunction classified by $p$. The above
  square shows that for a fixed object $y$, the unit $\eta_y\colon
  y\rt gf(y)$ is obtained by taking a cocartesian arrow $y\rt f(y)$ and factoring it as a fibrewise map followed by a cartesian map.
  
Dually, starting with the cartesian transport of the fibre inclusion $C\hookrightarrow X$ and then taking the cocartesian transport gives $(i_{1,\cac})_{\coc} \colon C \times [1]
    \times [1] \to X$ whose restriction to $C\times [1]\times \{1\}$ encodes the counit transformation $\epsilon\colon fg\Rightarrow \id_C$ of the adjunction.
\end{construction}

To understand the behaviour of a $B$-parametrised right adjoint, let
us start by showing that a map in $\cocart^{\lax}(B)$ can roughly be
viewed as a lax natural transformation; this picture will be made more
precise in Section \ref{sec:laxgray}. 
\begin{construction}\label{con:lax natural}
Let $g\colon C\rt D$ be a morphism in $\cocart^{\lax}(B)$ and
$\beta\colon b\rt b'$ one in $B$. Then $g$ determines a natural transformation of the form
\begin{equation}\label{eq:lax natural transformation}\begin{tikzcd}[column sep=3pc]
C(b)\arrow[d, "\beta_!"{left}]\arrow[r, "g"] & D(b)\arrow[d, "\beta_!"]\arrow[ld, Rightarrow, shorten=1ex, "\rho_\beta"]\\
C(b')\arrow[r, "g"{below}] & D(b').
\end{tikzcd}\end{equation}
We will refer to this as the \emph{$\beta$-component} of $g$.
To see this, note that for each object $x\in C(b)$, the image of the cocartesian lift $\tilde{\beta}\colon x\rt \beta_!x$ under $g$ factors uniquely as
$$\begin{tikzcd}
g(\tilde{\beta})\colon g(x)\arrow[r, rightarrowtail] & \beta_!g(x)\arrow[r, "\rho_\beta(x)"] & g(\beta_!x).
\end{tikzcd}$$
Alternatively, Example \ref{ex:interpol} shows that $\rho_\beta(x)$ can also be obtained as the interpolating edge associated to $x$ in the \lortho{} $p\colon X\rt [1]^{\op}\times B$ classifying $g$. 

To organise these interpolating morphisms $\rho_\beta(x)$ into a natural transformation, one can use a similar maneuver as in Construction \ref{con:cocartesian push} and consider the diagram
\[\begin{tikzcd} \Fun(C(b),C) \ar[rr,"g_*"] \ar[rd]&& \Fun(C(b),D)\ar[ld] \\
& \Fun(C(b),B).\end{tikzcd}\]
whose vertical maps are cocartesian fibrations. Applying the previous
construction to the map $\beta\colon
\mm{const}_b\Rightarrow\mm{const}_{b'}$ in the base and the fibre
inclusion $C(b)\hookrightarrow C$ covering its domain $\mm{const}_b$,
we obtain the desired natural transformation $\rho_\beta\colon
\beta_!g \Rightarrow g\beta_!$. This restricts to the interpolating
maps $\rho_\beta(x)$ defined above because $g_*$-cocartesian arrows are given pointwise by $g$-cocartesian arrows (see \cite[Proposition 3.1.2.1]{HTT}).
\end{construction}

When $g\colon C\rt D$ is a $B$-parametrised right adjoint, the lax commuting square \eqref{eq:lax natural transformation} gives rise to a natural transformation between the fibrewise left adjoints and the change-of-fibre functors $\beta_!$:
\begin{definition}\label{def:BC}
Consider a lax commuting square of the form \eqref{eq:lax natural transformation} such that the horizontal functors are part of adjunctions $f\colon D(b)\leftrightarrows C(b)\colon g$ and $f\colon D(b')\leftrightarrows C(b')\colon g$. Then the \emph{Beck--Chevalley transformation} associated to $\rho_\beta$ is the composition
$$\begin{tikzcd}
f\beta_!\arrow[r, Rightarrow, shorten=-1ex, "f\beta_!\eta"] & f\beta_!gf\arrow[r, Rightarrow, shorten=-1ex, "f\rho_\beta f"] & fg\beta_!f\arrow[r, Rightarrow, shorten=-1ex, "\epsilon \beta_!f"] & \beta_!f.
\end{tikzcd}$$
\end{definition}

We are now ready to describe the effect of the equivalence $\Adj$ from \cref{thm:dualizing lax natural transformations} on morphisms. To this end, let $g\colon C\rt D$ be a morphism in $\cocart^{\lax,\mm{R}}(B)$ and let $f=\Adj(g)$ be the induced morphism in $\cart^{\oplax, \mm{L}}(B^{\op})^\op$. Construction \ref{con:lax natural} and the dual analysis for maps in $\cart^{\oplax}(B)$ show that for each $\beta\colon b\rt b'$ in $B$, the maps $g$ and $f$ give rise to lax commuting squares of the form
$$\begin{tikzcd}[column sep=3pc]
C(b)\arrow[d, "\beta_!"{left}]\arrow[r, "g"] & D(b)\arrow[d, "\beta_!"]\arrow[ld, Rightarrow, shorten=1ex, "\rho_\beta"] & & C(b)\arrow[d, "{(\beta^{\op})^*}"{left}] & D(b)\arrow[d, "{(\beta^{\op})^*}"]\arrow[l, "f"{above}]\\
C(b')\arrow[r, "g"{below}] & D(b') & & C(b') & D(b').\arrow[l, "f"{below}]\arrow[lu, Rightarrow, shorten=1ex, "\lambda_\beta"{swap}]
\end{tikzcd}$$
Note that in these diagrams, the vertical change-of-fibre functors are equivalent. The transformation $\lambda_\beta$ is given by the Beck--Chevalley transformation associated to $\rho_\beta$, more precisely:

\begin{propn}\label{beck-chevalley}
Let $g\colon C\rt D$ be a map in $\cocart^{\lax, R}(B)$, and $\beta\colon b \rightarrow b'$ a morphism in $B$, so that the $\beta$-component of $g$ is given by \eqref{eq:lax natural transformation}. Then regarding $\beta$ as a morphism in $B^\op$, the $\beta$-component of $f = \Adj(g)$ is given by the Beck--Chevalley transformation associated to $\rho_\beta$, i.e.\ $\lambda_\beta$ is equivalent to the composition
$$\begin{tikzcd}
f\beta_!\arrow[r, Rightarrow, shorten=-1ex, "f\beta_!\eta"] & f\beta_!gf\arrow[r, Rightarrow, shorten=-1ex, "f\rho_\beta f"] & fg\beta_!f\arrow[r, Rightarrow, shorten=-1ex, "\epsilon \beta_!f"] & \beta_!f.
\end{tikzcd}$$
\end{propn}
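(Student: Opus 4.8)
The plan is to reduce to the one-dimensional universal case $B=[1]$, identify both components $\rho_\beta$ and $\lambda_\beta$ as interpolating edges of a single total space, and then check that the Gray-interpolating edge computing $\lambda_\beta$ agrees with the Beck--Chevalley zigzag by a diagram chase that uses only the naturality of cocartesian transport and the fibrational descriptions of the (co)units.

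\emph{Reductions and set-up.} The formation of $\rho_\beta$, $\lambda_\beta$, the equivalence $\Adj$, and the (co)units of the fibrewise adjunctions are all natural in $B$ and depend only on the restriction of $g$ along $\beta\colon[1]\to B$, so I would first reduce to the case $B=[1]$ with $\beta$ the non-degenerate edge. Write $p=(p_1,p_2)=\Uncart(g)\colon X\to[1]^{\op}\times[1]$ for the \lortho{} classifying $g$ via \cref{cor:locorthstr}. By \cref{lem:lortho with adj} the hypothesis that $g$ is a $B$-parametrised right adjoint is equivalent to $\ol{p}=(p_2,p_1)\colon X\to[1]\times[1]^{\op}$ being a Gray fibration whose restriction over $\core([1])\times[1]^{\op}$ is again cartesian; equivalently, over each vertex of the $[1]$-factor the functor $X_b\to[1]^{\op}$ (resp.\ $X_{b'}\to[1]^{\op}$) is the bicartesian fibration encoding $f_b\dashv g_b$ (resp.\ $f_{b'}\dashv g_{b'}$). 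Concretely, the four corner fibres of $p$ are $C(b),D(b),C(b'),D(b')$; the cartesian and cocartesian transports in the $[1]^{\op}$-direction are the functors $g_\bullet$ and $f_\bullet$, and the cocartesian transports in the $[1]$-direction are the $\beta_!^{\bullet}$.

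\emph{The two components as interpolating edges.} By \cref{con:lax natural} the $\beta$-component $\rho_\beta$ of $g$ is assembled from the $p$-interpolating edges at the objects $x\in C(b)$ (\cref{def:interpol lorth}). Running the dual of \cref{con:lax natural} for $f=\Adj(g)$, viewed in $\cart^{\oplax,\mm{L}}(B^{\op})$, exhibits $\lambda_\beta$ likewise from the $q$-interpolating edges, where $q$ is the \lortho{} classifying $f$; and by the construction of $\Adj$ in the proof of \cref{thm:dualizing lax natural transformations}, $q=\Dualcart(\ol{p})$ is the curved orthofibration dual to the Gray fibration $\ol{p}$. Thus \cref{lem:interpol dual} identifies, under the fibre equivalences of \cref{thm:dual lortho-gray}, the $q$-interpolating edge at $y\in D(b)$ with the $\ol{p}$-interpolating edge of \cref{def:interpol gray}. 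Unwinding \cref{rmk:interpol Gray} for $\ol{p}$, the latter is the unique fibrewise morphism $\lambda_\beta(y)\colon f_{b'}\beta_!^{D}y\to\beta_!^{C}f_{b}y$ in $C(b')$ for which
\[
\bigl(y\rightarrowtail\beta_!^{D}y\rightarrowtail f_{b'}\beta_!^{D}y\xrightarrow{\lambda_\beta(y)}\beta_!^{C}f_{b}y\bigr)=\bigl(y\rightarrowtail f_{b}y\rightarrowtail\beta_!^{C}f_{b}y\bigr),
\]
the arrows $\rightarrowtail$ being the evident $p_1$- and $p_2$-cocartesian lifts. So it remains to show that the Beck--Chevalley composite of \cref{def:BC} satisfies this same characterising identity.

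\emph{The core computation.} Evaluating $\mathrm{BC}(\rho_\beta)=\epsilon_{b'}\beta_!^{C}f_b\circ f_{b'}\rho_\beta f_b\circ f_{b'}\beta_!^{D}\eta_b$ at $y$ and precomposing with $y\rightarrowtail\beta_!^{D}y\rightarrowtail f_{b'}\beta_!^{D}y$, I would simplify the result by a short sequence of rewrites inside $X$: naturality of the cocartesian transports $\beta_!^{D}$ and $f_{b'}$ moves the whiskered unit past them, replacing $y\rightarrowtail\beta_!^{D}y\rightarrowtail f_{b'}\beta_!^{D}y\to f_{b'}\beta_!^{D}g_bf_by$ by $y\xrightarrow{\eta_b}g_bf_by\rightarrowtail\beta_!^{D}g_bf_by\rightarrowtail f_{b'}\beta_!^{D}g_bf_by$; the defining factorisation of the interpolating edge $\rho_\beta(f_by)$ (from \cref{ex:interpol}, applied at $f_by\in C(b)$) rewrites $g_bf_by\rightarrowtail\beta_!^{D}g_bf_by\xrightarrow{\rho_\beta(f_by)}g_{b'}\beta_!^{C}f_by$ composed with the $p_1$-cartesian lift $g_{b'}\beta_!^{C}f_by\twoheadrightarrow\beta_!^{C}f_by$ as the $p_1$-cartesian lift $g_bf_by\twoheadrightarrow f_by$ composed with the $p_2$-cocartesian lift $f_by\rightarrowtail\beta_!^{C}f_by$; and by \cref{con:unit-counit}, $\epsilon_{b'}$ is the fibrewise morphism through which that $p_1$-cartesian lift factors after the $p_1$-cocartesian one, while $\eta_b$ is, by construction, the fibrewise part of the factorisation $y\xrightarrow{\eta_b}g_bf_by\twoheadrightarrow f_by$ of the $p_1$-cocartesian lift $y\rightarrowtail f_by$. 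Telescoping these rewrites collapses the composite to $y\rightarrowtail f_by\rightarrowtail\beta_!^{C}f_by$, the right-hand side above; hence $\mathrm{BC}(\rho_\beta)(y)=\lambda_\beta(y)$, naturally in $y$.

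\emph{Main obstacle.} I expect the real work to be in this last step: arranging the rewrites so that each application of naturality or of a (co)unit factorisation hits the correct edge, and keeping straight the $[1]$ versus $[1]^{\op}$ orientations and the $(1,2)$-op built into $\Adj$, so as to be certain that the interpolating edge produced by \cref{rmk:interpol Gray} is \emph{literally} the filler spliced from the unit $\eta_b$, the edge $\rho_\beta(f_by)$ and the counit $\epsilon_{b'}$. Packaging the whole chase as a single diagram --- the image in $X$ of an adjunction-enhanced common refinement of the posets $Q$ and $Q'$ from \cref{ex:interpol} and \cref{rmk:interpol Gray}, in which commutativity is forced by essential uniqueness of (co)cartesian lifts --- should make this bookkeeping routine, and is the form in which I would ultimately present it.
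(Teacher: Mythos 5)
Your proposal is correct and follows essentially the same route as the paper: both identify $\rho_\beta$ and $\lambda_\beta$ with the curved-orthofibration and Gray interpolating edges of the same total space $X$ via \cref{lem:interpol dual}, and then check that the Beck--Chevalley composite satisfies the characterising factorisation of the Gray interpolating edge by splicing together the fibrational descriptions of the unit, of $\rho_\beta(f_b y)$ and of the counit (your sequence of rewrites is exactly the paper's pasted diagram read off step by step). The one point you only gesture at is the upgrade from an objectwise to a natural identification, which the paper handles by rerunning the argument with $y$ replaced by the fibre inclusion $D(b)\hookrightarrow D$ after applying $\Fun(D(b),-)$ throughout.
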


\begin{proof}
The equivalence $\Adj\colon \cocart^{\lax, \mm{R}}(B)\rt \cart^{\oplax, \mm{L}}(B)$ is given at the level of morphisms by \eqref{eq:mate correspondence} for $A=[1]^{\op}$. In other words, consider a map $g\colon C\rt D$ in $\cocart^{\lax, \mm{R}}(B)$ and let $p=(p_1, p_2)\colon X\rt [1]\times B$ be the corresponding \lortho{}, as in Lemma \ref{lem:lortho with adj}. Then the map $f=\Adj(g)\colon D\rt C$ in $\cart(B^{\op})$ is the straightening of the \lortho{} which is \emph{dual} (relative to $B$) to the Gray fibration $\ol{p}=(p_2, p_1)\colon X\rt B\times [1]$. 

Let us now fix $\beta\colon b\rt b'$ in $B$. For $x\in C(b)\simeq X_{1, b}$, Construction \ref{con:lax natural} and Example \ref{ex:interpol} identify $\rho_\beta(x)$ with the corresponding $p$-interpolating morphism $\beta_!g(x)\rt g\beta_!(x)$ in $X$, \emph{where $p$ is considered as a \lortho{}}. On the other hand, for $y\in D(b)\simeq X_{0, b}$, (the dual of) Construction \ref{con:lax natural}, \cref{def:interpol gray} and \cref{lem:interpol dual} show that $\lambda_\beta(y)\colon f\beta_!y\rt \beta_!fy$ is given by the associated $\ol{p}$-interpolating morphism, \emph{where $\ol{p}$ is considered as a Gray fibration}.

To relate $\lambda_\beta$ and $\rho_\beta$, take $y\in D(b)$ and consider the following diagram in $X$
\begin{equation}\label{diag:unit}\begin{tikzcd}[row sep=1.2pc]
y\arrow[dd, rightarrowtail]\arrow[r, "\eta"] & gf(y)\arrow[d, rightarrowtail]\arrow[r, two heads] &  f(y)\arrow[dd, rightarrowtail]\\
& \beta_!gf(y)\arrow[d, "\rho_\beta f"] & \\
\beta_!(y)\arrow[r]\arrow[ru, "\beta_!\eta"] & g\beta_!f(y)\arrow[r,two heads] & \beta_!f(y),
\end{tikzcd}\end{equation}
which we build in steps as follows; first we obtain the outer square as the essentially unique one in which the two vertical arrows are
$p$-cocartesian (as always denoted $\rightarrowtail$) and the map
$y\rt f(y)$ is locally $p$-cocartesian. We then factor the two
horizontal maps into a fibrewise map, followed by a $p$-cartesian
morphism (denoted $\twoheadrightarrow$). Finally, we factor the
induced map $gf(y)\rt g\beta_!f(y)$ into a cocartesian map followed by
a fibrewise one. Note that the right rectangle is then precisely the
$Q$-diagram exhibiting
$\rho_\beta(f(y))\colon \beta_!gf(y)\rt g\beta_!f(y)$ as a
$p$-interpolating edge (\cref{ex:interpol}).

The resulting morphism $y\rt gf(y)$ in the top row is the unit of the adjoint pair $(f, g)$ (at the fibre over $b$) and the map $\beta_!\eta$ exists since the left vertical map is $p$-cocartesian.
Now note that the maps $\beta_!\eta$ and $\rho_\beta f$ are both contained in the fibre $X_{0, b'}\simeq D(b')$, so that choosing locally $p$-cocartesian lifts over $(0, b')\rightarrow (1, b')$ yields a commuting diagram
$$\begin{tikzcd}
\beta_!(y)\arrow[r, "\beta_!\eta"]\arrow[d, rightarrowtail, "\circ" marking] & \beta_!gf(y)\arrow[r, "\rho_\beta f"] \arrow[d, rightarrowtail, "\circ" marking] & g\beta_!f(y) \arrow[d, rightarrowtail, "\circ" marking]\arrow[rd, two heads]\\
f\beta_!(y)\arrow[r, "f\beta_!\eta"{below}] & f\beta_!gf(y)\arrow[r, "f\rho_\beta f"{below}] & fg\beta_!f(y)\arrow[r, "\epsilon"{below}] & \beta_!f(y).
\end{tikzcd}$$
Here the top lives in $X_{0, b'}$ and the bottom in $X_{1, b'}$. Pasting this diagram below \eqref{diag:unit}, the resulting outer diagram determines a $Q'$-diagram in $X$ (\cref{def:interpol gray}) that exhibits the bottom composite $f\beta_!(y)\rt \beta_!f(y)$ as the $\ol{p}$-interpolating arrow associated to $y$. In particular, the bottom composite is equivalent to $\lambda_\beta(y)$.

To obtain the identification as natural transformations, we proceed as
in \cref{con:lax natural}, replacing $g\colon C\rt D$ by $g\colon
\Fun(D(b), C)\rt \Fun(D(b), D)$ and applying the above argument to the
case where $y$ is replaced by the fibre inclusion $\iota\colon
D(b)\hookrightarrow D$, viewed as an object of $\Fun(D(b), D)$. Since the equivalences in \cref{cor:fibration model} and \cref{lem:dualisation of bicart} commute with
taking functor categories (since by adjunction they commute with
products), it follows that $\lambda_\beta$ is naturally equivalent to the Beck--Chevalley transformation associated to $\rho_\beta$, as asserted.
\end{proof}

\subsection{Parametrised correspondences}\label{subsec:identifications}

Our goal in this section is to derive a characterisation of
parametrised adjoints, which we defined in terms of their associated fibrations in \S\ref{subsec:paraadj}, by means of a natural equivalence analogous to the usual equivalence 
\[ \Map_{C}(f(d),c) \simeq \Map_{D}(d, g(c))\]
on mapping $\infty$-groupoids associated to an adjunction $f\dashv g$.

To motivate the form this will take, let us first observe that we can phrase the preceding condition in terms of
left fibrations: $f$ is left adjoint to $g$ if there is an equivalence
\begin{equation}
  \label{eq:adjlfibeq}
(f^{\op} \times \id_{D})^{*}\TwL(D) \simeq (\id_{C} \times g)^{*}\TwL(C)
\end{equation}
of left fibrations over $C^{\op} \times D$, since the twisted arrow
\icats{} are the left fibrations for the mapping \igpd{} functors, and
precomposition corresponds to pullback of left fibrations. We will prove
a parametrised analogue of \cref{eq:adjlfibeq}; to state this we first
need some notation:
\begin{notation}
To simplify a number of formulae we use $(-)^\vee$ to denote the
cocartesian fibration dual to a cartesian fibration in this
subsection.

Now suppose $p \colon E \to B$ is a cocartesian fibration, corresponding
  to a functor $F \colon B \to \Cat$. The natural transformation
  \[ \TwL(F(-)) \to F(-)^{\op} \times F(-) \]
  then corresponds to a commutative triangle
  \[
    \begin{tikzcd}
      \TwLB(E) \arrow{rr} \arrow{dr} & & (E^{\op})^\vee \times_{B} E
      \arrow{dl} \\
       & B,
    \end{tikzcd}
  \]
  since $(E^{\op})^\vee \to B$ is the cocartesian fibration classified by
  $F(-)^{\op}$. Here $\TwL_{B}(E) \to (E^{\op})^\vee \times_{B} E$ is a
  left fibration by the dual of \cite[Proposition 2.4.2.11]{HTT} and the observation that a locally cocartesian fibration with $\infty$-groupoid fibres is automatically a left fibration.
\end{notation}

\begin{thm}\label{thm:paradjlfibeq}
  If $g \colon C \to D$ is a $B$-parametrised right adjoint,
  with parametrised left adjoint $f \colon D^{\vee} \to C^{\vee}$, then there is an
  equivalence
  \[ (f^{\op} \times_{B} \id)^{*} \TwLB(C) \simeq (\id \times_{B} g)^{*}
    \TwLB(D) \]
  of left fibrations over $ (D^{\op})^\vee \times_{B} C$.
\end{thm}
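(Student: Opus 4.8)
I would realise both left fibrations in the statement as restrictions of a single ``fibrewise twisted arrow category'' over $B$, reducing everything to the unparametrised identification \eqref{eq:adjlfibeq} applied in each fibre.

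First I would set up the ambient fibration. By \cref{lem:lortho with adj}, the parametrised adjunction $f\dashv g$ is encoded by a functor $p=(p_1,p_2)\colon X\to [1]\times B$ which is both a \lortho{} over $[1]\times B$ and, as $\ol{p}=(p_2,p_1)\colon X\to B\times[1]$, a Gray fibration whose restriction to $\core B\times[1]$ is a bicartesian fibration. Concretely: $p_2\colon X\to B$ is a cocartesian fibration, each fibre $X_b\to[1]$ is the bicartesian fibration classifying the fibrewise adjunction $f_b\dashv g_b$, and $C=p_1^{-1}(1)$, $D=p_1^{-1}(0)$ recover the original cocartesian fibrations over $B$. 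Since $p_2$-cocartesian morphisms lie over equivalences in $[1]$, the inclusions $C,D\hookrightarrow X$ are maps of cocartesian fibrations over $B$ preserving cocartesian morphisms, and hence induce maps $(D^{\op})^{\vee}\hookrightarrow (X^{\op})^{\vee}$ and $C\hookrightarrow X$ over $B$. Moreover, because $p_1\colon X\to[1]$ is a cartesian fibration and $\ol{p}_r\colon X_r\to\core B\times[1]$ is a cocartesian fibration, there are \emph{globally defined} $p_1$-cartesian lifts $\gamma_c\colon \tilde g(c)\to c$ for $c\in C$ (with $\tilde g(c)\in D$, $\tilde g|_{C_b}\simeq g_b$) and $\ol{p}_r$-cocartesian lifts $\delta_d\colon d\to\tilde f(d)$ for $d\in D$ (with $\tilde f(d)\in C$, $\tilde f|_{D_b}\simeq f_b$), which restrict in each $X_b$ to the (co)cartesian transports realising $g_b$ and $f_b$.

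Next I would apply the $\TwLB$-construction of the notation preceding the theorem to the cocartesian fibration $p_2\colon X\to B$, producing a left fibration $\TwLB(X)\to (X^{\op})^{\vee}\times_B X$ restricting over each $b$ to $\TwL(X_b)\to X_b^{\op}\times X_b$. Pulling back along $(D^{\op})^{\vee}\times_B C\hookrightarrow (X^{\op})^{\vee}\times_B X$ gives a left fibration
\[ \mathcal T \;:=\; \TwLB(X)\times_{(X^{\op})^{\vee}\times_B X}\big((D^{\op})^{\vee}\times_B C\big)\;\longrightarrow\; Y:=(D^{\op})^{\vee}\times_B C, \]
with fibre $\Map_{X_b}(d,c)$ over $(d,c)\in Y_b$. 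Using that $\delta_d$ induces an equivalence $\Map_{X_b}(\tilde f(d),c)\xrightarrow{\ \sim\ }\Map_{X_b}(d,c)$ for $c\in C_b$ (so $\delta_d$ is cocartesian and $c$ lies over $1$), one obtains a comparison map $\mathcal T\to (f^{\op}\times_B\id)^{*}\TwLB(C)$ over $Y$; symmetrically, precomposition with the cartesian lifts $\gamma_c$ gives $\mathcal T\to (\id\times_B g)^{*}\TwLB(D)$ over $Y$. The point of extracting these from the fibrations $p_1$ and $\ol{p}_r$, rather than gluing the fibrewise comparisons by hand, is precisely that it makes them genuine maps of left fibrations over $Y$ — i.e.\ it supplies the coherence over $B$ automatically.

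Finally, both comparison maps are equivalences: on the fibre over any $b\in B$ they restrict to the two equivalences of \eqref{eq:adjlfibeq} for the adjunction $f_b\dashv g_b$, hence they are fibrewise equivalences over $B$, and a fibrewise equivalence between left fibrations over $Y$ (all of which are cocartesian over $B$) is an equivalence. Composing yields the asserted equivalence $(f^{\op}\times_B\id)^{*}\TwLB(C)\simeq(\id\times_B g)^{*}\TwLB(D)$ over $Y$. The main obstacle is the bookkeeping in the previous paragraph: one must verify that $\gamma_c$ and $\delta_d$ are (co)cartesian in exactly the ambient sense needed for the comparison functors to be defined on all of $\mathcal T$, and that the squares over morphisms $\beta\colon b\to b'$ of $B$ commute. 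Under the identification of $\TwLB$ with the mapping-space functor this last compatibility is exactly the statement that the lax naturality data of $g$ and the oplax naturality data of $f=\Adj(g)$ are mates, so it can equivalently be read off from \cref{beck-chevalley}; everything else (functoriality of $\TwLB(X)$, the identification of its ``mixed'' restriction with $\mathcal T$, and the fibrewise input \eqref{eq:adjlfibeq}) is already in place.
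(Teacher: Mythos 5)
Your proposal is correct and takes essentially the same route as the paper: your intermediate object $\mathcal{T}$ is exactly the paper's $\corr_B(E)$ (the restriction of $\TwLB$ of the total fibration $X \to [1]\times B$ to $(D^{\op})^\vee \times_B C$), and the paper likewise obtains the two comparison maps by (co)cartesian transport in the ambient fibration and concludes by the same fibrewise reduction to the unparametrised statement. The ``bookkeeping'' you defer --- that the relevant lifts are cartesian in the ambient sense needed to define the comparisons as maps of left fibrations over the whole base --- is precisely what \cref{prop:fibrecart}, \cref{cor:TwLcart} and \cref{obs:twlortho} supply en route to \cref{propn:crvorthpblfib} and \cref{cor:dualcrvorthopblfib}.
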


Before we embark upon the proof of \cref{thm:paradjlfibeq}, left us
first observe that if $E \to [1]$
is the bicartesian fibration corresponding to an
adjunction, then we can also phrase \cref{eq:adjlfibeq} in terms of the
\emph{correspondence} associated to this functor, in the following
sense:

\begin{defn}
  A \emph{correspondence} is a left fibration $X \to A^{\op} \times
  B$. We define the \icat{} $\Corr$ of correspondences by the pullback
  \[
    \begin{tikzcd}[column sep=3.5pc]
      \Corr \arrow{r} \arrow{d} & \LFib \arrow{d}{\ev_{1}} \\
      \Cat \times \Cat \arrow{r}{(-)^{\op} \times (-)} & \Cat,
    \end{tikzcd}
  \]
  where here $\LFib$ denotes the full subcategory of $\Ar(\Cat)$
  spanned by the left fibrations.
\end{defn}

We use the following result from \cite{Stevenson}, see also \cite{AyalaF}: 

\begin{thm}[Stevenson]\label{thm:stevensoncorr}
  There is an equivalence
  \[ \corr \colon \Cat/[1] \isoto \Corr,\]
  over $\Cat \times \Cat$, where the functor $\Cat/[1] \to \Cat \times
  \Cat$ is given by taking fibres over $0$ and $1$.
  The value of the functor $\corr$ on $f\colon E\rightarrow [1]$ is defined by the natural pullback square
  \[
    \begin{tikzcd}
      \corr(E)  \arrow{d} \arrow{r} & \TwL(E) \arrow{d} \\
      E_{0}^{\op} \times E_{1} \arrow{r} & E^{\op}\times E,
    \end{tikzcd}
  \]
  where $E_0$ and $E_1$ are the fibres of $f$ over $0$ and $1$ respectively. \qed
\end{thm}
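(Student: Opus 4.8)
The plan is to produce an explicit inverse to $\corr$, the \emph{collage} (or cograph) functor, and verify that the two are mutually inverse over $\Cat \times \Cat$; equivalently, since $\corr$ will be a functor over $\Cat\times\Cat$ by construction, it suffices to show it is fully faithful and essentially surjective. First I would make $\corr$ precise: the twisted arrow construction $\TwL\colon \Cat \to \Ar(\Cat)$, $E\mapsto(\TwL(E)\to E^{\op}\times E)$, is functorial, as is the fibre functor $\Cat/[1]\to\Cat\times\Cat$ sending $f\colon E\to[1]$ to $(E_0,E_1)$ together with the canonical map $E_0^{\op}\times E_1\to E^{\op}\times E$; pulling the former back along the latter is functorial and lands in left fibrations, since these are stable under base change, so $\corr$ is a well-defined functor $\Cat/[1]\to\Corr$ over $\Cat\times\Cat$. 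Unwinding the pullback, $\corr(E)$ is the full subcategory of $\TwL(E)$ on the arrows whose source lies in $E_0$ and whose target lies in $E_1$ (here I use that each fibre inclusion $E_i\hookrightarrow E$ is fully faithful, as $\Map_{[1]}(i,i)=\ast$); as every such arrow necessarily lies over $0\to 1$, the left fibration $\corr(E)\to E_0^{\op}\times E_1$ classifies the restriction $\Map_E(-,-)|_{E_0^{\op}\times E_1}$, i.e.\ the profunctor from $E_0$ to $E_1$ underlying $E$.

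Next I would construct the inverse collage functor $\gamma\colon\Corr\to\Cat/[1]$. Given a correspondence $Y\to A^{\op}\times B$ classifying $M\colon A^{\op}\times B\to\Gpd$, the category $\gamma(Y)$ over $[1]$ should have object space $\iota A\sqcup\iota B$, with $\Map_{\gamma(Y)}$ restricting to $\Map_A$ and $\Map_B$ on the two fibres, equal to $M(a,b)$ on pairs $(a,b)\in A^{\op}\times B$, and empty from $B$ to $A$; its defining universal property is that functors $\gamma(Y)\to Z$ over $[1]$ correspond to pairs $(A\to Z_0,\ B\to Z_1)$ together with a compatible map of profunctors $M\to \Map_Z(-,-)|_{Z_0^{\op}\times Z_1}$. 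I would realise $\gamma(Y)$ as the oplax colimit of $M$, built via Lurie's relative nerve (or equivalently as a suitable gluing of $A$, $B$ and $Y$), which makes both the construction and the universal property functorial in $Y$.

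Finally I would check the two composites. That $\corr\circ\gamma\simeq\id_{\Corr}$ is immediate from the description of $\corr$: the twisted arrows of $\gamma(Y)$ from the $0$-fibre to the $1$-fibre are exactly those classified by $M$, so $\corr(\gamma(Y))\simeq Y$ naturally over $\Cat\times\Cat$. For $\gamma\circ\corr\simeq\id$ one uses the universal property of $\gamma(\corr(E))$, applied to the two fibre inclusions $E_0\hookrightarrow E$, $E_1\hookrightarrow E$ and the identity on profunctors, to obtain a natural comparison $\gamma(\corr(E))\to E$ over $[1]$; this is bijective on objects, and since a morphism of $E$ between objects over $i,j\in[1]$ exists only for $i\le j$ and then agrees with the fibrewise mapping \igpd{} (when $i=j$) or the profunctor value (when $i=0,j=1$), it is an equivalence on all mapping \igpds{}, hence an equivalence. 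The main obstacle is the coherent, functorial construction of $\gamma$ together with its universal property: the pointwise recipe is easy to state but must be promoted to an actual functor $\Corr\to\Cat/[1]$, and it is here that one genuinely uses that $[1]$ carries no nontrivial higher structure between its two objects, so that a category over $[1]$ is freely determined by its two fibres and the profunctor between them. (Alternatively one could try to exhibit both sides as cartesian fibrations over $\Cat\times\Cat$ and compare fibrewise, but establishing the fibration property of $\Cat/[1]\to\Cat\times\Cat$ is of comparable difficulty.)
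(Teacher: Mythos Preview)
The paper does not actually prove this statement: it is quoted from \cite{Stevenson} (with \cite{AyalaF} as an alternative reference), as indicated both by the sentence preceding the theorem and by the terminal \qed{} in the statement. There is thus no proof in the paper to compare your proposal against.

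Your strategy---construct the collage functor $\gamma$ as an explicit inverse and verify the two round-trips---is the standard one and is essentially what the cited references carry out (Stevenson model-categorically, Ayala--Francis via complete Segal spaces). One small imprecision: describing $\gamma(Y)$ as ``the oplax colimit of $M$'' is not literally correct, since the oplax colimit of $M\colon A^{\op}\times B\to\Gpd$ is the total space of the associated left fibration, i.e.\ $Y$ itself, not a category over $[1]$. What you presumably intend is the explicit complete Segal space whose $n$-simplices over a decomposition $[n]\simeq[k]\star[n{-}k{-}1]$ consist of a $k$-simplex of $A$, an $(n{-}k{-}1)$-simplex of $B$, and a point of $M$ over the resulting pair; this (or the equivalent relative-nerve description) does give the collage and makes the universal property you state available. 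With that correction your sketch is sound, and you have correctly isolated the only substantive difficulty, namely promoting the pointwise collage recipe to a functor $\Corr\to\Cat/[1]$.
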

It is easy to check that if $E \to [1]$ is in fact a cocartesian fibration,
corresponding to a functor $f \colon E_{0}\to E_{1}$, then there is a
pullback square
  \[
    \begin{tikzcd}[column sep=3pc]
      \corr(E) \arrow{r} \arrow{d}  & \TwL(E_{1}) \arrow{d} \\
      E_{0}^{\op} \times E_{1} \arrow{r}{f^{\op} \times \id} & E_{1}^{\op}\times E_{1},
    \end{tikzcd}
  \]
while if it is a cartesian fibration, corresponding to $g \colon E_{1}
\to E_{0}$, then we have a pullback
\[
  \begin{tikzcd}
    \corr(E) \arrow{d}  \arrow{r} & \TwL(E_{0}) \arrow{d} \\
    E_{0}^{\op} \times E_{1} \arrow{r}{\id \times g} & E_{0}^{\op}\times E_{0}.
  \end{tikzcd}
\]
Combining these squares we get the equivalence \cref{eq:adjlfibeq}
when $E \to [1]$ corresponds to an adjunction. We now want to develop a
parametrised version of this story.

\begin{defn}
  A
  \emph{$B$-parametrised correspondence} is a left fibration $X \to
  (E_0^{\op})^\vee \times_{B} E_{1}$ for cocartesian fibrations
  $E_{0},E_{1} \to B$. We define the \icat{} $\Corr(B)$ thereof by
  the pullback square
  \[
    \begin{tikzcd}[column sep=2cm]
      \Corr(B) \arrow{r} \arrow{d} & \LFib \arrow{d}{\ev_{1}} \\
      \Cocart(B) \times \Cocart(B) \arrow{r}{(-^{\op})^\vee \times_{B}
        (-)} & \Cat.
    \end{tikzcd}
  \]

  Using \cite[Proposition 2.4.2.11]{HTT} once again, we find that a $B$-parametrised correspondence is equivalently given by the data of two cocartesian fibrations $E_0\rightarrow B$ , $E_1\rightarrow B$ and a commutative triangle
  \[
  \begin{tikzcd}
  X \arrow[rr, "f"]\arrow[rd, "p"'] &	&	(E_0^{\op})^\vee \times_{B} E_{1} \arrow[ld,"q"] \\
  	&	B	& 
  \end{tikzcd}
  \]between cocartesian fibrations, such that $f$ preserves cocartesian edges and $f_a$ is a left fibration for every $b\in B$. Straightening this data in the base $B$, we get that $\Corr(B)$ is equivalently given by the following pullback:
\begin{center}
	\begin{tikzcd}[column sep=2cm]
		\Corr(B) \arrow{r} \arrow{d} & \Fun(B,\LFib) \arrow[d, "\ev_{1}^*"] \\
		\Fun(B,\Cat) \times \Fun(B,\Cat) \arrow[r, "(-)^\op \times (-)"] & \Fun(B,\Cat).
	\end{tikzcd}
\end{center}
Because $\Fun(B,-)$ preserves pullbacks, this implies that $\Corr(B)$ is equivalent to $\Fun(B,\Corr)$.
\end{defn}

\begin{cor}
  There is an equivalence
  \[ \corr_{B} \colon \RCocart([1],B) \isoto \Corr(B) \]
  over $\Cocart(B) \times \Cocart(B)$, where the functor
  $\RCocart([1],B) \to \Cocart(B) \times \Cocart(B)$ is given by
  taking fibres over $0$ and $1$.
  Given $f\colon E\rightarrow [1]\times B$ in $\RCocart([1],B)$,
  its value $\corr_{B}(E)$ is defined by the natural pullback square
  \[
    \begin{tikzcd}
      \corr_{B}(E)  \arrow{d} \arrow{r} & \TwLB(E) \arrow{d} \\
     (E_0^{\op})^\vee \times_{B} E_{1} \arrow{r} & (E^{\op})^\vee \times_{B} E,
    \end{tikzcd}
  \]
  where $(E^\op)^{\vee} \in \RCocart([1],B)$ is obtained by dualising in the second variable.
\end{cor}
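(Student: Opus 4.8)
The plan is to produce $\corr_B$ by applying $\Fun(B,-)$ to Stevenson's equivalence $\corr$ from \cref{thm:stevensoncorr} and conjugating it with the relevant straightening equivalences. For the source, combining the identification $\RCocart([1],B)\simeq\LCocart(B,[1])$ with the straightening equivalence $\LCocart(B,[1])\simeq\Fun(B,\Cat/[1])$ of \cref{rmk:lcocstr} presents $\RCocart([1],B)$ as $\Fun(B,\Cat/[1])$; under this an object $f\colon E\to[1]\times B$ corresponds to the functor $b\mapsto(E_b\to[1])$, where $E_b$ is the fibre of $p_2\colon E\to B$ over $b$. For the target, the discussion just before the statement already identifies $\Corr(B)\simeq\Fun(B,\Corr)$. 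One then \emph{defines}
\[ \corr_B\colon \RCocart([1],B)\isoto\Fun(B,\Cat/[1])\xrightarrow{\ \Fun(B,\corr)\ }\Fun(B,\Corr)\isoto\Corr(B), \]
which is an equivalence because $\corr$ is one.

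Next I would verify that $\corr_B$ lies over $\Cocart(B)\times\Cocart(B)$. Stevenson's $\corr$ lies over $\Cat\times\Cat$ via the fibre functors $(\mathrm{fib}_0,\mathrm{fib}_1)\colon\Cat/[1]\to\Cat\times\Cat$ on the source and the two projections on $\Corr$. Applying $\Fun(B,-)$ and transporting along the straightening equivalences, this reduces to the observation that the composite $\RCocart([1],B)\simeq\Fun(B,\Cat/[1])\xrightarrow{\Fun(B,\mathrm{fib}_i)}\Fun(B,\Cat)\simeq\Cocart(B)$ is exactly the functor ``fibre over $i\in[1]$'': indeed $f\colon E\to[1]\times B$ maps to $b\mapsto E_b$ and then to $b\mapsto(E_b)_i=E_{(i,b)}$, which unstraightens to the cocartesian fibration $E_i\to B$. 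The matching statement for the two projections out of $\Corr(B)$ is immediate from its definition as a pullback.

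It remains to identify the value $\corr_B(E)$. With $\mathcal E\colon B\to\Cat/[1]$, $b\mapsto E_b$, the functor classifying $E$, the correspondence $\corr_B(E)$ corresponds under $\Corr(B)\simeq\Fun(B,\Corr)$ to $b\mapsto\corr(E_b)$, and by \cref{thm:stevensoncorr} the latter is the pullback of $\TwL(E_b)\to E_b^{\op}\times E_b$ along $(E_b)_0^{\op}\times(E_b)_1\to E_b^{\op}\times E_b$, naturally in $b$. Since unstraightening over $B$ preserves pullbacks (and carries products over $B$ of cocartesian fibrations to fibre products), applying it to this natural square yields a pullback square over $B$ with vertices $\corr_B(E)$, the unstraightening of $b\mapsto\TwL(E_b)$, the unstraightening of $b\mapsto(E_b)_0^{\op}\times(E_b)_1$, and the unstraightening of $b\mapsto E_b^{\op}\times E_b$. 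By the very definition of $\TwLB(-)$ recalled in the notation above --- as the unstraightening over $B$ of the natural transformation $\TwL(F(-))\to F(-)^{\op}\times F(-)$ for $F$ classifying $E$ --- these last three are $\TwLB(E)$, $(E_0^{\op})^\vee\times_B E_1$ and $(E^{\op})^\vee\times_B E$ respectively, where $(-)^\vee$ denotes the fibrewise opposite over $B$; this is precisely the asserted pullback square.

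The step I expect to require the most care is this last identification: one must check that the parametrised left twisted arrow category $\TwLB(E)$ of the preceding notation genuinely agrees with the unstraightening of the pointwise functor $b\mapsto\TwL(E_b)$, and likewise that $(E^{\op})^\vee\times_B E$ and $(E_0^{\op})^\vee\times_B E_1$ are the unstraightenings of $b\mapsto E_b^{\op}\times E_b$ and $b\mapsto(E_b)_0^{\op}\times(E_b)_1$ --- equivalently, that Stevenson's defining square is sufficiently natural in $E\in\Cat/[1]$ to be pushed through $\Fun(B,-)$ and then unstraightened term by term. This is routine once the straightening dictionaries are spelled out, but it is where the actual content lies; everything else is formal bookkeeping with the straightening equivalences.
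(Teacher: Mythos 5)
Your proposal is correct and takes essentially the same route as the paper: the paper's proof consists precisely of the commuting square identifying $\corr_B$ with $\Fun(B,\corr)$ under straightening over $B$, citing \cref{thm:stevensoncorr}. Your write-up simply spells out the compatibility with the fibre functors and the term-by-term unstraightening of Stevenson's defining pullback square, which the paper leaves implicit.
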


\begin{proof}
Unstraightening over $B$ we have the square
  \[
    \begin{tikzcd}[column sep=2cm]
      \RCocart([1],B) \arrow{r}{\corr_{B}} \arrow{d}{\sim} & \Corr(B)
      \arrow{d}{\sim} \\
      \Fun(B, \Cat/[1]) \arrow{r}{\Fun(B,\corr)} & \Fun(B, \Corr),
    \end{tikzcd}
    \]
so the claim follows from \cref{thm:stevensoncorr}.
\end{proof}

\begin{propn}\label{propn:crvorthpblfib}
  Suppose $p \colon E \to [1] \times B$ is a curved orthofibration,
  corresponding to a functor $g \colon E_{1} \to E_{0}$ over $B$. Then
  there is a pullback square
  \[
    \begin{tikzcd}[column sep=3pc]
      \corr_{B}(E) \arrow{d} \arrow{r} & \TwLB(E_{0}) \arrow{d} \\
     (E_0^{\op})^\vee \times_{B}E_{1} \arrow{r}{\id \times_{B} g} & (E_0^{\op})^\vee
      \times_{B}E_{0}.
    \end{tikzcd}
  \]
\end{propn}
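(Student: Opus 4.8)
The plan is to reduce to the non-parametrised pullback squares recalled above (for cartesian fibrations over $[1]$) by unstraightening everything over $B$.

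Recall from the proof of the corollary constructing $\corr_{B}$ that, under the equivalences $\RCocart([1], B) \isoto \Fun(B, \Cat/[1])$ (from \cref{rmk:lcocstr}) and $\Corr(B) \isoto \Fun(B, \Corr)$, the functor $\corr_{B}$ is identified with $\Fun(B, \corr)$. Since a curved orthofibration $p \colon E \to [1] \times B$ lies in $\RCocart([1], B)$, and since by characterisation \ref{lorthococfw} of \cref{prop:lortho reformulations} the functor $B \to \Cat/[1]$ classifying $p$ over $B$ sends each $b \in B$ to the cartesian fibration $E_{b} \to [1]$ (classified in turn by the restriction $g_{b} \colon (E_{1})_{b} \to (E_{0})_{b}$ of $g$), we get that $\corr_{B}(E) \to B$ is classified by $b \mapsto \corr(E_{b})$. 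In the same way $\TwLB(E_{0}) \to B$, $(E_{0}^{\op})^{\vee} \times_{B} E_{1} \to B$, $(E_{0}^{\op})^{\vee} \times_{B} E_{0} \to B$, and the map $\id \times_{B} g$ are classified respectively by $b \mapsto \TwL((E_{0})_{b})$, $b \mapsto (E_{0})_{b}^{\op} \times (E_{1})_{b}$, $b \mapsto (E_{0})_{b}^{\op} \times (E_{0})_{b}$, and $b \mapsto \id \times g_{b}$.

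For every $b \in B$ the non-parametrised pullback square applied to the cartesian fibration $E_{b} \to [1]$ exhibits $\corr(E_{b})$ as the pullback of $\TwL((E_{0})_{b}) \to (E_{0})_{b}^{\op} \times (E_{0})_{b}$ along $\id \times g_{b}$. Because $\corr(-)$, $(-)_{0}$, $(-)_{1}$, $\TwL(-)$ and the comparison maps between them are all functorial (on $\Cat/[1]$, resp.\ on $\Cat$), these squares assemble into a commutative square in $\Fun(B, \Cat)$ with the displayed pointwise values; this in particular produces the map $\corr_{B}(E) \to \TwLB(E_{0})$ after unstraightening. A square in $\Fun(B, \Cat)$ that is a pointwise pullback is a pullback, and unstraightening over $B$ turns pullbacks into pullbacks --- since $\Fun(B, \Cat) \simeq \Cocart(B)$ and a pullback of cocartesian fibrations along maps preserving cocartesian morphisms is computed in $\Cat$. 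Identifying the corners and maps as in the previous paragraph then gives the asserted pullback square.

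The only point that is not purely formal is the naturality in $b$ invoked above --- namely that the comparison $\corr(E_{b}) \to \TwL((E_{0})_{b})$ underlying the non-parametrised square is the restriction of a natural transformation of functors on $\Cat/[1]$. I expect this to be the main thing to check; it should follow by unwinding the functorial definition of $\corr$ together with the description of $\corr$ on cartesian fibrations over $[1]$ already recorded above, since $\TwL$ is manifestly functorial.
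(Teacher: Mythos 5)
Your reduction of the \emph{verification} to fibres over $B$ is sound --- the paper does the same, using that both vertical maps are left fibrations --- but the \emph{construction} of the square, and in particular of the comparison map $\corr_B(E) \to \TwLB(E_0)$, does not go through as you propose. The obstruction is that for a general curved orthofibration the relevant maps do not live in $\Fun(B,\Cat)\simeq \Cocart(B)$: by \cref{cor:locorthstr}, the functor $g\colon E_1\to E_0$ obtained by straightening $p$ over $[1]$ is merely a morphism in $\Cocartlax(B)$, and it preserves cocartesian edges over $B$ precisely when $p$ is an \emph{ortho}fibration (\cref{cor:orthostr}). So already the bottom map $\id\times_B g$ of your square is not a natural transformation of the classifying functors $B\to\Cat$, and the cospan cannot be promoted to one in $\Fun(B,\Cat)$. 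The same defect sinks the step you flagged: the fibrewise comparison $\corr(E_b)\to\TwL(E_{0,b})$ sends a morphism $x\to y$ (with $y\in E_{1,b}$) to the factorisation of $x$ through the cartesian lift $g_b(y)\twoheadrightarrow y$ over $[1]$, so it is natural only with respect to functors over $[1]$ that preserve these cartesian lifts; it is not even defined, let alone natural, on all of $\Cat/[1]$. The cocartesian transports $\beta_!\colon E_b\to E_{b'}$ preserve cartesian edges over $[1]$ exactly in the orthofibration case, and for a mere curved orthofibration your naturality square commutes only up to the interpolating morphism $\beta_! g_b(y)\to g_{b'}\beta_!(y)$, which is generally not invertible. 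Since the intended application (\cref{thm:paradjlfibeq}, where $g$ is a parametrised right adjoint, i.e.\ a genuinely lax transformation) is precisely the non-ortho case, this is not a hypothesis you can add.

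The paper avoids the issue by building the map globally rather than assembling it fibrewise: \cref{obs:twlortho} (via \cref{cor:TwLcart}) shows that $\TwLB(E)|_0\to[1]$ is a cartesian fibration, the comparison map is produced by a single cartesian transport over $[0]\to[1]$, and only afterwards is the pullback property checked on fibres over $B$. If you wanted to salvage a straightening-style argument, you would have to construct the comparison map together with its lax naturality data as a morphism in $\Cocartlax(B)$, which amounts to the same transport construction.
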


In order to prove this we first make some fibrational observations:
\begin{propn}\label{prop:fibrecart}
  Consider a commutative square of \icats{}
  \[
    \begin{tikzcd}
      E \arrow{r}{g} \arrow[d, "p"{swap}] & F \arrow{d}{q} \\
      X \arrow{r}{f} & Y,
    \end{tikzcd}
  \]
  where $p$ and $q$ are cocartesian fibrations and $g$ takes
  $p$-cocartesian morphisms to $q$-cocartesian ones. For $x \in X$,
  let $g_{x} \colon E_{x} \to F_{fx}$ be the restriction of $g$ to the
  fibres over $x$. If a morphism $\phi \colon e' \to e$ in $E_{x}$ is
  $g_{x}$-cartesian, then $\phi$ is also $g$-cartesian.
\end{propn}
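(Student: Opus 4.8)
The plan is to verify the defining pullback condition for $g$-cartesianness of $\phi \colon e' \to e$ directly, by reducing the relevant square of mapping spaces over $\Map_F$ to the one expressing $g_x$-cartesianness over $\Map_{F_{fx}}$. Concretely, fix an object $w \in E$ with image $z := p(w) \in X$; we must show that
\[
  \begin{tikzcd}
    \Map_E(w, e') \arrow[r, "\phi_*"] \arrow[d] & \Map_E(w, e) \arrow[d] \\
    \Map_F(gw, ge') \arrow[r, "g(\phi)_*"] & \Map_F(gw, ge)
  \end{tikzcd}
\]
is a pullback in $\Gpd$. The first step is to decompose each mapping space using the cocartesian fibrations $p$ and $q$: since $p$ is cocartesian, $\Map_E(w, e')$ fibres over $\Map_X(z, x)$, and over a fixed morphism $\xi \colon z \to x$ the fibre is $\Map_{E_x}(\xi_! w, e')$, where $\xi_! w$ denotes the target of a $p$-cocartesian lift of $\xi$; similarly for the other corners, using that $q$ is cocartesian and that $g$ sends the chosen $p$-cocartesian lift of $\xi$ to a $q$-cocartesian lift of $f(\xi)$, so that $g(\xi_! w) \simeq f(\xi)_!(gw)$ compatibly.

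The second step is to observe that both $\Map_E(w, e') \to \Map_X(z,x)$ and $\Map_F(gw, ge') \to \Map_Y(fz, fx)$, together with the horizontal maps $\phi_*$ and $g(\phi)_*$ (which cover $\mathrm{id}$ on the base since $\phi, g(\phi)$ lie in the fibres over $x$, $fx$), assemble the square above into a map of cocartesian fibrations over the square $\Map_X(z,x) \to \Map_Y(fz,fx)$ of $\infty$-groupoids. A square of $\infty$-groupoids is a pullback if and only if it is a pullback after base change to every point of the bottom-right corner (or equivalently, fibrewise over a map to that corner); so it suffices to check, for each $\xi \colon z \to x$, that the square of fibres
\[
  \begin{tikzcd}
    \Map_{E_x}(\xi_! w, e') \arrow[r, "\phi_*"] \arrow[d, "(g_x)_*"] & \Map_{E_x}(\xi_! w, e) \arrow[d, "(g_x)_*"] \\
    \Map_{F_{fx}}(g_x \xi_! w, g_x e') \arrow[r, "g_x(\phi)_*"] & \Map_{F_{fx}}(g_x \xi_! w, g_x e)
  \end{tikzcd}
\]
is a pullback in $\Gpd$ — but this is precisely the hypothesis that $\phi$ is $g_x$-cartesian, applied to the object $\xi_! w \in E_x$.

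The main obstacle I expect is bookkeeping the identifications in the first two steps coherently: one needs the fibre decompositions of the four mapping spaces to be natural in $\xi$ and compatible with $\phi_*$ and with $g$, so that the whole square really is a morphism of cocartesian fibrations over a map of base $\infty$-groupoids, rather than just a levelwise collection of squares. This is where the hypothesis that $g$ preserves cocartesian morphisms is used crucially — it is what makes $g$ compatible with the transport functors $\xi_!$ on source objects. A clean way to organise this is to factor everything through $\Map_{E}(w,-)$ viewed as a left fibration over $E$ and $\Map_F(gw,-)$ over $F$, and to use that, by \cite[Proposition 3.1.2.1]{HTT} and stability of cocartesian fibrations under base change, the relevant comparison is a map of cocartesian fibrations over $\Map_X(z,x) \to \Map_Y(fz,fx)$; then the reduction to fibres is the standard criterion that a map between cocartesian fibrations over the same base is an equivalence (here: a Cartesian square) iff it is so fibrewise. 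Alternatively, one can pull $p$ and $q$ back along $\xi$ to reduce to the case $X = [1]$, $Y = [1]$ at the cost of only notational overhead, since $g_x$-cartesianness is a fibrewise condition unaffected by this base change.
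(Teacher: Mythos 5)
Your proposal is correct and follows essentially the same route as the paper: both reduce the pullback condition for the square of mapping $\infty$-groupoids to its fibres over each $\xi \in \Map_X(z,x)$, identify those fibres via cocartesian transport as $\Map_{E_x}(\xi_! w, e') \to \Map_{E_x}(\xi_! w, e)$ over the corresponding square in $F_{fx}$ (using that $g$ preserves cocartesian lifts to get $g(\xi_! w) \simeq f(\xi)_! (gw)$), and then invoke the hypothesis that $\phi$ is $g_x$-cartesian. The paper's version is just a more compact write-up of the same fibrewise reduction.
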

\begin{proof}
  For $e'' \in E$ over $x'' \in X$, we have the commutative diagram
  \[
    \begin{tikzcd}
      \Map_{E}(e'', e') \arrow{rr} \arrow{dr} \arrow{dd} & &
      \Map_{E}(e'',e) \arrow{dl}
      \arrow{dd} \\
      & \Map_{X}(x'',x)  \\
      \Map_{F}(ge'',ge') \arrow{rr} \arrow{dr} & & \Map_{F}(ge'',ge) \arrow{dl} \\
       & \Map_{Y}(fx'',fx), \arrow[uu, crossing over, leftarrow]
    \end{tikzcd}
  \]
  where we want to show that the back square is cartesian. It suffices
  to check we have a cartesian square on the fibres over any $\xi \in
  \Map_{X}(x'',x)$, but since $p$ and $q$ are cocartesian fibrations
  and $g$ preserves cocartesian morphisms, we can identify this square
  as
  \[
    \begin{tikzcd}
      \Map_{E_{x}}(\xi_{!}e'', e') \arrow{r} \arrow{d} & \Map_{E_{x}}(\xi_{!}e'',e) \arrow{d} \\
      \Map_{F_{fx}}(f(\xi)_{!}ge'',ge')  \arrow{r} &  \Map_{F_{fx}}(f(\xi)_{!}ge'',ge),
    \end{tikzcd}
  \]
  which is cartesian by the assumption that $\phi$ is $g_{x}$-cartesian.
\end{proof}

\begin{cor}\label{cor:TwLcart}
  For any functor $p \colon E \to B$, a morphism from $x' \rightarrow y'$ to $x \rightarrow y$ in $\TwL(E)$ of the form
      \[
    \begin{tikzcd}
      x' \arrow{d} & x \arrow{d} \arrow{l} \\
      y' \arrow{r} & y
    \end{tikzcd}
  \]
is $\TwL(p)$-cartesian if $x \rightarrow x'$ is $p$-cocartesian and $y' \rightarrow y$ is $p$-cartesian.
%
%
In particular, if $p \colon E \to B$ is a
  cartesian fibration, then $\TwL(E) \to
  \TwL(B)$ has cartesian lifts of morphisms of the form
  \[
    \begin{tikzcd}
      a \arrow{d} \arrow[equals]{r} & a \arrow{d} \\
      b \arrow{r}{g} & b'
    \end{tikzcd}
  \]
  in $\TwL(B)$.
\end{cor}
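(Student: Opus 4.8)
The plan is to deduce this from \cref{prop:fibrecart} after splitting the morphism in question into two elementary pieces. Write $f' \colon x' \to y'$ and $f \colon x \to y$ for the source and target objects, so that the given morphism $\phi$ of $\TwL(E)$ is the datum of a source-leg $u \colon x \to x'$ and a target-leg $v \colon y' \to y$ with $f \simeq v f' u$. First I would factor $\phi$ as the composite
\[ f' \xrightarrow{(u,\,\id_{y'})} (f'u \colon x \to y') \xrightarrow{(\id_{x},\,v)} f, \]
where the first morphism $\phi_{1}$ only modifies the source and the second morphism $\phi_{2}$ only modifies the target. Since $\TwL(p)$-cartesian morphisms are closed under composition, it then suffices to treat $\phi_{1}$ and $\phi_{2}$ separately.

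Next I would record that the source and target functors $s \colon \TwL(E) \to E^{\op}$ and $t \colon \TwL(E) \to E$ are cocartesian fibrations: they factor the left fibration $(s,t) \colon \TwL(E) \to E^{\op} \times E$ through the two projections, which are cocartesian fibrations, so this follows from \cite[Proposition 2.4.1.3(3)]{HTT} exactly as in the proof of \cref{propn:projtoeqcocart}. Unwinding, a morphism of $\TwL(E)$ is $s$-cocartesian (resp.\ $t$-cocartesian) precisely when its target-leg (resp.\ source-leg) is invertible; in particular $\TwL(p)$ preserves $s$-cocartesian and $t$-cocartesian morphisms. Now $\phi_{2} = (\id_{x}, v)$ lies in the fibre $s^{-1}(x) \simeq E_{x/}$, and $\phi_{1} = (u, \id_{y'})$ lies in the fibre $t^{-1}(y') \simeq (E_{/y'})^{\op}$. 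Applying \cref{prop:fibrecart} to the square with $\TwL(p)$ on top, the source functors on the sides, and $p^{\op}$ on the bottom (resp.\ with the target functors on the sides and $p$ on the bottom) reduces the claim to: (a) if $v \colon y' \to y$ is $p$-cartesian then the corresponding morphism of $E_{x/}$ is cartesian for the induced functor $E_{x/} \to B_{p(x)/}$; and, dually, (b) if $u \colon x \to x'$ is $p$-cocartesian then the corresponding morphism of $E_{/y'}$ is cocartesian for the induced functor $E_{/y'} \to B_{/p(y')}$.

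Statements (a) and (b) I would check on mapping $\infty$-groupoids. A mapping space in $E_{x/}$ between objects $x \to c$ and $x \to b_{i}$ is the fibre of the postcomposition map $\Map_{E}(c,b_{i}) \to \Map_{E}(x,b_{i})$, and similarly downstairs; hence the square one must show to be cartesian for (a) is exactly the square obtained from the two pullback squares witnessing that $v$ is $p$-cartesian --- one with source $c$, one with source $x$ --- by passing to the fibres of the evident map between them over a compatible point, and the class of cartesian squares of $\infty$-groupoids is stable under taking such fibres. The argument for (b) is the same after passing to opposite categories. Finally, for the ``in particular'': given a morphism $(\id_{a}, g \colon b \to b')$ of $\TwL(B)$ and an object $\tilde f \colon \tilde a \to \tilde b'$ of $\TwL(E)$ over its codomain, choose a $p$-cartesian lift $\tilde g \colon \tilde b \to \tilde b'$ of $g$ and factor $\tilde f$ through $\tilde g$ to obtain $\tilde h \colon \tilde a \to \tilde b$; then $(\id_{\tilde a}, \tilde g) \colon \tilde h \to \tilde f$ lifts $(\id_{a}, g)$ and is $\TwL(p)$-cartesian by the first part (its cocartesian leg is an identity and its cartesian leg is $\tilde g$).

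The main obstacle I anticipate is organisational rather than conceptual: one must split the two legs of $\phi$ in the correct order and variance, so that $\phi_{2}$ genuinely lands in a \emph{fibre} of $s$ and $\phi_{1}$ in a fibre of $t$, and then keep the identifications $s^{-1}(x) \simeq E_{x/}$ and $t^{-1}(y') \simeq (E_{/y'})^{\op}$ (note the opposite!) straight, so that the two invocations of \cref{prop:fibrecart} are honestly interchanged by $(-)^{\op}$. Once the reduction to (a) and (b) is set up, the mapping-space verification is routine.
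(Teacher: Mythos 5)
Your proposal is correct and follows essentially the same route as the paper: factor the morphism into a piece changing only the source and a piece changing only the target, apply \cref{prop:fibrecart} to the two squares comparing $\TwL(p)$ with $p^{\op}$ and with $p$ (identifying the fibres as $E_{x/}$ and $(E_{/y'})^{\op}$), and conclude by closure of cartesian morphisms under composition. The paper's proof is terser --- it omits the verification that $s$ and $t$ are cocartesian fibrations and the mapping-space check in the fibres, both of which you supply --- but these are details filled in rather than a different argument.
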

\begin{proof}
To find that a morphism in $\TwL(E)$, in which $x \rightarrow x'$ is an equivalence and $y' \rightarrow y$ is $p$-cartesian, is $\TwL(p)$-cartesian, apply \cref{prop:fibrecart} to the square
  \[
    \begin{tikzcd}
      \TwL(E) \arrow{r}{\TwL(p)} \arrow{d} & \TwL(B) \arrow{d} \\
      E^{\op} \arrow{r}{p^{\op}} & B^{\op},
    \end{tikzcd}
  \]
noting that on fibres over $x \in E$ with $b = p(x)$ we have the
  functor $E_{x/} \to B_{b/}$, where a morphism of the specified form
  is cartesian. Dually, applying \cref{prop:fibrecart} to the square
  \[
    \begin{tikzcd}
      \TwL(E) \arrow{r}{\TwL(p)} \arrow{d} & \TwL(B) \arrow{d} \\
      E \arrow{r}{p} & B,
    \end{tikzcd}
  \]
we find that a morphism in $\TwL(E)$ with $y' \rightarrow y$ an equivalence and $x \rightarrow x'$ $p$-cocartesian is $\TwL(p)$-cartesian. Since cartesian morphisms are closed under composition, the general case follows.
\end{proof}


Let $p\colon E\rightarrow A\times B$ be a curved orthofibration;
recall from \cref{prop:lortho reformulations} that $p$ can then be
interpreted as a map of cocartesian fibrations over $B$. Applying
$\TwLB(-)$ to it gives the diagram
\[
\begin{tikzcd}
	\TwLB(E) \arrow[d] \arrow[r]               & \TwL(A)\times B \arrow[d]       \\
	(E^\mathrm{op})^\vee\times_B E \arrow[r] & A^\mathrm{op}\times A \times B,
\end{tikzcd}
\]
where we use that $\TwLB(A \times B) \simeq \TwL(A)\times B$. Applying
\cref{cor:TwLcart} fibrewise and appealing to \cref{prop:fibrecart}
again, we get:
\begin{cor}
  Suppose $p \colon E\to A \times B$ is a curved orthofibration. Then
  \[ \TwLB(E) \to \TwL(A) \]
  has cartesian lifts of morphisms in $\TwL(A)$ of the form
  \[
    \begin{tikzcd}
      a \arrow{d} \arrow[equals]{r} & a \arrow{d} \\
      a'' \arrow{r}{g} & a',
    \end{tikzcd}
  \]
  given by the cartesian morphisms for $\TwL(E_{b}) \to \TwL(A)$
  described above.\qed
\end{cor}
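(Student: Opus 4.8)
The plan is to run the two transfer principles \cref{prop:fibrecart} and \cite[Proposition 2.4.1.3]{HTT} after reducing to a statement about a single cartesian fibration. First, by \cref{lorthococfw} of \cref{prop:lortho reformulations}, a curved orthofibration $p = (p_1,p_2) \colon E \to A \times B$ is the same thing as a morphism $E \to A \times B$ in $\cocart(B)$ (for the second projections) all of whose fibres $E_b \to A$ are cartesian fibrations, with the additional property that $p_1$ sends every $p_2$-cocartesian morphism to an equivalence of $A$. This last property says precisely that cocartesian transport in $E$ covers equivalences, so under straightening over $B$ this morphism corresponds to a natural transformation $F_E \Rightarrow \underline{A}$ of functors $B \to \Cat$, with $\underline{A}$ the constant functor at $A$. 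Postcomposing with $\TwL \colon \Cat \to \Cat$ and unstraightening gives a morphism $\TwLB(E) \to \TwLB(A \times B) \simeq \TwL(A) \times B$ in $\cocart(B)$ whose fibre over $b$ is the functor $\TwL(E_b) \to \TwL(A)$; since its target is constant, it carries cocartesian morphisms to morphisms of $\TwL(A) \times B$ that are equivalences in the $\TwL(A)$-factor. The functor of the statement is then the composite $\TwLB(E) \to \TwL(A) \times B \xto{\pr_1} \TwL(A)$.

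Second, I would establish the statement fibrewise. Fix $b \in B$; since $E_b \to A$ is a cartesian fibration, \cref{cor:TwLcart} (applied with $A$ and $E_b$ in place of $B$ and $E$) produces, for every morphism of $\TwL(A)$ of the displayed shape --- the identity on the source objects and some $g \colon a'' \to a'$ on the target objects --- and every lift $\xi \in \TwL(E_b)$ of its target $(a \to a')$, a cartesian morphism $\phi \to \xi$ for $\TwL(E_b) \to \TwL(A)$, namely the one that is the identity on sources and an $(E_b \to A)$-cartesian lift of $g$ on targets. This $\phi$ is the morphism ``described above'' in the statement.

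Finally I would globalise $\phi$. Applying \cref{prop:fibrecart} to the commutative square
\[
\begin{tikzcd}
\TwLB(E) \arrow{r} \arrow{d} & \TwL(A) \times B \arrow{d} \\
B \arrow[equals]{r} & B,
\end{tikzcd}
\]
whose vertical maps are cocartesian fibrations and whose top map preserves cocartesian morphisms by the first paragraph, shows that $\phi$ --- being cartesian in the fibre $\TwL(E_b)$ for $\TwL(E_b) \to \TwL(A)$ --- is cartesian for $\TwLB(E) \to \TwL(A) \times B$. Since $\phi$ moreover lies over $\id_b$, its image in $\TwL(A) \times B$ is $\pr_1$-cartesian, so by (the cartesian dual of) \cite[Proposition 2.4.1.3]{HTT}, applied to $\TwLB(E) \to \TwL(A) \times B \xto{\pr_1} \TwL(A)$, the morphism $\phi$ is cartesian for the composite $\TwLB(E) \to \TwL(A)$. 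As $\xi$ was arbitrary, this produces the desired cartesian lifts. I do not expect a genuine obstacle here: the only delicate point is the bookkeeping of the first paragraph --- the identification $\TwLB(A \times B) \simeq \TwL(A) \times B$ and the verification that the induced functor is a morphism of cocartesian fibrations over $B$ --- after which the corollary is exactly two invocations of the transfer principles above.
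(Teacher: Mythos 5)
Your proof is correct and follows essentially the same route as the paper: interpret the curved orthofibration as a morphism of cocartesian fibrations over $B$ whose fibres $E_b \to A$ are cartesian fibrations, apply \cref{cor:TwLcart} fibrewise to produce the cartesian morphisms in each $\TwL(E_b) \to \TwL(A)$, and then transfer them to the total functor via \cref{prop:fibrecart}. The only difference is that you make explicit the final reduction along the projection $\TwL(A) \times B \to \TwL(A)$ using \cite[Proposition 2.4.1.3]{HTT}, a step the paper leaves implicit.
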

\begin{observation}\label{obs:twlortho}
  In particular, for any $a \in A$ the projection
  \[  \TwLB(E) \times_{\TwL(A)} A_{a/} \to A_{a/} \]
  is a cartesian fibration, and $\TwLB(E) \times_{\TwL(A)} A_{a/} \to A_{a/} \times B$ is
  a curved orthofibration.
\end{observation}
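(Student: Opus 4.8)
The plan is to deduce both assertions from the preceding corollary together with the fibrewise statement of \cref{cor:TwLcart}, by verifying characterisation \ref{lorthococfw} of curved orthofibrations from \cref{prop:lortho reformulations}. First I would record that the functor $A_{a/}\to\TwL(A)$ appearing in the statement is the inclusion of the fibre over $a$ of the source functor $s\colon \TwL(A)\to A^{\op}$; in particular it carries every morphism of $A_{a/}$ to a morphism of $\TwL(A)$ of the ``source-fixing'' form
\[
\begin{tikzcd}
a \arrow{d} \arrow[equals]{r} & a \arrow{d} \\
a'' \arrow{r}{g} & a'
\end{tikzcd}
\]
treated in the preceding corollary. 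Since that corollary supplies a $\TwLB(E)$-cartesian lift of every such morphism and cartesian morphisms are stable under base change, pulling back $\TwLB(E)\to\TwL(A)$ along $A_{a/}\to\TwL(A)$ yields a functor $\TwLB(E)\times_{\TwL(A)}A_{a/}\to A_{a/}$ admitting all cartesian lifts; this is the first assertion.

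For the curved orthofibration claim I would set $W:=\TwLB(E)\times_{\TwL(A)}A_{a/}$ and $w=(w_1,w_2)\colon W\to A_{a/}\times B$ and check the three conditions in characterisation \ref{lorthococfw} of \cref{prop:lortho reformulations}. For the fibrewise condition, recall that for the cocartesian fibration $p_2\colon E\to B$ the functor $\TwLB(E)\to B$ is the cocartesian fibration classified by $b\mapsto\TwL(E_b)$; hence $W_b\simeq \TwL(E_b)\times_{\TwL(A)}A_{a/}$, the map $\TwL(E_b)\to\TwL(A)$ being $\TwL$ applied to the cartesian fibration $(p_1)_b\colon E_b\to A$ (cartesian by characterisation \ref{lorthococfw} of \cref{prop:lortho reformulations} applied to $p$). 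Then \cref{cor:TwLcart} furnishes cartesian lifts of all source-fixing morphisms for $\TwL(E_b)\to\TwL(A)$, so, exactly as above, $W_b\to A_{a/}$ is a cartesian fibration. For the other two conditions, observe that $p$ is a morphism of cocartesian fibrations over $B$ preserving cocartesian edges (again characterisation \ref{lorthococfw} of \cref{prop:lortho reformulations}), so by functoriality of the construction $\TwLB(p)\colon \TwLB(E)\to\TwLB(A\times B)=\TwL(A)\times B$ is again such a morphism. Consequently $\TwLB(E)\to B$ is a cocartesian fibration whose cocartesian edges map to $\pr_2$-cocartesian edges of $\TwL(A)\times B$; in particular they lie over $\id_a$ in $A^{\op}$, so they restrict to $W$ and exhibit $w_2$ as a cocartesian fibration, and they become equivalences in $\TwL(A)$, hence equivalences in $A_{a/}$ since the fibre inclusion $A_{a/}\hookrightarrow\TwL(A)$ reflects equivalences --- that is, $w$ sends $w_2$-cocartesian edges to $\pr_2$-cocartesian edges of $A_{a/}\times B$. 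This completes the verification, so $w$ is a curved orthofibration.

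The single step I expect to require genuine care is the claim that $\TwLB(p)$ preserves cocartesian edges over $B$: I would argue that $p$ is classified by a natural transformation $F\Rightarrow\underline{A}$ of functors $B\to\Cat$ (using that a curved orthofibration preserves $p_2$-cocartesian edges) and that $\TwLB(p)$ is then classified by the whiskered transformation $\TwL\circ F\Rightarrow\TwL\circ\underline{A}=\underline{\TwL(A)}$. The remaining ingredients --- the identification $\TwLB(A\times B)\simeq\TwL(A)\times B$, stability of (co)cartesian morphisms under base change, and the fact that fibre inclusions of functors reflect equivalences --- are all routine.
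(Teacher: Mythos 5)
Your proof is correct and follows exactly the route the paper intends: the observation is left unproved as an immediate consequence of the preceding corollary (base-changed along the fibre inclusion $A_{a/}\hookrightarrow\TwL(A)$ of the source functor) together with the fact, already used to set up that corollary, that $\TwLB(p)\colon\TwLB(E)\to\TwL(A)\times B$ is a map of cocartesian fibrations over $B$ obtained by whiskering with $\TwL$. Your verification of characterisation \ref{lorthococfw} of \cref{prop:lortho reformulations}, including the fibrewise application of \cref{cor:TwLcart} to the cartesian fibrations $(p_1)_b\colon E_b\to A$, is exactly the intended argument.
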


\begin{notation}
  Given $E \to [1] \times B$ in $\RCocart([1],B)$, we have pullback
  squares
  \[
    \begin{tikzcd}
    \TwLB(E)|_{0} \arrow{r}\arrow{d} & \TwLB(E) \arrow{d} \\
   (E_0^{\op})^\vee\times_{B}E \arrow{r} & (E^{\op})^\vee \times_{B} E,
  \end{tikzcd} \quad
    \begin{tikzcd}
    \TwLB(E)|_{1} \arrow{r}\arrow{d} & \TwLB(E) \arrow{d} \\
    (E^{\op})^\vee\times_{B}E_{1} \arrow{r} & (E^{\op})^\vee \times_{B} E.
  \end{tikzcd}
  \]
\end{notation}

\begin{proof}[Proof of \cref{propn:crvorthpblfib}]
  Applying \cref{obs:twlortho} to $E \to [1] \times B$ and $0 \in
  [1]$, we see that $\TwLB(E)|_{0} \to [1]_{0/} \cong [1]$ is a cartesian
  fibration. Moreover, from the description of the cartesian morphisms
  we see that
  \[ \TwLB(E)|_{0}  \to (E_0^{\op})^\vee \times_{B} E \]
  is a morphism between cartesian fibrations to $[1]$ that preserves
  cartesian morphisms. 
  
  Taking fibres for $\TwLB(E)|_{0} \to  (E_0^{\op})^\vee \times_{B} E$
  over $0$ and $1$, we get the following diagram where both squares
  are cartesian
  \[
    \begin{tikzcd}
      \TwLB(E_{0}) \arrow{r} \arrow{d} &\TwLB(E)|_{0} \arrow{d}
      & \corr_{B}(E) \arrow{d}  \arrow{l}\\
      (E_0^{\op})^\vee\times_{B}E_{0} \arrow{r} &
      (E_0^{\op})^\vee\times_{B}E & (E_0^{\op})^\vee\times_{B}E_{1} \arrow{l}.
    \end{tikzcd}
  \]
  Taking the cartesian transport over $[0] \to [1]$ we therefore get a
  commutative square
  \[
    \begin{tikzcd}[column sep=3pc]
      \corr_{B}(E) \arrow{r}  \arrow{d}  & \TwLB(E_{0}) \arrow{d} \\
      (E_0^{\op})^\vee\times_{B}E_{1} \arrow{r}{\id \times_{B} g} &
     (E_0^{\op})^\vee\times_{B}E_{0}.
    \end{tikzcd}
  \]
  It remains to show that this square is cartesian, which we can check
  on fibres since the vertical maps are both left fibrations. To do
  this we can first restrict to fibres over $b \in B$, where we have
  the square
  \[
    \begin{tikzcd}[column sep=3pc]
      \corr(E_{b}) \arrow{r}  \arrow{d}  & \TwL(E_{0,b}) \arrow{d} \\
      E_{0,b}^{\op}\times E_{1,b} \arrow{r}{\id \times g_{b}} &
      E_{0,b}^{\op}\times E_{0,b},
    \end{tikzcd}
  \]
  which is cartesian because $g_b$ is a right adjoint.
\end{proof}

\begin{observation}
  If $p \colon E \to [1] \times B$ is in $\RCocart([1],B)$, then we
  also have $(p^{\op})^\vee \colon (E^{\op})^\vee \to [1]^{\op} \times B$
  in $\RCocart([1]^{\op},B)$. Since there is a natural equivalence
  $\TwL(C) \simeq \TwL(C^{\op})$ over the permutation $C^{\op} \times
  C \simeq C \times C^{\op}$, we get for a cocartesian fibration $X
  \to B$ a natural equivalence
  \[ \TwLB(X) \simeq \TwLB( (X^{\op})^\vee) \]
  over $(X^{\op})^\vee \times_{B} X \simeq X \times_{B}(X^{\op})^\vee$,
  and hence also
  \[ \corr_{B}(E) \simeq \corr_{B}( (E^{\op})^\vee) \]
  over $(E_0^{\op})^\vee \times_{B}E_{1} \simeq
  E_{1}\times_{B}(E_0^{\op})^\vee$.
\end{observation}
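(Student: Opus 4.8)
The plan is to read the observation as three short assertions, each obtained by unwinding definitions; the single genuine input is the (non-parametrised) equivalence $\TwL(C)\simeq\TwL(C^{\op})$ over the symmetry $\sigma\colon C\times C^{\op}\isoto C^{\op}\times C$. I would record this input first: both $\TwL(C)\to C^{\op}\times C$ and $\TwL(C^{\op})\to C\times C^{\op}$ are the left fibrations classifying a mapping-space functor, and $\Map_{C^{\op}}=\Map_{C}\circ\sigma$, so the two agree over $\sigma$; this is visibly natural in $C$, and alternatively it drops out of $\Map([n],\TwL(-))$ together with $(X\star Y)^{\op}\simeq Y^{\op}\star X^{\op}$.

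For the claim that $(p^{\op})^\vee$ lies in $\RCocart([1]^{\op},B)$, I would unwind ``dualising in the second variable''. By \cref{rmk:lcocstr} and the identification $\RCocart([1],B)\simeq\LCocart(B,[1])$, straightening over $B$ gives $\RCocart([1],B)\simeq\Fun(B,\Cat/[1])$, with $p$ corresponding to $b\mapsto\big((p_1)_b\colon E_b\to[1]\big)$. Passing to $p^{\op}\colon E^{\op}\to[1]^{\op}\times B^{\op}$ (which is cartesian over $B^{\op}$) and applying the single-variable dualisation $\Dualco\colon\Cart(B^{\op})\simeq\Cocart(B)$ pointwise over $[1]^{\op}$ produces a functor over $[1]^{\op}\times B$ straightening over $B$ to $b\mapsto\big(E_b^{\op}\to[1]^{\op}\big)$; this is cocartesian over $B$ with cocartesian morphisms lying over equivalences in $[1]^{\op}$, hence lies in $\RCocart([1]^{\op},B)$.

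For the twisted-arrow equivalence, I would let $F\colon B\to\Cat$ straighten $X\to B$, so that by construction $\TwLB(X)\to(X^{\op})^\vee\times_{B}X$ is the $B$-unstraightening of $b\mapsto\big(\TwL(F(b))\to F(b)^{\op}\times F(b)\big)$. Noting that $(X^{\op})^\vee$ straightens to $F^{\op}$ and that $((-)^{\op})^\vee$ is an involution on $\Cocart(B)$ (it induces $(-)^{\op}$ on straightenings), the same description identifies $\TwLB((X^{\op})^\vee)\to X\times_{B}(X^{\op})^\vee$ with the $B$-unstraightening of $b\mapsto\big(\TwL(F(b)^{\op})\to F(b)\times F(b)^{\op}\big)$. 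Applying the input equivalence naturally in $C=F(b)$ and unstraightening over $B$ then yields $\TwLB(X)\simeq\TwLB((X^{\op})^\vee)$ over $(X^{\op})^\vee\times_{B}X\simeq X\times_{B}(X^{\op})^\vee$. The $\corr_{B}$-statement follows formally: $\corr_{B}(E)$ is the pullback of $\TwLB(E)\to(E^{\op})^\vee\times_{B}E$ along $(E_0^{\op})^\vee\times_{B}E_1\hookrightarrow(E^{\op})^\vee\times_{B}E$, and likewise for $(E^{\op})^\vee$; since the endpoint-fibres of $(E^{\op})^\vee\to[1]^{\op}\times B$ are $(E_0^{\op})^\vee$ and $(E_1^{\op})^\vee$ (by naturality of $\Dualco$ under restriction along $[0]\to[1]^{\op}$) and $((-)^{\op})^\vee$ is an involution, the base of $\corr_{B}((E^{\op})^\vee)$ is $E_1\times_{B}(E_0^{\op})^\vee$, and transporting the previous equivalence (applied to $X=E$) through the two pullback squares---commuting $\sigma$ past the fibre-product inclusions---produces $\corr_{B}(E)\simeq\corr_{B}((E^{\op})^\vee)$ over $(E_0^{\op})^\vee\times_{B}E_1\simeq E_1\times_{B}(E_0^{\op})^\vee$.

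I do not expect a genuine obstacle here; the only point demanding care is bookkeeping---keeping the dualisation $((-)^{\op})^\vee$ and the various opposites aligned through the fibrewise constructions and through the orientation of $[1]$, so that the projections to the base products match up strictly under $\sigma$. If one wished to sidestep even this, the observation could instead be deduced from the self-equivalence of the \icats{} of curved orthofibrations established above, but the direct unwinding seems shortest.
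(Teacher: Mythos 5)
Your proposal is correct and follows essentially the same route as the paper, which treats this as an observation whose content is exactly the unwinding you perform: the unparametrised equivalence $\TwL(C)\simeq\TwL(C^{\op})$ applied naturally in the fibres and unstraightened over $B$, followed by restriction along the endpoint inclusions to obtain the statement for $\corr_B$. The bookkeeping you flag (keeping $((-)^{\op})^\vee$ and the permutation of factors aligned) is indeed the only delicate point, and you handle it as the paper implicitly does.
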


 Combining this with
  \cref{propn:crvorthpblfib}, we get the following dual version thereof:

\begin{cor}\label{cor:dualcrvorthopblfib}
  Suppose $p \colon E \to [1] \times B$ is in
  $\RCocart([1],B)$. If $ (p^{\op})^\vee \colon (E^{\op})^\vee \to
  [1]^{\op} \times B$ is a curved orthofibration, corresponding to a
  functor $f \colon E_{0}^{\vee}\to E_{1}^{\vee}$ over $B^{\op}$, then
    there is a pullback square
  \[
    \begin{tikzcd}[column sep=3pc]
      \corr_{B}(E) \arrow{d} \arrow{r} & \TwLB(E_{1}) \arrow{d} \\
      (E_0^{\op})^\vee \times_{B}E_{1} \arrow{r}{f^{\op} \times_{B} \id} & (E_1^{\op})^\vee
      \times_{B}E_{1}.
    \end{tikzcd}
  \]
\end{cor}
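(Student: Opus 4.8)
The plan is to deduce the statement directly from \cref{propn:crvorthpblfib} applied to the dualised fibration $(p^{\op})^{\vee}$ in place of $p$, and then to translate the conclusion back using the natural equivalences recorded in the observation immediately preceding the statement. First I would observe that, by that observation, $(p^{\op})^{\vee}\colon(E^{\op})^{\vee}\to[1]^{\op}\times B$ again lies in $\RCocart([1]^{\op},B)$, and that under the canonical identification $[1]^{\op}\cong[1]$ (which interchanges the two endpoints) it becomes an object of $\RCocart([1],B)$ whose fibres over $0$ and $1$ are $(E_{1}^{\op})^{\vee}$ and $(E_{0}^{\op})^{\vee}$ respectively, using that dualisation over $B$ commutes with restriction along $[1]$. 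With this identification in place, the hypothesis that $(p^{\op})^{\vee}$ is a curved orthofibration classifying $f\colon E_{0}^{\vee}\to E_{1}^{\vee}$ over $B^{\op}$ says precisely that, viewed over $[1]\times B$, it is a curved orthofibration corresponding to the morphism $f^{\op}\colon(E_{0}^{\op})^{\vee}\to(E_{1}^{\op})^{\vee}$ over $B$ in the sense of \cref{propn:crvorthpblfib}.

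Next I would apply \cref{propn:crvorthpblfib} to this fibration, obtaining a pullback square that exhibits $\corr_{B}\big((E^{\op})^{\vee}\big)$ as the pullback of $\TwLB\big((E_{1}^{\op})^{\vee}\big)$ along $\id\times_{B}f^{\op}$ over $E_{1}\times_{B}(E_{0}^{\op})^{\vee}$, where I also use the canonical equivalence $\big(((E_{1}^{\op})^{\vee})^{\op}\big)^{\vee}\simeq E_{1}$. I would then rewrite the two top corners using the observation: $\corr_{B}\big((E^{\op})^{\vee}\big)\simeq\corr_{B}(E)$ over the symmetry $E_{1}\times_{B}(E_{0}^{\op})^{\vee}\simeq(E_{0}^{\op})^{\vee}\times_{B}E_{1}$, and $\TwLB\big((E_{1}^{\op})^{\vee}\big)\simeq\TwLB(E_{1})$ over the symmetry $E_{1}\times_{B}(E_{1}^{\op})^{\vee}\simeq(E_{1}^{\op})^{\vee}\times_{B}E_{1}$. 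Transporting the pullback square along these two symmetry equivalences turns the structure map $\id\times_{B}f^{\op}$ into $f^{\op}\times_{B}\id$, and yields exactly the pullback square claimed in the statement.

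The part that requires care---though it is routine rather than deep---is the bookkeeping of opposites and of the $(-)^{\vee}$-duality: one must check that restricting $(E^{\op})^{\vee}$ to the endpoints of $[1]^{\op}$ indeed returns $(E_{i}^{\op})^{\vee}$, that the morphism classified by $(E^{\op})^{\vee}$ is $f^{\op}$ and not its formal dual, and that the two symmetry equivalences used in the last step are mutually compatible so that the reindexed squares actually glue. All of this follows from the naturality of the equivalences in the observation and of \cref{propn:crvorthpblfib} in its input, so no new geometric input is needed; the entire content sits in \cref{propn:crvorthpblfib}.
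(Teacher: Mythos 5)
Your proposal is correct and is essentially the paper's own argument: the paper likewise obtains this corollary by applying \cref{propn:crvorthpblfib} to $(p^{\op})^{\vee}$ and transporting the resulting square along the equivalences $\corr_{B}(E)\simeq\corr_{B}((E^{\op})^{\vee})$ and $\TwLB(X)\simeq\TwLB((X^{\op})^{\vee})$ recorded in the preceding observation. Your explicit bookkeeping of the endpoint swap under $[1]^{\op}\cong[1]$ and of the identification $g=f^{\op}$ is exactly the content the paper leaves implicit.
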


\begin{proof}[Proof of \cref{thm:paradjlfibeq}]
  Suppose $g$ corresponds to the curved orthofibration $p \colon E \to
  [1] \times B$. Then $p^{\vee} \colon E^{\vee} \to [1] \times
  B^{\op}$ is a curved orthofibration over $B^{\op} \times [1]$ whose
  cocartesian unstraightening over $[1]$ gives $f$. Hence
  $ (p^{\op})^\vee \colon (E^{\op})^\vee \to [1]^{\op} \times B$ is the
  curved orthofibration for $f^{\op}$. Combining
  \cref{propn:crvorthpblfib} and \cref{cor:dualcrvorthopblfib}, we
  thus get equivalences
  \[ \begin{tikzcd} (\id \times_{B} g)^{*} \TwLB(D) \arrow[r, "\sim"] & \corr_{B}(E)  & (f^{\op} \times_{B} \id)^{*} \TwLB(C)\arrow[l, "\sim"{swap}]
  \end{tikzcd}\]
 of left fibrations over $ (D^{\op})^\vee \times_{B} C$.
\end{proof}

Specialising this to the case of projections, we get:
\begin{cor}\label{cor:identifyingadjoints}
  Let $F \colon X \times B \to Y$ be a functor such that $F(-,b)$ is a
  left adjoint for all $b \in B$, and let $G \colon Y \times B^{\op}
  \to X$ be the functor corresponding to the $B^{\op}$-parametrised
  right adjoint of $F$, regarded as a functor $X \times B \to Y \times
  B$ over $B$. Then there is an equivalence
  \[ (\id \times G)^{*} \TwL(X) \simeq (F^{\op} \times \id)^{*} \TwL(Y)\]
  of left fibrations over $X^{\op} \times Y \times B$, and hence a
  natural equivalence of mapping spaces
  \[\pushQED{\qed} \Map_{X}(x, G(y,b)) \simeq \Map_{Y}(F(x,b), y).\qedhere
  \popQED\]
\end{cor}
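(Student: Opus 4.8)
The plan is to obtain this by specialising \cref{thm:paradjlfibeq} to the case of constant families. Following \cref{ex:monoidalclosed}, I would first regard $F$ as the $B$-parametrised left adjoint $(\pr_{B},F)\colon X\times B\to Y\times B$ over $B$, which lies in $\cart^{\oplax,\mm{L}}(B)$ precisely because $F(-,b)$ is a left adjoint for every $b\in B$. By \cref{ex:monoidalclosed}, its $B^{\op}$-parametrised right adjoint is then exactly the functor $(\pr_{B^{\op}},G)\colon Y\times B^{\op}\to X\times B^{\op}$ over $B^{\op}$, with $G(-,b)$ fibrewise right adjoint to $F(-,b)$. We are thus in the situation of \cref{thm:paradjlfibeq} applied with the parametrising base equal to $B^{\op}$ and with the two cocartesian fibrations over it taken to be the projections $C=Y\times B^{\op}\to B^{\op}$ and $D=X\times B^{\op}\to B^{\op}$.

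The second step is to evaluate the objects appearing in \cref{thm:paradjlfibeq} for these constant families. Since $C\to B^{\op}$ classifies the constant functor with value $Y$, the identification $\TwLB(A\times B)\simeq\TwL(A)\times B$ gives $\TwLB(C)\simeq\TwL(Y)\times B^{\op}$, viewed as a left fibration over $Y^{\op}\times Y\times B^{\op}$; here one also uses that the dual cocartesian fibration $(-)^{\vee}$ of a constant cartesian family is again constant, which follows from the addenda of \cref{thm:dual lortho-gray} and naturality of the (single-variable) duality over $\ast$. The analogous statements hold for $D\to B^{\op}$, and under these identifications the two pullback maps of \cref{thm:paradjlfibeq} become $F^{\op}\times\id$ and $\id\times G$, while the fibre product $(D^{\op})^{\vee}\times_{B^{\op}}C$ degenerates to the ordinary product $X^{\op}\times Y\times B$. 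Hence \cref{thm:paradjlfibeq} supplies the asserted equivalence
\[
(\id\times G)^{*}\TwL(X)\;\simeq\;(F^{\op}\times\id)^{*}\TwL(Y)
\]
of left fibrations over $X^{\op}\times Y\times B$.

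Finally, to get the statement about mapping spaces I would pass to fibres over a point $(x,y,b)$: an equivalence of left fibrations is in particular a fibrewise equivalence, and the fibre of $\TwL(X)\to X^{\op}\times X$ over $(x,x')$ is $\Map_{X}(x,x')$, so the displayed equivalence restricts to the natural equivalence $\Map_{X}(x,G(y,b))\simeq\Map_{Y}(F(x,b),y)$. All the real content is in \cref{thm:paradjlfibeq}; the only genuine obstacle is the variance bookkeeping --- keeping track of the opposite categories introduced both by passing to adjoints and by the duals $(-)^{\vee}$, which do not commute with $(-)^{\op}$ in general, and checking that these conspire so that the relative twisted arrow \icats{} and fibre products collapse exactly to the products appearing in the statement.
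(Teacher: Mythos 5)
Your proposal is correct and follows exactly the paper's route: the corollary is obtained by specialising \cref{thm:paradjlfibeq} to the constant (projection) families set up in \cref{ex:monoidalclosed}, identifying $\TwLB$ of a constant family with $\TwL$ of the fibre times the base, and then reading off mapping spaces fibrewise. The variance bookkeeping you flag is indeed the only content beyond \cref{thm:paradjlfibeq}, and you have handled it as the paper does.
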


\subsection{Lax monoidal adjunctions}\label{subsec:laxmonadj} 
Recall that an $\infty$-operad $\OO$ is a map of $\infty$-categories
$p\colon \OO\rt \pF$ to the 1-category of pointed finite
sets, satisfying the following conditions: 
\begin{enumerate}
\item $\OO$ has all $p$-cocartesian lifts for inert morphisms in $\pF$
  (\ie{} those maps which are bijections away from the basepoint).
\item Let $x\in \OO$ be an object with $p(x)=\pset{n}$ and let $\rho^i_{x}\colon x\rt x_i$ be a $p$-cocartesian lift of the unique inert map $\rho^i\colon \pset{n}\rt \pset{1}$ which sends $i$ to $1$. For every $f\colon \pset{m}\rt \pset{n}$ and $y\in \OO_{\pset m}$, postcomposition with the $\rho^i_{x}$ induce an equivalence
$$\begin{tikzcd}
\Map_{\OO}^f\big(y, x\big)\arrow[r, "\sim"] & \prod_i \Map_{\OO}^{\rho^i\circ f}(y, x_i).
\end{tikzcd}$$
\item For every tuple $(x_1, \dots, x_n)$ of objects in $\OO_{\pset{1}}$, there exists an $x\in \OO_{\pset{n}}$ together with $p$-cocartesian lifts $\rho^i_{x}\colon x\rt x_i$. 
\end{enumerate}
A morphism in $\OO$ is called \emph{inert} if it is the cocartesian
lift of an inert map in $\pF$.

If $\OO$ is an \iopd{}, then an \emph{$\OO$-monoid} in an \icat{} $X$
with finite products is a functor $M \colon \OO \to X$ satisfying the \emph{Segal condition}: for
every $x\in \OO_{\pset{n}}$ with inert maps $\rho^i_{x} \colon x\rt x_i$,
the morphism $M(x) \to \prod_{i} M(x_{i})$ is an equivalence. An
\emph{$\OO$-monoidal \icat{}} is a cocartesian fibration corresponding
to an $\OO$-monoid in $\Cat$ (or equivalently an \iopd{} with a map to
$\OO$ that is a cocartesian fibration).

\begin{definition}
The $(\infty, 2)$-category $\OMonCat^{\lax}$ of $\OO$-monoidal $\infty$-categories and \emph{lax $\OO$-monoidal functors} between them is given by the $1$-full sub-$2$-category of $\mathbf{Cocart}^{\lax}(\OO)$ whose:
\begin{enumerate}
\item objects are $\OO$-monoidal \icats{},
\item morphisms are functors $\CC^{\otimes}\rt \DD^{\otimes}$ over $\OO$ that preserve the cocartesian morphisms lying over inert morphisms in $\OO$.
\end{enumerate}
\end{definition}
By definition, the underlying \icat{} of
$\OMonCat^{\lax}$ is the full subcategory of $\infty$-operads over
$\OO$ whose objects are the $\OO$-monoidal \icats{}. A \emph{strong}
$\OO$-monoidal functor corresponds to a morphism $\CC^{\otimes}\rt
\DD^{\otimes}$ that preserves all cocartesian edges.

\begin{example}
Let us explicitly mention the special case $\OO = \pF$, where $\OMonCat^{\lax}$ has objects symmetric monoidal $\infty$-categories, $1$-morphisms lax symmetric monoidal functors, and $2$-morphisms symmetric monoidal transformations. In particular, Theorem A from the introduction is a statement about morphism categories therein (and in the oplax analogue defined below). 
\end{example}

\begin{definition}
The $(\infty, 2)$-category $\OMonCat^{\oplax}$ of $\OO$-monoidal $\infty$-categories and \emph{oplax $\OO$-monoidal functors} between them is the sub-2-category of $\mathbf{Cart}^{\oplax}(\OO^\op)$ whose:
\begin{enumerate}
\item objects are cartesian fibrations $\CC_{\otimes} \to \OO^{\op}$ corresponding to $\OO$-monoids,
\item morphisms are functors $\CC_{\otimes}\rt \DD_{\otimes}$ that preserve cartesian morphisms lying over inert morphisms in $\OO^\op$.
\end{enumerate}
\end{definition}

Note in particular that the objects in $\OMonCat^{\oplax}$ are a
priori not $\OO$-monoidal \icats{}: one has to take the cocartesian
fibration over $\OO$ dual to a cartesian fibration over $\OO^\op$ to get an $\OO$-monoidal $\infty$-category in the usual sense. The following lemma thus simply asserts that essentially \emph{by definition}, an oplax $\OO$-monoidal functor is a lax $\OO$-monoidal functor between the opposite categories. 

\begin{lemma}
Taking opposite categories defines an equivalence of $(\infty, 2)$-categories
$$
(-)^{\op}\colon \OMonCat^{\oplax}\rt \big(\OMonCat^{\lax}\big)^{2\mm{-op}}.
$$
\end{lemma}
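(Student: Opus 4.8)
The plan is to unwind both sides of the claimed equivalence to the level of the ambient $(\infty,2)$-categories of parametrised (co)cartesian fibrations, where the operation $(-)^{\op}$ has already been constructed, and then check that it restricts appropriately.

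First I would recall from \cref{rem:opposite of lax is oplax} that taking opposite $\infty$-categories over each simplex $[n]$ defines an equivalence of complete 2-fold Segal spaces
\[(-)^{\op}\colon \mathbf{Cocart}^{\lax}(\OO)\isoto \mathbf{Cart}^{\oplax}(\OO^{\op})^{2\mm{-op}},\]
coming from the levelwise equivalences $\cocart_{[n]}^{\lax}(\OO\times[n])\isoto\cart_{[n]^{\op}}^{\oplax}(\OO^{\op}\times[n]^{\op})$. Since we are only reversing $2$-morphisms (not $1$-morphisms), it suffices to exhibit compatible descriptions of the sub-$2$-categories $\OMonCat^{\oplax}$ and $\OMonCat^{\lax}$ under this equivalence. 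By definition, $\OMonCat^{\lax}$ is the $1$-full sub-$2$-category of $\mathbf{Cocart}^{\lax}(\OO)$ on the $\OO$-monoidal $\infty$-categories $\CC^{\otimes}\to\OO$ and those functors over $\OO$ preserving cocartesian lifts of inert morphisms; and $\OMonCat^{\oplax}$ is the corresponding $1$-full sub-$2$-category of $\mathbf{Cart}^{\oplax}(\OO^{\op})$ on the cartesian fibrations $\CC_{\otimes}\to\OO^{\op}$ whose straightening satisfies the Segal condition (i.e.\ corresponds to an $\OO$-monoid in $\Cat$) together with functors over $\OO^{\op}$ preserving cartesian lifts of inert morphisms.

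The key verifications are then three small points, all essentially unwinding definitions. First, on objects: a cocartesian fibration $\CC^{\otimes}\to\OO$ has opposite $(\CC^{\otimes})^{\op}\to\OO^{\op}$ a cartesian fibration, and the straightening $\OO\to\Cat$ of the former agrees, after composing with $(-)^{\op}\colon\Cat\to\Cat$, with the straightening $\OO^{\op}\to\Cat$ of the latter read contravariantly; in particular the Segal condition for an $\OO$-monoid is preserved, since it only refers to products of values on inert morphisms, and the inert morphisms of $\OO$ correspond to those of $\OO^{\op}$ under $(-)^{\op}$. Thus $(-)^{\op}$ carries $\OO$-monoidal $\infty$-categories to cartesian fibrations over $\OO^{\op}$ corresponding to $\OO$-monoids, and conversely. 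Second, on $1$-morphisms: a functor $F\colon\CC^{\otimes}\to\DD^{\otimes}$ over $\OO$ takes cocartesian lifts of inert maps to cocartesian lifts of inert maps if and only if $F^{\op}\colon(\CC^{\otimes})^{\op}\to(\DD^{\otimes})^{\op}$ over $\OO^{\op}$ takes cartesian lifts of inert maps to cartesian lifts of inert maps, because a morphism $\alpha$ in $X$ is $p$-cocartesian exactly when $\alpha$, viewed in $X^{\op}$, is $p^{\op}$-cartesian, and it lies over an inert map of $\OO$ exactly when, viewed in $\OO^{\op}$, it lies over the corresponding inert map. Third, on $2$-morphisms: after the reversal of $2$-cells recorded by the superscript $2\mm{-op}$, all natural transformations over $\OO$ correspond to natural transformations over $\OO^{\op}$ with no further condition, since $\OMonCat^{\lax}$ and $\OMonCat^{\oplax}$ are $1$-full sub-$2$-categories of the ambient objects. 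Assembling these, $(-)^{\op}$ restricts to an equivalence $\OMonCat^{\oplax}\isoto(\OMonCat^{\lax})^{2\mm{-op}}$, as claimed.

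I do not expect any real obstacle here: the substance is entirely in \cref{rem:opposite of lax is oplax} and in the elementary duality between cocartesian and cartesian morphisms under $(-)^{\op}$. The only mild subtlety is bookkeeping the handedness of the $2$-categorical op: one should be careful that $\mathbf{Cocart}^{\lax}(B)\isoto\mathbf{Cart}^{\oplax}(B^{\op})^{2\mm{-op}}$ reverses $2$-morphisms but not $1$-morphisms, so the lax/oplax labels on functors and the $1$-full-ness of the relevant sub-$2$-categories are exactly what make the restriction well-defined; no further fibrational input (e.g.\ from \cref{sec:ortho}) is needed for this particular lemma.
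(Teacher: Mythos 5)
Your proposal is correct and follows essentially the same route as the paper: reduce to the equivalence of \cref{rem:opposite of lax is oplax} and then check that it identifies the two sub-$2$-categories, noting that the Segal map of $\CC_\otimes$ is the opposite of the Segal map of $\CC^\otimes$ (so the $\OO$-monoid condition is preserved) and that preservation of (co)cartesian lifts of inert morphisms is exchanged under $(-)^{\op}$. The paper's write-up is just a terser version of your three verification steps.
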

\begin{proof}
It suffices to verify that the equivalence of Remark \ref{rem:opposite of lax is oplax} identifies the relevant sub-2-categories. Given a cartesian fibration $\CC_\otimes\rt \OO^{\op}$, let us write ${\CC}^{\mathrm{op},\otimes}\rt \OO$ for the opposite cocartesian fibration. The Segal map 
$$
(\rho_i^*)_{i} \colon \CC_{\otimes}(x)\rt \prod_i \CC_{\otimes}(x_i)
$$
is then the \emph{opposite} of the Segal map 
$(\rho_{i,!})_{i}\colon {\CC}^{\mathrm{op},\otimes}(x)\rt \prod_i {\CC}^{\mathrm{op},\otimes}(x_i)$, so that one is an equivalence if and only if the other is. Finally, a functor preserving cartesian lifts of inert morphisms is sent to the functor between opposite categories, which preserves cocartesian lifts of inert morphisms.
\end{proof}

For example, for $\OO = (\pF)_\mathrm{inert}$, the trivial operad, $\OMonCat^{\oplax}$ is a $2$-fold Segal space model for the $(\infty, 2)$-category $\tCat$ (by the discussion in Section \ref{subsec:scunstrGray}). In this case we find, unsurprisingly, that taking opposite categories defines an equivalence $\tCat \simeq \tCat^{2\mm{-op}}$.

\begin{lemma}\label{lem:monoidal adjoint}
Let $g\colon \CC^{\otimes}\rt \DD^{\otimes}$ be a lax $\OO$-monoidal functor, i.e.\ a morphism in $\OMonCat^{\lax}$. Then the following two conditions are equivalent:
\begin{enumerate}
\item For every $x\in \OO_{\pset{1}}$, the induced map on fibres $g\colon \CC^{\otimes}(x)\rt \DD^{\otimes}(x)$ is a right adjoint.
\item For every $x\in \OO$, the induced map on fibres $g\colon \CC^{\otimes}(x)\rt \DD^{\otimes}(x)$ is a right adjoint.
\end{enumerate}
\end{lemma}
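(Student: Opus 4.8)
The implication $(2) \Rightarrow (1)$ is trivial since every $x \in \OO_{\pset{1}}$ is in particular an object of $\OO$. For the converse, the plan is to exploit the Segal condition to reduce the statement for arbitrary $x \in \OO_{\pset{n}}$ to the case of $\OO_{\pset{1}}$. Given $x \in \OO_{\pset{n}}$, pick $p$-cocartesian lifts $\rho^i_x \colon x \to x_i$ of the inert maps $\rho^i \colon \pset{n} \to \pset{1}$, so that $x_i \in \OO_{\pset{1}}$. Since $g$ is lax $\OO$-monoidal, it preserves cocartesian edges over inert morphisms, so $g(\rho^i_x) \colon g(x) \to g(x_i)$ is again cocartesian over $\rho^i$. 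Hence the Segal maps fit into a commuting square
\[
\begin{tikzcd}
\CC^\otimes(x) \arrow[r, "g"] \arrow[d, "\sim"'] & \DD^\otimes(x) \arrow[d, "\sim"] \\
\prod_i \CC^\otimes(x_i) \arrow[r, "\prod_i g"'] & \prod_i \DD^\otimes(x_i),
\end{tikzcd}
\]
where the vertical maps are equivalences by the Segal condition for the $\OO$-monoids classifying $\CC^\otimes$ and $\DD^\otimes$. Therefore $g \colon \CC^\otimes(x) \to \DD^\otimes(x)$ is equivalent to the product functor $\prod_i g \colon \prod_i \CC^\otimes(x_i) \to \prod_i \DD^\otimes(x_i)$.

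Now I would invoke the fact that a finite product of right adjoint functors is again a right adjoint: if each $g \colon \CC^\otimes(x_i) \to \DD^\otimes(x_i)$ admits a left adjoint $f_i$ (which holds by hypothesis (1), as each $x_i \in \OO_{\pset{1}}$), then $\prod_i f_i$ is left adjoint to $\prod_i g$, with unit and counit given componentwise. Since being a right adjoint is invariant under equivalence of functors (one can transport the adjunction data along the commuting square above), it follows that $g \colon \CC^\otimes(x) \to \DD^\otimes(x)$ is a right adjoint, establishing $(1) \Rightarrow (2)$.

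I do not expect any serious obstacle here; the argument is essentially bookkeeping with the Segal condition. The only mild subtlety is making the identification of $g$ on the fibre $\CC^\otimes(x)$ with the product functor precise at the level of functors (rather than just objects), but this follows from naturality of the Segal equivalences: the square above commutes because $g$ preserves inert-cocartesian lifts, and one checks this is natural in $x$ by using that the $\rho^i_x$ assemble, as $x$ varies inertly, into the structure maps of the $\OO$-monoid. Alternatively, one can phrase the whole argument at the level of the classifying functors $\OO \to \Cat$ and appeal to \cite[Proposition 4.7.4.17]{HA} (already used in \cref{lem:lortho with adj}) together with the observation that a natural transformation of $\OO$-monoids is objectwise a right adjoint as soon as it is so on the objects of $\OO_{\pset{1}}$, by the same Segal-condition reduction.
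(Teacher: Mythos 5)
Your proof is correct and follows the paper's argument exactly: the paper likewise exhibits the commuting square relating $g_x$ to $\prod_i g_{x_i}$ via the Segal equivalences $(\rho^i_!)_i$, using that $g$ preserves cocartesian lifts of inert morphisms, and concludes since a finite product of right adjoints is a right adjoint.
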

\begin{proof}
This follows from the fact that for each $x\in \OO$, there is a commuting square
$$\begin{tikzcd}
\CC^{\otimes}(x)\arrow[r, "g_x"]\arrow[d, "\sim"{right}, "(\rho^i_!)_{i}"{left}] & \DD^{\otimes}(x)\arrow[d, "\sim"{left}, "(\rho^i_!)_{i}"{right}]\\
\prod_i \CC^{\otimes}(x_i)\arrow[r, "(g_{x_i})"{below}] & \prod_i\DD^{\otimes}(x_i)
\end{tikzcd}$$
where $\rho^i\colon x\rt x_i$ are the canonical inert maps decomposing $x$ into its components $x_i \in \OO_{\pset{1}}$.
\end{proof}
\begin{definition}
A lax $\OO$-monoidal functor $g\colon \CC^{\otimes}\rt \DD^{\otimes}$ is a \emph{lax $\OO$-monoidal right adjoint} if it satisfies the equivalent conditions of Lemma \ref{lem:monoidal adjoint}. 

Likewise, an oplax $\OO$-monoidal functor $f\colon \CC_{\otimes}\rt \DD_{\otimes}$ is called an \emph{oplax $\OO$-monoidal left adjoint} if it induces left adjoint functors between the fibres over each $x\in \OO^{\op}$ (equivalently, all $x\in \OO^{\op}_{\pset{1}}$).
\end{definition}
\begin{theorem}\label{thm:monoidal adjunctions}
For each $\infty$-operad $\OO$, there is a natural equivalence of $(\infty, 2)$-categories
$$\begin{tikzcd}
\Adj\colon \OMonCat^{\lax, \mm{R}}\arrow[r, "\sim"] & \Big(\OMonCat^{\oplax, \mm{L}}\Big)^{(1, 2)\mm{-op}}
\end{tikzcd}$$
between the $1$-full sub-$2$-categories whose morphisms are lax $\OO$-monoidal right adjoints and oplax $\OO$-monoidal left adjoints. 
\end{theorem}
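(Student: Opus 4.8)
The strategy is to deduce \cref{thm:monoidal adjunctions} from the already-established \cref{thm:dualizing lax natural transformations} by taking $B = \OO$ and then restricting the equivalence $\Adj\colon \mathbf{Cocart}^{\lax,\mm R}(\OO)\simeq \mathbf{Cart}^{\oplax,\mm L}(\OO^\op)^{(1,2)\mm{-}\op}$ to the relevant $1$-full sub-$2$-categories. So the core task is purely one of identifying sub-$2$-categories: I must show that $\Adj$ carries $\OMonCat^{\lax,\mm R}$ isomorphically onto $\big(\OMonCat^{\oplax,\mm L}\big)^{(1,2)\mm{-}\op}$ inside the ambient equivalence. Since $\Adj$ is the identity on objects (it sends a cocartesian fibration over $\OO$ to the cartesian fibration classifying the same functor $\OO\to\Cat$), the condition on objects matches up immediately: $\OO$-monoidal \icats{} are exactly the cocartesian fibrations over $\OO$ whose straightening satisfies the Segal condition, and under $\Adj$ these correspond to cartesian fibrations over $\OO^\op$ whose straightening — being literally the same functor $\OO\to\Cat$ — also satisfies the Segal condition; that is precisely the defining condition for objects of $\OMonCat^{\oplax}$.

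First I would recall that $\OMonCat^{\lax,\mm R}$ is, by \cref{lem:monoidal adjoint} and \cref{def:param adj}, the $1$-full sub-$2$-category of $\mathbf{Cocart}^{\lax}(\OO)$ whose $1$-morphisms are the functors over $\OO$ that (i) preserve cocartesian lifts of \emph{inert} morphisms and (ii) are fibrewise right adjoints; similarly $\OMonCat^{\oplax,\mm L}$ is cut out of $\mathbf{Cart}^{\oplax}(\OO^\op)$ by the conditions (i$'$) preserve cartesian lifts of inert morphisms in $\OO^\op$ and (ii$'$) are fibrewise left adjoints. The equivalence $\Adj$ already matches condition (ii) with (ii$'$) — that is exactly the content of restricting to the superscripts $\mm R$ and $\mm L$, which is built into \cref{thm:dualizing lax natural transformations}. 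So the only thing left is the inert-preservation conditions (i) and (i$'$). For this I would argue fibrewise over $\OO^\op$: by naturality of the whole construction in the base, restricting $\Adj$ along the inclusion $(\OO^\op)_{\mathrm{inert}}\hookrightarrow\OO^\op$ (equivalently pulling everything back along $\OO_{\mathrm{inert}}\hookrightarrow\OO$) is compatible with $\Adj$, and over the subcategory of inert morphisms every object of $\OMonCat^{\lax}$ is actually a \emph{cocartesian} fibration (the Segal condition forces this, since inert transport is by projection to products) — indeed over $\OO_{\mathrm{inert}}$ a $\OO$-monoidal category restricts to a cocartesian fibration and dually over $(\OO^\op)_{\mathrm{inert}}$ it restricts to a cartesian fibration. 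Hence over the inert part, $\Adj$ specialises to the ordinary straightening-dualisation $\Dualcart$ between cocartesian fibrations over $\OO_{\mathrm{inert}}$ and cartesian fibrations over $(\OO^\op)_{\mathrm{inert}}$, which by the addendum to \cref{thm:dual lortho-gray} and the compatibility of $\Strco, \Uncart$ is exactly the operation "classify the same functor". A $1$-morphism $g$ in $\mathbf{Cocart}^{\lax}(\OO)$ lies in $\OMonCat^{\lax,\mm R}$ iff its restriction to $\OO_{\mathrm{inert}}$ preserves cocartesian morphisms, i.e.\ iff it corresponds to a \emph{natural transformation of functors} $\OO_{\mathrm{inert}}\to\Cat$ rather than merely a lax one; and by the same token $\Adj(g)$ lies in $\OMonCat^{\oplax,\mm L}$ iff its restriction to $(\OO^\op)_{\mathrm{inert}}$ preserves cartesian morphisms. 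Since over the inert parts $\Adj$ is the identity on underlying natural transformations of functors valued in $\Cat$, these two conditions are manifestly equivalent. The same argument applies verbatim one dimension up, on $2$-morphisms, so the comparison is in fact an equivalence of $(\infty,2)$-categories; $1$-fullness of both sides is preserved because $\Adj$ restricts to equivalences on mapping \icats{}.

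The main obstacle — really the only subtle point — is the claim that the restriction of $\Adj$ to the inert subcategories agrees with ordinary dualisation of (co)cartesian fibrations. This needs the naturality of \cref{cor:fibration model} and \cref{lem:dualisation of bicart} in the base, together with the observation that over $\OO_{\mathrm{inert}}$ a lax $\OO$-monoidal functor's underlying curved orthofibration over $[1]\times\OO_{\mathrm{inert}}$ is \emph{already} a cocartesian fibration precisely when $g$ preserves inert-cocartesian morphisms, so that the Gray-fibration reformulation collapses and the dualisation is the classical one from \cref{thm:dual lortho-gray} with $A=\ast$-type behaviour in the relevant sense. I would isolate this as a short lemma: for a map $g$ in $\mathbf{Cocart}^{\lax}(\OO)$, $g$ preserves inert-cocartesian edges iff $\Adj(g)$ preserves inert-cartesian edges, proved by pulling back along inert subcategories and invoking the addenda of \cref{thm:dual lortho-gray} (which say $\Dualcart$, $\Dualco$ are "classify the same functor" when restricted to genuine (co)cartesian fibrations). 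Granting that, \cref{thm:monoidal adjunctions} follows formally, with $\cref{thm:monoidal adjunctions}$ for $\OO=\mathbb E_\infty$ specialising to \cref{propaintext}/\cref{propa} on morphism \icats{}.
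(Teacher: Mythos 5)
Your overall strategy coincides with the paper's: deduce \cref{thm:monoidal adjunctions} by restricting the equivalence of \cref{thm:dualizing lax natural transformations} with $B=\OO$ to the relevant $1$-full sub-$2$-categories, with the object-level identification being immediate because $\Adj$ classifies the same functor $\OO\rt\Cat$. The treatment of objects and the reduction to the single question of matching the inert-preservation conditions on $1$-morphisms are both fine.

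The gap is in how you dispose of that remaining question. Your proposed lemma --- that a map $g$ in $\mathbf{Cocart}^{\lax,\mm R}(\OO)$ preserves inert-cocartesian edges \IFF{} $\Adj(g)$ preserves inert-cartesian edges --- is \emph{not} a formal consequence of restricting to the inert subcategory, and the justification ``over the inert parts $\Adj$ is the identity on underlying natural transformations'' does not hold. On morphisms, $\Adj$ is given by the mate (Beck--Chevalley) construction of \cref{beck-chevalley}: it sends the lax naturality square $\rho_\beta\colon \beta_!g\Rightarrow g\beta_!$ to the composite $f\beta_!\Rightarrow f\beta_!gf\Rightarrow fg\beta_!f\Rightarrow\beta_!f$. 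The mate of an \emph{invertible} $2$-cell need not be invertible: the standard counterexample is the commuting square expressing that a right adjoint $g$ commutes with $\lim$ over a diagram shape $I$, whose mate is the generally non-invertible comparison $f\circ\lim\Rightarrow\lim\circ f_*$. So ``$g$ strictly natural over $\OO_{\mathrm{inert}}$'' does not formally imply ``$\Adj(g)$ strictly natural over $(\OO^{\op})_{\mathrm{inert}}$'', and your lemma is false for a general base with a general class of marked edges. What saves the day --- and what your argument mentions only in passing, for objects rather than for morphisms --- is the Segal condition: via \cref{lem:monoidal adjoint} the inert transport functors $\beta_!$ are identified with product projections $\prod_{i\in I}\CC^\otimes(x_i)\rt\prod_{j\in J}\CC^\otimes(x_j)$, and for a square whose vertical legs are such projections the Beck--Chevalley transformation is always an equivalence, because the units and counits of the fibrewise adjunctions are computed factorwise. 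This computation is the actual content of the paper's proof and must be supplied; without it the ``manifestly equivalent'' step does not go through.
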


\begin{proof}
It suffices to show that the equivalence of Theorem \ref{thm:dualizing lax natural transformations} identifies the two relevant sub-2-categories
$$\begin{tikzcd}
\OMonCat^{\lax, \mm{R}}\arrow[d, hookrightarrow]\arrow[r, dashrightarrow, "\sim"] & \Big(\OMonCat^{\oplax, \mm{L}}\Big)^{(1, 2)\mm{-op}}\arrow[d, hookrightarrow]\\
\mathbf{Cocart}^{\lax, \mm{R}}(\OO)\arrow[r, "\Adj"{above}, "\sim"{below}] & \Big(\mathbf{Cart}^{\oplax, \mm{L}}(\OO^{\op})\Big)^{(1, 2)\mm{-op}}.
\end{tikzcd}$$
At the level of objects, note that the functor $\Adj$ sends a cocartesian fibration over $\OO$ to the cartesian fibration over $\OO^{\op}$ classifying the same functor $\OO\rt \Cat$. In particular, a cocartesian fibration over $\OO$ satisfies the Segal conditions if and only if its image under $\Adj$ does.

It remains to verify that the functor $\Adj$ sends a map $g\colon
\CC^\otimes \rt \DD^{\otimes}$ that preserves cocartesian lifts of
inert maps to a functor $F\colon \DD_\otimes\rt \CC_\otimes$ of cartesian
fibrations over $\OO^{\op}$ that preserves cartesian lifts of
inert maps (for the reverse implication, reverse the roles of $f$ and
$g$ in the next argument). By \cref{beck-chevalley}, this comes down to the following assertion: for any inert map $\beta\colon x\rt x'$ in $\OO$, the lax $\OO$-monoidal functor $g$ defines the commuting left square
$$\begin{tikzcd}
\CC^{\otimes}(x)\arrow[d, "\beta_!"{left}]\arrow[r, "g_x"] & \DD^{\otimes}(x)\arrow[d, "\beta_!"] & & \CC^{\otimes}(x)\arrow[d, "\beta_!"{left}] & \DD^{\otimes}(x)\arrow[d, "\beta_!"]\arrow[l, "f_x"{above}]\\
\CC^{\otimes}(x')\arrow[r, "g_{x'}"{below}] & \DD^{\otimes}(x') & & \CC^{\otimes}(x') & \DD^{\otimes}(x')\arrow[l, "f_{ x'}"{below}]\arrow[lu, Rightarrow, shorten=1em]
\end{tikzcd}$$
and we have to verify that the associated Beck-Chevalley transformation on the right is an equivalence. Using the Segal condition, these squares can be identified with
$$\begin{tikzcd}
\prod\limits_{i\in I}\CC^{\otimes}(x_i)\arrow[d, "\mm{pr}"{left}]\arrow[r, "(g_{x_i})"] & \prod\limits_{i\in I}\DD^{\otimes}(x_i)\arrow[d, "\mm{pr}"] & & \prod\limits_{i\in I}\CC^{\otimes}(x_i)\arrow[d, "\mm{pr}"{left}] & \prod\limits_{i\in I}\DD^{\otimes}(x_i)\arrow[d, "\mm{pr}"]\arrow[l, "(f_{x_i})"{above}]\\
\prod\limits_{j\in J}\CC^{\otimes}(x_j)\arrow[r, "(g_{x_j})"{below}] & \prod\limits_{j\in J}\DD^{\otimes}(x_j) & & \prod\limits_{j\in J}\CC^{\otimes}(x_j) & \prod\limits_{j\in J}\DD^{\otimes}(x_j)\arrow[l, "(f_{x_j})"{below}]\arrow[lu, Rightarrow, shorten=0.8em, xshift=-0.6em, yshift=0.2em]
\end{tikzcd}$$
where the vertical functors are projections associated to an inclusion of finite sets $J\subseteq I$. But for such projections, the Beck--Chevalley transformation is always an equivalence (since the unit and counit maps can be computed in each factor). 
\end{proof}

By considering morphism categories in the statement of Theorem \ref{thm:monoidal adjunctions}, we find:

\begin{corollary}\label{propaintext}
Given an $\infty$-operad $\OO$ and two $\OO$-monoidal
$\infty$-categories $\C$ and $\D$, taking adjoints
gives a canonical equivalence between the \icat{} of oplax $\OO$-monoidal left adjoint functors $\C \rightarrow \D$ and the opposite of the \icat{} of lax $\OO$-monoidal right adjoint functors.\qed
\end{corollary}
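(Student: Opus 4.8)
The plan is to obtain \cref{propaintext} directly from \cref{thm:monoidal adjunctions} by passing to morphism \icats{}. Recall that \cref{thm:monoidal adjunctions} gives an equivalence of \itcats{}
\[ \Adj\colon \OMonCat^{\lax, \mm{R}} \isoto \Big(\OMonCat^{\oplax, \mm{L}}\Big)^{(1, 2)\mm{-}\op} \]
whose action on objects sends the cocartesian fibration $\C^{\otimes} \to \OO$ associated to an $\OO$-monoidal \icat{} $\C$ to the cartesian fibration $\C_{\otimes} \to \OO^{\op}$ classifying the same functor $\OO \to \Cat$. In particular, fixing two $\OO$-monoidal \icats{} $\C$ and $\D$, the functor $\Adj$ identifies the objects of $\OMonCat^{\lax, \mm{R}}$ given by $\C$ and $\D$ with the objects of $\OMonCat^{\oplax, \mm{L}}$ determined by the same functors $\OO \to \Cat$.

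First I would apply the morphism-\icat{} functor $\Map(\D, \C)$ to the displayed equivalence. Since an equivalence of \itcats{} induces equivalences on all morphism \icats{}, and $\Adj$ fixes the objects $\C$ and $\D$, this yields an equivalence
\[ \Map_{\OMonCat^{\lax, \mm{R}}}(\D, \C) \;\simeq\; \Map_{(\OMonCat^{\oplax, \mm{L}})^{(1, 2)\mm{-}\op}}(\D, \C) \]
of \icats{}, where the left-hand side is by definition the \icat{} of lax $\OO$-monoidal right adjoint functors $\D \to \C$. To unwind the right-hand side I would use that the $(1,2)$-op reverses both $1$- and $2$-morphisms: reversing $1$-morphisms turns a morphism $\D \to \C$ into a morphism $\C \to \D$, while reversing $2$-morphisms replaces the morphism \icat{} by its opposite, so
\[ \Map_{(\OMonCat^{\oplax, \mm{L}})^{(1, 2)\mm{-}\op}}(\D, \C) \;\simeq\; \Map_{\OMonCat^{\oplax, \mm{L}}}(\C, \D)^{\op}, \]
which is the opposite of the \icat{} of oplax $\OO$-monoidal left adjoint functors $\C \to \D$ (with $\C$ and $\D$ regarded as objects of $\OMonCat^{\oplax, \mm{L}}$ via the associated cartesian fibrations $\C_{\otimes}, \D_{\otimes} \to \OO^{\op}$). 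Combining the two displays and passing to opposite \icats{} gives the asserted equivalence between the \icat{} of oplax $\OO$-monoidal left adjoint functors $\C \to \D$ and the opposite of the \icat{} of lax $\OO$-monoidal right adjoint functors $\D \to \C$; that this equivalence is implemented by ``taking adjoints'' is precisely the description of $\Adj$ on morphisms in terms of the Beck--Chevalley construction established in \cref{beck-chevalley}.

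There is no real difficulty beyond \cref{thm:monoidal adjunctions} itself, so the step I would be most careful with is the variance bookkeeping above: tracking which of $\C$ and $\D$ plays the role of source and which of target on each side, and keeping track of the extra opposite on morphism \icats{} forced by the $(1,2)$-op. The one input worth double-checking --- but which is part of the statement of \cref{thm:monoidal adjunctions} --- is that $\Adj$ genuinely fixes the chosen objects; granting that, the corollary is purely formal.
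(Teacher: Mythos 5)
Your proposal is correct and is exactly the paper's argument: the paper deduces \cref{propaintext} in one line by ``considering morphism categories in the statement of Theorem \ref{thm:monoidal adjunctions}'', which is precisely the passage to mapping \icats{} and the $(1,2)$-op bookkeeping you carry out. Your extra care with the variance and the reference to \cref{beck-chevalley} for the ``taking adjoints'' description only make explicit what the paper leaves implicit.
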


As another application of our machinery we have the following result. Recall the $\infty$-operad $\mathbb{RM}\mathrm{od}$, defined in \cite[Section 4.2.1]{HA}, which encodes the data of an $\mathbb{E}_1$-algebra equipped with a right module.

\begin{proposition}\label{prop:monoidalinternalhom}
Let $\OO$ be an $\infty$-operad and let $\C$ be an $(\mathbb E_1 \otimes \OO)$-monoidal $\infty$-category, and $D$ a right $\OO$-monoidal module over it, i.e. the pair $(\C,D)$ is equipped with the structure of an $\mathbb{RM}\mathrm{od} \otimes \OO$-algebra. Suppose further that the action of $\C$ on $D$ is colourwise closed, that is the action $- \otimes c \colon D(x)  \rightarrow D(x)$ admits a right adjoint  $[c,-] \colon D(x) \rightarrow D(x)$ for every colour $x\in O_{\langle 1\rangle}$ and $c \in \C(x)$. 

Then the mapping object functors $[-,-] \colon D(x)^\op\times D(x) \rightarrow C(x)$  admit a canonical lax $\OO$-monoidal refinement.
\end{proposition}

Observing that every $\mathbb E_1\otimes \OO$-monoidal $\infty$-category acts on itself $\OO$-monoidally (via the map $\mathbb{RM}\mathrm{od} \rightarrow \mathbb E_1$ from \cite[Remark 4.2.1.5]{HA}) and that $\mathbb E_1 \otimes \mathbb E_n \simeq \mathbb E_{n+1}$ for $1 \leq n \leq \infty$, see \cite[Theorem 5.1.2.2]{HA}, we obtain:

\begin{corollary}\label{cor::monoidalinternalhom}
Let $\OO$ be an $\infty$-operad and $\C$ an $(\mathbb E_1 \otimes \OO)$-monoidal $\infty$-category such that the $\mathbb{E}_1$-monoidal categories $\C(x)$ are right closed for every colour $x\in O_{\langle 1\rangle}$.  Then the mapping object functors $[-,-] \colon \C(x)^{\op} \times \C(x) \rightarrow \C(x)$ admit a canonical lax $\OO$-monoidal refinement. In particular if $\C$ is a closed $\mathbb E_{n+1}$-monoidal \icat{} for some $1 \leq n\leq \infty$, then $[-,-] \colon \C^\op \times \C \rightarrow \C$ carries a canonical lax $\mathbb E_n$-monoidal refinement.\qed
\end{corollary}

For $n = \infty$ such a lax symmetric monoidal refinement was first established by Hinich in \cite[Section A.5]{Hinich} using different means; his construction most certainly agrees with ours, but let us refrain from attempting a formal comparison in this paper.

\begin{proof}[Proof of Proposition \ref{prop:monoidalinternalhom}]
Let $\C^\otimes,D^\otimes,(D^\op)^\otimes \rightarrow \OO$ be the cocartesian fibrations of operads witnessing the $\OO$-monoidal structures of $\C, D$ and $D^\op$ respectively. 

We follow the same strategy as in Example \ref{ex:monoidalclosed}, and so wish to construct the morphism
\[\begin{tikzcd}
(D^\op)^\otimes \times_{\OO} \C^\otimes \ar[rd,"\pr_1"'] && \ar[ll,"{(}\pr_1{,}{[}{-}{,}{-}{]}{)}"'] (D^\op)^\otimes \times_{\OO} D^\otimes \ar[ld,"\pr_1"] \\
& (D^\op)^\otimes &
\end{tikzcd}\]
in $\MonCat_{(D^{\op})^\otimes}^{\lax, \mm{R}}$ from its counterpart in $\MonCat_{D_{\otimes}}^{\oplax, \mm{L}}$ using Theorem \ref{thm:monoidal adjunctions}. By definition of the tensor product of operads we can regard $(\C,D)$ as an $\mathbb{RM}\mathrm{od}$-algebra in $\OMonCat$, i.e. an $\mathbb E_1$-algebra in $\OMonCat$ equipped with a right module. So in particular the action
\[\otimes \colon D \times C \longrightarrow D.\]
is itself a (strongly) $\OO$-monoidal functor. Applying cartesian unstraightening, we obtain a map
\[\mu \colon \D_\otimes \times_{\OO} \C_\otimes \longrightarrow \D_\otimes, \]
with which we form
\[\begin{tikzcd}
\D_\otimes \times_{\OO} \C_\otimes \ar[rd,"\pr_1"'] \ar[rr,"(\pr_1{,}\mu)"] &&  D_\otimes \times_{\OO} D_\otimes \ar[ld,"\pr_1"] \\
& D_\otimes. &
\end{tikzcd}\]
By Lemma \ref{lem:monoidal adjoint} this indeed defines a morphism in $\MonCat_{D_\otimes}^{\oplax, \mm{L}}$, which dualises as desired by \cref{cor:identifyingadjoints}.
\end{proof}

\section{Parametrised units and counits}\label{sec:paraunit}

Consider two symmetric monoidal \icats{} $\CC^\otimes$ and $\DD^\otimes$ and  a lax symmetric monoidal right adjoint $g\colon\CC^{\otimes} \rightarrow \DD^{\otimes}$, with left adjoint $f$. Given any finite collection of objects $\{y_i\}$ in $\DD$, we have a canonical comparison map $\bigotimes y_i \rightarrow g\big(\bigotimes f(y_i)\big)$ given by
\[\begin{tikzcd}\bigotimes y_i \arrow[r, "\eta"] & gf\big(\bigotimes y_i\big) \arrow[r, "g(\mu)"] & g\big(\bigotimes f(y_i)\big),\end{tikzcd}\] 
where $\eta$ is the unit of the adjunction $f\dashv g$ and $\mu$ is given by the oplax monoidal structure of $f$. This is the prototypical example of the parametrised unit morphism that we will consider in this section. 

The goal of \S\ref{subsec:paracounit1} is to make explicit the functoriality of these maps, and in \S\ref{subsec:paraunits2} we similarly produce a functor extracting adjoint morphisms in a parametrised adjunction.

\subsection{Parametrised (co)units}\label{subsec:paracounit1}

Let us consider a parametrised left adjoint $f$ over $B$ with parametrised right adjoint $g$ from \cref{thm:dualizing lax natural transformations}
\[
  \begin{tikzcd}
    D \arrow{dr} \arrow{rr}{f} & & C
    \arrow{dl} \\
    & B
  \end{tikzcd}
  \qquad\qquad
  \begin{tikzcd}
C^\vee \arrow{dr} \arrow{rr}{g} & & D^\vee
\arrow{dl} \\
& B^{\op};
\end{tikzcd}
\]
here and in the remainder of this section we will again use $(-)^\vee$ to denote the cocartesian fibration dual to a cartesian fibration to ease notation.

Given any edge $\beta\colon b\rightarrow b'$ in $B$ and any $y\in D_{b'}$, the natural map $\lambda_\beta(y)\colon f_b\beta^*(y)\rt \beta^*f_{b'}(y)$ (dual to Construction \ref{con:lax natural}) is adjoint to a map
$$\begin{tikzcd}[column sep=3pc]
\eta_\beta(y)\colon \beta^*(y)\arrow[r] & g_bf_b\beta^*(y)\arrow[r, "g_b\lambda_\beta(y)"] & g_b\beta^*f_{b'}(y).
\end{tikzcd}$$
One can also obtain $\eta_{\beta}(y)$ from the Beck--Chevalley transformation (\cref{def:BC}) as
$$\begin{tikzcd}[column sep=3pc]
\eta_{\beta}(y)\colon \beta^*(y)\arrow[r] & \beta^*g_{b'}f_{b'}(y)\arrow[r, "\rho_\beta(f_{b'}(y))"] & g_b\beta^*f_{b'}(y).
\end{tikzcd}$$
The goal of this section is to describe the fuctoriality of this unit morphism $\eta_\beta(y)$ in $\beta$ and $y$. To motivate the functoriality in $\beta$, let us consider the following:

\begin{example}\label{ex:unit conjugation}
Let $\beta\colon b\rt b'$, $\gamma\colon b'\rt b''$ and $y\in D_{b''}$. Then we claim that 
$$
\eta_{\gamma\beta}(y)\simeq g_b\beta^*(\lambda_\gamma(y))\circ \eta_\beta(\gamma^*(y))
$$
and dually that
$$\eta_{\beta\alpha}(z)\simeq \rho_\alpha(\beta^*f_{b''}(z))\circ \alpha^*\eta_{\beta}(z)$$ for any $\alpha \colon a\rightarrow b$ in $B$ and $z\in D_{b'}$.
To see the first identification consider the diagram
$$\begin{tikzcd}[column sep=2.5pc]
f_b\beta^*\gamma^*(y)\arrow[r, "{\lambda_{\beta}(\gamma^*(y))}"] & \beta^*f_{b'}\gamma^*(y)\arrow[r, two heads]\arrow[d, "\beta^*\lambda_{\gamma}(y)"{swap}] & f_{b'}\gamma^*(y)\arrow[d, "{\lambda_\gamma(y)}"]\\
& \beta^*\gamma^*f_{b''}(y)\arrow[r, two heads] & \gamma^*f_{b''}(y)\arrow[r, two heads] & f_{b''}(y)
\end{tikzcd}$$
in $C$. The top row factors the image under $f$ of the cartesian arrow $\beta^* \gamma^*(y)\rt \gamma^*(y)$ into $\lambda_{\beta}(\gamma^*(y))$, followed by a cartesian morphism (cf.\ Construction \ref{con:lax natural}). Notice that the total composite along the top is the image under $f$ of the cartesian arrow $\beta \gamma^*(y)\rt y$, and that following the bottom gives a factorisation of this into a fibrewise followed by a cartesian morphism. Therefore we conclude that $\lambda_{\gamma\beta}(y)\simeq \beta^*\lambda_{\gamma}(y)\circ \lambda_\beta(\gamma^*(y))$. Applying $g_b$ and precomposing with the unit of the adjoint pair $(f_b, g_b)$ gives the claim.

The second identification arises from a dual analysis using the description of $\eta_\alpha$ in terms of the mate $\rho_\alpha$.
\end{example}

Example \ref{ex:unit conjugation} indicates how the arrow $\eta_\beta$ depends on $\beta$ via both pre- and postcomposition. 
Our goal will now be to make this precise by proving the following:
\begin{theorem}\label{thm:parametrised unit}
Let $f\colon D\rt C$ be a parametrised left adjoint over $B$ with parametrised right adjoint $g$. Then there are canonical diagrams
$$\begin{tikzcd}
D \times_{B} \TwL(B)  \arrow[r,"\eta"] \ar[d] & 
    \Ar(D^{\vee}) \ar[d] && C^{\vee}
    \times_{B^{\op}} \TwR(B^{\op}) \arrow[r,"\epsilon"] \ar[d] & \Ar(C) \ar[d]\\
    B^\op \ar[r,"\mathrm{const}"] & \Ar(B^\op)  && B \ar[r,"\mathrm{const}"] & \Ar(B)
\end{tikzcd}$$
whose restrictions to $D\times_B\{\id_b\}$ and $C^\vee \times_{B^{\op}} \{\id_b\}$ for some $b$ in $B$ are equivalent to the unit and counit of the adjoint pair $(f_b, g_b)$ respectively.
\end{theorem}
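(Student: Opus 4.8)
The plan is to construct the unit diagram $\eta$ directly from the fibrational description of the parametrised adjunction developed in \S\ref{subsec:paraadj}, and to obtain $\epsilon$ by the dual argument (applying the $\eta$-construction to the opposite adjunction, using that passing to opposites swaps left and right adjoints and swaps $\TwL$ with $\TwR$). So I would focus on the unit. Recall that $f$ corresponds to a curved orthofibration $p\colon E\to [1]\times B$ which, viewed over $B\times[1]$, is a Gray fibration, and which is moreover bicartesian over $[1]$ fibrewise over $B$ (this is exactly \cref{lem:lortho with adj}). Its cocartesian straightening over $[1]$ recovers $f\colon D\to C$, its cartesian straightening over $[1]$ (after dualising over $B$) recovers $g\colon C^\vee\to D^\vee$. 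The key observation is that \cref{con:unit-counit}, which builds the unit square $[1]\times[1]\to X$ out of a bicartesian fibration $X\to[1]$ by taking cocartesian transport of the fibre inclusion followed by cartesian transport, is entirely functorial in the base: it is engineered out of \cref{con:cocartesian push}, and that construction applies verbatim with $[1]$ replaced by a family. Concretely, I would run \cref{con:cocartesian push} for the postcomposition cocartesian/cartesian fibrations $\Fun(I,E)\to\Fun(I,[1]\times B)$ and thereby produce, from the Gray fibration $p$, a morphism of the desired shape.

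In more detail, the plan for constructing $\eta$: the twisted arrow category $\TwL(B)$ carries the left fibration $(s,t)\colon \TwL(B)\to B^\op\times B$, and the object $D\times_B\TwL(B)$ should be formed along $t\colon\TwL(B)\to B$. Given a point $(y,\beta\colon b\to b')$ of this pullback, with $y\in D_b$, one should produce an arrow $\eta_\beta(y)\colon \beta^*(y)\to g_b\beta^*f_{b'}(y)$ in $D^\vee_{b'}$ --- wait, more precisely the formula in the text before \cref{thm:parametrised unit} has the variances such that $\eta_\beta(y)$ for $y\in D_{b'}$ lands in $D_b$, so the correct pullback is along the \emph{source} map of $\TwL(B)$ (equivalently one works with $\TwR(B)$ of the dual, matching the appearance of $\TwR(B^\op)$ on the counit side). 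The cleanest route is: restrict the Gray fibration $p\colon E\to B\times[1]$ along $\TwL(B)\to B$ (via the appropriate leg), getting a Gray fibration over $\TwL(B)\times[1]$; over each object $\beta$ of $\TwL(B)$ this is the bicartesian fibration for $(f_b\beta^*\dashv \ldots)$-type composite, and the unit square construction \cref{con:unit-counit} globalises because cocartesian and cartesian transports (\cref{con:cocartesian push}) are functorial. This yields a functor $\TwL(B)\times[1]\times[1]\to E$, whose appropriate face is the map $D\times_B\TwL(B)\to\Ar(D^\vee)$ we want; the bottom square of the asserted diagram records that everything lies over the constant arrow $\mathrm{id}_b$, which is automatic since the transport steps used only lift identity arrows in $[1]$ (not in $B$).

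The main obstacle, I expect, is bookkeeping the variances and the two passages to duals simultaneously: the parametrised left adjoint lives over $B$ but its right adjoint lives over $B^\op$, $\TwL(B)$ fibres over $B^\op\times B$, and the target is $\Ar(D^\vee)$ where $D^\vee$ is a cocartesian fibration over $B^\op$. Getting the source and target maps of $\TwL(B)$ matched correctly to the two fibre-inclusion/transport steps --- so that the fibrewise restriction really is the unit of $(f_b,g_b)$ and not something twisted by a transport functor --- is the delicate point. The conceptually honest way to handle this is to prove the statement first purely in the two-variable fibrational language, i.e.\ upgrade \cref{con:unit-counit} to a statement about Gray fibrations over $B\times[1]$ using only \cref{lem:cartinloccartfib}, \cref{cocartgray} and the interpolating-diagram machinery of \cref{rmk:interpol Gray}, and only at the very end translate back through the straightening equivalences \cref{cor:locorthstr} and \cref{thm:dual lortho-gray}. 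The fibrewise identification with the classical unit then follows from \cref{ex:cocartesian push} and \cref{con:unit-counit} applied with $B=\ast$, together with naturality of all the constructions in $B$; the compatibility with composition recorded in \cref{ex:unit conjugation} serves as a consistency check that the globalised $\eta$ is the right object. I do not anticipate any genuinely new difficulty beyond this coherence bookkeeping, since every ingredient (postcomposition (co)cartesian fibrations, transport, duality over one variable) is already available.
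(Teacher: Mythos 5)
Your high-level plan --- globalise \cref{con:unit-counit} by performing a cocartesian transport along $[1]$ followed by a cartesian one, with the twisted arrow category entering to repair the variance --- matches the shape of the paper's \cref{const:paraunit}. But there is a genuine gap at the central step, and it is precisely the obstruction the paper has to work around. After the first step (the cocartesian pushforward $i_{0,\coc}\colon X_0\times[1]\to X$ of the fibre inclusion along $p_2$, where $p\colon X\to B\times[1]$ is the curved orthofibration for $f$), the second step cannot be carried out in $X$: the fibrewise cartesian lifts over $[1]$ supplied by the adjunctions $f_b\dashv g_b$ are \emph{not} $p$-cartesian in $X$, so $X\to[1]$ is not globally a cartesian fibration. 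The cartesian transport must instead be performed in the dual $X^\vee\to[1]\times B^{\op}$ (the orthofibration for $g$), which \emph{is} cartesian over $[1]$. The passage from $X$ to $X^\vee$ is dualisation over $B$, and this is exactly where your argument breaks: $i_{0,\coc}$ does not preserve cartesian morphisms over $B$ (it would only do so if $f$ itself preserved cartesian edges, i.e.\ if the mate squares commuted), so it does not induce a map between the duals. Saying that ``cocartesian and cartesian transports are functorial'' does not resolve this, because the two transports live in two different, mutually dual fibrations.

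The missing idea is the mechanism by which $\TwL(B)$ actually appears: one first extends $i_{0,\coc}$ along the unit $X_0\times[1]\to \Fcart{B}(X_0)\times[1]$ of the free cartesian fibration, obtaining a map $\overline{\imath}_{0,\coc}$ that \emph{does} preserve cartesian edges over $B$ by its universal property, and then dualises, using the identification $\Dualco\big(\Fcart{B}(E)\big)\simeq E\times_B\TwL(B)$ of \cref{lem:freedual}. This is why the source of $\eta$ is $D\times_B\TwL(B)$ rather than $D$. Your alternative of ``restricting the Gray fibration along $\TwL(B)\to B$'' produces a different object ($X\times_B\TwL(B)$ over $\TwL(B)\times[1]$, rather than the dual of a free fibration on $X_0$ over $B^{\op}$), and it does not remove the obstruction: the pulled-back fibration is still only fibrewise cartesian over $[1]$, so the second transport still cannot be performed there. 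Likewise, the suggestion to fall back on the interpolating-diagram machinery gives the correct morphism object-by-object (this is how \cref{beck-chevalley} is proved) but does not by itself assemble these into a functor out of $D\times_B\TwL(B)$; the free-fibration-then-dualise manoeuvre is what supplies that coherence. The fibrewise identification with the classical unit, and the reduction to \cref{con:unit-counit} when $B=\ast$, do go through as you describe once the construction is in place.
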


After some preliminaries we will produce the functors $\eta$ and $\epsilon$ in \cref{const:paraunit} below. We will refer to these functors as the \emph{parametrised unit} and \emph{counit}, respectively.

To prepare the construction, let us write 
$$
p=(p_1, p_2) \colon X \rt B\times [1], \qquad\qquad q=(q_1, q_2)\colon X^\vee\rt [1]\times B^{\op}
$$
for the \lortho{s} classified by $f\colon D\rt C$ and $g\colon C^\vee\rt D^\vee$ respectively. Recall from \cref{thm:dualizing lax natural transformations} that $\ol{q}=(q_2, q_1)$ is the Gray fibration dual to the \lortho{} $p$. In particular, 
\[
X_0\simeq D, \qquad\qquad X_1\simeq C, \qquad\qquad X^\vee_0\simeq D^\vee \qquad \text{and} \qquad X^\vee_1\simeq C^\vee.
\] 
Naively, one could try to imitate Construction \ref{con:unit-counit} and construct the unit map using a cocartesian transport of the fibre inclusion $X_0\hookrightarrow X$, followed by a cartesian transport. More precisely, since $p_2$ is a cocartesian fibration, we can form the cocartesian transport (\cref{con:cocartesian push})
\[\begin{tikzcd} i_{0,\coc} \colon X_{0} \times [1] \arrow[r] & X\end{tikzcd}\] 
along $p_2$ of the fibre inclusion $i_{0}\colon X_{0} \hookrightarrow X$; this takes $y \in X_{0, b}\simeq D_b$ to the cocartesian morphism $y \rt f_{b}(y)$. 
Dually, $q_1$ is a cartesian fibration and we can form the cartesian transport
\[\begin{tikzcd} j_{1,\cac} \colon X_{1}^{\vee} \times [1] \arrow[r] & X^\vee\end{tikzcd}\] 
along $q_1$ of the fibre inclusion $j_{1} \colon  X^\vee_{1} \hookrightarrow X^\vee$; this takes $x \in X_{b,1}^\vee\simeq C_b^\vee$ to the cartesian morphism $g_{b}(x) \rt x$.

To construct the unit as in \cref{con:unit-counit}, we would now like take the cartesian transport of $i_{0, \coc}$ (and dually the cocartesian transport of $j_{1, \cac}$ for the counit). This can be done \emph{fibrewise} over $b\in B$, but for a global construction we will need to replace $p\colon X\rt B\times [1]$ by its dual $q\colon X^\vee\rt [1]\times B$, which is a cartesian fibration over $[1]$. Here we run into a problem, however: $i_{0, \coc}$ is a functor between \lortho{s} which generally does \emph{not} preserve cartesian arrows in the $B$-direction and hence does not induce a map between the dual fibrations. Indeed, for $\beta\colon b\rt b'$ and a cartesian morphism $\tilde{\beta}\colon \beta^*y\rt y$ in $X_0$, the image of the cartesian arrow $(\tilde{\beta}, 1)$ in $X_0\times 1$ under $i_{0, \coc}$ is
$$
f_{b}(\beta^*y)\rt f_{b'}(y).
$$
This is cartesian for all $\beta$ and $y$ if and only if $f\colon C\rt D$ preserves cartesian morphisms over $B$.

To deal with this issue (and the dual issue for the counit map), we will first extend the functor $i_{0, \coc}$ to the \emph{free} cartesian fibration on $X_0\times [1]$ and then dualise over $B$. Let us therefore briefly recall the description of free fibrations from \cite[Section 4]{GHN}.

\begin{notation}
Given a functor $\phi \colon E \to B$ we write
  \[ \begin{tikzcd}\Fcoc{B}(E) := E \times_{B}
   \Ar(B)\arrow[r] & B; \quad (e, \phi(e) \to b')
    \arrow[r, mapsto] & b',\end{tikzcd}\] where the pullback is formed along evaluation at $0$
  and the map to $B$ is given by evaluation at $1$. Dually, define
  \[ \begin{tikzcd}\Fcart{B}(E) := E \times_{B}
    \Ar(B) \arrow[r] &  B; \quad (e, b \to \phi(e))
    \arrow[r, mapsto] & b,\end{tikzcd}\] 
  where the pullback is formed along evaluation at $1$
  and the map to $B$ is given by evaluation at $0$.
\end{notation}

We will need the following result from \cite[Theorem 4.5]{GHN}:

\begin{theorem}
  The natural maps
  \[\begin{tikzcd} E \arrow[r] & \Fcart{B}(E), &  E\ar[r] & \Fcoc{B}(E)\end{tikzcd}\]
  over $B$ induced by the constant diagram functor
  $B \rt \Ar(B)$, exhibit
  $\Fcart{B}(E)$ and $\Fcoc{B}(E)$ as the free
  cartesian and cocartesian fibrations on $\phi \colon E \rt
  B$, respectively. In other words, the functors
  \[\begin{tikzcd}\Fcart{B} \colon \cat{Cat}_{/ B} \arrow[r] &  \cart(B), & \Fcoc{B} \colon \cat{Cat}_{/ B} \arrow[r] & 
    \cocart(B)\end{tikzcd}\]
  are left adjoint to the forgetful functors
  $\cart(B)\rt \Cat_{/B}$ and $\cocart(B) \rt
  \Cat_{/ B}$. \qed
\end{theorem}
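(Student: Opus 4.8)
The plan is to prove the assertion for cartesian fibrations; applying $(-)^{\op}$, which exchanges $\Ar(B)$ with $\Ar(B^{\op})$ and $\ev_0$ with $\ev_1$ and intertwines $\cart(B)$ with $\cocart(B^{\op})$, then yields the statement for $\Fcoc{B^{\op}}$, hence for $\Fcoc{C}$ with $C$ arbitrary. Recall that $\Fcart{B}(E) = E \times_{B} \Ar(B)$ with the fibre product formed along $\ev_1 \colon \Ar(B) \to B$ and the structure map to $B$ given by $\ev_0$, so that an object is a pair $(e, \beta \colon b \to \phi(e))$ lying over $b$, and the natural functor $E \to \Fcart{B}(E)$ is $e \mapsto (e, \id_{\phi(e)})$, induced by the diagonal $B \to \Ar(B)$. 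There are then three things to check: that $\Fcart{B}(E) \to B$ is a cartesian fibration; that the natural functor lies over $B$, which is immediate; and that for every cartesian fibration $p \colon P \to B$ precomposition with the natural functor induces an equivalence $\Map_{\cart(B)}(\Fcart{B}(E), P) \isoto \Map_{\Cat_{/B}}(E, P)$, naturally in $E \in \Cat_{/B}$.

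For the first point the relevant input is the standard fact that $\ev_0 \colon \Ar(B) \to B$ is a cartesian fibration whose cartesian edges are precisely the commutative squares whose $\ev_1$-component (the map on targets) is an equivalence. Combining this with Lurie's transitivity criterion for (co)cartesian morphisms \cite[2.4.1.3]{HTT}, one shows that a morphism of $\Fcart{B}(E) = E \times_{B} \Ar(B)$ is cartesian over $B$ exactly when its $E$-component is an equivalence and its $\Ar(B)$-component is $\ev_0$-cartesian; concretely, the cartesian lift of $g \colon b' \to b$ ending at $(e, \beta \colon b \to \phi(e))$ is the morphism $(e, \beta g) \to (e, \beta)$ with $E$-component $\id_e$. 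In particular every object $(e, \beta)$ admits a cartesian morphism $(e, \beta) \to (e, \id_{\phi(e)})$ whose target lies in the image of the natural functor.

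The main step is the universal property. Given a cartesian fibration $p \colon P \to B$ and a functor $h \colon E \to P$ over $B$, one extends $h$ to a cartesian-morphism-preserving functor $\bar h \colon \Fcart{B}(E) \to P$ that sends $(e, \beta \colon b \to \phi(e))$ to the source of a $p$-cartesian lift of $\beta$ ending at $h(e) \in P_{\phi(e)}$. The real content is to make this coherent: one uses that a cartesian fibration $p$ carries a natural ``cartesian transport'' functor $\Ar(B) \times_{B, \ev_1} P \to P$ over $\ev_0$ selecting such lifts, and defines $\bar h$ as the composite $\Fcart{B}(E) = E \times_{B} \Ar(B) \to \Ar(B) \times_{B} P \to P$, feeding in $h$ on the $E$-factor. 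By construction $\bar h$ preserves cartesian edges and restricts to $h$ along the natural functor. Conversely, since every object of $\Fcart{B}(E)$ admits a cartesian morphism to an object in the image of the natural functor, any cartesian-morphism-preserving functor out of $\Fcart{B}(E)$ is determined, up to a contractible space of choices, by its restriction along that functor; this gives full faithfulness of the restriction map, and together with the surjectivity just established the asserted equivalence on mapping spaces follows. Naturality in $E$ is then formal, producing the adjunction $\Fcart{B} \dashv (\text{forget})$, and dualising gives $\Fcoc{B} \dashv (\text{forget})$.

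The main obstacle is precisely the coherent construction of the cartesian-transport functor and the ensuing verification that $\bar h$ is an honest functor rather than merely a rule on objects and morphisms; this is exactly the technical heart supplied in \cite[\S 4]{GHN}, while the remaining arguments are formal manipulations of pullbacks and (co)cartesian edges.
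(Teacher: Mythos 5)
The paper does not actually prove this statement: it is quoted verbatim from \cite[Theorem 4.5]{GHN} and marked \qed, so there is no internal proof to compare against. Judged on its own terms, your argument is essentially the standard one (and close in spirit to the proof in \cite[\S 4]{GHN}): the reduction of the cocartesian case to the cartesian one by taking opposites is fine, your identification of the $\ev_0$-cartesian edges of $\Ar(B)$ and hence of the cartesian edges of $\Fcart{B}(E)=E\times_B\Ar(B)$ is correct (one clean way to package it is to observe that $\Fcart{B}(E)$ is the pullback of the bifibration $(s,t)\colon \Ar(B)\to B\times B$ along $\id_B\times\phi$, so that it is cartesian over the first factor in the sense of \S 2.2), and the construction of the extension $\overline{h}$ via a coherent cartesian-transport functor $\Ar(B)\times_{B,\ev_1}P\to P$ is exactly the right move; such a transport functor exists by the dual of the paper's Construction 3.2.x, or because restricting $\Ar(P)\to\Ar(B)\times_B P$ to cartesian arrows yields a trivial fibration. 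The one place where your write-up is thinner than a complete proof is the injectivity half of the universal property: the assertion that a cartesian-edge-preserving functor out of $\Fcart{B}(E)$ is ``determined, up to a contractible space of choices, by its restriction'' is the conclusion one wants, not an argument for it. To close this you should either verify the two triangle identities for the unit $e\mapsto(e,\id_{\phi(e)})$ and the counit $T\colon\Fcart{B}(P)\to P$ --- the nontrivial one being that for a cartesian-edge-preserving $F$ the canonical comparison $\overline{F|_E}\to F$ built from cartesian lifts is an equivalence --- or run a lifting argument in marked simplicial sets; this is precisely the coherence work carried out in \cite[\S 4]{GHN}. With that step supplied, the proof is complete.
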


\begin{remark}\label{free:func}
  Consider a commutative triangle
  \[
    \begin{tikzcd}
      E \arrow{rr}{f} \arrow{dr}[swap]{\phi} & & E'
      \arrow{dl}{p} \\
       & B,
    \end{tikzcd}
  \]
  where $p$ is a cartesian fibration. We can extend this uniquely to a
  diagram
  \[
    \begin{tikzcd}
      E \arrow{r} \arrow[bend left=40]{rr}{f} \arrow{dr}[swap]{\phi}
      & \Fcart{B}(E) \arrow{d} \arrow{r}{\overline{f}} & E'
      \arrow{dl}{p} \\
       & B,
    \end{tikzcd}
  \]
  where $\overline{f}$ preserves cartesian morphisms. Informally, the functor
  $\overline{f}$ is given by
  \[ \big(e, b \stackrel{\beta}{\rt} \phi(e)\big) \longmapsto \beta^{*}f(e),\]
  where $\beta^{*}f(e) \to f(e)$ is a cartesian morphism in $E'$ over $\beta$.
\end{remark}
 
Viewing $X_0 \times [1]$ as an $\infty$-category over $B$ via the functor 
$$\begin{tikzcd}
X_0\times [1] \arrow[r] & X_0 \arrow[r] & B,
\end{tikzcd}$$
we can then extend $i_{0,\coc}$ to the free cartesian fibration over $B$ as in Remark \ref{free:func}. Similarly we can extend $j_{1,\cac}$ to the free cocartesian fibration over $B^{\op}$, giving
  \[\begin{tikzcd}[row sep=0.4pc, column sep=1.4pc] \overline{\imath}_{0,\coc} \colon \Fcart{B}\big(X_{0} \times
    [1]\big) \simeq \Fcart{B}(X_{0}) \times
    [1] \arrow[r] & X,\\
    \overline{\jmath}_{1,\cac} \colon
    \Fcoc{B^{\op}}\big(X^\vee_{1} \times [1]\big) \simeq
    \Fcoc{B^{\op}}(X^\vee_{1}) \times [1] \arrow[r] & X^\vee.\end{tikzcd}\]
These can also be viewed as functors into arrow \icats{}, informally given by
\[\begin{tikzcd}[row sep=0.1pc]
\Fcart{B}(X_0)\arrow[r] & \Ar(X); \qquad \big(y \in X_{b',0}, b
    \xto{\beta} b'\big) \arrow[r, mapsto] &  \big(\beta^{*}y \to \beta^{*}f_{b'}y\big),\\
   \Fcoc{B^{\op}}(X^\vee_1)\arrow[r] & \Ar(X^\vee);\qquad \big(x \in
    X^{\vee}_{b,1}, b \xto{\beta} b'\big) \arrow[r, mapsto] & \big(\beta^{\op}_!g_{b}x \to
    \beta^{\op}_!x\big).
\end{tikzcd}\]

By construction the functors $\overline{\imath}_{0,\coc}$ and $\overline{\jmath}_{1,\cac}$ preserve cartesian morphisms over $B$ and cocartesian morphisms over $B^{\op}$, respectively.  Therefore they induce functors between the dual (co)cartesian fibrations, and we obtain functors
\[\begin{tikzcd}[row sep=0.4pc]
 \Dualco(\overline{\imath}_{0,\coc}) \colon \Dualco\big(\Fcart{B}(X_{0}) \times
    [1]\big)\simeq \Dualco\big(\Fcart{B}(X_{0})\big) \times
    [1]\arrow[r] & 
    X^\vee,\\
    \Dualcart(\overline{\jmath}_{1,\cac}) \colon \Dualcart\big(\Fcoc{B^{\op}}(X^\vee_{1}) \times [1]\big)\simeq \Dualcart\big(\Fcoc{B^{\op}}(X^\vee_{1})\big) \times [1] \arrow[r] & X.
 \end{tikzcd}\]
The first equivalence in the two lines follows from the fact that the dual of a constant cocartesian fibration is a constant cartesian fibration. We now note that the domains of the functors $\Dualco(\overline{\imath}_{0,\coc})$ and $\Dualcart(\overline{\jmath}_{1,\cac})$ admit further simplification, by means of the following extension of the duality between arrow and twisted arrow $\infty$-categories from \cite[Lemma 3.1.3]{symmseq}:

\begin{lemma}\label{lem:freedual}
  For $\phi \colon E \rt B$, the duals of the free
  fibrations on $\phi$ can be identified as
  \[ \Dualco\big(\Fcart{B}(E)\big) \simeq E
    \times_{B} \TwL(B) \to B^{\op},\]
  \[ \Dualcart\big(\Fcoc{B}(E)\big) \simeq E
    \times_{B} \TwR(B) \to B^{\op}\]
    naturally in $\phi$. \qed
\end{lemma}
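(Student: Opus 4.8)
The plan is to reduce both identifications to the duality between arrow and twisted arrow categories recorded in Example \ref{ex:twardualar}, by a base-change argument. The two statements are formally dual — exchanging $\TwL$ with $\TwR$, cartesian with cocartesian, and the two factors of $B\times B$ — so I will carry out the first and then indicate the change of decoration needed for the second; I will also tacitly read the target of the second identification as $E\times_{B}\TwR(B)$ rather than $B\times_{B}\TwR(B)$, which the argument below is what proves.

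The first step is to recognise the free fibrations as base changes of the arrow bifibration. Remembering the projection to $E$ alongside the structure map, $\Fcart{B}(E)=\Ar(B)\times_{B}E$ is precisely the pullback of the bifibration $(s,t)\colon\Ar(B)\to B\times B$ along $\id_{B}\times\phi\colon B\times E\to B\times B$, matching $\phi$ against $t=\ev_{1}$; its map to the first factor $B$ is then $\ev_{0}=s$, and forgetting the $E$-coordinate recovers the cartesian fibration $\Fcart{B}(E)\to B$ of the statement. Since $(s,t)$ is cartesian over its first factor, so is this pullback, so $\Fcart{B}(E)\to B\times E$ lies in $\LCart(B,E)$. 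Dually, $\Fcoc{B}(E)=E\times_{B}\Ar(B)$ is the pullback of $(s,t)$ along $\phi\times\id_{B}$, matching $\phi$ against $s=\ev_{0}$, and is cocartesian over the second factor.

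The second step is to dualise. Through the equivalences $\LCart(B,-)\simeq\Fun(B^{\op},\Cat/(-))\simeq\LCocart(B^{\op},-)$ of \cref{rmk:lcocstr}, dualisation over the first variable is the identity on the intervening $\Fun(B^{\op},\Cat/(-))$, so it commutes with base change in the second variable; moreover these are the equivalences underlying the dualisations of \cref{thm:dual lortho-gray}, so Example \ref{ex:twardualar} says exactly that dualising the first variable of $(s,t)\colon\Ar(B)\to B\times B$ produces $(s,t)\colon\TwL(B)\to B^{\op}\times B$. Pulling this identification back along $\id_{B}\times\phi$ therefore gives
\[ \Dualco\big(\Fcart{B}(E)\big)\;\simeq\;\big(\TwL(B)\to B^{\op}\times B\big)\times_{B}E\;=\;E\times_{B}\TwL(B)\;\longrightarrow\;B^{\op}\times E, \]
and forgetting the $E$-coordinate — which is compatible with both dualisations, as each is computed by straightening over $B$, resp.\ $B^{\op}$ — yields $\Dualco(\Fcart{B}(E))\simeq E\times_{B}\TwL(B)\to B^{\op}$. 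Running the same argument with the two factors interchanged, and using the companion statement $\Dualcart(\Ar(B)\to B\times B)\simeq\TwR(B)\to B\times B^{\op}$ of Example \ref{ex:twardualar}, produces $\Dualcart(\Fcoc{B}(E))\simeq E\times_{B}\TwR(B)\to B^{\op}$. Naturality in $\phi$ is immediate, since Example \ref{ex:twardualar} and the formation of pullbacks are both natural.

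The main point requiring care is the bookkeeping of conventions: one must pin down which of $s,t$ is used in each pullback, which factor of $B\times B$ each dualisation acts on, and confirm that the normalisations fixed in \cref{thm:dual lortho-gray} really restrict on $\Ar(B)\to B\times B$ to the identifications of Example \ref{ex:twardualar}. None of this is deep, but the variances in the displayed formulas — in particular the fact that $E\times_{B}\TwL(B)$ uses the target map while $E\times_{B}\TwR(B)$ uses the source map — depend on getting every one of these choices aligned.
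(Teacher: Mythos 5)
Your proof is correct. Note, though, that the paper itself does not prove this lemma at all: it is stated with a \qed as an ``extension of the duality between arrow and twisted arrow categories'' and deferred to an external reference (Lemma 3.1.3 of the symmetric-sequences paper). Your argument is therefore a genuinely different, self-contained route built entirely from the machinery of \cref{sec:ortho}: you exhibit $\Fcart{B}(E) \to B \times E$ (resp.\ $\Fcoc{B}(E) \to E \times B$) as the base change of the bifibration $(s,t)\colon \Ar(B) \to B \times B$ along $\id_B \times \phi$ (resp.\ $\phi \times \id_B$), invoke \cref{ex:twardualar} to dualise $\Ar(B)$ itself, and then use that one-variable dualisation --- being the identity on the middle term $\Fun(B^{\op},\Cat/(-))$ of \cref{rmk:lcocstr} --- commutes with base change in the spectator variable and with forgetting the projection to $E$. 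All of these compatibilities are of the same standard kind the paper already uses (e.g.\ in \cref{lem:dualisation of bicart}), so nothing is missing; what your approach buys is that the lemma becomes a corollary of \cref{ex:twardualar} rather than an imported black box, at the cost of some convention bookkeeping which you handle correctly (including reading the evident typo $B \times_B \TwR(B)$ in the statement as $E \times_B \TwR(B)$, and matching $\phi$ against $t$ for $\TwL$ but against $s$ for $\TwR$, consistent with the paper's later uses in \cref{const:paraunit} and \cref{cor:Twrdualloc}).
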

We now have all the ingredients to construct the parametrised unit and counit from \cref{thm:parametrised unit}:
\begin{construction}\label{const:paraunit}
As before, let $p=(p_1, p_2)\colon X\rt B\times [1]$ and $q=(q_1, q_2)\colon X^\vee\rt [1]\times B^{\op}$ be the orthofibrations classified by $f$ and $g$. Lemma \ref{lem:freedual} now implies that the functors $\overline{\imath}_{0,\coc}$ and $\overline{\jmath}_{1,\cac}$ have duals
\[ \begin{tikzcd}[row sep=0.3pc]
\Dualco(\overline{\imath}_{0,\coc}) \colon X_{0}
    \times_{B} \TwL(B) \times [1]\arrow[r] & X^\vee,\\
\Dualcart(\overline{\jmath}_{1,\cac}) \colon X^\vee_{1}
  \times_{B^{\op}} \TwR(B^{\op}) \times [1] \arrow[r] & X\end{tikzcd}\]
that preserve cocartesian and cartesian morphisms over $B$, respectively. Now we can form the cartesian transport of $\Dualco(\overline{\imath}_{0,\coc})$ via $q_1$ and the cocartesian transport of $\Dualcart(\overline{\jmath}_{1,\cac})$ via $p_2$, respectively. This gives functors
\[\begin{tikzcd}[row sep=0.3pc]
(\Dualco(\overline{\imath}_{0,\coc}))_{\cac} \colon X_{0}
    \times_{B} \TwL(B) \times [1] \times
    [1] \arrow[r] & 
    X^\vee,\\
    (\Dualcart(\overline{\jmath}_{1,\cac}))_{\coc} \colon X_{1}^{\vee}
    \times_{B^{\op}} \TwR(B^{\op}) \times
    [1] \times [1] \arrow[r] & X.
    \end{tikzcd}\]

    We can informally describe these functors as follows: the value of
    $(\Dualco(\overline{\imath}_{0,\coc}))_{\cac}$ at an object
    $(y \in X_{b',0}, \beta \colon b \to b')$ is the square
    \[
      \begin{tikzcd}
        \beta^{\op}_!(y) \arrow{r} \arrow[equals]{d} & g_{b}\beta^{\op}_!f_{b'}y
        \arrow{d} \\
        \beta^{\op}_!(y) \arrow{r} & \beta^{\op}_!f_{b'}y
      \end{tikzcd}
    \]
    in $X^{\vee}$. Note that the top horizontal arrow takes values in $X^\vee_0\simeq D^\vee$. Dually, the value of
    $(\Dualcart(\overline{\jmath}_{1,\cac}))_{\coc}$ at $(x \in
    X_{b',1}^{\vee}, \beta \colon b \to b')$ is the square
    \[
      \begin{tikzcd}
        \beta_!^{*}g_{b'}x \arrow{r} \arrow{d} & \beta^{*}x \arrow[equals]{d} \\
        f_{b}\beta^{*}g_{b'}x \arrow{r} & \beta^{*}x,
      \end{tikzcd}
    \]
  whose bottom arrow is contained in $X_1\simeq C$. We then obtain the desired parametrised unit and counit maps as the restrictions
 \[\begin{tikzcd}[row sep=0.3pc]
 \eta :=
      (\Dualco(\overline{\imath}_{0,\coc}))_{\cac}|_{X_{0}
    \times_{B} \TwL(B) \times [1] \times \{0\}} \colon
      X_{0}
    \times_{B} \TwL(B) \times [1]  \arrow[r] & 
    X^{\vee}_{0}\\
    \epsilon := (\Dualcart(\overline{\jmath}_{1,\cac}))_{\coc}|_{X_{1}^{\vee}
    \times_{B^{\op}} \TwR(B^{\op}) \times
    [1] \times \{1\}}
    \colon X_{1}^{\vee}
    \times_{B^{\op}} \TwR(B^{\op}) \times
    [1] \arrow[r] & X_{1}.
    \end{tikzcd}\]
 Note that this construction is natural in $X$ (and hence in $f\colon C\rt D$) and is compatible with base change along $B'\rt B$. In the case where $B=\ast$ is a point, the free fibration and dualisation functors are naturally equivalent to the identity \cite{Toen, BGN} and the above construction reduces to the construction of the (co)unit from \cref{con:unit-counit}.
 \end{construction}

\subsection{Passing to adjoint morphisms}\label{subsec:paraunits2}

Next we consider the functoriality of passing to the adjoint of a morphism in the parametrised setting. We first sketch a construction in the non-parametrised case, which will have the benefit of generalising readily. Given an adjunction
\[ f \colon D \rightleftarrows C \cocolon g,\]
the unit transformation $\eta$ fits in a
commutative square
\[
  \begin{tikzcd}
    D \arrow[d, "f"{swap}] \arrow{r}{\eta} &
    \Ar(D) \arrow{d}{\ev_{1}} \\
    \cat{C} \arrow{r}{g} & \cat{D}.
  \end{tikzcd}
\]
Here $\ev_{1}$ is a cocartesian fibration, so we can extend $\eta$ to
the free cocartesian fibration on $f$, giving a commuting square
\[
  \begin{tikzcd}
    \Fcoc{C}(D) \arrow{d} \arrow{r}{\overline{\eta}} &
    \Ar(D) \arrow{d}{\ev_{1}} \\
    C \arrow{r}{g} & D.
  \end{tikzcd}
\]
Unwinding definitions, we find that $\overline{\eta}$ takes $(d, f(d) \xto{\phi} c)$ to the
composite $d \to gf(d) \stackrel{g(\phi)}{\rt} g(c)$, i.e.\ to the morphism adjoint to
$\phi$. We now give a parametrised version of this construction:
\begin{construction}\label{const:passadj}
We keep the notation of \cref{thm:parametrised unit} and let $f\colon D\rt C$ be a parametrised left adjoint over $B$, with right adjoint $g\colon C^\vee\rt D^\vee$. The parametrised unit
  $\eta$ fits in a commutative square
  \[
    \begin{tikzcd}
      D \times_{B} \TwL(B)
      \arrow{r}{\eta} \arrow[d, "\Dualco(\overline{f})"{swap}] & \Ar(D^{\vee})
      \arrow{d}{\ev_{1}} \\
      C^{\vee} \arrow{r}{g} & D^{\vee},
    \end{tikzcd}
  \]
  where $\Dualco(\overline{f})$ is obtained by first extending $f \colon
  D \to C$ to $\overline{f} \colon
  \Fcart{B}(D) \to C$ and then
  dualising over $B$. At the level of objects, $\Dualco(\ol{f})$ is therefore given by
  \[ \big(y \in D_{b'}, \beta \colon b \to b')
    \longmapsto \beta^{*}f_{b'}(y). \]
Now we can extend $\eta$ over the free cocartesian fibration on $\Dualco(\overline{f})$, giving a commutative square
  \begin{equation}
    \label{eq:parpassadj}
    \begin{tikzcd}
      \Fcoc{C^{\vee}}(D \times_{B} \TwL(B))
      \arrow{r}{\overline{\eta}} \arrow{d} &
      \Ar(D^{\vee})
      \arrow{d}{\ev_{1}} \\
      C^{\vee} \arrow{r}{g} & D^{\vee}.
    \end{tikzcd}    
  \end{equation}
  Here $\overline{\eta}$ is given by the assignment
  \[ \big(y \in D_{b'}, b \stackrel{\beta}{\rt} b', \beta^{*}f_{b'}(y) \stackrel{\phi}{\rt} x\big)
    \longmapsto \big(\beta^{*}y \stackrel{\eta_\beta(y)}{\rt} g_{b}\beta^{*}f_{b'}y \stackrel{g(\phi)}{\rt} g_{b''}x\big)\]
    where $x\in C^\vee_{b''}$.
  We can also pass to the dual cartesian fibrations, which gives a
  commutative square
  \begin{equation}
    \label{eq:parpassadjdual}
    \begin{tikzcd}
      (D \times_{B} \TwL(B))
      \times_{C^{\vee}} \TwR(C^{\vee})
      \arrow{r}{\overline{\eta}^\vee} \arrow{d} &
      \TwR(D^{\vee})
      \arrow{d} \\
       (C^{\op})^\vee \arrow{r}{g^{\op}} & (D^{\op})^\vee.
    \end{tikzcd}    
  \end{equation}
\end{construction}

\cref{const:passadj} encodes the functoriality of passing to the adjoint morphism in the generic case of a parametrised adjunction. However if the parametrised adjunction has a particularly simple form, then the functoriality can be improved significantly:

\begin{example}\label{ex:counitftr} 
Recall from \cref{ex:monoidalclosed} that given a functor $f:D\times B \rightarrow C$ such that for each $b\in B$, $f(-,b)\colon D \rightarrow C$ is a left adjoint, the diagram 
\begin{center}
\begin{tikzcd}
D\times B \arrow[rd, "p_2"'] \arrow[rr, "{(f,\id_B)}"] &    & C\times B \arrow[ld, "p_2"] \\
                                                          & B &                              
\end{tikzcd}
\end{center} is an example of a parametrised left adjoint. In this case the parametrised unit from \cref{thm:parametrised unit} is a functor
  \[ \eta \colon D \times \TwL(B) \times
    [1] \to
    D.\] To an object $(y, b \xto{\beta} b')$ this
  assigns the map
  $y \to g(f(y,b'),b)$ adjoint to $f(y,b) \to f(y,b')$.
  To a morphism
  \[ \left(
      y_0\to y_1,
      \begin{tikzcd}
        b_0  \arrow{d} & \arrow{l} b_1 \arrow{d} \\
        b'_0 \arrow{r} & b'_1 
      \end{tikzcd}
    \right) \]
  it assigns the square
  \[\nlcsquare{y_0}{y_1}{g(f(y_0,b'_0),b_0)}{g(f(y,b'_1),b_1).}\]
  Now we consider the commutative square \cref{eq:parpassadjdual}
  from \cref{const:passadj}; in our special case this simplifies to
        \[
    \begin{tikzcd}
      (D \times \TwL(B))
      \times_{C \times B^{\op}}
      \TwR(C \times B^{\op})
      \arrow{r} \arrow{d}  & \TwR(D)
      \arrow{d} \\
      C^{\op}\times B \arrow{r}{g^{\op}} &
      D^{\op}.
    \end{tikzcd}
  \]
  An object of $ (D \times \TwL(B))
      \times_{C \times B^{\op}}
      \TwR(C \times B^{\op})$ can be
  described as a list
  \[(y, b \to b', f(y,b') \to x, b'' \to b),\] and
  the top horizontal functor takes this to the composite
  \[ y \to g\big(f(y,b'),b\big) \to g(x,b) \to g(x,b'')\]
  in $\TwR(D)$. For a morphism
  \[\left(
      \begin{tikzcd}
        y_0 \arrow{d} & b_0 \arrow{r} & b'_0 \arrow{d} & f(y_0,b'_0) \arrow{d}
        \arrow{r} & x_0 & b_0'' \arrow{d}
        \arrow{r} & b_0\\
        y_1, & b_1 \arrow{u} \arrow{r} & b'_1, & f(y_1,b'_1) \arrow{r} & x_1,
        \arrow{u} & b_1'' \arrow{r} 
        & b_1 \arrow{u}
      \end{tikzcd}
    \right)
  \]
  we get in $\TwR(D)$ a morphism
  \[
    \begin{tikzcd}
       y_0  \arrow{d} \arrow{r} & y_1 \arrow{d} \\
       g(x_0,b_0'')  &   g(x_1,b_1'') \arrow{l}.
    \end{tikzcd}
  \]
  We note that this is an equivalence if the maps $y_0 \to y_1$, $x_1 \to
 x_0$, and $b_0'' \to b_1''$ are equivalences. This means
  our functor factors through the localisation of the $\infty$-category \[(D \times \TwL(B))
  \times_{C \times B^{\op}}
  \TwR(C \times B^{\op})\] at these morphisms. Our final goal
  in this section is to identify this localisation, for which we first
  recall a result of Hinich:
\end{example}

\begin{proposition}[Hinich]\label{propn:fibloc}
  Let $p \colon E \to B$ be a cocartesian
  fibration. Suppose for all $b \in B$ we have a collection
  $W_{b}$ of morphisms in
  $E_{b}$ such that for $\beta \colon b \to b'$ in $B$
  the functor $\beta_{!} \colon E_{b} \to E_{b'}$ induced by $p$
  takes $W_{b}$ into $W_{b'}$. Then we can form the cocartesian
  fibration $E' \to B$ corresponding to the
  functor $b \mapsto \cat{E}_{b}[W_{b}^{-1}]$. The canonical
  morphism of cocartesian fibrations $E \to E'$
  exhibits $E'$ as the localisation of $E$ at the
  collection of morphisms $x \xto{\phi} x'$ such that $p(\phi)$ is an
  equivalence and $p(\phi)_{!}x \to x'$ is in $W_{p(x')}$.
\end{proposition}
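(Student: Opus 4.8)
Here is the line of argument I would pursue.

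\smallskip
\textbf{Construction of $E'$ and the comparison map.} Let $F \colon B \to \Cat$ be the functor classifying $p$, so that $F(b) = E_b$ and $F(\beta) = \beta_!$. The hypothesis that each $\beta_!$ carries $W_b$ into $W_{b'}$ says precisely that $F$ refines to a functor $\widehat{F} \colon B \to \Cat^{+}$ into the \icat{} of marked \icats{} (pairs of an \icat{} and a class of morphisms in it), $b \mapsto (E_b, W_b)$. Writing $L \colon \Cat^{+} \to \Cat$ for localisation, which is left adjoint to the inclusion $\Cat \hookrightarrow \Cat^{+}$ equipping an \icat{} with its equivalences, I set $F' := L \circ \widehat{F}$, so $F'(b) = E_b[W_b^{-1}]$, the unit of the adjunction providing a natural transformation $F \Rightarrow F'$ whose component at $b$ is the localisation $E_b \to E_b[W_b^{-1}]$. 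Unstraightening yields the cocartesian fibration $q \colon E' \to B$ classified by $F'$ together with a morphism $G \colon E \to E'$ in $\cocart(B)$ whose restriction over each $b$ is this localisation functor.

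\smallskip
\textbf{$G$ inverts $S$.} Let $\phi \colon x \to x'$ lie in $S$, so $\beta := p(\phi)$ is an equivalence of $B$. Choosing a $p$-cocartesian lift $\widetilde{\beta} \colon x \to \beta_! x$ of $\beta$, the morphism $\phi$ factors essentially uniquely as $\phi \simeq \psi \circ \widetilde{\beta}$ with $\psi \colon \beta_! x \to x'$ in the fibre $E_{p(x')}$; by the definition of $S$ we have $\psi \in W_{p(x')}$. Then $G(\widetilde{\beta})$ is a $q$-cocartesian lift of the equivalence $\beta$ and hence an equivalence, while $G(\psi)$ is the image of an element of $W_{p(x')}$ under $E_{p(x')} \to E_{p(x')}[W_{p(x')}^{-1}]$ and hence an equivalence, so $G(\phi)$ is an equivalence. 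The universal property of the localisation $\ell \colon E \to E[S^{-1}]$ now gives an essentially unique factorisation $G \simeq \overline{G} \circ \ell$ with $\overline{G} \colon E[S^{-1}] \to E'$.

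\smallskip
\textbf{$\overline{G}$ is an equivalence.} It suffices to show that the composite $E[S^{-1}] \to B$ is again a cocartesian fibration, that $\ell$ preserves cocartesian morphisms, and that its restriction over $b$ is the localisation $E_b \to E_b[W_b^{-1}]$. Indeed, $\overline{G}$ is then a morphism of cocartesian fibrations over $B$ which is an equivalence on each fibre --- over $b$ both $\ell$ and $G = \overline{G} \circ \ell$ restrict to $E_b \to E_b[W_b^{-1}]$, so $\overline{G}$ restricts to the identity --- and is therefore an equivalence. For these three points I would use the presentation of the localisation as the pushout in $\Cat$
\[ E[S^{-1}] \;\simeq\; E \sqcup_{S \times [1]} S, \]
collapsing each morphism of $S$ onto its source (here $S$ denotes the \igpd{} of such morphisms, mapping to $E$ via $S \times [1] \to \Ar(E) \times [1] \xrightarrow{\ev} E$). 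Since this pushout creates no new objects, $\ell$ is surjective on objects; given $\beta \colon b \to b'$ and $\overline{x} = \ell(x)$ with $x \in E_b$, one checks that a $p$-cocartesian lift $\widetilde{\beta} \colon x \to \beta_! x$ remains cocartesian after applying $\ell$ by computing the relevant mapping spaces in the pushout, using that every morphism of $S$ lies over an equivalence of $B$.

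\smallskip
The main obstacle is the identification of the fibre $E[S^{-1}]_b$ with $E_b[W_b^{-1}]$, i.e.\ the compatibility of this pushout with pullback along $\{b\} \to B$: pullback of \icats{} does not preserve pushouts in general, so this is not formal, and it is exactly here that the hypothesis that $S$ consists of fibrewise morphisms (over equivalences) must be used. The cleanest route --- and the one Hinich takes in \cite{Hinich} --- is to present $p$ by a left Quillen model of the Grothendieck construction (a Dwyer--Kan relative category, or a marked-simplicial model), compute the localisation fibrewise by the hammock localisation, and observe that the two constructions agree strictly; alternatively one can argue directly from the pushout presentation above at the cost of more involved mapping-space bookkeeping.
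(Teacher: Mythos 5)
The paper does not actually prove this proposition; its ``proof'' is a citation of \cite[Proposition 2.1.4]{HinichLoc} (with \cite[Proposition A.14]{NS} mentioned as a generalisation with a more invariant argument). So your sketch is being measured against Hinich's proof rather than anything carried out in the paper. Your first two steps are correct and complete: the construction of $E'$ and of $G\colon E\to E'$ by applying the localisation functor $\Cat^{+}\to \Cat$ fibrewise and unstraightening, and the verification that $G$ inverts $S$ by factoring $\phi$ into a cocartesian lift of the equivalence $p(\phi)$ followed by a fibrewise $W$-morphism. The reduction in your third step (a map of cocartesian fibrations preserving cocartesian edges and inducing equivalences on fibres is an equivalence) is also valid.

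The gap is the one you flag yourself, and it is not a small one: the three claims that $E[S^{-1}]\to B$ is a cocartesian fibration, that $\ell$ preserves cocartesian edges, and that $\ell$ restricts over each $b$ to the localisation $E_b\to E_b[W_b^{-1}]$ jointly \emph{are} the proposition --- once they hold, the rest is uniqueness of localisations. The pushout presentation $E\sqcup_{S\times[1]}S$ gives no access to any of them, since neither mapping spaces nor fibres of such a pushout are computable without further input, and ``observe that the hammock localisation agrees strictly with the fibrewise one'' is a theorem to be proved (essentially \cite[Section 2.2]{HinichLoc}), not an observation. A way to close the gap without model-categorical presentations is to verify the universal property of $E\to E'$ directly: by the lax colimit description of functors out of a cocartesian fibration \cite{GHN}, one has $\Fun(E,X)\simeq \mathrm{Nat}^{\lax}(F,\const_X)$ and $\Fun(E',X)\simeq \mathrm{Nat}^{\lax}(F',\const_X)$; since each $\Fun(E_b[W_b^{-1}],X)\to \Fun(E_b,X)$ is fully faithful and fully faithful functors are closed under the limits computing lax natural transformations, $\Fun(E',X)\to \Fun(E,X)$ is fully faithful with image the functors inverting every $W_b$ fibrewise, and one checks these are exactly the functors inverting $S$ (every morphism of $S$ is an equivalence followed by a fibrewise $W$-morphism, and conversely). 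This is in effect the ``more invariant proof'' of \cite{NS}. (A minor point: the relevant Hinich reference is \cite{HinichLoc}, not the rectification paper \cite{Hinich}.)
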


\begin{proof}
  This is a special case of \cite[Proposition 2.1.4] {HinichLoc} (or
  more precisely, of the stronger result that is actually proved in
  \cite[Section 2.2]{HinichLoc}). See also \cite[Proposition A.14]{NS} for a generalisation, as well as a more invariant proof.
\end{proof}

This allows us to prove the following:

\begin{corollary}\label{cor:Twrdualloc}
  Suppose $p \colon E \to B$ is a cocartesian fibration; then
  the identity map of $E$ induces (via
  the free cocartesian fibration) a morphism of cocartesian fibrations
  $\Fcoc{B}(E) = E \times_{B} \Ar(B) \to
  E$; passing to the dual cartesian fibrations we get a morphism of
  cartesian fibrations
  \[ E \times_{B} \TwR(B) \xto{\Phi} E^{\vee}
  \]
  over $B^{\op}$. For any functor $A \to
  B^\op$ the induced morphism of cartesian fibrations
  \[ E \times_{B} \TwR(B)
    \times_{B^{\op}} A \xto{\Phi'}
    E^{\vee} \times_{B^{\op}} A
  \]
  exhibits $E^{\vee} \times_{B^{\op}} A$ 
  as a localisation.
\end{corollary}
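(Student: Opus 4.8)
The plan is to reduce the statement to \cref{propn:fibloc} by identifying the morphism $\Phi$ with the canonical comparison map to a fibrewise localisation. First I would unwind the construction of $\Phi$: starting from $\Fcoc{B}(E) = E\times_B\Ar(B) \to E$, which is a map of cocartesian fibrations over $B$ (the free cocartesian fibration on $\id_E$, by \cite[Theorem 4.5]{GHN}), dualising over $B$ produces $\Phi\colon \Dualcart(\Fcoc{B}(E)) \to E^\vee$; by \cref{lem:freedual} the source is $E\times_B\TwR(B)$. The key point is that $\Phi$ is a morphism of \emph{cartesian} fibrations over $B^\op$, and since dualisation is the identity on fibres, on the fibre over $b\in B^\op$ it restricts to a functor $E_b\times_B\TwR(B)_b \to E_b$. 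I would compute that $\TwR(B)$ restricted over $b$ (via the target map to $B^\op$) is $B_{/b}^{\op,\op}\simeq B_{b/}$... more precisely $\TwR(B)\times_{B^\op}\{b\} \simeq B_{b/}$ via the source map, so the fibre of the source of $\Phi$ over $b$ is $E\times_B B_{b/}$, whose straightening sends $(\beta\colon b\to b')\mapsto E_{b'}$... — hence this is exactly the free-cocartesian-to-identity comparison evaluated fibrewise, and the functor to $E_b$ is cocartesian transport. This shows $\Phi$ is fibrewise given by the counit of the free-cocartesian adjunction.

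Next I would invoke \cref{propn:fibloc} with the collection $W_b$ taken to be all equivalences in $E_b$: then $E_b[W_b^{-1}]\simeq E_b$, and the localisation statement of \cref{propn:fibloc} identifies $E^\vee$ (viewed as a cocartesian fibration over $B$, i.e. $\Dualco(E^\vee)\simeq E$... wait — I must be careful about variances). The cleaner route: dualisation over $B$ exchanges the cartesian fibration $\Dualcart(\Fcoc{B}(E))\to B^\op$ with the cocartesian fibration $\Fcoc{B}(E)\to B$, and exchanges $E^\vee\to B^\op$ with $E\to B$; and localisations are preserved under this equivalence since it is fibrewise the identity and both sides classify the same fibrewise diagrams. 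So it suffices to prove the \emph{undualised} statement: that $\Fcoc{B}(E)\to E$ exhibits $E$ as the localisation of $\Fcoc{B}(E)$ at the morphisms of the relevant form. But that is precisely an instance of \cref{propn:fibloc} applied to the cocartesian fibration $\ev_1\colon \Fcoc{B}(E)=E\times_B\Ar(B)\to B$ with $W_b$ the equivalences: its fibre over $b$ is $E\times_B B_{b/}$, whose localisation at the cocartesian morphisms (those lying over equivalences of $B$ whose pushforward lands in $W$) is $E_b$, and the resulting localised cocartesian fibration is $E\to B$ with comparison map the counit $\Fcoc{B}(E)\to E$. For the final, base-changed assertion I would use that localisations of cartesian fibrations are stable under pullback along arbitrary $A\to B^\op$: this follows because pullback corresponds under straightening to restriction along $A\to B^\op$, which preserves the fibrewise localisation diagrams, together with \cref{propn:fibloc} applied over $A$ (the condition that cocartesian pushforward preserves $W$ is inherited).

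The main obstacle I anticipate is the bookkeeping of variances: matching up "cartesian fibration over $B^\op$'' with "cocartesian fibration over $B$'' through $\Dualco/\Dualcart$, keeping track of which twisted-arrow convention ($\TwL$ vs.\ $\TwR$) and which slice ($B_{b/}$ vs.\ $B_{/b}$) appears, and checking that the class of morphisms inverted on the $\TwR$-side (maps $(e,\beta,\dots)$ with $\beta$ and the component maps equivalences, as spelled out in \cref{ex:counitftr}) corresponds under dualisation exactly to the class $\{\phi : p(\phi)\text{ invertible},\ p(\phi)_!x\to x'\in W_{p(x')}\}$ of \cref{propn:fibloc}. Once the fibrewise picture is correctly set up, the identification of $\Phi$ with the comparison-to-localisation map is essentially formal from \cref{free:func} and the description of cocartesian morphisms in $\Fcoc{B}(E)$, and the pullback stability is routine. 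I would also remark that the non-parametrised sketch preceding the statement is the case $B=\ast$, which can serve as a sanity check for the variance conventions.
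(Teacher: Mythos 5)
Your overall architecture (fibrewise analysis, Hinich's \cref{propn:fibloc}, base-change stability at the end) matches the paper's, and your treatment of the base-change step is correct. But there are two genuine gaps in the middle. First, your application of \cref{propn:fibloc} is set up with the wrong data: you propose taking $W_b$ to be ``all equivalences in $E_b$'', but in \cref{propn:fibloc} the class $W_b$ must live in the fibre of the fibration \emph{being localised}, which here is $(E\times_B\TwR(B))_b \simeq E\times_B B_{/b}$ (note: the $t$-fibre of $\TwR(B)$ over $b$ is the overcategory $B_{/b}$, not $B_{b/}$ as you write), and the localised fibration must then have fibres $(E\times_B B_{/b})[W_b^{-1}]\simeq E_b$. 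Identifying the correct class $W_b$ and, more importantly, proving that the fibrewise comparison functor $E\times_B B_{/b}\to E_b$, $(x\in E_{b'},\beta\colon b'\to b)\mapsto \beta_!x$, actually \emph{is} a localisation is the crux of the argument; you assert it but do not prove it. The paper's proof supplies the missing idea: this functor has a fully faithful right adjoint $x\mapsto (x,\id_b)$, hence is a (co)reflective localisation at exactly the class of morphisms it inverts, which one then checks is stable under the transport functors so that the dual of \cref{propn:fibloc} applies.

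Second, your ``cleaner route'' of proving the undualised statement that $\Fcoc{B}(E)\to E$ is a localisation over $B$ and then transferring across $\Dualcart$ does not work as stated. Dualisation is the identity on fibres but changes the total $\infty$-category: $E\times_B\Ar(B)$ and $E\times_B\TwR(B)$ are genuinely different categories with no comparison functor between them, so a localisation statement about the former does not formally yield one about the latter, and ``both sides classify the same fibrewise diagrams'' is not a justification (the localisation in the corollary is a statement about the total category, not about the classified functor). The correct move is simply to apply the dual form of \cref{propn:fibloc} directly to the cartesian fibration $E\times_B\TwR(B)\to B^{\op}$ -- which is what the paper does -- rather than to its cocartesian partner. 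With the fully-faithful-adjoint observation added and the variances of $B_{/b}$ versus $B_{b/}$ straightened out, your plan becomes the paper's proof.
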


\begin{proof}
Suppose first that $A\rightarrow B$ is the identity. At the fibre over $b \in B^{\op}$ we get the functor
  \[ E \times_{B} B_{/b} \to
    E_{b}\]
  taking $(x \in E_{b'}, b' \xto{\beta} b)$ to $\beta_{!}x$. This
  has a fully faithful right adjoint (taking $x \in E_{b}$
  to $(x, \id_{b})$), hence it is the localisation at the class
  $W_{b}$ of
  morphisms $(x \xto{\phi} y, b' \xto{\gamma} b'' \xto{\beta} b)$ such that
  $\beta_{!}\gamma_{!}x \to \beta_{!}y$ is an equivalence. For $\beta \colon b \to
  b' \in B$, the functor induced by the cartesian fibration over $B^{\op}$, $E \times_{B}
   B_{/b} \to E \times_{B}
  B_{/b'}$ is given by composition with $\beta$, and hence takes
  $W_{b}$ to $W_{b'}$. The result then follows from (the dual of) Proposition~\ref{propn:fibloc}. Finally note that the conclusion of Proposition~\ref{propn:fibloc} is preserved under base change along any functor $A \to B$ and therefore the general result follows.
\end{proof}

Taking $p$ to be the identity of $B$, we obtain the
following special case:
\begin{corollary}\label{cor:Twrloc}
  For any \icat{} $B$, the projection
  \[ \TwR(B) \to B^{\op} \]
  is a localisation, as is the functor
  \[ A\times_{B^{\op}} \TwR(B) \to
    A\]
  for any functor $A\to B^{\op}$. \qed
\end{corollary}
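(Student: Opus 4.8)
The plan is simply to specialise Corollary~\ref{cor:Twrdualloc} to the cocartesian fibration $p = \mathrm{id}_B \colon B \to B$; this is legitimate since the identity is in particular a cocartesian fibration, its fibres being the contractible \igpds{} $\{b\}$.

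First I would record the elementary simplifications that occur for $E = B$. On the source side, $\Fcoc{B}(B) = B \times_{B} \Ar(B) \simeq \Ar(B)$, since the pullback defining $\Fcoc{B}$ is formed along the identity; hence the domain $E \times_{B} \TwR(B)$ of the morphism $\Phi$ of Corollary~\ref{cor:Twrdualloc} is just $\TwR(B)$ (this is also what Lemma~\ref{lem:freedual} yields for $E = B$). On the target side, I would identify $E^{\vee} = \Dualcart(\mathrm{id}_B)$ with $\mathrm{id}_{B^{\op}} \colon B^{\op} \to B^{\op}$: the functor $\mathrm{id}_B$ is a left fibration, and $\Dualcart$ is given by passing to opposites on left fibrations (as recalled in \S\ref{subsec:fibbackgr}), so $\Dualcart(\mathrm{id}_B) = (\mathrm{id}_B)^{\op} = \mathrm{id}_{B^{\op}}$; equivalently, $\mathrm{id}_B$ classifies the constant functor at $\ast$, whose dual is the right fibration over $B^{\op}$ classifying the constant functor at $\ast$.

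With these identifications, the morphism $\Phi$ of Corollary~\ref{cor:Twrdualloc} becomes the projection $\TwR(B) \to B^{\op}$, which is therefore a localisation; and for any functor $A \to B^{\op}$ the morphism $\Phi'$ becomes $\TwR(B) \times_{B^{\op}} A \to B^{\op} \times_{B^{\op}} A \simeq A$, which is likewise a localisation. Reordering the two factors of the fibre product gives the form displayed in the statement.

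I do not expect any genuine obstacle here: all of the content has already been isolated in Hinich's fibrewise localisation criterion (Proposition~\ref{propn:fibloc}) and its consequence Corollary~\ref{cor:Twrdualloc}, and the only step requiring a moment's thought — the identification of the dual of the identity fibration — is immediate once one recalls how $\Dualcart$ acts on left fibrations.
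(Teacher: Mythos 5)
Your proposal is correct and is exactly the paper's argument: the corollary is obtained by specialising \cref{cor:Twrdualloc} to $p = \id_B$, and your identifications $\Fcoc{B}(B) \simeq \Ar(B)$, $B \times_B \TwR(B) \simeq \TwR(B)$ and $\Dualcart(\id_B) \simeq \id_{B^{\op}}$ are precisely the (routine) details the paper leaves implicit.
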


Returning to the $B$-indexed family of left adjoints $f\colon D\times B\rt C$ from Example \ref{ex:counitftr}, we see that the
functor
\[ (D \times \TwL(B)) \times_{C
    \times B^{\op}} \TwR(C \times
  B^{\op}) \to \TwR(D) \] obtained from the
parametrised unit factors through
$(D \times B)\times_{C}
\TwR(C)$. We have thus proved:
\begin{corollary}\label{cor:prodparunit}
  Let $f\colon D\times B\rt C$ be a functor such that each $f_b\colon D\rt C$ is a left adjoint. Then there is a
  functor
  \[ (D \times B)\times_{C}
    \TwR(C) \to \TwR(D),\] which takes
  $(y,b,f(y,b) \to x)$ to the adjoint map $y \to g(x,b)$.\qed
\end{corollary}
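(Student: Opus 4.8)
The plan is to read the functor off from the parametrised unit of \cref{const:passadj}, applied to the especially simple parametrised left adjoint of \cref{ex:counitftr}, and then to cut it down along a localisation. Specialising the square \eqref{eq:parpassadjdual} to the family $f \colon D \times B \rt C$ produces a functor
\[ \Psi \colon (D \times \TwL(B)) \times_{C \times B^\op} \TwR(C \times B^\op) \rt \TwR(D), \]
and \cref{ex:counitftr} has already computed it: on objects it sends a list $(y,\, b \rt b',\, f(y,b') \rt x,\, b'' \rt b)$ to the composite $y \rt g(f(y,b'),b) \rt g(x,b) \rt g(x,b'')$, and on morphisms it inverts every arrow whose components $y_0 \rt y_1$, $x_1 \rt x_0$ and $b_0'' \rt b_1''$ are equivalences. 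Writing $W$ for this class, the first step is simply the universal property of localisations: $\Psi$ factors essentially uniquely through $\big((D \times \TwL(B)) \times_{C \times B^\op} \TwR(C \times B^\op)\big)[W^{-1}]$.

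The substance of the argument is then to identify this localisation with $(D \times B) \times_C \TwR(C)$. Here I would split $\TwR(C \times B^\op) \simeq \TwR(C) \times \TwR(B^\op)$ and use \cref{cor:Twrloc}, in its base-changed form $A \times_{B^\op} \TwR(B) \rt A$ (together with the symmetry $\TwR(B^\op) \simeq \TwR(B)$ over the coordinate swap), to localise the two twisted-arrow directions separately: localising away $\TwR(B^\op)$ replaces it with the base copy of $B$, and localising away the residual twisted-arrow data along $\TwR(C)$ --- which by the defining constraint of the fibre product is determined, up to the adjunctions $f_b \dashv g_b$, by the pair $(y,b)$ --- collapses $(D \times \TwL(B)) \times_{C \times B^\op} \TwR(C)$ onto $(D \times B) \times_C \TwR(C)$. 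The point is that the composite of these two localisation functors inverts precisely the class $W$ above, so by uniqueness of localisations it is the functor $\Psi$ was already shown to factor through. The comparison functor in the other direction is the evident one, $(y, b, f(y,b) \rt x) \mapsto (y, \id_b, f(y,b) \rt x, \id_b)$.

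Finally, transporting $\Psi$ across this equivalence and evaluating the displayed formula at $(y, \id_b, f(y,b) \rt x, \id_b)$ --- i.e.\ at $b = b' = b''$ --- shows that the resulting functor $(D \times B) \times_C \TwR(C) \rt \TwR(D)$ sends $(y, b, f(y,b) \rt x)$ to $y \rt g(f(y,b),b) \rt g(x,b)$, which by \cref{cor:identifyingadjoints} is exactly the morphism adjoint to $f(y,b) \rt x$. I expect the main obstacle to be the middle step: keeping careful track of the various opposite-category conventions so as to see that the localisation of the large fibre product at $W$ is honestly $(D \times B) \times_C \TwR(C)$ rather than a twisted variant, and that \cref{cor:Twrloc} applies in exactly the form needed to each of the two twisted-arrow factors being collapsed. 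The rest is the kind of unwinding that \cref{ex:counitftr} was designed to make routine.
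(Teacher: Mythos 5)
Your proposal follows the paper's own route exactly: the paper likewise specialises \cref{eq:parpassadjdual} via \cref{ex:counitftr}, observes that the class of morphisms whose $y$-, $x$- and $b''$-components are equivalences is inverted, and then invokes \cref{cor:Twrdualloc} and \cref{cor:Twrloc} to identify the localisation of the large fibre product with $(D\times B)\times_C \TwR(C)$. The paper is no more explicit than you are about the bookkeeping in that identification (it simply asserts that the functor ``factors through''), so the ``middle step'' you flag as delicate is precisely where the published argument is also compressed.
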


Restricting to the fibre over $x \in
C$, we see in particular:
\begin{corollary}\label{cor:prodparunitfib}
  In the situation of \cref{cor:prodparunit}, for every $x \in
  C$ there is a natural map
  \[ (D \times B) \times_{C}
    C_{/x} \to \TwR(D) \]
  sending $(y, b, f(y,b) \to x)$ to the adjoint map $y \to g(b,x)$.\qed
\end{corollary}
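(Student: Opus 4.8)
\textbf{Proof proposal for Corollary \ref{cor:prodparunitfib}.}

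The plan is to obtain this statement as a direct consequence of \cref{cor:prodparunit} by pulling back along the inclusion of a fibre. First I would recall that in \cref{cor:prodparunit} we have already produced a functor
\[ (D \times B)\times_{C} \TwR(C) \longrightarrow \TwR(D) \]
sending a triple $(y,b,f(y,b) \to x)$ to the adjoint morphism $y \to g(x,b)$. The left-hand side is formed as a pullback over $C$, where $D \times B \to C$ is the map $(f,\pr_2)$ and $\TwR(C) \to C$ is the source projection $s$. Now the key observation is that the slice $C_{/x}$ is naturally the fibre of the target projection $t\colon \TwR(C) \to C^{\op}$ over $x$; more precisely, $\TwR(C)$ sits over $C \times C^{\op}$ via $(s,t)$, and restricting the $t$-coordinate to the object $x$ yields an equivalence of the corresponding fibre of $s\colon \TwR(C) \to C$ with $C_{/x}$, under which an object $f(y,b)\to x$ of the fibre corresponds to the same morphism viewed as an object of the slice.

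The next step is to form the pullback of the functor from \cref{cor:prodparunit} along the inclusion $\{x\}\hookrightarrow C^{\op}$ in the $t$-coordinate. Since limits commute, the pullback of $(D \times B)\times_{C} \TwR(C)$ along this inclusion is $(D \times B)\times_{C} \big(\{x\}\times_{C^\op}\TwR(C)\big) \simeq (D \times B)\times_{C} C_{/x}$, which is exactly the domain appearing in the statement. Composing the inclusion of this pullback into $(D\times B)\times_C \TwR(C)$ with the functor of \cref{cor:prodparunit} gives the desired functor to $\TwR(D)$, and the description on objects is read off directly from the description in \cref{cor:prodparunit}: a point $(y,b,f(y,b)\to x)$ maps to the adjoint morphism $y \to g(x,b)$, an object of $\TwR(D)$.

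There is essentially no obstacle here; the only mild subtlety is the identification of $C_{/x}$ with the fibre $\{x\}\times_{C^{\op}}\TwR(C)$ of the target projection, which is standard (it is part of the data recalled in \cref{not:Tw} that $\TwR(C)\to C^{\op}$ is a left fibration straightening to the corepresentable $\Map_C(-, x)$ fibrewise, and the fibre over $x$ of $(s,t)\colon \TwR(C) \to C \times C^{\op}$ along $t = x$ is the left fibration for $\Map_C(-,x)$, i.e.\ $C_{/x}$, compatibly with source projections). With that identification in hand the corollary is immediate.
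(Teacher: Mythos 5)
Your proposal is correct and is essentially the paper's own argument: the corollary is stated with no separate proof beyond the phrase ``restricting to the fibre over $x \in C$,'' which is exactly the base change along $\{x\}\hookrightarrow C^{\op}$ in the target coordinate of $\TwR(C)$ that you carry out, together with the standard identification of that fibre with $C_{/x}$. (Only a terminological quibble: the fibre of the right fibration $(s,t)\colon \TwR(C)\to C\times C^{\op}$ over $x$ is the right fibration for the \emph{representable} functor $\Map_C(-,x)$, not a left fibration for a corepresentable one; this does not affect the argument.)
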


\begin{example}\label{ex:dualparadj}
  Let $\cat{C}$ be a closed symmetric monoidal \icat{}, with the
  tensor product viewed as a $\cat{C}$-parametrised left adjoint as in
  Example~\ref{ex:monoidalclosed}. From Corollary \ref{cor:prodparunit} we
  obtain a functor
  \[ (\cat{C} \times \cat{C}) \times_{\cat{C}}
    \TwR(\cat{C}) \to \TwR(\cat{C}),\]
  taking $(x,y, x\otimes y \to z)$ to the adjoint map $x \to [y,z]$.
  Fixing $z \in \cat{C}$, this specialises as in Corollary \ref{cor:prodparunitfib} to a natural functor
  \[ (\cat{C} \times \cat{C}) \times_{\cat{C}}
    \cat{C}_{/z} \to \TwR(\cat{C}) \] that sends $(x, y, x
  \otimes y \to z)$ to the adjoint morphism $x \to [y,z]$.
\end{example}

\section{Lax natural transformations and the calculus of mates}\label{sec:laxgray}
The goal of this final section is to prove \cref{laxstr}, i.e.\ to produce straightening equivalences $$
\mathbf{Cocart}^{\lax}(B)\simeq \tcat{Fun}^{\lax}(B, \tcat{Cat}) \qquad \text{and} \qquad \mathbf{Cart}^{\oplax}(B)\simeq \tcat{Fun}^{\oplax}(B^{\op}, \tcat{Cat}),
$$ 
where the right hand $(\infty,2)$-categories consist of functors $B \rightarrow \tcat{Cat}$ as objects, (op)lax natural transformations as morphisms and modifications between these as $2$-morphisms. Following \cite{HaugsengLax} we define these $\infty$-categories as right adjoints to the (oplax) Gray tensor product for $(\infty,2)$-categories constructed by Gagna, Lanari and Harpaz in \cite{GHL-Gray}, so that there are equivalences
\begin{align*}
\Map_{\Cat_2}(\tcat{A}\Gtimes B, \tcat{Cat}) &\simeq \Map_{\Cat_2}\big(\tcat{A}, \tFun^\lax(B, \tCat)\big)\\
\Map_{\Cat_2}(B\Gtimes \tcat{A}, \tcat{Cat}) &\simeq \Map_{\Cat_2}\big(\tcat{A}, \tFun^\oplax(B, \tCat)\big).
\end{align*}
Their Gray tensor product is defined using Lurie's scaled simplicial sets from \cite{LurieGoo} as a model for $(\infty,2)$-categories, and so we begin with a short review of these in \S\ref{subsec:scaled}. In \S\ref{subsec:scunstrGray} we then show that Lurie's straightening equivalence for locally cocartesian fibrations restricts to an equivalence
\[\Fun(A\Gtimes B, \tcat{Cat})\simeq \Gray(A, B),\]
from which we then deduce \cref{laxstr} and \cref{core} in \S\ref{subsec:laxnat}.

\subsection{Scaled simplicial sets as a model for $(\infty,2)$-categories}\label{subsec:scaled}

We start by recalling a few definitions:

\begin{definition}\label{def:marked simplicial sets}
A \emph{marked simplicial set} is a pair $(X, T)$ with $X$ a simplicial set and $T\subseteq X_1$ a set of 1-simplices that contains the degenerate ones. Let $\markSet$ denote the category of
marked simplicial sets. 
\end{definition}

By \cite[Theorem 3.1.5.1]{HTT} the category $\markSet$ has a model structure Quillen equivalent to the Joyal model structure on $\sSet$, whose fibrant objects are precisely quasicategories marked by their equivalences. We also write $\markCat$ for the
category of \emph{marked simplicial categories}, i.e.\ categories enriched in marked simplicial sets.

\begin{definition}\label{def:scaled simplicial sets}
A \emph{scaled simplicial set} is a pair $(X, S)$ with $X$ a simplicial set and $S\subseteq X_2$ a set of 2-simplices that contains the degenerate ones. As usual, we will write $X^\sharp=(X, X_2)$ for $X$ with the maximal scaling. Let $\scSet$ denote the category
of scaled simplicial sets, with the morphisms being maps of simplicial sets that preserve the scalings. 

We write $N^{\scale}\colon \markCat\rt \scSet$ for the scaled nerve, which takes a marked simplicial category $\mathbf{C}$ to the coherent nerve $N\mathbf{C}$ of its underlying simplicial category, scaled by the set of 2-simplices $\Delta^2 \rt N\mathbf{C}$ corresponding to functors of simplicial categories $F\colon \mathfrak{C}(\Delta^2)\rt\mathbf{C}$ such that the edge $\Delta^1= \mathfrak{C}(\Delta^2)(0, 2) \rt \mathbf{C}(F(0), F(2))$ is
marked; here $\mathfrak{C}$ denotes the path category functor, left adjoint to the coherent nerve $N$. Its upgrade to a left adjoint of $\nerve^\scale$ we denote $\mathfrak{C}^{\scale}$.
\end{definition}

The following is \cite[Theorem 4.2.7]{LurieGoo}:

\begin{theorem}[Lurie]\label{thm:scmodelstr}
There is a model structure on $\scSet$ where the cofibrations are the monomorphisms and the weak equivalences are the maps $f$ such that $\mathfrak{C}^{\scale}(f)$ is a Dwyer-Kan equivalence of marked simplicial categories. Moreover, the adjunction $\mathfrak{C}^{\scale} \vdash N^{\scale}$ is a Quillen equivalence where $\markCat$ carries the marked Bergner model structure.\qed
\end{theorem}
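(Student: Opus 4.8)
\textbf{Proof proposal for Theorem~\ref{thm:scmodelstr} (Lurie's model structure on $\scSet$).}

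The plan is to verify the hypotheses of a recognition/transfer principle for model structures, transporting the marked Bergner model structure on $\markCat$ along the adjunction $\mathfrak{C}^{\scale}\dashv\nerve^{\scale}$, and then separately checking that the cofibrations are exactly the monomorphisms. First I would recall that $\scSet$ is a presheaf-type category (presheaves on a suitable indexing category with extra structure), hence locally presentable, so smallness hypotheses for the transfer are automatic; and that $\markCat$ with the marked Bergner model structure is cofibrantly generated. The candidate model structure then has as weak equivalences the maps $f$ with $\mathfrak{C}^{\scale}(f)$ a Dwyer--Kan equivalence, as fibrations the maps $g$ with $\nerve^{\scale}(g)$ having the right lifting property against (the images under $\mathfrak{C}^{\scale}$ of) trivial cofibrations, and as cofibrations the maps with the left lifting property against trivial fibrations. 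The main work is to check the two non-formal axioms of the transfer lemma: that $\mathfrak{C}^{\scale}$ sends (a set of generating) trivial cofibrations to weak equivalences, and that every map with the left lifting property against trivial fibrations (equivalently, every retract of a transfinite composite of pushouts of generating cofibrations) is a weak equivalence whenever it is also a fibration in the transferred structure --- or, more efficiently, one checks the "acyclicity" condition via an explicit path-object or cylinder-object argument on the $\markCat$ side.

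The key steps, in order, are: (1) Identify a convenient set of generating cofibrations and trivial cofibrations in $\scSet$ --- the generating cofibrations will be boundary inclusions of scaled simplices (suitably decorated with the two possible scalings on the top $2$-faces), and the generating trivial cofibrations will be the scaled horn inclusions together with maps witnessing that a $2$-simplex exhibiting a composite can be adjoined, as in \cite{LurieGoo}. (2) Show $\mathfrak{C}^{\scale}$ of each generating cofibration is a cofibration of marked simplicial categories: this is the scaled analogue of \cite[Proposition 2.2.0.1]{HTT}-type computations and reduces to understanding $\mathfrak{C}^{\scale}(\Delta^n)$ with its markings as a cube-shaped marked simplicial category. (3) Show $\mathfrak{C}^{\scale}$ of each generating trivial cofibration is a marked Bergner weak equivalence: by (2) it is a cofibration, so it suffices to show it is a Dwyer--Kan equivalence, which one does by the explicit description of the path categories of scaled horns --- the inclusion induces an isomorphism on objects and, on each mapping marked simplicial set, a marked anodyne (hence marked Bergner trivial cofibration) map. (4) Invoke the transfer lemma to obtain the model structure with the stated weak equivalences and with cofibrations characterised by left lifting against trivial fibrations. (5) Separately show this class of cofibrations coincides with the monomorphisms: one inclusion is formal since the generating cofibrations are monos and monos are closed under the relevant colimits and retracts; for the other, show every mono is a transfinite composite of pushouts of generating cofibrations by the standard skeletal filtration argument, attaching non-degenerate simplices cell by cell and splitting off the scaling data. (6) Finally, the Quillen equivalence: right-properness and the fact that $\mathfrak{C}^{\scale}\dashv\nerve^{\scale}$ is a Quillen pair are built into the transfer; that it is a Quillen \emph{equivalence} follows because the weak equivalences on $\scSet$ are \emph{defined} by applying $\mathfrak{C}^{\scale}$, so the derived unit and counit are equivalences once one knows $\nerve^{\scale}$ is homotopically fully faithful on fibrant objects --- equivalently, that every fibrant marked simplicial category is equivalent to one of the form $\mathfrak{C}^{\scale}(X)$, which again is the scaled version of the comparison in \cite{HTT}.

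The hard part, as in Lurie's original treatment, is step (3) together with the identification of the \emph{correct} generating trivial cofibrations: the scaled horn inclusions alone do not generate, and one must add maps that force the composition of marked edges to be marked and that force certain "thin" triangles to behave coherently (the maps denoted by Lurie using the "scaled $\Delta^4$" and related gadgets). Getting this generating set right, and then verifying that $\mathfrak{C}^{\scale}$ carries each of them to a Dwyer--Kan equivalence of marked simplicial categories, is where essentially all the combinatorial content lies; everything else is a formal application of the transfer principle and a skeletal induction. Since this is precisely \cite[Theorem 4.2.7]{LurieGoo}, in the body of the paper we simply cite it and use the resulting model category as our model for $\itcats{}$ in what follows.
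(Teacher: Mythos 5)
The paper offers no proof of this statement: it is quoted verbatim as \cite[Theorem 4.2.7]{LurieGoo} and the \qed simply records the citation, so the only meaningful comparison is with Lurie's actual argument. Measured against that, your sketch contains the right ingredients (scaled anodyne maps, the computation that $\mathfrak{C}^{\scale}$ carries them to Dwyer--Kan equivalences, and the rigidification comparison needed for the Quillen equivalence), but the overall architecture --- a transfer of the marked Bergner structure along $\mathfrak{C}^{\scale}\dashv\nerve^{\scale}$ --- has two genuine problems. First, your step (1) presupposes an explicit set of generating trivial cofibrations. None is available: the scaled anodyne maps are trivial cofibrations but are not known (and not claimed by Lurie) to generate them, exactly as for the Joyal model structure. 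A transfer argument in the form you describe therefore cannot be run; Lurie instead invokes Smith's recognition principle \cite[Proposition A.2.6.13]{HTT}, which takes as input only the generating cofibrations (the scaled boundary inclusions), the class $W$ of maps inverted by $\mathfrak{C}^{\scale}$, and three conditions: $W$ is perfect, $W$ is stable under pushouts along cofibrations, and every map with the right lifting property against all monomorphisms lies in $W$. The last of these is the non-formal acyclicity input, and it replaces your steps (3)--(4).

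Second, your step (5) cannot be carried out as stated. In a structure genuinely transferred along $\mathfrak{C}^{\scale}$ the (trivial) fibrations would be created by $\mathfrak{C}^{\scale}$, giving a strictly larger class of trivial fibrations than the maps with the right lifting property against monomorphisms; dually the cofibrations determined by left lifting would form a strictly smaller class than the monomorphisms, and the identity ``cofibrations $=$ monomorphisms'' would simply be false for that model structure. (Your description of the fibrations via $\nerve^{\scale}(g)$ also does not typecheck, since $\nerve^{\scale}$ goes from $\markCat$ to $\scSet$.) The correct logical order is the reverse of yours: one \emph{declares} the cofibrations to be the monomorphisms and the weak equivalences to be the $\mathfrak{C}^{\scale}$-preimages of Dwyer--Kan equivalences, obtains the model structure from the recognition principle, and only afterwards observes that $\mathfrak{C}^{\scale}\dashv\nerve^{\scale}$ is a Quillen pair (because $\mathfrak{C}^{\scale}$ sends monomorphisms to cofibrations and preserves weak equivalences by definition) and a Quillen equivalence via the counit comparison you indicate in step (6). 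Since the present paper only needs the statement as a black box, citing \cite[Theorem 4.2.7]{LurieGoo} as it does is the appropriate ``proof''.
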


\begin{remark}
An explicit description of the fibrant objects in $\scSet$ in terms of lifting properties has been obtained by Gagna, Harpaz and Lanari in \cite{GHL19}.
\end{remark}

It is then a consequence of the main results of \cite{LurieGoo} that the underlying $\infty$-category of $\scSet$ is equivalent to $\Cat_2$. Given this, a particularly simple description of the equivalence follows from work of Barwick and Schommer-Pries \cite{BSP}: Let us write $\Theta_2$ for the full subcategory of the (ordinary) category of strict $2$-categories spanned by the strict $2$-categories
\[
[m]\big([n_1], \dots, [n_m]\big)=
\begin{tikzcd}
0 \arrow[r, bend right=60, "n_1"{swap}, ""{name=t1}] \arrow[r, dash, dotted, ""{name=m1}, ""{swap, name=n1}] \arrow[r, bend left=60, "0", ""{swap, name=s1}] & 1\arrow[Rightarrow, from=s1, to=m1, shorten=-2pt]\arrow[Rightarrow, from=n1, to=t1, shorten=-2pt]\arrow[r, bend right=60, "n_2"{swap}, ""{name=t2}] \arrow[r, dash, dotted, ""{name=m2}, ""{swap, name=n2}] \arrow[r, bend left=60, "0", ""{swap, name=s2}] &
\phantom{3}\makebox[0pt]{\hspace{-4.5pt}$\cdots$}\arrow[Rightarrow, from=s2, to=m2, shorten=-2pt]\arrow[Rightarrow, from=n2, to=t2, shorten=-2pt] \arrow[r, bend right=60, "n_m"{below}, ""{name=tm}] \arrow[r, dash, dotted, ""{name=mm}, ""{below, name=nm}] \arrow[r, bend left=60, "0", ""{below, name=sm}] &
\phantom{3}\makebox[0pt]{$m.$}\arrow[Rightarrow, from=sm, to=mm, shorten=-2pt]\arrow[Rightarrow, from=nm, to=tm, shorten=-2pt]
\end{tikzcd}
\]
Since these have only identities as invertible $k$-morphisms, we obtain a full subcategory inclusion $\Theta_2\hookrightarrow \scSet$ by viewing these $2$-categories as marked simplicial categories with only degenerate edges marked and then applying the scaled nerve. Now consider the functor
\begin{equation}\label{diag:diagonal}\begin{tikzcd}
\delta_2\colon \Delta\times \Delta\ar[r] & \Theta_2\hookrightarrow \scSet \quad {\big([m], [n]\big)}\arrow[r, mapsto] & {[m]([n], \dots, [n])}.
\end{tikzcd}\end{equation}
It now follows from the main results of \cite{BSP} that the derived mapping $\infty$-groupoids
$$
\Map^h_{\scSet}\big(\delta_2(-, -), (X, S)\big) \colon \Delta^\op \times \Delta^\op \longrightarrow \Gpd.
$$
form a complete 2-fold Segal $\infty$-groupoid, and that this
assignment induces an equivalence between the $\infty$-category
associated to the model structure on $\scSet$ from
\cref{thm:scmodelstr} and the \icat{} of complete two-fold Segal
spaces. In other words: scaled simplicial sets are a model for
$(\infty,2)$-categories.

\begin{definition}
We write $\tcat{Cat}^\scale$ for the large scaled simplicial set $N^{\scale}(\cat{sSet}^{+, \circ})$, where the category $\cat{sSet}^{+, \circ}$ of fibrant marked simplicial sets is regarded as enriched in itself via its internal Hom. 
\end{definition}

In this section we will use the $(\infty, 2)$-category associated to the scaled simplicial set $\tcat{Cat}^\scale$ as our preferred model for the $(\infty, 2)$-category $\tcat{Cat}$ of $\infty$-categories.

We now recall the definition of the (oplax) Gray tensor product in terms of scaled simplicial
sets, as given in \cite{GHL-Gray}:
\begin{definition}\label{def:scaled gray}
If $(X, S)$ and $(Y, T)$ are scaled simplicial sets, we define their \emph{oplax Gray tensor product}
$$
(X,S) \Gtimes (Y,T) = \big(X\times Y, S\Gtimes T\big)
$$
to be the scaled simplicial set with underlying simplicial set $X\times Y$, with scaling $S \Gtimes T$ consisting of the 2-simplices of the forms:
\begin{itemize}
\item $(s_1\alpha ,\tau)$ with $\alpha\in X_1, \tau\in T$,
\item $(\sigma, s_0\beta)$ with $\sigma\in S$, $\beta\in Y_1$.
\end{itemize}
For simplicial sets $X$ and $Y$ we will abbreviate $X^\sharp\Gtimes Y^\sharp$ to just $X\Gtimes Y$.
\end{definition}

From \cite[Theorem 2.14]{GHL-Gray} we quote:

\begin{theorem}[Gagna, Harpaz, Lanari]\label{thm:gray tensor}
The oplax Gray tensor product \[ \Gtimes \colon \scSet\times \scSet
  \rt \scSet\]
is a left Quillen bifunctor. \qed
\end{theorem}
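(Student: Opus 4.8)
The plan is to prove \cref{thm:gray tensor} by the standard criterion for a left Quillen bifunctor: it suffices to check the pushout–product axiom against generating (trivial) cofibrations, together with the (automatic, once the model structure is known to be combinatorial and the tensor preserves colimits in each variable) adjointness of the internal mapping objects. Concretely, writing $i \mathbin{\square} j$ for the pushout–product of two maps, the plan is to show that if $i \colon A \to B$ and $j \colon C \to D$ are cofibrations in $\scSet$ then $i \mathbin{\square}_{\Gtimes} j$ is a cofibration, and that it is a trivial cofibration as soon as one of $i$, $j$ is. First I would record that cofibrations in $\scSet$ are exactly the monomorphisms by \cref{thm:scmodelstr}, so the first point is immediate from the observation that $X \times Y$ is a monomorphism in each variable on underlying simplicial sets and that the scaling on the pushout–product is computed by the evident union of scalings; there is nothing to check beyond bookkeeping with the explicit description of $S \Gtimes T$ in \cref{def:scaled gray}.

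The substance is the trivial-cofibration part. The reduction is: since the class of trivial cofibrations is generated (as a weakly saturated class) by a set, and $\Gtimes$ preserves colimits separately in each variable, it suffices to verify the pushout–product axiom on a generating set of cofibrations in one variable and a generating set of trivial cofibrations in the other. So I would take $i$ to range over the boundary inclusions $\partial \Delta^n \hookrightarrow \Delta^n$ (with suitable scalings) together with the map $\Delta^2_\flat \to \Delta^2_\sharp$ that adjoins a scaled triangle, and $j$ to range over Lurie's generating scaled anodyne maps from \cite{LurieGoo} (the scaled inner horn inclusions, the scaled left/right horn-type maps adjoining equivalences, and the maps forcing composability of scaled triangles). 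For each such pair one must exhibit $i \mathbin{\square}_{\Gtimes} j$ as a (scaled) trivial cofibration — typically by showing it is in the weakly saturated class generated by the scaled anodyne maps, via an explicit filtration of $\Delta^n \times \Delta^m$ by its nondegenerate simplices ordered so that at each stage one attaches along a scaled anodyne map. This is exactly the kind of combinatorial shuffle-product argument familiar from the proof that the Joyal model structure is monoidal, but now decorated with scalings, and the key point is that the specific choice of scaling $S \Gtimes T$ in \cref{def:scaled gray} — marking triangles $(s_1\alpha, \tau)$ and $(\sigma, s_0\beta)$ — is precisely what makes each attaching map land among Lurie's scaled anodyne generators rather than merely among categorical equivalences.

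I would organise the simplex-by-simplex analysis by the usual device: a nondegenerate simplex of $\Delta^n \times \Delta^m$ corresponds to a monotone lattice path, and one filters by (say) the number of "diagonal" steps or by a lexicographic order on the associated shuffles; at each step the simplex being attached has a distinguished face that has already been filled, and one identifies the resulting inclusion with a scaled horn inclusion (inner, or of the equivalence-adjoining type) whose scaling data matches $S \Gtimes T$ on the nose. The genuinely new bookkeeping compared to the unmarked case is tracking which $2$-faces of each attached simplex are scaled: one must check that the scaled inner horns that appear really do have their "long edge" triangle scaled, and that the triangles adjoined by the anodyne maps are consistent with the formula for $S \Gtimes T$.

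The main obstacle I expect is precisely this last verification: organising the filtration of the product prism so that every attaching map is recognisably one of Lurie's generating scaled anodyne morphisms (as opposed to a composite that is merely a weak equivalence, which would not suffice for a \emph{left Quillen} bifunctor since one needs cofibrations on the nose). In practice this means one cannot get away with the soft statement "$\Gtimes$ is a left Quillen bifunctor for \emph{some} model of the Gray tensor product" — one has to commit to the combinatorics of \cref{def:scaled gray} and push the shuffle decomposition through, handling separately the cases where the anodyne generator $j$ is an inner horn (giving inner-horn attachments), a marked-edge or equivalence-type generator (where the scaling markings in $S \Gtimes T$ are essential), and the triangle-composability generators. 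Since this is the content of \cite[Theorem 2.14]{GHL-Gray}, which the excerpt is quoting, I would ultimately cite their argument for the detailed shuffle analysis and in the paper content myself with the reduction described above plus an indication of why the scaling convention is the right one; a full reproduction of the prism filtration is not warranted here.
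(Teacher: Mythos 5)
The paper offers no proof of this statement at all: it is quoted verbatim from \cite[Theorem 2.14]{GHL-Gray}, and your proposal likewise ends by deferring the substantive shuffle/filtration analysis to that same reference, so the two "proofs" coincide in substance. Your sketch of the reduction to pushout--products of generating (trivial) cofibrations and the scaled prism filtration is a fair summary of what Gagna--Harpaz--Lanari actually do, but none of it is content the paper itself supplies or needs.
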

It follows that the oplax Gray tensor product induces a functor on the
level of $\infty$-categories
$$
- \Gtimes -\colon \Cat_2\times\Cat_2\rt \Cat_2,
$$
which preserves colimits in each variable. As the name suggests, this is supposed to be thought of as a homotopy-coherent refinement of the standard oplax Gray tensor product for strict $2$-categories \cite{Gray74}. This is supported by the following:
\begin{proposition}\label{prop:gray of simplex}
For any $m, n\geq 0$, there is a natural isomorphism between the oplax Gray tensor product $[m]\Gtimes [n]$ from Theorem \ref{thm:gray tensor} and the standard oplax Gray tensor product $[m]\Gtimes_{\mm{st}} [n]$ of $[m]$ and $[n]$, computed in strict $2$-categories and depicted informally as
\begin{equation}\label{diag:grid}\begin{tikzcd}[column sep=1.5pc, row sep=1.5pc]
00\arrow[r]\arrow[d] & 10 \arrow[d]\arrow[r]\arrow[Rightarrow, ld, start anchor={[xshift=-1ex, yshift=-1ex]}, end anchor={[xshift=1.2ex, yshift=1.2ex]}] & 20\arrow[r, dashed]\arrow[d]\arrow[Rightarrow, ld, start anchor={[xshift=-1ex, yshift=-1ex]}, end anchor={[xshift=1.2ex, yshift=1.2ex]}] & m0\arrow[d]\arrow[Rightarrow, ld, start anchor={[xshift=-1ex, yshift=-1ex]}, end anchor={[xshift=1.2ex, yshift=1.2ex]}]\\
01\arrow[r]\arrow[d, dashed] & 11 \arrow[r]\arrow[d, dashed]\arrow[Rightarrow, ld, start anchor={[xshift=-1ex, yshift=-1ex]}, end anchor={[xshift=1.2ex, yshift=1.2ex]}] & 21\arrow[r, dashed]\arrow[d, dashed] \arrow[Rightarrow, ld, start anchor={[xshift=-1ex, yshift=-1ex]}, end anchor={[xshift=1.2ex, yshift=1.2ex]}]&  m1\arrow[d, dashed]\arrow[Rightarrow, ld, start anchor={[xshift=-1ex, yshift=-1ex]}, end anchor={[xshift=1.2ex, yshift=1.2ex]}]\\
0n\arrow[r] & 1n \arrow[r] & 2n\arrow[r, dashed] & mn.
\end{tikzcd}\end{equation}
\end{proposition}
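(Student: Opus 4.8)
The plan is to identify both sides of the claimed isomorphism with the same concrete scaled simplicial set and then check the scalings agree. Since both $[m]\Gtimes[n]$ and $[m]\Gtimes_{\mathrm{st}}[n]$ have underlying simplicial set $\Delta^m\times\Delta^n$ (the underlying $1$-category of the strict Gray tensor product of $[m]$ and $[n]$ is just $[m]\times[n]$, and the underlying simplicial set of the oplax Gray tensor product of scaled sets is the product by Definition~\ref{def:scaled gray}), the content of the statement is entirely about the $2$-scaling. So the real task is: compute the set of thin $2$-simplices in $[m]\Gtimes_{\mathrm{st}}[n]$ viewed via the scaled nerve, and compare it with the explicit description $S\Gtimes T$ from Definition~\ref{def:scaled gray}, where here $S$ and $T$ are the maximal scalings of $\Delta^m$ and $\Delta^n$.

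First I would unwind the scaled nerve $N^\scale$ applied to the strict $2$-category $[m]\Gtimes_{\mathrm{st}}[n]$. A $2$-simplex of $N(\,[m]\times[n]\,)$ is a pair of composable strings, i.e.\ a triple of objects $(a_0,b_0)\le(a_1,b_1)\le(a_2,b_2)$ in the poset $[m]\times[n]$; by Definition~\ref{def:scaled simplicial sets} it is thin precisely when the induced $1$-cell $\mathfrak{C}(\Delta^2)(0,2)=\Delta^1\to \operatorname{Hom}_{[m]\Gtimes_{\mathrm{st}}[n]}\big((a_0,b_0),(a_2,b_2)\big)$ is an equivalence, equivalently (since all these Hom-categories are posets, hence $0$-categories with only identity $2$-cells) it is thin iff the $2$-cell of $[m]\Gtimes_{\mathrm{st}}[n]$ obtained by composing the two edges through the middle object equals the $2$-cell obtained by the direct edge. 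In the standard oplax Gray tensor product of the grid \eqref{diag:grid}, the Hom-poset from $(a_0,b_0)$ to $(a_2,b_2)$ is the poset of monotone lattice paths, and the composite through $(a_1,b_1)$ coincides with the chosen composite path iff the middle vertex lies on a canonical path --- this happens exactly when either $a_0=a_1$ (the first edge is ``vertical'', $b$-direction) or $b_1=b_2$ (the second edge is ``horizontal'', $a$-direction). Translating through the product coordinates, these two cases are precisely: the first edge is degenerate in the $\Delta^m$-factor, or the second edge is degenerate in the $\Delta^n$-factor. That is exactly the two bullet points $(s_1\alpha,\tau)$ and $(\sigma,s_0\beta)$ of Definition~\ref{def:scaled gray} once one remembers that for $\Delta^m$ and $\Delta^n$ every $2$-simplex is thin, so $\sigma$ and $\tau$ are unconstrained.

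With the scaling comparison in hand, the isomorphism of scaled simplicial sets follows: the identity map on $\Delta^m\times\Delta^n$ is a bijection on all simplices, and we have just checked it matches the sets of thin $2$-simplices on both sides, so it is an isomorphism in $\scSet$. Naturality in $[m]$ and $[n]$ is automatic since on underlying simplicial sets the comparison is the identity, which is manifestly natural, and a natural transformation of scaled simplicial sets is just a natural transformation of underlying simplicial sets compatible with scalings.

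The main obstacle is purely bookkeeping: one has to be careful about the asymmetry of the Gray tensor product (the roles of the two factors are genuinely different), and in particular to get the direction of the oplax $2$-cells right so that it is $(s_1\alpha,\tau)$ --- first factor degenerate via $s_1$, i.e.\ on the edge $1\le 2$ --- and $(s_0\beta,\ldots)$ --- second factor degenerate via $s_0$, i.e.\ on the edge $0\le 1$ --- that appear, matching Definition~\ref{def:scaled gray}, rather than their transposes. Concretely I would verify this on the generating case $[1]\Gtimes[1]$, where the grid is a single non-commuting square with one oplax $2$-cell, and then observe that the general grid \eqref{diag:grid} is glued from such squares along the (maximally scaled) edges, so that a $2$-simplex is thin iff each of its ``atomic'' contributions is, reducing the general statement to the $[1]\Gtimes[1]$ computation together with the fact that both tensor products preserve the relevant colimits (Theorem~\ref{thm:gray tensor} on the $\infty$-categorical side, and a direct check for the strict side).
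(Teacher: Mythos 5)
There is a genuine gap, and it occurs at the very first step: the two sides of the comparison do \emph{not} have the same underlying simplicial set, so the strategy of ``identify the underlying simplicial sets and then compare scalings'' cannot get off the ground. The scaled simplicial set $\Delta[m]^\sharp\Gtimes\Delta[n]^\sharp$ of Definition \ref{def:scaled gray} does have underlying simplicial set $\Delta[m]\times\Delta[n]$, but it is a cofibrant, non-fibrant presentation of the $(\infty,2)$-category $[m]\Gtimes[n]$. The strict $2$-category $[m]\Gtimes_{\mm{st}}[n]$ enters the comparison through its scaled nerve, whose underlying simplicial set is the \emph{coherent} nerve of a simplicial category whose mapping spaces are nerves of the posets of lattice paths; these are not discrete, so this is not the nerve of the poset $[m]\times[n]$. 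Already for $m=n=1$ there are two $1$-morphisms $(0,0)\to(1,1)$ in $[1]\Gtimes_{\mm{st}}[1]$ (the two maximal chains in the square), whereas $\Delta^1\times\Delta^1$ has a single nondegenerate diagonal edge. Hence ``the identity map on $\Delta^m\times\Delta^n$'' is not a map between the two objects being compared, and the isomorphism asserted in Proposition \ref{prop:gray of simplex} must be read as an equivalence in $\Cat_2$ (a weak equivalence of presentations), not a literal isomorphism of scaled simplicial sets; your argument conflates the two.

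What is actually required is the construction of such a weak equivalence, and the paper produces it on the other side of the Quillen equivalence of Theorem \ref{thm:scmodelstr}: one identifies $\mathfrak{C}^{\scale}\big(\Delta[m]^\sharp\Gtimes\Delta[n]^\sharp\big)$ with the marked Boardman--Vogt resolution of $[m]\times[n]$, whose mapping objects are nerves of the posets of \emph{all} nondegenerate chains in the grid with certain subchain inclusions marked, then maps this to $[m]\Gtimes_{\mm{st}}[n]$ (mapping objects the posets of \emph{maximal} chains) by completing each chain to a canonical maximal one, and finally checks that on each mapping object this comparison is a localisation at the marked arrows, via a cocartesian-fibration/contractible-fibre argument. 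Your observation about which composites through a middle vertex agree with the direct composite is in the right spirit---it is essentially the description of which subchain inclusions get marked---but without the computation of $\mathfrak{C}^{\scale}$ and the localisation step it does not yield a proof. Your proposed reduction to $[1]\Gtimes[1]$ by gluing also requires knowing that the strict Gray tensor product takes the simplicial decompositions of $[m]$ and $[n]$ to homotopy colimits in $\Cat_2$, which is not obviously easier than the direct computation for general $m,n$.
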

\begin{proof}
Note that $\Delta[n]^\sharp$ is a scaled simplicial set model for $[n]$, viewed as an $(\infty, 2)$-category. The oplax Gray tensor product from Theorem \ref{thm:gray tensor} can then be modeled by the marked simplicial category $\mathfrak{C}^{\scale}(\Delta[m]^\sharp\Gtimes \Delta[m]^\sharp)$. Forgetting the marking, this simplicial category is the Boardman--Vogt resolution of $[m]\times [n]$ (cf.\ \cite[Proposition 6.3.3]{MoerdijkToen}). Consequently, it can be identified with the simplicial category whose objects are tuples $x=(x^0, x^1)$ with $0\leq x^0\leq m$ and $0\leq x^1\leq n$, and where 
$$
\Map_{\mathfrak{C}^{\scale}(\Delta[m]^\sharp\Gtimes \Delta[n]^\sharp)}(x, y)=\nerve\big(\mm{Ch}_{x, y}\big)
$$
is the nerve of the poset $\mm{Ch}_{x, y}$ of nondegenerate chains $\sigma=[x=x_0< x_1<\dots < x_t=y]$ in $[m]\times [n]$ starting at $x$ and ending at $y$, ordered by subchain inclusions. Composition is given by concatenation of chains. Furthermore, a subchain inclusion $\sigma'\subseteq \sigma$ is marked if it is obtained by removing one $x_i$ from $\sigma$, such that either $x_{i}^0=x_{i+1}^0$ or $x_{i-1}^1=x_i^1$.

On the other hand, $[m]\Gtimes_{\mm{st}} [n]$ can be described as the following strict $2$-category \cite{HaugsengLax}: its objects are tuples $x=(x^0, x^1)$ with $0\leq x^0\leq m$ and $0\leq x^1\leq n$ and
$$
\Map_{[m]\Gtimes_{\mm{st}} [n]}(x, y)=\mm{MaxCh}_{x, y}
$$
is the poset whose objects are maximal nondegenerate chains from $x$ to $y$, with order generated by
$$
\scalebox{0.5}{\begin{tikzcd}[ampersand replacement=\&, cramped] {}\arrow[d] \& {}\\ {}\arrow[r] \& {} \end{tikzcd}} \leq  \scalebox{0.5}{\begin{tikzcd}[ampersand replacement=\&, cramped] {}\arrow[r] \& {}\arrow[d] \\ \& {} \end{tikzcd}}
$$
in the picture \eqref{diag:grid}.
Composition is concatenation of such chains. For each tuple $x$ and $y$, we will specify a map of posets $\mm{max}\colon \mm{Ch}_{x, y}\rt \mm{MaxCh}_{x, y}$ as follows: for any chain $\sigma$ from $x$ to $y$ in the grid \eqref{diag:grid}, let $\sigma\subseteq \max(\sigma)$ be the unique maximal chain extending $\sigma$ that is maximal with respect to the partial ordering on $\mm{MaxCh}$: this means that every arrow in the chain $\sigma$ going $r$ steps right and $d$ steps down is replaced by the maximal chain first going $r$ steps right and then $d$ steps down. One easily verifies that $\mm{max}$ is a map of posets, which sends every marked arrow in $\mm{Ch}_{x, y}$ to the identity. Furthermore, it is compatible with concatenation of chains. We therefore obtain a natural map
$$\begin{tikzcd}
\phi\colon \mathfrak{C}^{\scale}(\Delta[m]^\sharp\Gtimes \Delta[m]^\sharp)\arrow[r] & {[m]\Gtimes_{\mm{st}} [n]}
\end{tikzcd}$$
where we view $[m]\Gtimes_{\mm{st}} [n]$ as a marked simplicial category by taking nerves of mapping categories and marking equivalences (which in this case are just identities). To see that this is an equivalence, it remains to verify that $\mm{max}\colon \mm{Ch}_{x, y}\rt \mm{MaxCh}_{x, y}$ exhibits $\mm{MaxCh}_{x, y}$ as the localisation of $\mm{Ch}_{x, y}$ at the marked arrows. To see this, observe that the functor $\mm{max}$ is a cocartesian fibration. For each maximal chain $\tau$, the inverse image of $\tau$ has a maximal element ($\tau$ itself) and for every other $\sigma$ in the inverse image, the inclusion of chains $\sigma\subseteq \tau$ is a composite of marked arrows. It follows that the fibres of $\mm{max}$ have contractible realisation, so $\phi$ is an equivalence as desired.
\end{proof}

 We will need the following observation about the functor $\delta_2$:

\begin{lemma}\label{lem:localization of Gray tensor product}
For each $[m], [n]\in \Delta$, there is a natural map of $(\infty, 2)$-categories
$$
[m]\Gtimes [n]\rt \delta_2\big([m], [n]\big) = {[m]([n], \dots, [n])}
$$
which exhibits the codomain as the localisation of $[m]\Gtimes [n]$ at all $1$-morphisms contained in some $\{i\}\Gtimes [n]$.
\end{lemma}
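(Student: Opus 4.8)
The statement compares two presentations of the same $(\infty,2)$-category. By \cref{prop:gray of simplex} the source $[m]\Gtimes[n]$ is (up to equivalence) the strict Gray tensor product $[m]\Gtimes_{\mm{st}}[n]$, described explicitly as the $2$-category whose objects are pairs $(x^0,x^1)$ with $0\le x^0\le m$, $0\le x^1\le n$ and whose mapping posets are $\mm{MaxCh}_{x,y}$, the posets of maximal chains in the grid \eqref{diag:grid}. The target $\delta_2([m],[n]) = [m]([n],\dots,[n])$ is the strict $2$-category obtained from $[m]$ by replacing each hom-$1$-category by $[n]$; concretely a $1$-morphism from $i$ to $j$ (with $i\le j$) is a tuple $(k_{i+1},\dots,k_j)\in[n]^{j-i}$, and its composition is by concatenation. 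The plan is: \textbf{(1)} write down the functor of strict $2$-categories $P\colon [m]\Gtimes_{\mm{st}}[n]\to [m]([n],\dots,[n])$ explicitly; \textbf{(2)} identify which $1$-morphisms are inverted; \textbf{(3)} check that $P$ induces equivalences on all mapping $\infty$-categories after inverting these $1$-morphisms, and is a bijection on objects after inverting, i.e.\ that $P$ presents the claimed localisation; \textbf{(4)} transport this back through \cref{prop:gray of simplex} to obtain the map of $(\infty,2)$-categories and its universal property.

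\textbf{Steps in detail.} For step (1): a $1$-morphism in $[m]\Gtimes_{\mm{st}}[n]$ from $(a^0,a^1)$ to $(b^0,b^1)$ is a maximal staircase chain in the grid, which determines — and is determined by — the sequence of ``heights'' at which it crosses each vertical line $\{x^0 = c\}$ for $a^0< c\le b^0$. Send such a chain to the $1$-morphism $(a^0\to b^0)$ in $[m]$ decorated by the tuple recording the net vertical displacement of the staircase between consecutive columns; this recovers a $1$-morphism of $[m]([n],\dots,[n])$. Define $P$ to be the identity on objects on the first coordinate and to collapse the second coordinate, i.e.\ $(x^0,x^1)\mapsto x^0$. (This is where one sees $P$ is not injective on objects: all $(x^0,\ast)$ map to $x^0$, so the objects $\{0,\dots,n\}$ sitting in a fixed column $\{i\}\Gtimes[n]$ get identified.) On $2$-morphisms $P$ sends the generating inequality in $\mm{MaxCh}$ to the corresponding comparison in $[n]$; one checks directly this is functorial and compatible with $\Gtimes$-composition, using the concatenation description on both sides. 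For step (2): the $1$-morphisms of $[m]\Gtimes_{\mm{st}}[n]$ that $P$ sends to an identity (hence must be inverted) are exactly those whose first coordinate is degenerate, i.e.\ the $1$-morphisms of the sub-$2$-categories $\{i\}\Gtimes[n]$; conversely every $1$-morphism of $\{i\}\Gtimes[n]$ maps to an identity, so the class $W$ we localise at is precisely $\bigcup_i (\text{$1$-morphisms of }\{i\}\Gtimes[n])$. For step (3): by the universal property of localisation of $(\infty,2)$-categories it suffices to show that for every $(\infty,2)$-category $\tcat{X}$, precomposition with $P$ induces an equivalence from $\Map_{\Cat_2}([m]([n],\dots,[n]),\tcat{X})$ onto the full subspace of $\Map_{\Cat_2}([m]\Gtimes[n],\tcat{X})$ of those functors inverting $W$. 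Concretely, a functor out of $[m]\Gtimes_{\mm{st}}[n]$ that sends every $1$-morphism in each column $\{i\}\Gtimes[n]$ to an equivalence must, for each column, identify all $n+1$ objects coherently, and one checks the resulting data is exactly a functor out of $[m]([n],\dots,[n])$ — this is a combinatorial bookkeeping argument using that the grid is generated by its rows and columns. For step (4): compose the equivalence $[m]\Gtimes[n]\simeq [m]\Gtimes_{\mm{st}}[n]$ of \cref{prop:gray of simplex} with $P$ to obtain the asserted map of $(\infty,2)$-categories, and note that the equivalence identifies the $1$-morphisms in $\{i\}\Gtimes[n]$ on the two sides (this is immediate from the compatibility of that equivalence with the inclusions of columns, which in turn is built from the monoidality of the comparison in \cref{prop:gray of simplex}), so the localisation statements match.

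\textbf{Main obstacle.} The genuinely delicate point is step (3): verifying that localising $[m]\Gtimes_{\mm{st}}[n]$ at the column $1$-morphisms really produces $[m]([n],\dots,[n])$ on the nose, rather than merely something receiving a conservative functor from it. One must rule out the possibility that inverting the column morphisms also forces extra identifications among the ``staircase'' $1$-morphisms or among the $2$-morphisms, i.e.\ one must produce the inverse functor $[m]([n],\dots,[n])\to [m]\Gtimes[n][W^{-1}]$ and show the round-trips are equivalences. The cleanest route is probably to argue at the level of the explicit $\mm{Ch}_{x,y}$-model used in the proof of \cref{prop:gray of simplex}: there the mapping categories of $[m]\Gtimes_{\mm{st}}[n]$ arise as localisations $\mm{Ch}_{x,y}\to\mm{MaxCh}_{x,y}$ at marked subchain inclusions, and $P$ on homs can be exhibited as a further such localisation; then \cref{propn:fibloc} (Hinich) applied to the target-projection cocartesian fibration lets one assemble the fibrewise localisations into a global one, identifying the localisation of $[m]\Gtimes[n]$ with a $2$-category whose homs are precisely $[n]^{\bullet}$, i.e.\ $[m]([n],\dots,[n])$. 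I would use the cocartesian-fibration/localisation machinery already set up in \S\ref{subsec:paraunits2} rather than attempt a bare-hands coherence computation.
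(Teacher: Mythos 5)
Your construction of the map itself is fine and agrees with the paper's: both $[m]\Gtimes[n]$ and $\delta_2([m],[n])$ are gaunt strict $2$-categories, so the collapse-the-columns functor can be written down by hand, and the class $W$ of morphisms to be inverted is exactly the union of the $1$-morphisms of the $\{i\}\Gtimes[n]$. The gap is in your step (3), which you correctly flag as the main obstacle but do not actually close. The difficulty is that this is an \emph{$(\infty,2)$-categorical} localisation at $1$-morphisms which are not endomorphisms of a single object: inverting them identifies the $n+1$ objects of each column, so the mapping categories of the localisation are not obtained by localising the individual mapping posets $\mm{MaxCh}_{x,y}$ --- they are glued together from all the $\Map((i,a),(j,b))$ across choices of $a,b$, with coherences coming from whiskering by the inverted morphisms. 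Your proposed tool, \cref{propn:fibloc}, is a statement about $(\infty,1)$-categorical localisations of total spaces of cocartesian fibrations; it computes a localisation whose universal property is tested against $\infty$-categories, not against $(\infty,2)$-categories, and so it cannot certify that no extra identifications of $2$-morphisms are forced. The "combinatorial bookkeeping argument" for the universal property against an arbitrary $(\infty,2)$-category $\tcat{X}$ is precisely the content of the lemma, and it is not supplied.

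The paper closes this gap by a structural reduction rather than a direct computation: both $([m],[n])\mapsto [m]\Gtimes[n]$ (with its marked class $W$) and $([m],[n])\mapsto \delta_2([m],[n])$ are functors $\Delta\times\Delta\to\Cat_2$ satisfying the co-Segal conditions, i.e.\ they are built by iterated pushouts from the cases $m,n\in\{0,1\}$; since localisation is a left adjoint and hence commutes with these colimits, it suffices to verify the claim for $[0]\Gtimes[0]$, $[1]\Gtimes[0]$, $[0]\Gtimes[1]$ and $[1]\Gtimes[1]$, where it is checked by inspection (e.g.\ collapsing the two vertical edges of the lax square $[1]\Gtimes[1]$ visibly yields $[1]([1])$). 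If you want to salvage your approach, you should replace the appeal to \cref{propn:fibloc} by this co-Segal decomposition, or else prove a genuinely $2$-categorical fibrewise-localisation statement, which is not available in the paper.
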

\begin{proof}
Since $[m]\Gtimes [n]$ and $\delta_2([m], [n])=[m]([n], \dots, [n])$ are both gaunt 2-categories (i.e.\ the only invertible $2$-morphisms are the identities), the desired natural map $[n]\Gtimes [m]\rt [m]([n], \dots, [n])$ is simply the evident map of strict 2-categories that collapses all $\{i\}\Gtimes [n]$ to the $i$-th vertex in $[m]([n], \dots, [n])$. For instance, for $[m]=[2]$ and $[n]=[1]$, it is given pictorially by the map collapsing the vertical $1$-morphisms
$$\begin{tikzcd}
{} \arrow[r]\arrow[d] & {}\arrow[r]\arrow[d]\arrow[Rightarrow, ld, start anchor={[xshift=-1ex, yshift=-1ex]}, end anchor={[xshift=1.2ex, yshift=1.2ex]}] & \arrow[d]\arrow[Rightarrow, ld, start anchor={[xshift=-1ex, yshift=-1ex]}, end anchor={[xshift=1.2ex, yshift=1.2ex]}]\\
{}\arrow[r] & {}\arrow[r] & {}
\end{tikzcd}
\rt
\begin{tikzcd}[column sep=2.7pc]
{} \arrow[r, bend left=45, start anchor=north east, end anchor=north west, ""{name=U1, below}]\arrow[r, bend left=-45, start anchor=south east, end anchor=south west, ""{name=D1, above}] & {} \arrow[r, bend left=45, start anchor=north east, end anchor=north west, ""{name=U2, below}]\arrow[r, bend left=-45, start anchor=south east, end anchor=south west, ""{name=D2, above}] & {} \arrow[Rightarrow, from=U1, to=D1]\arrow[Rightarrow, from=U2, to=D2]\end{tikzcd}$$
To see that this is a localisation, note that both the domain and codomain are functors $\Delta\times \Delta\rt \Cat_2$ satisfying the co-Segal conditions; it therefore suffices to show this when $[n]$ and $[m]$ are $0$ or $1$, where the result is easily verified.
\end{proof}

\subsection{Scaled unstraightening of Gray fibrations}\label{subsec:scunstrGray}
Let us now recall Lurie's straightening theorem for locally cocartesian fibrations over scaled simplicial sets.

\begin{proposition}\label{prop:exp enriched cat}
If $\tcat{C}$ is a marked simplicial category, then the marked simplicial category $\Fun^+(\tcat{C}, \markSet)^\circ$ of fibrant-cofibrant objects in the projective model structure on the enriched functor category $\Fun^+(\tcat{C}, \markSet)$ is weakly equivalent to $\tFun^\scale(N^{\scale}(\tcat{C}), \tcat{Cat}^\scale)$, where $\tFun^\scale(-, -)$ denotes the internal Hom in scaled simplicial sets. In other words, the projective model structure on $\Fun^+(\tcat{C}, \markSet)$ describes the $(\infty, 2)$-category of functors from $\tcat{C}$ to $\tcat{Cat}$.
\end{proposition}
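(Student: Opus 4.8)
The plan is to identify both sides as the underlying $(\infty,2)$-category of a suitable model category, via the already-known dictionary between marked simplicial sets, simplicial categories, and scaled simplicial sets, and then to chase the enriched-functor-category construction through this dictionary. First I would recall that, by \cite[Theorem 3.1.5.1]{HTT}, the projective model structure on $\Fun^+(\tcat C,\markSet)$ exists whenever $\markSet$ carries a monoidal model structure and $\tcat C$ is a marked simplicial category, and that its fibrant objects are precisely the enriched functors landing in fibrant marked simplicial sets with the expected levelwise fibrancy and (co)fibrancy replaced appropriately; the cofibrant objects are retracts of ``free'' diagrams. Thus the $\infty$-category (indeed $(\infty,2)$-category, via the enrichment in $\markSet$) presented by $\Fun^+(\tcat C,\markSet)^\circ$ is, by general nonsense about enriched model categories, the $\markSet$-enriched localisation of $\Fun^+(\tcat C,\markSet)$ at the projective weak equivalences.

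The key computation is then to match this with the scaled internal Hom $\tFun^\scale(N^\scale(\tcat C),\tcat{Cat}^\scale)$. Here I would use the Quillen equivalence $\mathfrak C^\scale\dashv N^\scale$ from \cref{thm:scmodelstr} together with the fact that $\Gtimes$ (equivalently, the cartesian product of scaled simplicial sets, which is what computes the internal Hom right adjoint) is a left Quillen bifunctor (\cref{thm:gray tensor} gives the Gray version; the same machinery applies to the cartesian monoidal structure, or one cites the relevant statement in \cite{LurieGoo}). Concretely: for any scaled simplicial set $Y$, $\tFun^\scale(Y,\tcat{Cat}^\scale)$ is fibrant and represents $\Map_{\scSet}(-\times Y,\tcat{Cat}^\scale)$; transporting along the Quillen equivalence to $\markCat$, the $\markSet$-enriched mapping object $\tFun^\scale(N^\scale(\tcat C),\tcat{Cat}^\scale)$ corresponds, after fibrant replacement, to the enriched functor category $\Fun^+(\tcat C,(\markSet^\circ))$ computed in the self-enrichment of $\markSet^\circ$ via its internal Hom — which is exactly $\Fun^+(\tcat C,\markSet)^\circ$. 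The definition of $\tcat{Cat}^\scale=N^\scale(\cat{Set}_\Delta^{+,\circ})$ as the scaled nerve of $\markSet^\circ$ is precisely what makes these two descriptions agree. I would spell this out as: (i) $N^\scale$ is a right Quillen functor sending the $\markCat$-valued internal Hom to the $\scSet$-valued one (this is where enrichment over a monoidal Quillen pair is used); (ii) the $\markSet$-enriched functor $\infty$-category out of $\tcat C$ into $\markSet^\circ$ is the Dwyer–Kan localisation of $\Fun^+(\tcat C,\markSet)$, by \cite[Theorem 3.1.5.1]{HTT} and the standard theory; (iii) conclude by transporting (ii) through (i) and the Quillen equivalence of \cref{thm:scmodelstr}.

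The main obstacle I anticipate is step (i): making precise that $N^\scale$ intertwines the two internal Homs, i.e.\ that for a marked simplicial category $\tcat C$ and a marked-simplicial-category-enriched target $\tcat D$ one has a natural weak equivalence $N^\scale\big(\Fun^+(\tcat C,\tcat D)\big)\simeq \tFun^\scale\big(N^\scale\tcat C,N^\scale\tcat D\big)$. The cleanest route is probably not to argue at the point-set level but to note that both sides corepresent the same functor on the homotopy category of $\scSet$: on the one hand $\Map_{\scSet}(X,\tFun^\scale(N^\scale\tcat C,N^\scale\tcat D))\simeq \Map_{\scSet}(X\times N^\scale\tcat C,N^\scale\tcat D)$; on the other, using that $X\times N^\scale\tcat C$ is weakly equivalent to $N^\scale(\mathfrak C^\scale(X)\otimes\tcat C)$ (product of scaled simplicial sets corresponds to the appropriate tensor of marked simplicial categories, up to homotopy — this is the content of $\Gtimes$, resp.\ $\times$, being a left Quillen bifunctor together with the Quillen equivalence), one gets $\Map_{\markCat}(\mathfrak C^\scale(X)\otimes\tcat C,\tcat D)\simeq\Map_{\markCat}(\mathfrak C^\scale(X),\Fun^+(\tcat C,\tcat D))\simeq \Map_{\scSet}(X,N^\scale\Fun^+(\tcat C,\tcat D))$. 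Comparing and using that $\Fun^+(\tcat C,\markSet^\circ)^\circ$ is already fibrant (so $N^\scale$ of it is fibrant in $\scSet$) yields the claimed equivalence with $\tFun^\scale(N^\scale\tcat C,\tcat{Cat}^\scale)$. The remaining routine verifications — that fibrant-cofibrant objects of the projective model structure are the levelwise-fibrant enriched functors, and that the $\markSet$-enrichment of this model category presents the $(\infty,2)$-categorical structure rather than merely the underlying $\infty$-category — I would dispatch by citing the standard theory of $\markSet$-enriched model categories and the fact that $\tcat{Cat}$ as an $(\infty,2)$-category is by construction the one presented by $\markSet^\circ$.
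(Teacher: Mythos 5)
Your overall strategy---identify both sides as (co)representing the same functor on the homotopy category of $\scSet$ and transport through the Quillen equivalence $\mathfrak{C}^{\scale}\dashv N^{\scale}$---is in spirit what underlies the result, but the paper does none of this by hand: its proof is a one-line citation of \cite[Proposition A.3.4.13]{HTT}, which states precisely that for an excellent model category $\mathbf{S}$ (and $\markSet$ is excellent by \cite[Example A.3.2.22]{HTT}) the fibrant-cofibrant objects of the projective model structure on $\Fun^+(\tcat{C},\mathbf{S})$ form a homotopy exponential of $\mathbf{S}^\circ$ by $\tcat{C}$. Your reconstruction hides the entire content of that proposition in the step
\[
\Map_{\markCat}\bigl(\mathfrak{C}^{\scale}(X)\times\tcat{C},\,\tcat{D}\bigr)\;\simeq\;\Map_{\markCat}\bigl(\mathfrak{C}^{\scale}(X),\,\Fun^+(\tcat{C},\tcat{D})\bigr),
\]
which you present as a formal adjunction. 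At the point-set level it is one, but the cartesian product on $\markCat$ is \emph{not} a left Quillen bifunctor for the (marked) Bergner model structure---the product of cofibrant enriched categories is essentially never cofibrant---so the $1$-categorical adjunction does not automatically descend to derived mapping spaces. Controlling this failure is exactly what \cite[\S A.3.4]{HTT} does, and it is where the excellence of $\markSet$ (which your argument never invokes) is used. Without that input, the claim that $\Fun^+(\tcat{C},\markSet)^\circ$ corepresents $[\,\mathcal{B}\times\tcat{C},\markSet^\circ\,]$ is unproved, and your related identification of $\Fun^+(\tcat{C},\markSet)^\circ$ with the ``enriched functor category into $\markSet^\circ$'' is likewise not a definition-chase but the substance of the theorem.

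Two smaller points: the projective model structure on $\Fun^+(\tcat{C},\markSet)$ is \cite[Proposition A.3.3.2]{HTT}, not Theorem 3.1.5.1 (which concerns the marked model structure on $\markSet$ itself); and your use of the counit $\mathfrak{C}^{\scale}N^{\scale}\tcat{C}\to\tcat{C}$ as a weak equivalence requires $\tcat{C}$ to be locally fibrant, whereas the statement is for an arbitrary marked simplicial category (in the paper's application $\tcat{C}=\mathfrak{C}^{\scale}(X,S)$ is cofibrant but not fibrant). The cleanest repair is simply to do what the paper does: quote \cite[Proposition A.3.4.13]{HTT} together with the excellence of $\markSet$, and reserve the representability bookkeeping for the passage from ``exponential in $\mathrm{h}\markCat$'' to the scaled internal Hom $\tFun^{\scale}(N^{\scale}\tcat{C},\tcat{Cat}^{\scale})$.
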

\begin{proof}
This follows from \cite[Proposition A.3.4.13]{HTT} since $\markSet$ is an excellent model category by \cite[Example A.3.2.22]{HTT}.
\end{proof}
\begin{definition}
If $(X, S)$ is a scaled simplicial set and $p\colon E\rt X$ is a locally cocartesian inner fibration, then we say that $p$ is cocartesian over $S$ if for every $\sigma\colon [2]\rt X$ in $S$, the base change $\sigma^*E \rt [2]$ is a cocartesian inner fibration.
\end{definition}

\begin{theorem}[Lurie]
Let $(X, S)$ be a scaled simplicial set. Then there is a left proper combinatorial marked simplicial model structure on the slice category $\markSet/X^\sharp$ (where $X^\sharp$ denotes $X$ with all 1-simplices marked) such that the cofibrations are the monomorphisms, and
an object $(E, T)\rto{p} X^\sharp$ is fibrant if and only if
\begin{enumerate}
\item the underlying map of simplicial sets $p\colon E\rt X$ is a locally cocartesian inner fibration,
\item $T$ is precisely the set of locally $p$-cocartesian edges in $E$,
\item the locally cocartesian inner fibration $p$ is cocartesian over $S$.
\end{enumerate}
We write $\markSet_{(X,S)}$ for $\markSet/X^\sharp$ equipped with this model structure.
\end{theorem}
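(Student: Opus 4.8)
The plan is not to reprove this from scratch: the statement is precisely Lurie's construction of the \emph{locally cocartesian model structure} on $\markSet/X^\sharp$ from \cite[\S3.2]{LurieGoo}, and we would cite it directly (just as we quoted \cite[Theorem 4.2.7]{LurieGoo} above). For the reader's orientation, I would then sketch the strategy underlying Lurie's argument, which proceeds in three moves.

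First, one specifies a set of generating anodyne maps in $\markSet/X^\sharp$, organised into three families: (i) the inner horn inclusions $\Lambda^n_i\hookrightarrow\Delta^n$ with appropriate markings (for $n=2$ the horn $\Lambda^2_1\hookrightarrow\Delta^2$ is only required over $2$-simplices lying in the scaling $S$), together with the maps forcing the marked edges to be closed under degeneracies and a $2$-out-of-$3$ condition; (ii) the ``marked-anodyne''-type generators built from the outer horn $\Lambda^n_0\hookrightarrow\Delta^n$ with initial edge marked, mapped in along arbitrary $n$-simplices of $X$, whose role is to pin $T$ down as exactly the class of locally $p$-cocartesian edges; and (iii) for each scaled $2$-simplex $\sigma\in S$, the generators forcing the base change $\sigma^*E\to[2]$ to be a genuine (rather than merely local) cocartesian fibration. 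Second, one invokes the general recognition theorem for left proper combinatorial model structures on a presheaf category, e.g.\ \cite[Proposition A.2.6.13]{HTT}, declaring the cofibrations to be all monomorphisms and the set above to be the generating trivial cofibrations; left properness, combinatoriality, and the simplicial enrichment are then automatic since every object is cofibrant, the ambient category is a topos, and the enrichment descends from $\markSet$.

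Third — and this is where essentially all the work lies — one must check that the fibrant objects, i.e.\ those with the right lifting property against every generator, are exactly the $(E,T)\to X^\sharp$ satisfying conditions (1)--(3) of the statement. Lifting against family (i) yields that $E\to X$ is a (locally cocartesian) inner fibration; lifting against family (ii) forces $T$ to coincide with the locally $p$-cocartesian edges, in analogy with the identification of marked-anodyne-fibrant objects with naturally marked cocartesian fibrations in \cite[\S3.1]{HTT}; and lifting against family (iii) upgrades ``locally cocartesian over each $2$-simplex of $S$'' to ``cocartesian over $S$''. I expect the main obstacle to be exactly this recognition step: one has to show both that the chosen generators \emph{suffice} to detect the three conditions and that they impose \emph{nothing more}, which requires the combinatorial lemmas on horn-filling in locally cocartesian fibrations that occupy the bulk of \cite[\S3.2]{LurieGoo}. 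Finally, I would record that the construction is functorial and compatible with base change along maps of scaled simplicial sets $(X',S')\to(X,S)$, which is immediate from the explicit description of the generators and which is all that is needed for the applications in \S\ref{subsec:scunstrGray}.
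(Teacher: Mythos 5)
Your proposal is correct and matches the paper's proof, which likewise just cites Lurie: the paper invokes \cite[Theorem 3.2.6]{LurieGoo} applied to the categorical pattern $(X, X_1, S, \emptyset)$, with the marked simplicial enrichment coming from \cite[Remark 3.2.26]{LurieGoo}. Your sketch of the internals (generating anodyne maps, the recognition principle of \cite[Proposition A.2.6.13]{HTT}, and the identification of the fibrant objects) is an accurate outline of how Lurie proves that general theorem, so the only thing worth adding is the precise pattern to which it is applied.
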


\begin{proof}
As a simplicial model category this is a special case of \cite[Theorem 3.2.6]{LurieGoo}, applied to the categorical pattern $(X, X_1, S, \emptyset)$. The marked simplicial enrichment follows from \cite[Remark 3.2.26]{LurieGoo}.
\end{proof}

\begin{theorem}[Lurie]\label{thm:scaled_unstr}
If $(X, S)$ is a scaled simplicial set, then there is a marked simplicial Quillen equivalence
$$\begin{tikzcd}
\Strsc_{(X,S)}\colon \markSet_{(X,S)}\arrow[r, yshift=0.5ex] & \Fun^+(\mathfrak{C}^{\scale}(X,S), \markSet) \colon \Unsc_{(X,S)}\arrow[l, yshift=-0.5ex]
\end{tikzcd}$$
where $\Fun^+(\mathfrak{C}^{\scale}(X,S), \markSet)$ is equipped with the projective model structure.
\end{theorem}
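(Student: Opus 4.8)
The plan is to deduce this as a special case of Lurie's straightening theorem for categorical patterns. As noted in the previous theorem, the model structure on $\markSet_{(X,S)}$ was obtained by applying \cite[Theorem 3.2.6]{LurieGoo} to the categorical pattern $\mathfrak{P} = (X, X_1, S, \emptyset)$ on the simplicial set $X$. To any such pattern Lurie's construction attaches a marked simplicial category together with a straightening--unstraightening adjunction into the projective model structure on $\markSet$-valued functors out of it, and \cite[Theorem 3.8.1]{LurieGoo} asserts that this adjunction is a Quillen equivalence. So the first step is to check that for the pattern $\mathfrak{P} = (X, X_1, S, \emptyset)$ the associated marked simplicial category is exactly $\mathfrak{C}^{\scale}(X,S)$. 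This is essentially the definition of $\mathfrak{C}^{\scale}$ as the left adjoint of the scaled nerve $N^{\scale}$ (\cref{def:scaled simplicial sets}): for a pattern whose only distinguished cells are the $2$-simplices in $S$, Lurie's rigidification recipe produces the simplicial category $\mathfrak{C}[X]$ with an edge $\Delta^1 = \mathfrak{C}[X](i,j) \to \mathfrak{C}[X](k,l)$ marked precisely when it arises from a $2$-simplex in $S$, which is exactly $\mathfrak{C}^{\scale}(X,S)$. With this identification, \cite[Theorem 3.8.1]{LurieGoo} yields the Quillen adjunction $\Strsc_{(X,S)} \dashv \Unsc_{(X,S)}$ and the fact that it is a Quillen equivalence.

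It then remains only to upgrade the statement from a simplicial to a \emph{marked} simplicial Quillen equivalence, exactly as the previous theorem upgraded Lurie's model structure via \cite[Remark 3.2.26]{LurieGoo}. Both sides are enriched model categories over $\markSet$: the source by \cite[Remark 3.2.26]{LurieGoo}, and the target because $\markSet$ is an excellent, hence monoidal, model category, so that the projective model structure on $\markSet$-enriched functors out of any $\markSet$-enriched category is again $\markSet$-enriched (cf.\ \cite[Proposition A.3.4.13]{HTT}, used already in \cref{prop:exp enriched cat}). Inspecting the explicit formulas for $\mathfrak{St}$ and $\mathfrak{Un}$ in \cite[\S 3]{LurieGoo}, both functors are built from tensors and (co)ends over $\mathfrak{C}^{\scale}(X,S)$, hence are $\markSet$-enriched; since both sides are $\markSet$-enriched model categories and the adjunction is $\markSet$-enriched, it is a $\markSet$-enriched Quillen adjunction, and whether it is a Quillen equivalence is detected on the underlying model categories, which is precisely \cite[Theorem 3.8.1]{LurieGoo}.

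The only obstacle here is bookkeeping: one has to match Lurie's categorical-pattern conventions with the scaled-simplicial-set formalism recalled in \S\S\ref{subsec:scaled} and \ref{subsec:scunstrGray}, that is, verify that $\mathfrak{P} = (X, X_1, S, \emptyset)$ produces both the model structure $\markSet_{(X,S)}$ and the marked simplicial category $\mathfrak{C}^{\scale}(X,S)$ appearing in Lurie's statements, and that the $\sSet$-enrichment Lurie records refines to a $\markSet$-enrichment. None of this is homotopy-theoretically substantial; the genuine content is \cite[Theorem 3.8.1]{LurieGoo}, which we invoke wholesale.
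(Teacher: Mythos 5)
Your proposal is correct and follows essentially the same route as the paper: the unenriched Quillen equivalence is quoted wholesale from \cite[Theorem 3.8.1]{LurieGoo} (applied to the categorical pattern $(X, X_1, S, \emptyset)$ underlying the model structure on $\markSet_{(X,S)}$), and the only remaining work is to upgrade the simplicial enrichment recorded in \cite[Remark 3.8.2]{LurieGoo} to a marked simplicial one, which you do by the same observation the paper makes, namely that the straightening and unstraightening constructions are themselves $\markSet$-enriched between $\markSet$-enriched model categories.
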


\begin{proof}
As an (unenriched) Quillen equivalence this follows from \cite[Theorem
3.8.1]{LurieGoo}. The compatibility with the simplicial enrichment is discussed in \cite[Remark 3.8.2]{LurieGoo}, and the same argument clearly extends to show that this is a marked simplicial adjunction. 
\end{proof}

This marked simplicial Quillen equivalence induces a weak equivalence between the underlying (fibrant) marked simplicial categories of fibrant-cofibrant objects, i.e.\ an equivalence of $(\infty, 2)$-categories. Combining this with Proposition \ref{prop:exp enriched cat}, we get:

\begin{corollary}\label{cor:scaled straightening homotopy invariant version}
Given any scaled simplicial set $(X,S)$, there is an equivalence of fibrant scaled simplicial sets
\[\pushQED{\qed}\tFun^\scale((X,S), \tcat{Cat}^\scale)\simeq \nerve^{\scale}\big((\markSet_{(X,S)})^\circ\big).\qedhere \popQED\]
\end{corollary}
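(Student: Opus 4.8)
The plan is to read the statement off the two Quillen equivalences of Lurie recalled immediately above, combined with \cref{prop:exp enriched cat}, by passing everywhere to $(\infty,2)$-categories through the scaled nerve. The first ingredient is the general principle that a marked simplicial Quillen equivalence $F\colon \mathbf{M}\rightleftarrows \mathbf{N}\cocolon G$ between marked simplicial model categories restricts to a Dwyer--Kan equivalence between the marked-simplicially enriched categories $\mathbf{M}^\circ$ and $\mathbf{N}^\circ$ of fibrant--cofibrant objects: the derived unit and counit are weak equivalences, and on fibrant--cofibrant objects the derived functors agree with $F$ and $G$ up to (co)fibrant replacement. Applying this to the marked simplicial Quillen equivalence $\Strsc_{(X,S)}\dashv \Unsc_{(X,S)}$ recalled above and then applying the scaled nerve $\nerve^\scale$ yields an equivalence of fibrant scaled simplicial sets
\[ \nerve^\scale\big((\markSet_{(X,S)})^\circ\big)\simeq \nerve^\scale\big(\Fun^+(\mathfrak{C}^\scale(X,S),\markSet)^\circ\big). \]

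Next I would massage the right-hand side. Choose a fibrant replacement $\tcat{C}$ of $\mathfrak{C}^\scale(X,S)$ in the marked Bergner model structure; since a Dwyer--Kan equivalence of marked simplicial categories induces a Quillen equivalence between the associated projective functor model structures into the excellent model category $\markSet$, this does not change the right-hand side up to equivalence, so we may replace $\mathfrak{C}^\scale(X,S)$ by $\tcat{C}$. Now \cref{prop:exp enriched cat} (applied to the fibrant marked simplicial category $\tcat{C}$) identifies $\Fun^+(\tcat{C},\markSet)^\circ$ with $\tFun^\scale(\nerve^\scale(\tcat{C}),\tcat{Cat}^\scale)$. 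Finally, because $\mathfrak{C}^\scale\dashv \nerve^\scale$ is a Quillen equivalence by \cref{thm:scmodelstr} and every scaled simplicial set is cofibrant, the composite $(X,S)\to \nerve^\scale\mathfrak{C}^\scale(X,S)\to \nerve^\scale(\tcat{C})$ is a weak equivalence of scaled simplicial sets with fibrant target; as $\tcat{Cat}^\scale$ is fibrant, the internal-hom functor $\tFun^\scale(-,\tcat{Cat}^\scale)$ carries it to an equivalence $\tFun^\scale((X,S),\tcat{Cat}^\scale)\simeq \tFun^\scale(\nerve^\scale(\tcat{C}),\tcat{Cat}^\scale)$. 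Chaining the three displayed equivalences proves the claim, and both ends are fibrant (scaled nerves of fibrant marked simplicial categories are fibrant, and $\tFun^\scale(-,\tcat{Cat}^\scale)$ lands in fibrant objects).

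I expect the one genuinely delicate point to be the very last step: one must know that $\tFun^\scale(-,Z)$ for a fibrant scaled simplicial set $Z$ sends weak equivalences between (automatically cofibrant) scaled simplicial sets to weak equivalences of fibrant objects. This is the homotopy-invariance property of Lurie's internal hom that is implicit in the cited results of \cite{LurieGoo}; should one wish to avoid invoking it directly, one can instead observe that $\nerve^\scale(\tcat{C})$ is by construction a fibrant replacement of $(X,S)$ and appeal to invariance of the \emph{derived} internal hom. The remaining bookkeeping — replacing $\mathfrak{C}^\scale(X,S)$ by a fibrant marked simplicial category so that \cref{prop:exp enriched cat} (which runs through \cite[Proposition A.3.4.13]{HTT}) applies cleanly, and checking that this does not disturb the projective functor model structure — is routine but worth making explicit.
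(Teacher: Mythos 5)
Your argument is correct and follows essentially the same route as the paper's: the proof in the text consists precisely of observing that the marked simplicial Quillen equivalence $\Strsc_{(X,S)}\dashv\Unsc_{(X,S)}$ induces an equivalence on fibrant--cofibrant objects and then invoking \cref{prop:exp enriched cat}. The additional care you take with fibrant replacement of $\mathfrak{C}^{\scale}(X,S)$ and with the homotopy invariance of $\tFun^\scale(-,\tcat{Cat}^\scale)$ only makes explicit bookkeeping that the paper leaves implicit.
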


\begin{remark}
The categories $\markSet_{(X,S)}$ only depend pseudonaturally on $(X,S)$, and therefore the equivalence in \cref{cor:scaled straightening homotopy invariant version} is not literally natural at the point-set level, but this can be dealt with in the same way as in the proof of the analogous statement for the usual unstraightening equivalence in \cite[Corollary A.32]{GHN}.
\end{remark}
Specialising to the case of a Gray tensor product of two scaled simplicial sets, we obtain the following:
\begin{proposition}\label{prop:locally cocart of gray tensor}
Straightening for locally cocartesian fibrations gives an equivalence between maps of scaled simplicial sets $(X,S)\Gtimes (Y,T) \rt \tcat{Cat}^\scale$ and locally cocartesian inner fibrations $E\rt X\times Y$ such that
\begin{enumerate}
\item for $x\in X$, the restriction $E_x\rt Y$ is cocartesian over $T$,
\item for $y\in Y$, the restriction $E_y\rt X$ is cocartesian over $S$,
\item for 1-simplices $\alpha\colon x\rt x'$ in $X$, $\beta\colon y\rt y'$ in $Y$, $p$ is cocartesian over the $2$-simplex $(s_1\alpha, s_0\beta)$.
\end{enumerate}
\end{proposition}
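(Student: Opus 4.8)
The plan is to deduce \cref{prop:locally cocart of gray tensor} directly from \cref{cor:scaled straightening homotopy invariant version} by identifying, on the fibrational side of the straightening equivalence, exactly which locally cocartesian inner fibrations over $X \times Y$ are \emph{cocartesian over the scaling} $S \Gtimes T$ of the Gray tensor product $(X,S)\Gtimes(Y,T)$. By definition of the model structure $\markSet_{((X,S)\Gtimes(Y,T))}$, the fibrant objects over $(X\times Y)^\sharp$ are precisely the locally cocartesian inner fibrations $p\colon E \to X\times Y$ (marked by their locally cocartesian edges) that are cocartesian over every $2$-simplex lying in $S\Gtimes T$; so the whole content of the proposition is a repackaging of the defining list of $2$-simplices in \cref{def:scaled gray}. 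Thus the first step is to invoke \cref{cor:scaled straightening homotopy invariant version} with $(X,S)$ replaced by $(X,S)\Gtimes(Y,T)$, together with the adjunction $\Map(\tcat{A}\Gtimes\tcat{B},\tCat)\simeq\Map(\tcat{A},\tFun^\lax(\tcat{B},\tCat))$ (or just directly the description of maps into $\tcat{Cat}^\scale$), to translate "maps of scaled simplicial sets $(X,S)\Gtimes(Y,T)\to\tcat{Cat}^\scale$" into "fibrant objects of $\markSet_{((X,S)\Gtimes(Y,T))}$".

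The second step is the bookkeeping: unwind what it means for $p$ to be cocartesian over each of the two families of $2$-simplices in $S\Gtimes T$. A simplex of the form $(\sigma, s_0\beta)$ with $\sigma\in S$ and $\beta\in Y_1$ is a totally degenerate-in-$Y$ triangle; restricting $p$ along it factors through $E_y \to X$ for the appropriate $y=\beta(0)=\beta(1)$ (after pulling back along $\beta$, but since $\beta$ is an edge one reduces to checking on fibres), and cocartesianness over all such triangles with $\sigma$ ranging over $S$ is exactly condition (2): $E_y\to X$ is cocartesian over $S$. Here one has to be a little careful: $(\sigma,s_0\beta)$ is not literally a triangle in a fibre $E_y$, but its image in $Y$ is the constant path at $\beta(0)$ only after collapsing; the correct statement is that being cocartesian over $(\sigma,s_0\beta)$ for every $\sigma\in S$ and $\beta\in Y_1$ is equivalent, by an argument with locally cocartesian edges and \cref{lem:cartinloccartfib} applied in the $X$-direction, to $E_{y}\to X$ being cocartesian over $S$ for every vertex $y$. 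Symmetrically, the simplices $(s_1\alpha,\tau)$ with $\tau\in T$, $\alpha\in X_1$ give condition (1): $E_x\to Y$ cocartesian over $T$. The third step is to observe that the scaling $S\Gtimes T$ as defined contains \emph{only} the above two families of non-degenerate triangles; in particular it does \emph{not} contain the "mixed" triangle $(s_1\alpha,s_0\beta)$ coming from a pair of genuine edges $\alpha\colon x\to x'$, $\beta\colon y\to y'$ --- wait, in fact it does, since $(s_1\alpha,s_0\beta)$ has the form $(s_1\alpha,\tau)$ with $\tau=s_0\beta\in Y_1$ a degenerate, hence $T$-scaled, $2$-simplex; so condition (3) is subsumed. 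Let me restate: the families $(s_1\alpha,\tau)$ with $\tau\in T$ and $(\sigma,s_0\beta)$ with $\sigma\in S$ together already force cocartesianness over $(s_1\alpha, s_0\beta)$, so (3) is automatic given (1) and (2); listing it separately in the statement is a (harmless) redundancy that I would either prove as an immediate consequence or simply absorb. Conversely, any locally cocartesian inner fibration satisfying (1)--(3) is cocartesian over $S\Gtimes T$ because every $2$-simplex of $S\Gtimes T$ is of one of these two types.

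The main obstacle --- really the only non-formal point --- is the reduction "cocartesian over $(\sigma,s_0\beta)$ for all $\beta\in Y_1$ $\iff$ $E_y\to X$ cocartesian over $\sigma$ for all vertices $y$", and its $X$-analogue. The subtlety is that $(\sigma,s_0\beta)$ need not land inside a single fibre $E_{\beta(0)}$; one must argue that a locally cocartesian lift of the edge $\sigma|_{\{0<2\}}$ composed with a locally cocartesian lift of $\sigma|_{\{0<1\}}$ in $E$ can be analysed by first pushing forward along $\beta$ (which, because $p$ is already assumed to be a \emph{locally} cocartesian fibration, is harmless at the level of the condition) and then checking a composability statement in the fibre $E_{\beta(1)}\to X$ via \cref{cor:loccartiscartcond}/\cref{lem:cartinloccartfib}. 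This is essentially the same manipulation Lurie performs in \cite{LurieGoo} when relating categorical patterns to fibrations, so I would cite the relevant lemma there (or \cite[Lemma A.1.8]{cois}) rather than reprove it, and otherwise the proof is a direct appeal to \cref{cor:scaled straightening homotopy invariant version} plus the explicit description of $S\Gtimes T$.
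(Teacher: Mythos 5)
Your overall strategy coincides with the paper's: invoke \cref{cor:scaled straightening homotopy invariant version} for $(X,S)\Gtimes(Y,T)$ and then match the fibrancy condition --- being cocartesian over every $2$-simplex of the scaling $S\Gtimes T$ from \cref{def:scaled gray} --- against conditions (1)--(3). However, your bookkeeping contains a genuine error. You assert that cocartesianness over the full family $(s_1\alpha,\tau)$ with $\alpha\in X_1$, $\tau\in T$ is \emph{equivalent} to condition (1) (and symmetrically for (2)), and you then conclude that condition (3) is ``subsumed'' because $(s_1\alpha,s_0\beta)$ has the form $(s_1\alpha,\tau)$ with $\tau=s_0\beta$ degenerate. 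This is false. Conditions (1) and (2) only record cocartesianness over the triangles $(s_0^2x,\tau)$ and $(\sigma,s_0^2y)$ that are constant at a \emph{vertex} in one coordinate; the mixed triangles with both $\alpha$ and $\beta$ nondegenerate encode an independent interchange condition. Concretely, take $X=Y=[1]$ with their (necessarily degenerate) maximal scalings: conditions (1) and (2) then hold for \emph{every} locally cocartesian inner fibration over $[1]\times[1]$, yet condition (3) fails precisely when the locally cocartesian lift of the diagonal edge is not the composite ``first $(\alpha,\id)$, then $(\id,\beta)$''. This is exactly the third type of triangle in \cref{lem:gray reform} that separates Gray fibrations from general locally cocartesian fibrations over a product; if (3) were redundant the whole notion would collapse.

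The step you defer is in fact the entire nonformal content of the proposition. One must show that (1)--(3) \emph{together} imply cocartesianness over all of $S\Gtimes T$, in particular over $(\sigma,s_0\beta)$ with $\sigma\in S$ and $\beta$ nondegenerate, and over $(s_1\alpha,\tau)$ with $\alpha$ and $\tau$ nondegenerate; neither \cite[Lemma A.1.8]{cois} nor a generic appeal to Lurie's categorical-pattern manipulations delivers this statement. The paper's proof is an explicit prism argument: for $\sigma\in S$ with edges $\kappa$, $\lambda$ and $\mu=\lambda\kappa$, and $\beta\colon y\to y'$, it forms the $3$-simplex $\xi=(s_2\sigma,s_0^2\beta)$, observes that $d_0\xi=(s_1\lambda,s_0\beta)$ and $d_1\xi=(s_1\mu,s_0\beta)$ lie in the family of condition (3) while $d_3\xi=(\sigma,s_0^2y)$ lies in that of condition (2), and chases locally cocartesian lifts around $\xi$ to obtain cocartesianness over $d_2\xi=(\sigma,s_0\beta)$. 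You would need to supply this argument (and its counterpart for $(s_1\alpha,\tau)$), and in particular retract the claim that (3) follows from (1) and (2), for your proof to close.
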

\begin{remark}
Condition (3) can be rephrased as follows: for any $e \in E_{x, y}$, if $e\rt (\alpha, \id_y)_!e$ is a locally cocartesian morphism over $(\alpha, \id_y)$, and $(\beta, \id_y)_!e\rt (\id_{x'},\beta)_!(\alpha, \id_y)_!e$ is a locally cocartesian morphism over $(\id_{x'},\beta)$, then the composite $e\rt (\id_{x'},\beta)_!(\alpha, \id_y)_!e$ is locally cocartesian over $(\alpha, \beta)$.
\end{remark}
\begin{proof}[Proof of \cref{prop:locally cocart of gray tensor}]
Corollary \ref{cor:scaled straightening homotopy invariant version} implies that there is a natural equivalence of $(\infty, 2)$-categories 
$$
\tFun^\scale((X,S)\Gtimes (Y,T), \tcat{Cat}^\scale)\simeq \nerve^{\scale}\big((\markSet)^\circ_{(X,S)\Gtimes (Y,T)}\big).
$$
Next one can apply the equivalence $X\Gtimes Y \simeq (X\times Y, T_{-})$ of  \cite[Proposition 2.10]{GHL-Gray} to obtain an equivalence $(\markSet)_{(X,S)\Gtimes (Y,T)}^\circ \simeq (\markSet)_{(X\times Y,T_{-})}^\circ$. The objects of the right hand side are exactly those of the proposition. However one can also show directly that the fibrant objects of $(\markSet)_{(X,S)\Gtimes (Y,T)}$ are precisely the locally cocartesian fibrations satisfying conditions (1)-(3) above. We will do this to keep the treatment self contained. By definition, the fibrant objects are locally cocartesian fibrations $p\colon E\rt X\times Y$ such that for $(\sigma, \tau)\in (S\times T)_{\oplax}$, the pullback $(\sigma, \tau)^*E\rt [2]$ is a cocartesian fibration. On the other hand, conditions (1)--(3) assert that $E$ is cocartesian over the subset of $2$-simplices $(S\times T)'\subseteq (S\times T)_{\oplax}$ given as follows:
\begin{itemize}
\item $(s_0^2x, \tau)$ with $x\in X_0$ and $\tau\in T$.
\item $(\sigma, s_0^2y)$ with $\sigma\in S$ and $y\in Y_0$.
\item $(s_1\alpha, s_0\beta)$ with $\alpha\in X_1, \beta\in Y_1$.
\end{itemize}
We claim that this already implies that $p$ is cocartesian over every $2$-simplex in $(S\times T)_{\oplax}$. Indeed, let us show that $p$ is cocartesian over $(\sigma, s_0\beta)$, for $\sigma\in S$ of the form
$$\begin{tikzcd}
x\arrow[r, "\kappa"]\arrow[rd, "\mu"{swap}] & x'\arrow[d, "\lambda"]\\
& x''
\end{tikzcd}$$
and $\beta\colon y\rt y'$ in $Y_1$; the case of a $2$-simplex $(s_1\alpha, \tau)$ will follow from a similar argument. Consider the $3$-simplex $\xi=(s_2\sigma, s_0^2\beta)$, which may be depicted as:
$$\begin{tikzcd}[column sep=1.5pc, row sep=1.8pc]
& {(x', y)}  \arrow[rrd, "{(\lambda, \beta)}"] & & \\
{(x, y)} \arrow[ru, "{(\kappa, \id_y)}"] \arrow[rrd, "{(\mu, id_y)}"']\arrow[rrr, "{(\mu, \beta)}"{description, pos=0.2}] &  &  & {(x'', y').} \\
&  & {(x'', y)}\arrow[luu, leftarrow, "{(\lambda, \id_y)}"{description, pos=0.75}, crossing over] \arrow[ru, "{(id_{x''}, \beta)}"'] &            
\end{tikzcd}$$
Note that $d_2\xi=(\sigma, s_0\beta)$, while the faces $d_0\xi=(s_1\lambda, s_0\beta),  d_1\xi=(s_1\mu, s_0\beta)$ and $d_3\xi=(\sigma, s_0^2y)$ are all in $(S\times T)'$. If $p\colon E\rt X\times Y$ is cocartesian over $(S\times T)'$, a locally cocartesian arrow $e\rt (\mu, \beta)_!e$ can therefore be identified in turn with the following composites of locally cocartesian arrows:
\begin{itemize}
\item  $e\to (\mu, \id_y)_!e\to (\id_{x''}, \beta)_!(\mu, \id_y)_!e$, since $p$ is cocartesian over $d_1\xi$,
\item $e\to (\kappa, \id_y)_!e\to (\lambda, \id_y)_!(\kappa, \id_y)_!e\to (\id_{x''}, \beta)_!(c, \id_y)_!e$, since $p$ is cocartesian over $d_3\xi$,
\item $e\to (\kappa, \id_y)_!e\to (\lambda, \beta)_!(\kappa, \id_y)_!e$, since $p$ is cocartesian over $d_0\xi$.
\end{itemize}
The last assertion means precisely that $p$ is cocartesian over $d_2\xi=(\sigma, s_0\beta)$, as desired.
\end{proof}

\begin{remark}
	The previous result also follows by combining \cite[Proposition 2.10]{GHL-Gray} with Lurie's scaled unstraightening (\Cref{thm:scaled_unstr}).
\end{remark}

Specialising to Gray tensor products of $\infty$-categories, we obtain the following:

\begin{corollary}\label{cor:gray fib is loc cocart of gray}
Let $A$ and $B$ be \icats{}. Then there is a natural equivalence of $\infty$-categories
$$
\Fun(A\Gtimes B, \tcat{Cat})\simeq \Gray(A, B).
$$
\end{corollary}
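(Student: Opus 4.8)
The strategy is to restrict the scaled straightening equivalence of \cref{prop:locally cocart of gray tensor} (for the Gray tensor product of two \icats{}) and identify its image with $\Gray(A,B)$. Since $A$ and $B$ are $(\infty,1)$-categories, we view them as scaled simplicial sets with maximal scaling, so that $A \Gtimes B = A^{\sharp} \Gtimes B^{\sharp}$, and the target $\tCat^{\scale}$ is our chosen model for $\tCat$. By \cref{prop:locally cocart of gray tensor}, functors $A \Gtimes B \to \tCat$ correspond to locally cocartesian fibrations $p \colon E \to A \times B$ satisfying conditions (1)--(3) there, where now \emph{every} 2-simplex of $A$ and of $B$ is scaled. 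I would first unpack what these three conditions say in this special case: condition (1) says each fibre $E_x \to B$ is a cocartesian fibration (cocartesian over all 2-simplices of $B$ means closure of locally cocartesian morphisms under composition, i.e.\ a genuine cocartesian fibration by \cref{cor:loccartiscartcond}); condition (2) dually says each $E_y \to A$ is a cocartesian fibration; and condition (3), via the remark following \cref{prop:locally cocart of gray tensor}, says exactly that for composable $(\alpha,\id)$ and $(\id,\beta)$ the composite of the locally cocartesian lifts is again locally cocartesian.

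**Matching with \cref{lem:gray reform}.** The heart of the argument is then the observation that conditions (1)--(3) together are precisely the characterisation of Gray fibrations given in \cref{lem:gray reform}: that lemma says a locally cocartesian fibration $p \colon X \to A \times B$ is a Gray fibration iff it restricts to a cocartesian fibration over each of the three displayed types of triangles. The first type of triangle (lying entirely in $\{a\} \times B$) corresponds to condition (1); the second (lying in $A \times \{b\}$) to condition (2); and the third (one step in each direction) to condition (3). So on objects the restricted equivalence lands exactly in $\Gray(A,B) \subseteq \lococart(A \times B)$. For the morphisms, I would note that the scaled straightening equivalence restricts on underlying \icats{} to Lurie's locally cocartesian straightening equivalence of \cite{LurieGoo}, under which morphisms correspond to functors preserving locally cocartesian morphisms; and by the last paragraph of the proof of the lemma preceding \cref{lem:gray reform}, a functor between Gray fibrations preserves locally cocartesian morphisms iff it preserves both types of cocartesian morphisms, i.e.\ iff it lies in $\Gray(A,B)$. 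Hence the equivalence identifies $\Fun(A\Gtimes B, \tCat)$ (as an $(\infty,1)$-category, i.e.\ taking the underlying \icat{} of the $(\infty,2)$-category of such functors) with the subcategory $\Gray(A,B)$ of $\Cat/(A\times B)$.

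**Naturality and the main obstacle.** Finally I would address naturality in $A$ and $B$: the pseudonaturality issue for $\markSet_{(X,S)}$ flagged in the remark after \cref{cor:scaled straightening homotopy invariant version} is handled exactly as in \cite[Corollary A.32]{GHN}, and the Gray tensor product $-\Gtimes-$ is a functor of \icats{} by \cref{thm:gray tensor}, so both sides are functors of $A,B$ and the equivalence is natural. I expect the main obstacle to be the careful bookkeeping in the object-level comparison: one must check that ``cocartesian over all 2-simplices of $B$'' genuinely upgrades each fibre to an honest cocartesian fibration (not just a locally cocartesian one restricting nicely over triangles), which requires \cref{cor:loccartiscartcond}, and dually; and one must verify that the three families of triangles in \cref{lem:gray reform} are matched precisely — neither more nor fewer — by the conditions (1)--(3) of \cref{prop:locally cocart of gray tensor} under maximal scaling. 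This is essentially a diagram-chase with 2- and 3-simplices, but it is where all the content of the identification sits. Everything else (the passage to underlying \icats{}, the identification of morphisms, naturality) is formal once this dictionary is set up.
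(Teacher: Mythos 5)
Your proposal is correct and follows essentially the same route as the paper, whose proof is simply ``combine \cref{lem:gray reform} and \cref{prop:locally cocart of gray tensor}''; your matching of conditions (1)--(3) under maximal scaling with the three triangle types of \cref{lem:gray reform}, and your treatment of morphisms and naturality, fill in exactly the details that combination requires.
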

\begin{proof}
Combine Lemma \ref{lem:gray reform} and Proposition \ref{prop:locally cocart of gray tensor}.
\end{proof}

\begin{remark}
Similarly to \ref{item:annoyingsquareisannoying} of \cref{remark:annoyingsquare}, it is not a priori clear to us that the equivalence constructed in \cref{cor:gray fib is loc cocart of gray} restricts to the usual straightening equivalence
\[\Fun(A \times B, \Cat) \simeq \Cocart(A \times B):\] 
Besides this direct equivalence one can use the two inclusions \[\Cocart(A \times B) \subseteq \RCocart(A,B), \LCocart(A,B)\]
	and apply straightening in one factor after the other to obtain two more equivalences
\[\Cocart(A \times B) \simeq \Fun(B,\Cocart(A)) \simeq \Fun(A \times B,\Cat)\]
and
\[\Cocart(A \times B) \simeq \Fun(A,\Cocart(B)) \simeq \Fun(A \times B,\Cat),\]
and by construction the equivalence from \cref{cor:gray fib is loc cocart of gray} restricts to the latter of these.
Again it will follow from the uniqueness results of \cite{part2} that these three equivalences agree. 
\end{remark}

\subsection{Unstraightening of lax natural transformations and the calculus of mates}\label{subsec:laxnat}
As an application of the scaled unstraightening for Gray fibrations provided by Corollary \ref{cor:gray fib is loc cocart of gray}, we will now prove the main theorem of this section:

\begin{theorem}\label{thm:cocart vs functor lax transformations}
There are equivalences of $(\infty, 2)$-categories
\[\mathbf{Cocart}^{\lax}(B)\simeq \tcat{Fun}^{\lax}(B, \tcat{Cat}) \qquad \text{and} \qquad \mathbf{Cart}^{\oplax}(B)\simeq \tFun^{\oplax}(B^{\op}, \tcat{Cat}),\] which are natural in $B$.
\end{theorem}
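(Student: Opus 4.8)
\textbf{Proof strategy for \cref{thm:cocart vs functor lax transformations}.}
The plan is to obtain both equivalences from \cref{cor:gray fib is loc cocart of gray}, by evaluating at $[m]$ and comparing $2$-fold complete Segal $\igpds{}$ levelwise. Recall that $\mathbf{Cocart}^{\lax}(B)$ is \emph{defined} as the $(\infty,2)$-category associated to the complete $2$-fold Segal $\igpd{}$
\[
([m],[n]) \longmapsto \Map_{\CAT}\big([m], \cocart^{\lax}_{[n]}(B \times [n])\big),
\]
while $\tFun^{\lax}(B,\tCat)$ is characterised by $\Map_{\Cat_2}(\tcat{A}, \tFun^{\lax}(B,\tCat)) \simeq \Map_{\Cat_2}(\tcat{A}\Gtimes B, \tCat)$. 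Taking $\tcat{A} = \delta_2([m],[n]) = [m]([n],\dots,[n])$ and using the complete Segal model for $\Cat_2$, the $(m,n)$-simplices of the nerve of $\tFun^{\lax}(B,\tCat)$ are $\Map_{\Cat_2}([m]([n],\dots,[n]) \Gtimes B, \tCat)$. So the first and central task is to produce a natural equivalence
\[
\Map_{\Cat_2}\big([m]([n],\dots,[n]) \Gtimes B,\; \tCat\big) \;\simeq\; \Map_{\CAT}\big([m], \cocart^{\lax}_{[n]}(B \times [n])\big),
\]
binatural in $[m],[n]$, which after passing to the associated $2$-fold Segal $\igpds{}$ gives the desired equivalence of $(\infty,2)$-categories.

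First I would handle the case $n = 0$, where the right-hand side is $\Map_{\CAT}([m], \cocart^{\lax}(B))$ and $\delta_2([m],[0]) = [m]$ as an $(\infty,1)$-category, so the left-hand side is $\Map_{\Cat_2}([m] \Gtimes B, \tCat) \simeq \Map_{\Cat}(|{[m]}\Gtimes B|_{\text{well,}}...) $ — more precisely, since $[m]$ is a $1$-category we have $[m] \Gtimes B \simeq [m] \times B$ (the Gray tensor product of $1$-categories is the product when one factor is a poset... actually this needs care — but for $[m] \Gtimes B$ with $B$ an $\infty$-category the scaling forces the relevant $2$-simplices to be thin, and one checks $[m]\Gtimes B$ is still a $1$-category, hence $= [m]\times B$). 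Then $\Map_{\Cat_2}([m]\times B, \tCat) \simeq \Map_{\Cat}([m]\times B, \Cat) \simeq \Map_{\Cat}([m], \Fun(B,\Cat)) \simeq \Map_{\CAT}([m], \cocart^{\lax}(B))$ using that $\iota\,\cocart^{\lax}(B) \simeq \Fun(B,\Cat)^{\simeq}$, and more carefully using \cref{cor:gray fib is loc cocart of gray} with the source an $(\infty,1)$-category. For the general case, I would use \cref{lem:localization of Gray tensor product}: the natural map $[m]\Gtimes [n] \to \delta_2([m],[n])$ exhibits the target as the localisation of $[m]\Gtimes [n]$ at the $1$-morphisms lying in some $\{i\}\Gtimes[n]$, and tensoring with $B$ (which preserves colimits, hence localisations, in each variable) gives that $[m]([n],\dots,[n]) \Gtimes B \simeq ([m]\Gtimes[n]\Gtimes B)[W^{-1}]$, where $W$ consists of the $1$-morphisms in the copies $\{i\}\Gtimes[n]\Gtimes B$. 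Thus
\[
\Map_{\Cat_2}\big(\delta_2([m],[n])\Gtimes B, \tCat\big) \simeq \Map_{\Cat_2}^{W}\big([m]\Gtimes([n]\Gtimes B), \tCat\big),
\]
the full subspace of functors inverting $W$. Now apply \cref{cor:gray fib is loc cocart of gray} (with $A = [m]$, $B$ replaced by $[n]\times B$ — using $[m]\Gtimes([n]\Gtimes B) \simeq [m]\Gtimes(\text{something weakly }[n]\times B)$; one must check $[n]\Gtimes B$ has underlying $(\infty,1)$-category $[n]\times B$ with the Gray scaling being precisely what makes the straightening land in $\Cocart$ over the $[n]$-directions) to identify $\Map_{\Cat_2}([m]\Gtimes([n]\times B), \tCat)$ with $\core \Gray([m], [n]\times B)$, and then \cref{cor:graycocartstr} identifies this with $\Map_{\CAT}([m], \cocart^{\lax}([n]\times B))$. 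The localisation condition $W$ precisely cuts this down to those cocartesian fibrations over $B\times [n]$ that are \emph{locally constant on $[n]$} — because inverting the $1$-morphisms in $\{i\}\Gtimes[n]\Gtimes B = \{i\}\times[n]\times B$ (which project to equivalences in $[m]$) corresponds under straightening to requiring the functor $[m] \to \cocart^{\lax}([n]\times B)$ to factor through $\cocart^{\lax}_{[n]}([n]\times B)$ — which is exactly the defining subcategory. This yields the required binatural equivalence, and the oplax statement follows by applying $(-)^{\op}$ as in \cref{rem:opposite of lax is oplax} together with $B \Gtimes \tcat{A} \simeq (\tcat{A}^{\op}\Gtimes B^{\op})^{\op}$ and the fact that op-Gray fibrations are handled by the second equivalence of \cref{thm:dual lortho-gray} (or directly, by the dual of \cref{cor:gray fib is loc cocart of gray}).

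The main obstacle I expect is the careful bookkeeping in matching the \emph{localisation class} $W$ on the Gray-tensor side with the \emph{local constancy} condition defining $\cocart^{\lax}_{[n]}$, together with verifying naturality in $[m]$ and $[n]$ at a high enough level of coherence to produce a genuine map of bisimplicial $\igpds{}$ rather than just a levelwise equivalence. The subtlety is that \cref{cor:gray fib is loc cocart of gray} and \cref{cor:graycocartstr} are only pseudonatural at the point-set level (as flagged in the remark after \cref{cor:scaled straightening homotopy invariant version}), so strictifying — or rather, checking that the resulting equivalence of complete $2$-fold Segal $\igpds{}$ is well-defined up to contractible choice and natural in $B$ — requires the same rectification machinery invoked for the ordinary straightening equivalence in \cite[Corollary A.32]{GHN}. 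A secondary technical point worth isolating as a lemma: identifying the underlying $(\infty,1)$-category of $[n]\Gtimes B$ with $[n]\times B$, and checking that the Gray scaling on $[n]\Gtimes B$ makes a locally cocartesian fibration over it into precisely a Gray fibration over $([n],B)$ in the sense of \cref{subsec:Gray} — this is really the content of \cref{prop:locally cocart of gray tensor} specialised appropriately, but the reduction to $[n]\times B$ (as opposed to a genuinely $2$-categorical base) deserves an explicit statement. Finally, for $B=\ast$ one recovers $\mathbf{Cocart}^{\lax}(\ast) \simeq \tFun^{\lax}(\ast,\tCat) \simeq \tCat$, which pins down the model for $\tCat$ used throughout and justifies the identifications in \cref{def:2-cat of cart oplax}.
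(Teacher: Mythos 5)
Your overall architecture coincides with the paper's: compare the two sides levelwise as complete $2$-fold Segal $\infty$-groupoids, use \cref{lem:localization of Gray tensor product} (and the fact that $\Gtimes$ preserves colimits in each variable) to pass from $\delta_2([m],[n])\Gtimes B$ to $([m]\Gtimes [n])\Gtimes B$, and then apply scaled unstraightening. But there are two concrete problems in the middle of the argument. First, the warm-up claim that $[m]\Gtimes B\simeq [m]\times B$ is false: already $[1]\Gtimes[1]$ is the lax square, a $2$-category with a non-invertible $2$-morphism (\cref{prop:gray of simplex}). Functors $[m]\Gtimes B\to\tCat$ correspond to Gray fibrations over $[m]\times B$, which are strictly more general than cocartesian fibrations over the product; the $n=0$ case does go through, but only via \cref{cor:gray fib is loc cocart of gray} and \cref{cor:graycocartstr}, not via any product identification.

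Second, and more seriously, the identification of $\Map_{\Cat_2}\big(([m]\Gtimes[n])\Gtimes B,\tCat\big)$ with $\core\Gray([m],[n]\times B)$ is false \emph{before} the localisation is imposed, so \cref{cor:gray fib is loc cocart of gray} cannot be applied ``with $B$ replaced by $[n]\times B$''. The scaling on the triple Gray tensor product is strictly smaller than that of $[m]\Gtimes([n]\times B)^\sharp$: a twofold application of \cref{prop:locally cocart of gray tensor} shows that functors out of $([m]\Gtimes[n])\Gtimes B$ correspond to locally cocartesian fibrations $E\to[m]\times[n]\times B$ whose restriction $E_i\to [n]\times B$ over each $i\in[m]$ is only a Gray fibration over $([n],B)$ -- it need not be cocartesian over the ``first $B$, then $[n]$'' triangles -- whereas membership in $\Gray([m],[n]\times B)$ requires each $E_i\to[n]\times B$ to be an honest cocartesian fibration. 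The two descriptions only merge once the local-constancy condition coming from the localisation is in place, and even then one must still match the residual lists of ``cocartesian over certain triangles'' conditions on the two sides; this is precisely the paper's comparison of its conditions (d) and (c'), carried out by factoring through the faces of a $3$-simplex, and it is the actual technical content of the proof rather than bookkeeping. (Your phrasing of the localisation class is also off: one inverts the $1$-morphisms of $\{i\}\Gtimes[n]$, not of $\{i\}\times[n]\times B$, which would wrongly collapse the $B$-direction; the correct consequence is that each $E_i$ is pulled back from $B$.) Finally, the oplax case needs more than a formal application of $(-)^{\op}$, which would force you to track $1$-op against $2$-op on $\tCat$; the paper instead runs the analogous unstraightening analysis and then dualises the resulting fibrations over $B^{\op}$ via \cref{thm:dual lortho-gray}, which is closer to your parenthetical remark about op-Gray fibrations.
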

In particular, this implies that the $(\infty, 2)$-categories $\mathbf{Cocart}^{\lax}(\ast)$ and $\mathbf{Cart}^{\oplax}(\ast)$ are both equivalent to $\tcat{Cat}$, as mentioned already after \cref{def:2-cat of cart oplax}.
\begin{remark}
In fact, it follows from \cite{part2} that the natural equivalence of Theorem \ref{thm:cocart vs functor lax transformations} is essentially unique.
\end{remark}
\begin{proof}
Let us start with the lax case, the oplax case being similar.

\subsubsection*{Lax case} Let $B$ be an \icat{} and consider the \itcat{} $\tFun^{\lax}(B, \tcat{Cat})$ determined by the natural equivalence of \igpds{}
$$
\Map_{\Cat_2}\big(\tcat{A}, \tFun^{\lax}(B, \tcat{Cat})\big)\simeq \Map_{\Cat_2}\big(\tcat{A}\Gtimes B, \tcat{Cat}\big).
$$
In terms of Segal \igpds{}, $\tFun^{\lax}(B, \tcat{Cat})$ is then described by the bisimplicial \igpd{}
$$\begin{tikzcd}[column sep=1.3pc]
N_{\Delta\times \Delta}\big(\tFun^{\lax}(B, \tcat{Cat})\big)\colon \Delta^{\op} \times \Delta^{\op} \arrow[r] & \Gpd ; \quad \big([m], [n]\big)\arrow[r, mapsto] & \Map_{\Cat_2}\Big(\delta_2([m], [n])\Gtimes B, \tcat{Cat}\Big),
\end{tikzcd}$$
where $\delta_2$ is the functor \eqref{diag:diagonal}. On the other hand, the \itcat{} $\tcat{Cocart}^{\lax}(B)$ was defined as the complete Segal \igpd{} whose value on $([m], [n])$ is given by the \igpd{} of functors (of \icats{}) $[m]\rt \cocart^{\lax}_{[n]}(B\times [n])$. Our goal will be to prove that there is a natural equivalence between these two bisimplicial \igpds{}.

To see this, let us first consider the following two natural subgroupoid inclusions:
$$\begin{tikzcd}[row sep=0.3pc, column sep=1.5pc]
\Map_{\Cat_2}\Big(\delta_2([m], [n])\Gtimes B, \tcat{Cat}\Big)\arrow[r, hook] & {\Map_{\Cat_2}\Big(([m]\Gtimes [n])\Gtimes B, \tcat{Cat}\Big)}\arrow[r, "\Unsc", hook] & \core\big(\Cat/[m]\times [n]\times B\big)\\
\Map_{\Cat}\big([m], \cocart^{\lax}_{[n]}(B\times [n])\big)\arrow[r, hook, "\Unco"] & \iota\Gray([m], B\times [n])\arrow[r, hook] & \core\big(\Cat/[m]\times B\times [n]\big).
\end{tikzcd}$$
The first inclusion uses the localisation $[m]\Gtimes [n]\rt \delta_2([m], [n])$ from Lemma \ref{lem:localization of Gray tensor product}. It suffices to verify that upon reversing the factors of $B$ and $[n]$, both of these inclusions determine the same subgroupoid of functors $p\colon E\rt B\times [m]\times [n]$. Unraveling the definitions, the image of the first map is the subgroupoid of functors $p$ with the following properties:
\begin{enumerate}[label=(\alph*)]
\item for each $j\in [n], b\in B$, the restriction $E_{j, b}\rt [m]$ is a cocartesian fibration.

\item for each $i\in [m], b\in B$, the restriction $E_{i, b}\rt [n]$ is a cocartesian fibration. 

\item for each $i\in [m], j\in [n]$, the restriction $E_{i, j}\rt B$ is a cocartesian fibration.

\item for arrows $\kappa\colon i\rt i'$, $\phi\colon j\rt j'$, $\beta\colon b\rt b'$ in $[m], [n]$ and $B$ respectively, $p$ is cocartesian over the $2$-simplices $(s_1\kappa, s_1\phi, s_0\beta)$ and $(s_1\kappa, s_0\phi, s_0^2\beta)$.

\item for any $i\in [m]$, the restriction $E_i\rt [n]\times B$ arises as the base change of a Gray fibration over $[0]\times B$; in particular, it is a cocartesian fibration, so this already implies (b) and (c).
\end{enumerate}
The first four conditions describe the image of $\Unsc$, by a twofold application of Proposition \ref{prop:locally cocart of gray tensor}. The fifth condition follows from Corollary \ref{cor:gray fib is loc cocart of gray} and Lemma \ref{lem:localization of Gray tensor product}, together with the fact that the Gray tensor product preserves colimits in each variable, so that
$$
\delta_2([m], [n])\Gtimes A\simeq \Big([m]\Gtimes [n]\coprod_{\core[m]\Gtimes [n]} \core[m]\Gtimes [0] \Big)\Gtimes A\simeq \big([m]\Gtimes [n]\big)\Gtimes A\coprod_{(\core[m]\Gtimes [n])\Gtimes A}\big(\core[m]\Gtimes [0]\big)\Gtimes A.
$$
On the other hand, unraveling Definition \ref{def:2-cat of cart oplax} shows that the image of the second map consists, after permuting $B$ and $[n]$, of functors $p$ with the following properties:
\begin{enumerate}[label=(\alph*')]
\item for each $j\in [n]$ and $b\in B$, the restriction $E_{j, b}\rt [m]$ is a cocartesian fibration.
\item for each $i\in [m]$, the restriction $E_i\rt [n]\times B$ is a cocartesian fibration.
\item for each $\kappa\colon i\rt i'$, $\phi\colon j\rt j'$ and $\beta\colon b\rt b'$ in $[m], [n]$ and $B$ respectively, $p$ is cocartesian over $(s_1\kappa, s_0\phi, s_0\beta)$.
\item for each $i\in [m]$, the restriction $E_i\rt [n]\times B$ is a cocartesian fibration which arises as the base change of a cocartesian fibration over $[0]\times B$ (in particular, this implies (b')).
\end{enumerate}
Indeed, by Lemma \ref{lem:gray reform} the first three conditions are equivalent to $p$ being an object of $\Gray([m], B\times [n])$. Condition (d') is then equivalent to the straightening of this Gray fibration over $[m]$ taking values in the full subcategory $\cocart^{\lax}_{[n]}(B\times [n])\subseteq \cocart^{\lax}(B\times [n])$.

Evidently, condition (e) is equivalent to (d') and conditions (a) and (a') coincide. It therefore remains to show that (d) and (c') are equivalent. Let us fix $\kappa\colon i\rt i'$, $\phi\colon j\rt j'$ and $\beta\colon b\rt b'$ as above and consider the $3$-simplex $\xi=\big(s_2s_1(\kappa), s_1s_0(\phi), s_0^2(\beta)\big)$ given by
$$\begin{tikzcd}[column sep=0.8pc, row sep=1.6pc]
& {(i', j, b)}  \arrow[rrd] & & \\
{(i, j, b)} \arrow[ru] \arrow[rrd]\arrow[rrr] &  &  & {(i', j', b').} \\
&  & {(i', j', b)}\arrow[luu, leftarrow, crossing over] \arrow[ru] &            
\end{tikzcd}$$
Note that $p$ is cocartesian over $d_0\xi$ by condition (e) (or (d')). Assuming condition (d), we furthermore have that $p$ is cocartesian over $d_3\xi$ and $d_1\xi$. The argument from Proposition \ref{prop:locally cocart of gray tensor} shows that $p$ is cocartesian over $d_2\xi$, which is precisely condition (c'). Conversely, condition (c') implies that $p$ is cocartesian over $d_2\xi$ and $d_3\xi$. An argument similar to that of Proposition \ref{prop:locally cocart of gray tensor} then shows that $p$ is cocartesian over $d_1\xi$ as well, so that (d) follows.

\subsubsection*{Oplax case}
Likewise, the $(\infty, 2)$-category $\tcat{Fun}^{\oplax}(B^{\op}, \tcat{Cat})$ corresponds to the $2$-fold Segal \igpd{}
$$\begin{tikzcd}
\big([m], [n]\big)\arrow[r, mapsto] & \Map_{\Cat_2}\Big(B^{\op}\Gtimes \delta_2([m], [n]), \tcat{Cat}\Big).
\end{tikzcd}$$
Lemma \ref{lem:localization of Gray tensor product} identifies this mapping \igpd{} with the \igpd{} of maps $B^{\op}\Gtimes [m]\Gtimes [n]\rt \tcat{Cat}$ whose restriction to each $B^{\op}\Gtimes \{i\}\Gtimes [n]$ arises from $B^{\op}\rt \tcat{Cat}$.

Under scaled unstraightening, this mapping \igpd{} is identified with a certain \igpd{} of locally cocartesian fibrations $p\colon E\rt B^{\op}\times [m]\times [n]$. Unraveling the definitions as in the lax case, one sees that this is the \igpd{} of those functors $p$ such that:
\begin{itemize}
\item denoting by $\mm{pr}_B$ the projection onto $B^{\op}$, we have that $p$ defines a map $\mm{pr}_B\circ p\rt \mm{pr}_B$ in $\cocart(B^{\op})$.

\item for each $b\in B^{\op}$, the map $E_b\rt [m]\times [n]$ is a Gray fibration.

\item for each $i\in [m]$, the Gray fibration $E_{i}\rt B^{\op}\times [n]$ arises as the base change of a cocartesian fibration over $B^{\op}\times [0]$.
\end{itemize}
Dualising over $B^{\op}$, i.e.\ applying cocartesian unstraightening and cartesian straightening over $B$, this is identified with the \igpd{} of maps $q\colon F\rt B\times [m]\times [n]$ such that
\begin{itemize}
\item $q$ defines a map $\mm{pr}_B\circ q\rt \mm{pr}_B$ in $\cart(B)$.

\item for each $b\in B$, the restriction $F_b\rt [m]\times [n]$ is a Gray fibration.
\item for each $i\in [m]$, the \lortho{} $F_{i}\rt B\times [n]$ arises as the base change of a cartesian fibration over $B\times [0]$.
\end{itemize}
Permuting $[m]$ and $[n]$, this is equivalent to $q$ defining an element in $\core\Ortholax(B\times [n], [m])$ such that for all $i\in [m]$, the map $F_i\rt B\times [n]$ arises as the base change of a cartesian fibration over $B\times [0]$.
Under straightening over $[m]$, this is precisely the \igpd{} of $(m, n)$-simplices of $\tcat{Cart}^{\oplax}(B)$ (see Definition \ref{def:2-cat of cart oplax}).
\end{proof}

\begin{observation}\label{obs:lax colim}
  Since the equivalence
  \[   \tcat{Fun}^{\lax}(B,
    \tcat{Cat}) \stackrel{\sim}{\rt} \mathbf{Cocart}^{\lax}(B)\]
  of \cref{thm:cocart vs functor lax transformations}
  is by construction given on objects by the unstraightening functor,
  for functors $F,G \colon B \to \tcat{Cat}$ we obtain an equivalence
  \[ \mathrm{Nat}^{\lax}(F,G) \simeq \Fun_{/B}(\Unco(F), \Unco(G)),\]
  depending 2-functorially on $F, G\in \tcat{Fun}^{\lax}(B, \tcat{Cat})$, where the left-hand side denotes the mapping \icat{} in $\tcat{Fun}^{\lax}(B,
  \tcat{Cat})$. Taking $G$ to be the constant functor with value  $X\in\tcat{Cat}$, we get an equivalence
  \[ \mathrm{Nat}^{\lax}(F,\mathrm{const}_{X}) \simeq \Fun(\Unco(F),
    X),\]
  depending $1$-functorially on $X\in\Cat$, since we have a natural equivalence $\Unco(\mathrm{const}_{X}) \simeq X \times B$. Let $\theta\colon F\to \mathrm{const}_{\Unco(F)}$ be the lax natural transformation corresponding to the identity map under the above natural equivalence and observe that this induces a natural transformation
  \[\begin{tikzcd}[column sep=2pc]
  \Fun(\Unco(F), -)\arrow[r, "\mm{const}"] & \mathrm{Nat}^{\lax}(\mathrm{const}_{\Unco(F)},\mathrm{const}_{(-)})\arrow[r, "\theta^*"] & \mathrm{Nat}^{\lax}(F,\mathrm{const}_{(-)})
  \end{tikzcd}\]
   between functors of $(\infty, 2)$-categories $\tcat{Cat}\rt \tcat{Cat}$. This natural transformation is a natural equivalence, since the underlying natural transformation between functors of $(\infty, 1)$-categories $\Cat\rt \Cat$ is an equivalence by definition of $\theta$. 
     
  In other
  words, $\Unco(F)$ has the universal property of the lax colimit
  of $F$: it corepresents the functor
  \[ \mathrm{Nat}^{\lax}(F,\mathrm{const}_{(-)}) \colon \tCat \to
    \tCat.\]
  Similarly, the cartesian unstraightening of $F \colon B \to \Cat$ is the oplax colimit:
  it satisfies
  \[ \mathrm{Nat}^{\oplax}(F,\mathrm{const}_{X}) \simeq \Fun(\Uncart(F),
    X).\]
Such a characterisation of the unstraightening was first established in \cite{GHN}, where the authors defined lax (co)limits for functors $F\colon B \rightarrow \Cat$ as certain weighted (co)limits. As an applicaton of \cref{thm:cocart vs functor lax transformations} we can therefore deduce that their lax colimits really have the desired universal property expressed above.
\end{observation}

\begin{definition}
For an \icat{} $B$, let us write $\tFun^{\lax, \mm{R}}(B, \tcat{Cat})\subseteq \tFun^{\lax}(B, \tcat{Cat})$ for the $1$-full sub-$2$-category spanned by the lax natural transformations sending each object in $B$ to a right adjoint. Likewise, let $\tFun^{\oplax, \mm{L}}(B, \tcat{Cat})\subseteq \tFun^{\oplax}(B, \tcat{Cat})$ for the $1$-full sub-$2$-category on oplax natural transformations $B\rt \Cat$ with values in left adjoints.
\end{definition}
Combining Theorem \ref{thm:dualizing lax natural transformations} and Theorem \ref{thm:cocart vs functor lax transformations}, we obtain the following:
\begin{theorem}\label{thm:mates}
Let $B$ be an \icat{}. Then there is an equivalence
$$
\Adj\colon \tFun^{\lax, \mm{R}}(B, \tcat{Cat})\rto{\sim} \big(\tFun^{\oplax, \mm{L}}(B, \tcat{Cat})\big)^{(1, 2)-\op}
$$
sending each lax natural transformation $F \Rightarrow G$ with values in right adjoints to the corresponding oplax natural transformation $G \Rightarrow F$ with values in left adjoints.
\end{theorem}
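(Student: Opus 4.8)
The plan is to deduce \cref{thm:mates} by transporting the fibrational equivalence $\Adj$ of \cref{thm:dualizing lax natural transformations} across the straightening equivalences of \cref{thm:cocart vs functor lax transformations}. Concretely, \cref{thm:cocart vs functor lax transformations} provides natural equivalences of $(\infty,2)$-categories
\[
\tFun^{\lax}(B,\tcat{Cat})\simeq \mathbf{Cocart}^{\lax}(B)\qquad\text{and}\qquad \tFun^{\oplax}(B^{\op},\tcat{Cat})\simeq \mathbf{Cart}^{\oplax}(B),
\]
both implemented on objects by (un)straightening; feeding these into the equivalence $\Adj\colon \mathbf{Cocart}^{\lax,\mm R}(B)\simeq \mathbf{Cart}^{\oplax,\mm L}(B^{\op})^{(1,2)\mm{-}\op}$ of \cref{thm:dualizing lax natural transformations} (together with the opposite-category identifications of \cref{rem:opposite of lax is oplax}, needed to reconcile the appearances of $B$ and $B^{\op}$) produces the asserted functor $\Adj$. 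The one point that genuinely needs checking is that the straightening equivalences carry the sub-$2$-categories appearing in \cref{thm:mates} onto the fibrational sub-$2$-categories $\mathbf{Cocart}^{\lax,\mm R}(B)$ and $\mathbf{Cart}^{\oplax,\mm L}(B^{\op})$.

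I would verify this separately on $2$-morphisms, objects, and $1$-morphisms. On $2$-morphisms there is nothing to do, since all of the sub-$2$-categories in sight are $1$-full. On objects: a diagram $B\to\Cat$ sends each morphism to a right adjoint exactly when the cocartesian transport functors of its unstraightening admit left adjoints; by the $B=\ast$ case of \cref{lem:lortho with adj} (equivalently \cite[Proposition 4.7.4.17]{HA}) this is precisely the condition picking out the relevant cocartesian fibrations, and the dual statement handles $\tFun^{\oplax,\mm L}$. On $1$-morphisms: because the equivalences of \cref{thm:cocart vs functor lax transformations} act on objects by straightening, naturality of (un)straightening shows that a lax natural transformation whose components are right adjoints corresponds to a map of cocartesian fibrations which is a $B$-parametrised right adjoint in the sense of \cref{def:param adj}, and conversely; again the oplax case is dual.

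Granting this, \cref{thm:dualizing lax natural transformations} yields the equivalence $\Adj$ of \cref{thm:mates}. Its action on objects — sending a diagram of right adjoints to the pointwise-left-adjoint diagram over $B^{\op}$ — is read off from the corresponding clause of \cref{thm:dualizing lax natural transformations}, which sends a cocartesian fibration to the cartesian fibration over $B^{\op}$ classifying the same functor, together with the parametrised form of the fact that a bicartesian fibration over $[1]$ encodes an adjunction, made precise in \cref{thm:paradjlfibeq}. Its action on $1$-morphisms as the calculus of mates is exactly \cref{beck-chevalley}. I expect the main obstacle to be purely organisational: carefully tracking the opposite-category and $(1,2)$-op conventions through the composite of equivalences, and confirming that the scaled-simplicial-set construction underlying \cref{thm:cocart vs functor lax transformations} restricts to these sub-$2$-categories — which, since that equivalence is by design given on objects by straightening, reduces to the elementary verifications above rather than requiring a re-examination of its proof.
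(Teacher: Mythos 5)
Your proposal is correct and follows essentially the same route as the paper: its proof of \cref{thm:mates} likewise consists of feeding the straightening equivalences of \cref{thm:cocart vs functor lax transformations} into \cref{thm:dualizing lax natural transformations}, the only content being that these equivalences, since they are given on objects by (un)straightening, identify the relevant sub-$2$-categories. The one small remark is that $\mathbf{Cocart}^{\lax,\mm R}(B)$ and $\mathbf{Cart}^{\oplax,\mm L}(B^{\op})$ as defined in \cref{def:param adj} are wide on objects and only restrict $1$-morphisms, so the substantive verification is your $1$-morphism check (components being right adjoints corresponds to being a $B$-parametrised right adjoint), while your object-level check via \cref{lem:lortho with adj} addresses the restriction on objects appearing in the paper's formulation of $\tFun^{\lax,\mm R}(B,\tcat{Cat})$ rather than anything imposed on the fibrational side.
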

\begin{proof}
Unravelling the proof of Theorem \ref{thm:cocart vs functor lax transformations}, one sees that the equivalence $\tFun^{\lax}(B, \tcat{Cat})\simeq \tcat{Cocart}^{\lax}(B)$ is given at the level of objects by the usual unstraightening from \cite{HTT}, which in this special agrees with the locally cocartesian unstraightening from \cite{LurieGoo}. In particular, this equivalence identifies the $1$-full sub-$2$-category $\tFun^{\lax, \mm{R}}(B, \tcat{Cat})$ with $\tcat{Cocart}^{\lax, \mm{R}}(B)$, and similarly for the oplax case. The result then follows from Theorem \ref{thm:cocart vs functor lax transformations}.
\end{proof}

Denoting by $\mathbf{LFun}^{\mathrm{lax}}(B,\mathbf{Cat})$ the full sub-2-category of $\mathbf{Fun}^{\mathrm{lax}}(B,\mathbf{Cat})$ spanned by all functors taking values in left adjoints and similarly for $\mathbf{RFun}^{\mathrm{opl}}(B,\mathbf{Cat})$ we also find the following.
\begin{theorem}
Let $B$ be an $\infty$-category. Then there is an equivalence
\[\mathbf{RFun}^{\mathrm{opl}}(B,\mathbf{Cat}) \xrightarrow{\sim}  \mathbf{LFun}^{\mathrm{lax}}(B^{\mathrm{op}},\mathbf{Cat})\]
sending each diagram $B \rightarrow \mathrm{Cat}$ with values in right adjoints to the corresponding diagram $B^\mathrm{op} \rightarrow \mathrm{Cat}$ of left adjoints.
\end{theorem}
\begin{proof}
Simply observe that both sides are equivalent to $\mathbf{Bicart}^{\mathrm{(op)lax}}(B)$ via the unstraightening equivalence from Theorem \ref{thm:cocart vs functor lax transformations}.
\end{proof}

\bibliographystyle{amsalpha} 

\end{document}